\begin{document}

\def\dd{r}
\def\ld{{\widehat L}}
\def\Gcal{\mathcal G}
\def\eps{\epsilon}
\def\eqn{\eqref}
\def\wtau{\widehat \tau}
\def\ttau{\widetilde \tau}
\def\Q{\mathcal Q}
\def\hG{\widehat H}
\def\C{\mathcal C}
\def\ds{{\mathcal D}}
\def\br{br}
\def\M{{\cal M}}
\def\H{{\cal H}}
\def\bell{\ensuremath{\boldsymbol\ell}}
\def\Bin{{\bf Bin}}
\def\hex{{\widehat{\mathbb E}}}
\def\blank{{\Lambda}}
\def\LL{{\mathcal L}}
\def\F{{\mathcal F}}
\def\Po{{\bf Po}}

\newcommand{\be}{\begin{equation}}
\newcommand{\ee}{\end{equation}}
\newcommand{\bea}{\begin{eqnarray}}
\newcommand{\eea}{\end{eqnarray}}
\newcommand{\bean}{\begin{eqnarray*}}
\newcommand{\eean}{\end{eqnarray*}}
\newcommand{\non}{\nonumber}
\newcommand{\no}{\noindent}
\newcommand\floor[1]{{\lfloor #1 \rfloor}}
\newcommand\ceil[1]{{\lceil #1 \rceil}}

\newcommand{\fix}[1]{{#1}}
\newcommand{\remove}[1]{}
\newcommand{\lab}[1]{\label{#1}\ }

\def\a{\alpha}
\def\b{\beta}
\def\c{\chi}
\def\d{\delta}
\def\D{\Delta}
\def\e{\epsilon}
\def\f{\phi}
\def\F{\Phi}
\def\g{\gamma}
\def\G{\Gamma}
\def\k{\kappa}
\def\K{\Kappa}
\def\z{\zeta}
\def\th{\theta}
\def\Th{\Theta}
\def\l{\lambda}
\def\la{\lambda}
\def\La{\Lambda}
\def\m{\mu}
\def\u{\upsilon}
\def\p{\pi}
\def\P{\Pi}
\def\r{\rho}
\def\R{\Rho}
\def\s{\sigma}
\def\S{\Sigma}
\def\t{\tau}
\def\om{\omega}
\def\Om{\Omega}
\def\smallo{{\rm o}}
\def\bigo{{\rm O}}
\def\to{\rightarrow}
\def\E{{\mathbb E}}
\def\ex{{\mathbb E}}
\def\cd{{\cal D}}
\def\rme{{\rm e}}
\def\hf{{1\over2}}
\def\R{{\bf  R}}
\def\cala{{\cal A}}
\def\cale{{\cal E}}
\def\call{{\cal L}}
\def\calb{{\cal B}}
\def\cald{{\cal D}}
\def\calz{{\cal Z}}
\def\calf{{\cal F}}
\def\Fscr{{\cal F}}
\def\cc{{\cal C}}
\def\calc{{\cal C}}
\def\calh{{\cal H}}
\def\calk{{\cal K}}
\def\cals{{\cal S}}
\def\calr{{\cal R}}
\def\calt{{\cal T}}
\def\msq{{\mathscr Q}}
\def\bk{\backslash}

\def\out{{\rm Out}}
\def\temp{{\rm Temp}}
\def\overused{{\rm Overused}}
\def\big{{\rm Big}}
\def\moderate{{\rm Moderate}}
\def\swappable{{\rm Swappable}}
\def\candidate{{\rm Candidate}}
\def\bad{{\rm Bad}}
\def\crit{{\rm Crit}}
\def\col{{\rm Col}}
\def\dist{{\rm dist}}
\def\poly{{\rm poly}}

\def\tdT{{\widetilde\Theta}}

\newcommand{\var}{\mbox{\bf Var}}
\newcommand{\pr}{\mbox{\bf Pr}}

\newtheorem{lemma}{Lemma}
\newtheorem{theorem}[lemma]{Theorem}
\newtheorem{corollary}[lemma]{Corollary}
\newtheorem{claim}[lemma]{Claim}
\newtheorem{remark}[lemma]{Remark}
\newtheorem{proposition}[lemma]{Proposition}
\newtheorem{observation}[lemma]{Observation}
\theoremstyle{definition}
\newtheorem{definition}[lemma]{Definition}

\newcommand{\limninf}{\lim_{n \rightarrow \infty}}
\newcommand{\proofstart}{{\bf Proof\hspace{2em}}}
\newcommand{\tset}{\mbox{$\cal T$}}
\newcommand{\proofend}{\hspace*{\fill}\mbox{$\Box$}}
\newcommand{\bfm}[1]{\mbox{\boldmath $#1$}}
\newcommand{\reals}{\mbox{\bfm{R}}}
\newcommand{\expect}{\mbox{\bf Exp}}
\newcommand{\he}{\hat{\e}}
\newcommand{\card}[1]{\mbox{$|#1|$}}
\newcommand{\rup}[1]{\mbox{$\lceil{ #1}\rceil$}}
\newcommand{\rdn}[1]{\mbox{$\lfloor{ #1}\rfloor$}}
\newcommand{\ov}[1]{\mbox{$\overline{ #1}$}}
\newcommand{\inv}[1]{\frac{1}{#1} }
\newcommand{\imax}{I_{\rm max}}

\newcommand{\whp}{w.h.p.}
\newcommand{\aas}{a.a.s.\ }

\date{\empty}

\title{The stripping process can be slow: part I}

\author{Pu Gao\footnote{Research supported by an NSERC Postdoctoral Fellowship and an NSERC Discovery Grant. This author is currently affiliated with Monash University.}
and Michael Molloy\footnote{Research supported by an NSERC Discovery Grant and Accelerator Supplement.} \\
Department of Computer Science, University of Toronto\\
10 King's College Road, Toronto, ON\\
jane.gao@monash.edu \hspace{5ex} molloy@cs.toronto.edu}

\maketitle

\begin{abstract}

Given an integer $k$, we consider the parallel $k$-stripping process applied to a hypergraph $H$:  removing all vertices with degree less than $k$ in each iteration until reaching the $k$-core of $H$. {Take $H$ as $\H_r(n,m)$: a random $r$-uniform hypergraph on $n$ vertices and $m$ hyperedges with the uniform distribution.} Fixing $k,r\ge 2$ with $(k,r)\neq (2,2)$, it has previously been proved that there is a constant $c_{r,k}$ such that for all $m=cn$ with constant $c\neq c_{r,k}$, with high probability, the parallel $k$-stripping process takes $O(\log n)$ iterations. In this paper we investigate the critical case when $c=c_{r,k}+o(1)$. We show that the number of iterations that the process takes can go up to some power of $n$, as long as $c$ approaches $c_{r,k}$ sufficiently fast. A second result we show involves the depth of a non-$k$-core vertex $v$: the minimum number of steps required to delete $v$ from $\H_r(n,m)$ where in each step one vertex with degree less than $k$ is removed. We will prove lower and upper bounds on the maximum depth over all non-$k$-core vertices.

\end{abstract}

\section{Introduction}

Given a nonnegative integer $k$ and a (hyper)graph $H$, the $k$-core of $H$, denoted by $\C_k(H)$, is the maximum subgraph of $H$ with minimum degree at least $k$. The $k$-core was first studied by Bollob\'as\cite{bbcore} and has since become a major focus in random graph theory. Its many applications include erasure codes\cite{lmss,lmss2}, colouring\cite{mr2}, hashing\cite{hmwc}, and graph orientability\cite{pw,csw,fkp,fern}. We define $\calh_{\dd}(n,m)$ to be a random $r$-uniform hypergraph on vertex set $[n]:=\{1,2,\ldots,n\}$ {and exactly $m$ hyperedges, with the uniform distribution}.  We focus on sparse random hypergraphs with bounded average degree; thus the typical range of focus is $m=\Theta(n)$. 
The threshold for the appearance of a non-empty $k$-core in $\calh_{\dd}(n,cn)$ was first determined by Pittel, Spencer and Wormald~\cite{psw} for $r=2$ and $k\ge 3$, whereas for general $(r,k)\neq (2,2)$, the threshold is determined\cite{mmcore,jhk} to be:
\begin{equation}\lab{krthreshold}
c_{\dd,k}=\inf_{\mu > 0}
 \frac{\mu}{r\left[e^{-\mu}\sum_{i = {k-1}}^{\infty} \mu^i/i!\right]^{\dd-1}}
 \enspace .
 \end{equation}


The $k$-core of a (hyper)graph $H$ can be obtained by repeatedly removing all vertices with degree less than $k$. In the parallel $k$-stripping process, all vertices with degree less than $k$ are removed at once in each iteration until the $k$-core is reached. The number of rounds this process takes is called the {\em $k$-stripping number}, denoted by $s_k(H)$. This number is an important parameter associated with a \fix{(hyper)graph} and its $k$-core. Jiang, Mitzenmacher and Thaler~\cite{jmt} discussed several applications of the parallel stripping process in computer science and the importance of analysing the $k$-stripping number. An upper bound $O(\log n)$ is given in~\cite{amxor} for $s_k(\H_r(n,cn))$ when $c>c_{r,k}+\eps$ and this bound is proved to be tight in~\cite{jmt}. Independently, \cite{ikkm} proved an upper bound of $\poly(\log n)$. On the other hand, it was proved in~\cite{jmt,g2} that $s_k(\H_r(n,cn))=O(\log\log n)$ if $c<c_{r,k}-\eps$ {(in fact, results in~\cite{amxor,jmt,g2} are presented for \fix{$\H_r(n,p)$, $p=c/n^{r-1}$}, but they easily translate to $\H_r(n,m)$ as well)}. However, as we will prove in this paper, the stripping process can take a long time as $c\to c_{r,k}$: the number of rounds needed can go up to some power of $n$, depending on the rate at which $c$ approaches $c_{r,k}$.

We give some intuitive explanation of this phenomenon. Note that $c_{r,k}$ is the emergence threshold of the $k$-core. When $c\to c_{r,k}$, the parallel $k$-stripping process undergoes a bottleneck. At a certain point, the number of vertices removed in each {iteration} (i.e.\ of degree less than $k$) becomes sublinear in $n$. If there is {no}  $k$-core, then the process continues for a long enough time to eventually pass through the bottleneck: the number of vertices removed in each iteration becomes and stays linear in $n$ again and eventually all vertices of the \fix{hypergraph} will be removed, producing an empty $k$-core.  If there is a $k$-core, then the process terminates after only $o(n)$ total vertices are removed during the bottleneck; what remains is the giant $k$-core. In both cases, the number of remaining vertices with degree less than $k$ mimics a random walk whose expected change with each vertex deletion is very close to zero during the bottleneck. Hence  a large number of vertices must be removed for it to either decrease to 0 or increase to linear in $n$. Since few vertices are removed in each iteration, it takes many iterations to remove this large number of vertices.

The analysis for the stripping number when $c\to c_{r,k}$ becomes subtle.  For $c$ inside the critical window \fix{$|c-c_{r,k}|=O(1/\sqrt{n})$}, \fix{it is not certain that $\H_r(n,cn)$ has a non-empty $k$-core; i.e.\ the probability that $\H_r(n,cn)$ contains a non-empty $k$-core is bounded away from 0 and 1 as $n\to\infty$~\cite{JL}}. This brings in certain complications in our analysis of the stripping number, especially for the upper bound. We will prove a lower bound for the stripping number when $c\le c_{r,k}+n^{-1/2+\eps}$ \fix{(note this range contains the critical window)},  whereas the upper bound for $c$ in {this range} will be studied in a subsequent paper. For $c$ above {$c_{r,k}+n^{-1/2+\eps}$}, we will prove both upper and lower bounds that are tight in the asymptotic order. It is interesting to note that several graph parameters/properties related to the $k$-core have a radical change when $c$ approaches to this threshold, including the robustness of the $k$-core~\cite{s}.  

Rather than removing all vertices with degree less than $k$ at once in each iteration, we may remove one vertex at a time. This produces a stripping sequence: a sequence of vertices removed from the initial hypergraph $H$. Vertices in a stripping sequence can appear in an arbitrary order, as long as each vertex has degree less than $k$ at the moment of its removal. {Note that} a stripping sequence does not {necessarily} need to terminate with the $k$-core: \fix{for a stripping sequence $\Psi$, a subsequence of $\Psi$ composed of the first arbitrary number of vertices in $\Psi$ is also a stripping sequence}. Given a non-$k$-core vertex $v$, we are interested in the length of a shortest stripping sequence ending with $v$: roughly speaking, this is the minimum number of vertices required to be deleted before deleting $v$. This number is called the {\em depth} of $v$. A formal definition is given in Section~\ref{sec:results}. The maximum depth of $\H_r(n,m)$ (over all non-$k$-core vertices) is bounded in~\cite{amxor} by $O(\log n)$ when $c$ differs from $c_{r,k}$ by some absolute constant. In this paper, our goal is to bound (both from above and below) this parameter when $c\to c_{r,k}$.

The aforementioned bounds on the depth and stripping number from \cite{amxor,ikkm} were motivated by applications to solution clustering in random XORSAT.  We will provide analogous applications for our present results in a subsequent paper ({a preliminary version containing partial results on their applications to clustering in random XORSAT is available in~\cite{gmarxiv}}).

\section{Main results} \lab{sec:results}


We first give a formal definition of the parallel $k$-stripping process and the $k$-stripping number.

\begin{definition} The {\em parallel $k$-stripping process}, applied to a hypergraph $H$, consists of iteratively removing {\em all} vertices of degree less than $k$ at once along with any hyperedges containing any of those vertices, until no vertices of degree less than $k$ remain; i.e.\ until we are left with the $k$-core of $H$.
We use $S_i$ to denote the vertices that are removed during iteration $i$. We use $\widehat{H}_i$ to denote the hypergraph remaining after $i-1$ iterations, i.e. after removing $S_1,...,S_{i-1}$.
\end{definition}

We will analyze the number of rounds that this process takes:

\begin{definition}  The {\em $k$-stripping number of $H$}, denoted $s_k(H)$, is the number of iterations in the  parallel $k$-stripping process applied to $H$.  We often drop the ``$k$'' and speak of the stripping number and $s(H)$.
\end{definition}

We use the following standard notation. Given a sequence of probability spaces $\Omega_n$, we say a sequence of events $A_n$ occurs {\em asymptotically almost surely} (a.a.s.) if the probability that $A_n$ occurs in $\Omega_n$ tends to $1$ as $n\to\infty$. With two sequences of real numbers $a_n$ and $b_n$, we use $a_n=O(b_n)$ to denote that there is an absolute constant $C$ such that $|a_n|\le C |b_n|$. We write $a_n=o(b_n)$ if  $\lim_{n\to\infty} a_n/b_n=0$. We write $a_n=\Omega(b_n)$ if $b_n=O(a_n)$ {and $a_n, b_n \ge 0$ eventually;} 
$a_n=\Theta(b_n)$ if $a_n=O(b_n)$ and $b_n=O(a_n)$ and $a_n, b_n\ge 0$ eventually. Thus, if $a_n=\Omega(b_n)$ or $a_n=\Theta(b_n)$ then $a_n,b_n$ must both be positive (for large $n$). All asymptotics in this paper refer to $n\to\infty$.

We first present our main result on the stripping number of $\H_r(n,cn)$. 

\begin{theorem}\lab{mt}
Consider constants  $\dd,k\geq2, (\dd,k)\neq (2,2)$ and $0<\d<1/2$. 
\begin{enumerate}
\item[(a)]  If $c=c_{r,k}+n^{-\d}$  then \aas $s(\calh_{\dd}(n,cn))={\Theta}(n^{\d/2}\log n)$.
\item[(b)]   If $|c-c_{r,k}|\le n^{-\d}$  then \aas $s(\calh_{\dd}(n,cn))= \Omega(n^{\d/2})$.
\end{enumerate}
\end{theorem}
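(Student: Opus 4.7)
The strategy is to reduce the parallel stripping process to a scalar deterministic recursion and then exploit the tangency of that recursion at $c=c_{r,k}$.

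Using the configuration-model description of $\H_r(n,m)$ with Poisson-approximated degrees, I would first show that, up to a small error, the state of $\hG_i$ after $i$ rounds is captured by a single scalar $\mu_i$ (essentially the Poisson parameter associated with the remaining degree distribution). This gives an approximate recursion $\mu_{i+1}=F_c(\mu_i)$, whose fixed points are precisely the solutions of the $k$-core equation \eqref{krthreshold}. By the very definition of $c_{r,k}$, at $c=c_{r,k}$ the graph of $F_c$ is tangent to the identity at some $\mu^*>0$; for $c$ slightly above, $F_c$ crosses the identity at two nearby fixed points (the attracting one being the $k$-core); for $c$ slightly below, $F_c$ has no fixed point near $\mu^*$.

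A Taylor expansion in $(\mu-\mu^*,c-c_{r,k})$ yields, for positive constants $\a,\b$ depending on $r,k$,
\[
 F_c(\mu)-\mu \;=\; -\a(\mu-\mu^*)^2 + \b(c-c_{r,k}) + O\bigl(|\mu-\mu^*|^3 + (c-c_{r,k})^2\bigr).
\]
Writing $y_i:=\mu_i-\mu^*$ and $\eta:=c-c_{r,k}$, the recursion becomes $y_{i+1}\approx y_i-\a y_i^2+\b\eta$. For $\eta=n^{-\d}>0$ this has an attracting fixed point $y^\star=\sqrt{\b\eta/\a}=\Theta(n^{-\d/2})$. Comparing to the ODE $dy/di=-\a y^2+\b\eta$, the trajectory from $y_0=\Theta(1)$ splits into two phases: a \emph{polynomial-descent} phase where $y_i\sim 1/(\a i)$, needing $\Theta(1/y^\star)=\Theta(n^{\d/2})$ iterations to reach $y_i=\Theta(y^\star)$; then a \emph{linear-contraction} phase, where the linearization near the fixed point has factor $1-2\a y^\star=1-\Theta(n^{-\d/2})$, requiring $\Theta(n^{\d/2}\log n)$ further iterations to bring $y_i-y^\star$ below the $n^{-\Omega(1)}$ precision at which the discrete process actually lands on the $k$-core. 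The second phase dominates, giving $\Theta(n^{\d/2}\log n)$ in~(a). For~(b), when $|\eta|\le n^{-\d}$ one has $|y^\star|=O(n^{-\d/2})$ (or no local fixed point at all when $\eta<0$), and the per-step decrement inside an $O(n^{-\d/2})$-window around $y=0$ is at most $\a y_i^2+\b|\eta|=O(n^{-\d})$, so traversing the window already requires $\Omega(n^{\d/2})$ iterations.

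The main obstacle is controlling the stochastic deviation from the deterministic recursion over a polynomially long time horizon. Per-iteration martingale fluctuations of order $n^{-1/2}$ accumulate to $O(n^{\d/4-1/2}\sqrt{\log n})$ across $n^{\d/2}\log n$ steps, which for $\d<1/2$ is well below the signal scale $y^\star=\Theta(n^{-\d/2})$, so the headroom is sufficient in principle. The effort lies in (i) preserving the validity of the configuration-model description throughout, including late iterations when the surviving hypergraph has only $O(n^{1-\d/2})$ vertices and relative noise grows, which calls for a phased concentration argument; and (ii) for the lower bound, ruling out atypical one-iteration large deviations that would let the process jump past the bottleneck. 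A further subtlety is that inside the critical window $|c-c_{r,k}|=O(n^{-1/2})$ the existence of a non-empty $k$-core is itself random, but the polynomial-descent lower bound for~(b) does not depend on which side one is on.
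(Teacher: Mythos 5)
Your saddle-node picture ($y_{i+1}\approx y_i-\alpha y_i^2+\beta\eta$, descent phase of length $\Theta(n^{\delta/2})$, contraction phase of length $\Theta(n^{\delta/2}\log n)$, and the $\Theta(n^{-\delta/2})$-window argument for (b)) is exactly the heuristic the paper itself gives, and it predicts the right exponents. But as a proof there is a genuine gap, and it sits precisely where the paper does most of its work: the endgame of the upper bound in (a). The stripping number is determined by when the discrete process actually halts, i.e.\ when the set of light vertices becomes empty. In your formulation that corresponds to resolving $y_i-y^\star$ down to scale $n^{\delta/2-1}$ (equivalently $|S_i|$ down to $0$), which is far below the per-step noise floor of the density variable you track (your own estimate is $n^{-1/2}$ per step, and even the intrinsic fluctuation $\sqrt{|S_i|}/n$ dominates once $|S_i|$ is small). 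So "bring $y_i-y^\star$ below the $n^{-\Omega(1)}$ precision at which the discrete process actually lands on the $k$-core" is not a step you can carry out: no concentration of the scalar recursion certifies that the process stops within $O(n^{\delta/2}\log n)$ further rounds once $|S_i|$ is, say, polylogarithmic. The paper's fix is a change of variable: it runs a slowed-down, one-edge-at-a-time version of the process on the allocation–partition model and proves a drift bound $\mathbb{E}(L_{t+1}-L_t\mid \mathcal{F}_t)\le -Kn^{-\delta/2}$ for the \emph{integer} quantity $L_t$ (total degree of light vertices) that holds uniformly no matter how small $L_t$ is (Lemma~\ref{llt1}); combined with $t(i+1)-t(i)\le L_{t(i)}$ this gives $\mathbb{E}L_{t(i+1)}\le L_{t(i)}(1-\Theta(n^{-\delta/2}))$ and a pure first-moment argument finishes the termination bound. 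Your plan has no mechanism playing this role, and a "phased concentration argument" on the density cannot supply it.

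A second, related shortfall concerns the matching lower bound $\Omega(n^{\delta/2}\log n)$ in (a): you need \emph{two-sided} per-round control $|S_{i+1}|\ge(1-Y_1 n^{-\delta/2})|S_i|$ over a long window (the paper's Lemma~\ref{lsi}(a,b), which rests on matching upper and lower bounds for the drift of $L_t$ and a refined analysis of the heavy-vertex average degree $\zeta_t$), and your noise accounting — comparing accumulated $n^{-1/2}$-per-step fluctuations to $y^\star$ — does not give relative control of $|S_i|$ once $|S_i|$ is well below $n^{1/2}$; at minimum you would have to restrict the window to $|S_i|\ge n^{1/2+\epsilon}$ and prove a per-round lower bound there, which your sketch does not do. By contrast, your part (b) is essentially the paper's Phase-2 argument in disguise (excess of $\Theta(n^{1-\delta/2})$ vertices above the eventual core traversed at $O(n^{1-\delta})$ vertices per round), but note that both inputs are nontrivial: the $\Theta(n^{1-\delta/2})$ second-order term requires the sharp core-size expansion (the paper's Lemma~\ref{l:diff}/Corollary~\ref{ccoresize}), the uniform bound $|S_i|=O(n^{1-\delta})$ through the window is proved in the paper by a supermartingale anchored at the stopping time (Lemma~\ref{llt2}), and the subcritical/critical cases, where no core need exist, are handled there by coupling with a supercritical instance — none of which your "does not depend on which side one is on" remark replaces. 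Also, a small slip: for $c=c_{r,k}+n^{-\delta}$ the surviving hypergraph always has $\Theta(n)$ vertices (the core is linear); it is the non-core excess that is $O(n^{1-\delta/2})$.
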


So if we roughly define the following three ranges of $c$, the behaviour of the stripping number is as follows:
\begin{itemize} 
\item $c=c_{r,k}+n^{-\d}$, $0<\d<1/2$.  Then the stripping number is ${\Theta}(n^{\d/2}\log n)$.
\item $c=c_{r,k}-n^{-\d}$, $0<\d<1/2$.  Then the stripping number is at least $\Omega(n^{\d/2})$.
\item $c$ is between $c_{r,k}-n^{-1/2+o(1)}$ and $c_{r,k}+n^{-1/2+o(1)}$. Then the stripping number is $\Omega(n^{1/4+o(1)})$.

\end{itemize}

In the first range, the stripping number is specified within a constant factor. In the other two ranges, we do not say anything about upper bounds on the stripping number; those upper bounds will be studied in a subsequent paper.

\noindent {\bf Remark}. With the same proof, $n^{-\d}$ in Theorem~\ref{mt} (and in all relevant lemmas in Sections~\ref{sec:coresize} and~\ref{smt1}) can be replaced by $\xi_n$ (and correspondingly $n^{\d/2}$ is replaced by $\sqrt{1/\xi_n}$) for any $\xi_n=o(1)$ such that $\xi_n\ge n^{-1/2+\e}$ for some constant $\e>0$ (e.g.\ $\xi_n=1/\log n$ or $\xi_n=n^{-1/3}\log n$). We use the less general statement for a cleaner presentation.

We are also interested in the number of vertices that must be deleted in order to remove a particular vertex $v$. I.e.\ what is the smallest number of vertices that must be deleted in order to demonstrate that $v$ is not in the $k$-core?
\begin{definition} A {\em $k$-stripping sequence} is a sequence of vertices that can be deleted from a hypergraph, one-at-a-time, {along with their incident hyperedges} such that at the time of deletion each vertex has degree less than $k$.
For any vertex $v$ not in the $k$-core, the {\em depth} of $v$ is the length of a shortest $k$-stripping sequence ending with $v$.
\end{definition}

While every non-$k$-core vertex has depth $O(\log n)$ for any constant $c>c_{k,r}$, as proved in~\cite{amxor},
one of our main contributions in this paper is to prove that {when $c$ approaches} the $k$-core emergence threshold, the maximum depth can rise to $n^{\Theta(1)}$.

\begin{theorem}\lab{mt2}
{Let $\dd,k\geq2, (\dd,k)\neq (2,2)$ be fixed. There are constants $Z=Z(k,r)$ and $\kappa=\kappa(k,r)$ such that: for any fixed $0<\d<1/2$, if $c=c_{\dd,k}+n^{-\d}$ then \aas
the maximum depth of all non-$k$-core vertices in $\calh_{\dd}(n,cn)$ is between $Zn^{\d/2}$ and $n^{\kappa \d}$.}
\end{theorem}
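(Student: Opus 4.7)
The lower bound is an immediate consequence of Theorem~\ref{mt}(a). The key observation is that the depth of any vertex $v$ is bounded below by $\rho(v)$, the round in which the parallel $k$-stripping process removes $v$. Indeed, for any $k$-stripping sequence $v_1,\dots,v_d=v$, a straightforward induction on $i$ shows $v_i\in S_1\cup\cdots\cup S_i$: by the inductive hypothesis $\{v_1,\dots,v_{i-1}\}\subseteq S_1\cup\cdots\cup S_{i-1}$, so $\widehat{H}_i$ is a subhypergraph of $H\setminus\{v_1,\dots,v_{i-1}\}$, in which $v_i$ has degree less than $k$; thus if $v_i\in\widehat{H}_i$ then $v_i\in S_i$, and otherwise $v_i\in S_1\cup\cdots\cup S_{i-1}$. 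Applying this to a vertex removed in the last round of the parallel process, combined with Theorem~\ref{mt}(a), gives maximum depth at least $Zn^{\d/2}$ \aas for some $Z=Z(k,r)>0$.

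For the upper bound, the plan is to associate to each non-core vertex $v$ an \emph{ancestor set} $A(v)$ whose elements can be ordered into a short stripping sequence ending at $v$. Fix a witness map: for each non-core $u$ with $\rho(u)\ge 2$ and each hyperedge $e\ni u$ killed in some round strictly before $\rho(u)$, select a single $w(u,e)\in e$ with $\rho(w(u,e))<\rho(u)$ (such a vertex exists since $e$ is killed the first time one of its vertices is stripped). For each such $u$ fix a set $E(u)$ of $\deg_H(u)-k+1$ hyperedges incident to $u$ that are killed before round $\rho(u)$ (this many exist because the degree of $u$ drops below $k$ by round $\rho(u)$), and define $A(v)$ as the smallest set containing $v$ closed under $u\mapsto\{w(u,e):e\in E(u)\}$. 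Listing $A(v)$ in increasing order of $\rho$ produces a valid $k$-stripping sequence terminating at $v$, so the depth of $v$ is at most $|A(v)|$.

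The remaining task is to establish $|A(v)|\le n^{\kappa\d}$ \aas uniformly over non-core $v$. The plan is to analyze the exploration defining $A(v)$ in the configuration model representation of $\H_r(n,cn)$. Locally the hypergraph resembles a Galton--Watson branching process, and for $c=c_{r,k}+n^{-\d}$ the process governing the non-core region has offspring-mean within $\Theta(n^{-\d/2})$ of criticality (in the natural coordinates coming from \eqref{krthreshold}). By standard near-critical branching-process estimates, the total-progeny distribution has expectation of order $n^{\d}$ with sub-exponential tails sharp enough that a union bound over the $O(n)$ non-core vertices yields the claimed bound with $\kappa=\kappa(k,r)$.

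The main obstacle I anticipate is carrying out this branching-process comparison rigorously and uniformly. Since $\H_r(n,cn)$ is only genuinely tree-like on balls of size $n^{o(1)}$, ancestor sets exceeding this scale cannot be controlled by the naive GW approximation; instead one needs a configuration-model coupling that reveals hyperedges incrementally as the exploration of $A(v)$ proceeds, tracking the evolving residual degree sequence to control the branching rate throughout. A secondary subtlety is the choice of witness map $w(u,e)$: a careless choice can lead to repeated re-visits that inflate $|A(v)|$ beyond the target bound, and selecting $w(u,e)$ to have minimum $\rho$ together with an amortization argument should prevent this. I expect the tail bound on $|A(v)|$ combined with the union bound, rather than the combinatorial construction, to be the technically heaviest step.
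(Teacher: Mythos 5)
Your lower bound is essentially the paper's Lemma~\ref{ls2}: the induction showing $v_i\in S_1\cup\cdots\cup S_i$ is correct, and combined with Theorem~\ref{mt}(a) it gives the $\Omega(n^{\d/2})$ bound. Your ancestor-set construction is also sound: $A(v)$ is a valid (indeed leaner) variant of the paper's $R(v)$ (Definition~\ref{drv}, Observation~\ref{ossr}), and the reduction ``depth of $v\le |A(v)|$'' is correct. But the actual content of the upper bound in Theorem~\ref{mt2} is the a.a.s.\ estimate $|A(v)|\le n^{\kappa\d}$ uniformly over $v$, and for this you give only a heuristic that fails on two concrete points.

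First, the conditioning problem you mention in passing is not something a generic ``configuration-model coupling revealing hyperedges incrementally'' fixes: $A(v)$ is defined through the full stripping history (the round function $\rho$, which edges die in which round, the witness map), so by the time you know $\rho(v)$ you have conditioned on substantial information about every deleted edge, and your exploration of $A(v)$ runs backwards in stripping time while any natural edge-revealing process runs forwards. The paper's resolution is a specific two-stage scheme (EXPOSURE/EDGE-SELECTION, Section~\ref{sec:exposure}): expose only the sets $S_i$, the degrees $d^{\pm}$, and the counts $M_i^{a,b}$; prove that conditional on this the deleted $r$-tuples of each round remain uniform subject to (P1--P4) (Lemma~\ref{lem:edgeSelect}); and only then reveal edges level by level backwards while exploring $R(v)$. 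Nothing in your sketch plays this role. Second, the quantitative picture ``offspring mean $1-\Theta(n^{-\d/2})$, total progeny of expectation $n^{\d}$ with subexponential tails'' does not describe the backward exploration. Passing from level $j+1$ to level $j$ multiplies the ancestor mass by roughly $1+\Theta(\sqrt{|S_j|/n})$ (this is the recursion of Lemma~\ref{l.rj} and the factors $a_j$ in Section~\ref{sec:rec2}); at early levels, where $|S_j|=\Theta(n)$, this is supercritical by a constant factor, so no single near-critical Galton--Watson comparison applies. The final bound $n^{O(\d)}$ comes from $\prod_j\bigl(1+D\sqrt{|S_j|/n}\bigr)\le\exp\bigl(D\sum_j\sqrt{|S_j|/n}\bigr)=n^{O(\d)}$, which requires the precise decay rates $|S_{i+1}|\le(1-Y_2\sqrt{|S_i|/n})|S_i|$ and $|S_{i+1}|\le(1-Y_2 n^{-\d/2})|S_i|$ of Lemma~\ref{lsi}(a,b); proving those is most of the paper (the refined drift and concentration analysis of $L_t,\zeta_t,\bar p_t,\br_t$ in Sections~\ref{slt} and~\ref{slsi}), and the per-vertex union bound additionally needs McDiarmid-type concentration of the level-to-level growth, not just expectation bounds. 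So what you have is the combinatorial shell of the argument; the probabilistic core of Theorem~\ref{mt2} is missing.
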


\no {\bf Remark}.

\no (a) Again {here and in Theorem~\ref{mt3} in Section~\ref{smd} below}, with the same proof, $n^{-\d}$ above (and in all relevant lemmas in Sections~\ref{smd} and~\ref{slsi}) can be replaced by $\xi_n$ for any $\xi_n={o(1/\log^{7}n)}$ and $\xi_n\ge n^{-1/2+\e}$ for some constant $\e>0$. We did not 
try to optimize the power of $\log n$.

\no (b) For the upper bound in this theorem, we will actually prove a stronger statement, because the stronger statement will be used in another paper on solution clustering in random XOR-SAT. See Theorem~\ref{mt3} in Section~\ref{smd}.\smallskip

\no (c) Both Theorems~\ref{mt} and~\ref{mt2} translate to $\H_r(n,p)$ with $p=r!c/n^{r-1}$. {Translations of a.a.s.\ properties from $\H_r(n,m)$ to $\H_r(n,p)$ are usually standard  by conditioning the number of hyperedges $X$ in $\H_r(n,p)$ on its typical values, as long the properties under consideration hold for all $m$ such that $|m-\ex X|=O(\sqrt{\var X})$. Our situation is a little subtle as our bounds on the stripping number depend on how close $c$ is to $c_{r,k}$, and some of our results cover the case that $|(c-c_{r,k})n|$ is smaller than the standard deviation of $X$, e.g.\ if $|c-c_{r,k}|=o(n^{-1/2})$. The reason that the translation holds in this paper is that for $c$ in the range in Theorem~\ref{mt}(b), the lower bound is uniform for all $c$ and the size of the range is of order $n^{1-\delta}$, which is much greater than the deviation of $X$, for any $\d<1/2$. }

The $k$-stripping number clearly bounds the maximum depth from below, and so the lower bound of $\Omega(n^{\d/2})$ from Theorem \ref{mt} implies the lower bound in Theorem~\ref{mt2}. The difficult part of the proof of Theorem~\ref{mt2} is to show the upper bound.

Our requirement that {$c\ge c_{r,k}+n^{-1/2+\eps}$ in Theorem~\ref{mt2}} comes from the similar restriction on the actual appearance of the $k$-core, as shown by Kim\cite{jhk} (see also~\cite{dmcore}; {a more precise statement of Theorem~\ref{tkim}(b) is given in Lemma~\ref{lcoresize2} in Section~\ref{sec:coresize}}):

\begin{theorem}\lab{tkim}\cite{jhk} For  $\dd,k\geq2, (\dd,k)\neq (2,2)$ and for any constant $\e>0$.
\begin{enumerate}
\item[(a)] For $c\le c_{\dd,k}-n^{-1/2+\e}$, \aas the $k$-core  of $\calh_{\dd}(n,cn)$ is empty.
\item[(b)] For $c\ge c_{\dd,k}+n^{-1/2+\e}$, \aas $\calh_{\dd}(n,cn)$ has a $k$-core with $\Theta(n)$ vertices.
\end{enumerate}
\end{theorem}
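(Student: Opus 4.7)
The plan is to run the sequential $k$-stripping process, track its state via the differential equations method, and argue that the trajectory stays decisively on the correct side of the termination condition whenever $c$ differs from $c_{r,k}$ by at least $n^{-1/2+\eps}$.

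First I would replace $\H_r(n,cn)$ by a more tractable model: the Poisson cloning / configuration model, in which each vertex independently produces $\Po(rc)$ half-edges that are then matched uniformly into $r$-tuples. This model is contiguous with $\H_r(n,cn)$ after conditioning on the number of hyperedges, so \aas statements about the $k$-core carry over. Then I would run the slow stripping process one vertex at a time and track the vector $(V_j(t))_{j \ge 0}$ counting vertices of degree $j$ after $t$ removals. A key property of the configuration model is that after each deletion the residual hypergraph is again a uniform configuration on the remaining degree sequence, and moreover the family of truncated Poisson distributions $\{\Po(\mu) \mid \deg \ge k\}$ is essentially invariant in law under the dynamics. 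Hence a single scalar $\mu(t)$ serves as a Markovian summary of the state.

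Applying Wormald's differential equations method to $\mu(t)$, outside the critical region the rescaled trajectory $\mu(\tau n)$ tracks a deterministic ODE whose termination condition is exactly $\mu = rc\,[e^{-\mu}\sum_{i\ge k-1}\mu^i/i!]^{r-1}$. Unpacking the formula defining $c_{r,k}$ in \eqn{krthreshold}, one sees: if $c > c_{r,k}$ the two curves meet at some $\mu^\ast > 0$, the process halts, and a linear $k$-core remains; if $c < c_{r,k}$ they meet only at $\mu = 0$ and the process empties the hypergraph. This is the classical Pittel--Spencer--Wormald / Molloy picture in the linear regime, and it already establishes the theorem for $c$ at constant distance from $c_{r,k}$.

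The main obstacle, specific to Theorem~\ref{tkim}, is the critical window. Near $c = c_{r,k}$ the two curves above become almost tangent, so the margin by which the ODE trajectory avoids termination scales linearly in $c - c_{r,k}$. A naive Wormald-type concentration yields fluctuations of order $\sqrt{n\log n}$, which after the appropriate rescaling is of size $n^{-1/2+o(1)}$---precisely the width of the window. A refined martingale / Brownian-motion coupling argument (of the kind used in Kim's Poisson-cloning proof) is therefore required to show that the rescaled fluctuations really do remain of order $n^{-1/2+o(1)}$ uniformly over time, not just at each fixed instant. Once this tight concentration is available, for $c \ge c_{r,k} + n^{-1/2+\eps}$ the positive margin dominates the fluctuation, the trajectory stays bounded away from termination, and a $\Theta(n)$-vertex $k$-core survives, yielding part~(b); symmetrically, for $c \le c_{r,k} - n^{-1/2+\eps}$ the trajectory must cross the termination curve early, the stripping continues all the way, and the residual hypergraph is \aas empty, proving part~(a).
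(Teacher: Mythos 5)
You should first be aware that the paper does not prove Theorem~\ref{tkim} at all: it is imported verbatim from Kim~\cite{jhk} (see also~\cite{dmcore}), and the only in-house material is the non-rigorous branching-process heuristic in Section~\ref{sec:coresize} together with the sharper quantitative form recorded as Lemma~\ref{lcoresize2}, again quoted from~\cite{jhk}. So the relevant comparison is with Kim's Poisson-cloning argument, and your outline does follow that known route (cloning/configuration model, one-dimensional reduction, differential-equations tracking, refined concentration in the window). As a proof, however, it has a genuine gap exactly where the theorem has its content. The classical ODE picture you describe only gives the result when $|c-c_{r,k}|$ is a constant; the whole point of Theorem~\ref{tkim} is the margin $n^{-1/2+\e}$, and the step that handles it — showing the rescaled trajectory has fluctuations of order $n^{-1/2+o(1)}$ \emph{uniformly over the bottleneck}, beating the naive $\sqrt{n\log n}$-type bound that is of the same order as the margin — is precisely the technically heavy part of Kim's paper. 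You state that such a ``refined martingale / Brownian-motion coupling argument \dots is therefore required'' and attribute it to Kim, but you do not supply it; since everything else in your sketch is standard, deferring this step means the proposal does not actually prove the statement, it re-cites it.

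Two further points would need repair even granting that concentration step. For part~(a), the drift/ODE argument only shows that the stripping process passes the bottleneck and reduces the hypergraph to $o(n)$ vertices; concluding that the $k$-core is \emph{empty} additionally requires ruling out a nonempty sublinear $k$-core (a separate first-moment count showing a.a.s.\ no subhypergraph on at most $\e n$ vertices has minimum degree $k$ when $(\dd,k)\neq(2,2)$), which your sketch omits. Second, the claim that ``a single scalar $\mu(t)$ serves as a Markovian summary of the state'' is special to Kim's cloning model (his cut-off line lemma); in the configuration/AP model the correct conditional description is the one in Observation~\ref{oft}, i.e.\ the heavy part is a truncated multinomial determined by the pair $(N_t,D_t)$ and the light part must be tracked separately (the triple $\calt_t=(L_t,N_t,D_t)$ as in Section~\ref{smt1}), so the one-parameter reduction needs to be justified rather than asserted. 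Finally, the transfer back to $\H_r(n,cn)$ should be stated with care: for the AP model one uses the bounded probability of simplicity (Corollary~\ref{ccon0}), while for Poisson cloning the edge count fluctuates by $\Theta(\sqrt n)$, which is harmless here only because $n\cdot n^{-1/2+\e}\gg\sqrt n$ keeps all relevant $m$ on the same side of the threshold — a remark worth making explicit, as the paper itself does in a related context.
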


The most challenging difference between the setting of this paper and that of~\cite{amxor} is:
when $c=c_{r,k}+\Theta(1)$, the number of vertices with degree less than $k$ remaining after each round of the parallel stripping process drops geometrically (see the comment following Lemma~\ref{llt1}).
That easily implies that \aas the stripping number is $O(\log n)$.  Furthermore, the property which implies that this number drops geometrically is also key in the analysis of~\cite{amxor}  proving that the depth is $O(\log n)$.  For $c=c_{r,k}+n^{-\d}$, the number of such vertices drops much more slowly, leading to an increase in the stripping number and hence in the maximum depth, which is easily seen to be bounded from below by the stripping number.  This requires us to use a very different, and much more intricate, analysis.

One novelty in our analysis of the depth is as follows: The most straightforward approach to analyze the stripping process is to repeatedly expose the vertices and edges removed in each iteration.  Instead, we only expose the vertex sets $S_1, S_2,...$.  The advantage is that, when considering the depth of a vertex $v$ in a particular level $S_i$, we can treat the edges removed in previous iterations as random. Thus even though we have exposed the fact that $v$ is in $S_i$, there is still some randomness that we can make use of in the specific sequence of vertices whose deletions led to $v$ being deleted in iteration $i$.  Of course, exposing the vertex sets that are deleted in each step exposes something about the edges that are deleted, so we have to be careful about the conditional distribution of those edges. The details of how we do this are in Section~\ref{sec:exposure}.

We excluded the case $(r,k)=(2,2)$ from our main results as this case has already been extensively studied.
 The 2-core of the random graph $G(n,cn)$ is well-studied\cite{tlcomp,tlcomp2,rw,DKLP,dklp2,frmix}. For $0<\d<\inv{3}$ and $p=1+n^{-\d}$ a.a.s.\ the largest component consists of a large 2-core $C_2$, with a Poisson Galton-Watson tree of branching parameter $1-n^{-\d}$ rooted at each vertex.  In addition, there are $\Theta(n)$ smaller components distributed essentially like Poisson Galton-Watson trees of branching parameter $1-n^{-\d}$, except that some of them contain a single cycle. We can define the  stripping number to be the stripping number of the largest component, or to be the maximum of the stripping number of all components. Either way, it is of the order of the height of the tallest of those trees which is $n^{\d}\poly(\log n)$.  Similarly, the maximum depth is of the order of the size of the largest tree, which is  $n^{2\d}\poly(\log n)$.


We will prove Theorem~\ref{mt}(b), and the upper bound for Theorem~\ref{mt}(a) in Section~\ref{smt1}, along with a weaker version of the  lower bound for Theorem~\ref{mt}(a). The lower bound for Theorem~\ref{mt}(a) is proved in Section~\ref{slsi}.  We prove Theorem~\ref{mt2} in Section~\ref{smd}. In Section~\ref{sec:coresize} we discuss the size of the $k$-core as a preparation for the analysis in Sections~\ref{smt1}~--~\ref{slsi}. A key lemma used in the proof of Theorems~\ref{mt}(a) and~\ref{mt2} will be presented in Section~\ref{slsi}.

\section{Size of the $k$-core}
\lab{sec:coresize}

We give a more precise version of Theorem~\ref{tkim}(b), where the size of the $k$-core is specified, when $c\ge c_{r,k}+n^{-1/2+\e}$. We start by defining:
\begin{eqnarray}
f_t(\mu)&=&e^{-\mu}\sum_{i\geq t}\frac{\mu^i}{i!} \label{e.fk}\\
h(\mu)=h_{\dd,k}(\mu)&=&\frac{\mu}{f_{k{-1}}(\mu)^{\dd-1}}. \label{e.hrk}
\end{eqnarray}

Note that $f_t(\mu)$ is the probability that a Poisson variable with mean $\mu$, \fix{denoted by $\Po(\mu)$,} is at least $t$.
Thus {by~\eqn{krthreshold},} 
\[c_{\dd,k}=\inf_{\mu>0} \frac{h(\mu)}{{r}}.\] 
Now for any $\dd,k\geq2, (\dd,k)\neq (2,2)$, we define
$\mu_{\dd,k}$ to be the value of $\mu$ that minimizes $h(\mu)$; i.e.
\begin{equation}
\mu_{\dd,k} \mbox{ is the unique solution to }
h(\mu_{\dd,k}) = {r}c_{\dd,k}  \lab{murk}
\end{equation}
Define
\begin{eqnarray}
\a=\a_{\dd,k}=f_k(\mu_{\dd,k}),\quad \b=\b_{\dd,k}=\frac{1}{\dd}\mu_{\dd,k} f_{k-1}(\mu_{\dd,k}). \lab{alpha-beta}
\end{eqnarray}

 For ease of notation, we drop most of the $\dd,k$ subscripts.
For any $c\ge c_{r,k}$, we define $\mu(c)$ to be the larger solution to
\[c=\frac{h(\mu)}{{r}}.\]
{Then, $\mu_{\dd,k}=\mu(c_{\dd,k})$.}
Define
\begin{eqnarray}
\a(c)&=&f_k(\mu(c)),\quad \b(c)=\frac{1}{r}\mu(c) f_{k-1}(\mu(c)).\lab{alpha2}
\end{eqnarray}

 Theorem 1.7 of \cite{jhk} yields the size of the $k$-core as follows {(Theorem 1.7 of \cite{jhk} is for $\H_r(n,p)$ but it easily translates to $\H_r(n,m)$ by a standard coupling argument for $\H_r(n,p)$ and $\H_r(n,m)$, and by the fact that having a non-empty $k$-core is an increasing graph property)}:
\begin{lemma}\lab{lcoresize2}
Fix $\dd,k\geq2, (\dd,k)\neq (2,2)$ and $\e>0$. If $c\geq c_{\dd,k}+ n^{-1/2+\e}$ then \aas the $k$-core of $\calh_{\dd}(n,cn)$ has
\begin{itemize}
\item $\a(c) n + O(n^{3/4})$ vertices and
\item $\b(c) n +O(n^{3/4})$ edges.
\end{itemize}
\end{lemma}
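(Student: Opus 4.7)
The plan is to invoke Theorem~1.7 of~\cite{jhk}, which establishes the analogous statement for the binomial random hypergraph $\H_r(n,p)$, and transfer it to $\H_r(n,cn)$ by the standard coupling. Given $c \ge c_{r,k} + n^{-1/2+\eps}$ and $m = cn$, I would set $p = m/\binom{n}{r}$ so that $\H_r(n,p)$ has expected number of hyperedges equal to $m$. Kim's theorem then yields that a.a.s.\ the $k$-core of $\H_r(n,p)$ contains $\alpha(c)n + O(n^{3/4})$ vertices and $\beta(c)n + O(n^{3/4})$ hyperedges.

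To transfer this estimate to $\H_r(n,m)$, I would use the identity that $\H_r(n,m)$ is distributed as $\H_r(n,p)$ conditioned on having exactly $m$ hyperedges. A Chernoff bound on the number $M$ of hyperedges gives $|M - m| \le \sqrt{n}\log n$ w.h.p. The standard way to avoid the $\Theta(\sqrt{n})$ factor one loses by conditioning on the edge count is to sandwich $\H_r(n,m)$ between $\H_r(n, m_-)$ and $\H_r(n, m_+)$, where $m_\pm = m \pm \lceil\sqrt{n}\log n\rceil$, using the natural monotone coupling on a uniformly random ordering of the $\binom{n}{r}$ possible hyperedges. Monotonicity of the core under hyperedge addition (if $H \subseteq H'$ then $\C_k(H) \subseteq \C_k(H')$, since $\C_k(H)$ retains minimum degree $\ge k$ in $H'$) gives $\C_k(\H_r(n, m_-)) \subseteq \C_k(\H_r(n, m)) \subseteq \C_k(\H_r(n, m_+))$, while applying Kim's theorem to $\H_r(n, p_\pm)$ for the corresponding densities $c_\pm = m_\pm/n = c \pm O(n^{-1/2}\log n)$ pins down the sizes of $\C_k(\H_r(n, m_\pm))$ up to $O(n^{3/4})$ (the quantitative failure probability in Kim's result is polynomially small, which easily absorbs the loss from conditioning on the edge count).

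Finally, the smoothness of $\alpha(\cdot)$ and $\beta(\cdot)$ on the relevant range, inherited from the real-analyticity of $h$ together with the fact that $\mu(c)$ is implicitly defined by $h(\mu)=rc$ with $h'(\mu)>0$ on the interval $(\mu_{r,k},\infty)$, gives $\alpha(c_\pm)=\alpha(c)+O(n^{-1/2}\log n)$ and the analogous bound for $\beta$. Hence the sandwich collapses to $\alpha(c)n+O(n^{3/4})$ vertices and $\beta(c)n+O(n^{3/4})$ hyperedges in $\C_k(\H_r(n,m))$, as required. The only real obstacle is the standard subtlety of matching up the conditional and unconditional a.a.s.\ statements under a fluctuating edge count, which is exactly what the monotone sandwich plus smoothness argument above is designed to handle.
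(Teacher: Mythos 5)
Your proposal is essentially the paper's own argument: the paper simply cites Kim's Theorem~1.7 for $\H_r(n,p)$ and remarks that it translates to $\H_r(n,cn)$ by a standard coupling of $\H_r(n,p)$ with $\H_r(n,m)$ together with monotonicity of the $k$-core under hyperedge addition, which is exactly the sandwich-plus-smoothness scheme you describe (your conditioning/sandwich bookkeeping is a slightly redundant but workable version of the standard one). One quantitative caution: your claim $\a(c_\pm)=\a(c)+O(n^{-1/2}\log n)$ via the implicit function theorem is too strong near the threshold, since $h'(\mu(c))\to 0$ as $c\downarrow c_{\dd,k}$ and hence $\mu'(c)$ blows up like $(c-c_{\dd,k})^{-1/2}$; however, for $c-c_{\dd,k}\ge n^{-1/2+\e}$ the true increment is $O\bigl(n^{-1/2}\log n\cdot(c-c_{\dd,k})^{-1/2}\bigr)=O(n^{-1/4-\e/2}\log n)$, whose contribution after multiplying by $n$ is still $o(n^{3/4})$, so the stated conclusion is unaffected.
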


\fix{While the proof of~\cite[Theorem 1.7]{jhk} (i.e.\ Lemma~\ref{lcoresize2}) is very technical, we give a brief heuristic explanation of why $c_{r,k}$ is the emergence threshold for a non-empty $k$-core and why the $k$-core of $\H_r(n,cn)$ is expected to contain around $\a(c)n$ vertices, when $c>c_{r,k}$. For vertex $v\in \H_r(n,cn)$, let $\la_t$ denote the probability that $v$ survives after $t$ iterations of the parallel stripping process. Intuitively, $\la:=\lim_{t\to\infty} \la_t$ is the probability that $v$ is in the $k$-core of $\H_r(n,cn)$ and thus the $k$-core is expected to contain $\la n$ vertices. It is easy to prove that the neighbourhood of $v$ locally converges in distribution to a hyper-tree  generated by the Poisson branching process with parameter $cr$ starting at $v$. Then, $v$ survives after $t$ iterations of the parallel stripping process if and only if $v$ is incident with at least $k$ hyperedges surviving from $t-1$ iterations of the stripping process, i.e.\ for each of these surviving hyperedges $x$, all the other $r-1$ vertices in $x$ are incident with at least $k-1$ other hyperedges that survive after $t-2$ iterations of the stripping process, and so on. To compute $\la_t$, let $\rho_t$ denote the probability that $v$ is incident with at least $k-1$ hyperedges surviving after $t$ iterations of the stripping process. As mentioned before, the number of hyperedges incident with $v$ is $\Po(cr)$. Each hyperedge survives after $t-1$ iterations of the stripping process with probability $\rho_{t-1}^{r-1} $. It is then easy to prove that the number of hyperedges incident with $v$ that survive after $t-1$ iterations of the stripping process is $\Po(\rho_{t-1}^{r-1} cr)$. Hence, we derive the recursion for $\rho_t$ and $\la_t$: 
\bean
\rho_0&=&1;\\
\rho_t&=&\pr(\Po(\rho_{t-1}^{r-1} cr)\ge k-1);\\
\la_t&=&\pr(\Po(\rho_{t-1}^{r-1} cr)\ge k).
\eean 
Put $\la=\pr(\Po(\rho^{r-1} cr)\ge k)$, where $\rho$ satisfies $\rho=\pr(\Po(\rho^{r-1} cr)\ge k-1)$. It is then easy to show that $\la=\rho=0$ if $c<c_{r,k}$ and thus, the heuristics above implies that the $k$-core is likely to be empty. For $c>c_{r,k}$, it is easy to show that $\rho_t$ converges to the larger root of $\rho=\pr(\Po(\rho^{r-1} cr)\ge k-1)$ and then $\la$ gives $\a(c)$ in~\eqn{alpha2}.} \fix{The expected sum of the degrees is $n\times\sum_{i\geq k}i\pr(\Po(\rho^{r-1} cr)=i)$ which is $2\b$.} 
\smallskip

By a close analysis of $\a(c)$ and $\b(c)$ we have:
\begin{lemma}\lab{l:diff} Fix $\dd,k\geq2, (\dd,k)\neq (2,2)$ and $0<\d<1/2$. There exist positive constants $K_1$, $K_2$ and $K_3$ such that if $c= c_{\dd,k}+n^{-\d}$ then
\begin{eqnarray*}
\mu(c)-\mu_{\dd,k}&=&K_1 n^{-\d/2}+O(n^{-\d}),\\
{\a(c)}-\alpha&=&K_2n^{-\d/2}+O(n^{-\d}), \\
{\b(c)}-\beta&=&K_3n^{-\d/2}+O(n^{-\d}).
\end{eqnarray*}
\end{lemma}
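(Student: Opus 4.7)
The plan is to Taylor-expand the defining equation $rc=h(\mu)$ around the minimizer $\mu_{r,k}$, extract the leading term of $\mu(c)-\mu_{r,k}$, and then propagate this expansion through the smooth functions $f_k$ and $\mu\mapsto\mu f_{k-1}(\mu)/r$ that define $\a(c)$ and $\b(c)$. The whole argument is deterministic and purely analytic; the only special inputs needed beyond standard calculus are the Poisson derivative identity $f_t'(\mu)=f_{t-1}(\mu)-f_t(\mu)=e^{-\mu}\mu^{t-1}/(t-1)!$ and a non-degeneracy property of the minimum of $h$.

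By definition $\mu_{r,k}$ minimizes $h$, so $h'(\mu_{r,k})=0$. A computation using the Poisson derivative identity shows that for every $(r,k)\ne(2,2)$ the minimum is non-degenerate, i.e.\ $h''(\mu_{r,k})>0$; this non-degeneracy is the analytic signature of the discontinuous $k$-core jump and is exactly what fails at the excluded point $(r,k)=(2,2)$ (compare~\cite{mmcore,jhk,psw}). Granting it, Taylor's theorem yields
\[ h(\mu)=rc_{r,k}+\tfrac{1}{2}h''(\mu_{r,k})(\mu-\mu_{r,k})^2+O\big((\mu-\mu_{r,k})^3\big). \]
Setting $h(\mu)=rc=rc_{r,k}+rn^{-\d}$ and using that $\mu(c)$ is, by definition, the larger root of $h(\mu)=rc$, a one-step bootstrap (first get $\mu(c)-\mu_{r,k}=O(n^{-\d/2})$ from the quadratic, then feed this back into the cubic error term) produces
\[ \mu(c)-\mu_{r,k}=K_1\,n^{-\d/2}+O(n^{-\d}),\qquad K_1:=\sqrt{2r/h''(\mu_{r,k})}>0. \]

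Since both $f_k$ and $g(\mu):=\mu f_{k-1}(\mu)/r$ are smooth at $\mu_{r,k}$, first-order Taylor expansion gives
\[ \a(c)-\a=f_k'(\mu_{r,k})\big(\mu(c)-\mu_{r,k}\big)+O\big((\mu(c)-\mu_{r,k})^2\big), \]
and an identical expansion for $\b(c)-\b$. Plugging in the expansion of $\mu(c)-\mu_{r,k}$ produces the two claimed asymptotics with
\[ K_2=\frac{e^{-\mu_{r,k}}\mu_{r,k}^{k-1}}{(k-1)!}\,K_1,\qquad K_3=\frac{1}{r}\Big(f_{k-1}(\mu_{r,k})+\frac{e^{-\mu_{r,k}}\mu_{r,k}^{k-1}}{(k-2)!}\Big)\,K_1, \]
both manifestly strictly positive (for $k\ge 2$), where the formula for $K_3$ uses $g'(\mu)=\big(f_{k-1}(\mu)+\mu f_{k-1}'(\mu)\big)/r$ and the Poisson identity above.

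The main obstacle is the verification that $h''(\mu_{r,k})>0$ for every $(r,k)\ne(2,2)$; everything else is one-variable calculus. This positivity must be established by combining the first-order optimality $h'(\mu_{r,k})=0$ with the Poisson derivative identities to rewrite $h''(\mu_{r,k})$ in a manifestly nonnegative form, with a small case split to cover the families $r=2,k\ge 3$ and $r\ge 3,k\ge 2$; the excluded point $(r,k)=(2,2)$ is exactly the locus where the reorganized expression degenerates to $0$, which is consistent with the continuous $2$-core transition in that case and explains why both $K_1$ and the whole expansion break down there.
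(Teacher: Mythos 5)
Your proposal is correct and follows essentially the same route as the paper's appendix proof: Taylor-expand $h$ at $\mu_{\dd,k}$, where $h'(\mu_{\dd,k})=0$, to get $\mu(c)-\mu_{\dd,k}=K_1n^{-\d/2}+O(n^{-\d})$ with $K_1$ determined by $h''(\mu_{\dd,k})$, and then push this through first-order expansions of $f_k$ and $\mu\mapsto\mu f_{k-1}(\mu)/r$ using $f_k'(\mu)=e^{-\mu}\mu^{k-1}/(k-1)!>0$. The one ingredient you flag but do not carry out, the strict positivity $h''(\mu_{\dd,k})>0$, is treated no more explicitly in the paper (which merely asserts convexity of $h$ and writes $K_1=\sqrt{2/h''(\mu_{\dd,k})}$), so your argument matches the paper's in both structure and level of detail, and your bookkeeping of the factor $r$ in $K_1$ is in fact the more careful version.
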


\fix{We present its proof in the Appendix.}

Lemma~\ref{lcoresize2}, along with Lemma~\ref{l:diff} above, {together with the fact that $\mu f_{k-1}(\mu)/f_k(\mu)$ is an increasing function on $\mu>0$ (see Lemma~\ref{l:gk} below),} yields the following bounds on the size of the $k$-core:

\begin{corollary}\lab{ccoresize}
Fix $\dd,k\geq2, (\dd,k)\neq (2,2)$ and $0<\d<1/2$. There exist {positive constants} $K_1=K_1(r,k),K_2=K_2(r,k)$ and \fix{$K_3=K_3(r,k)$}: if $c=c_{\dd,k}+ n^{-\d}$ then \aas the $k$-core of $\calh_{\dd}(n,cn)$ has
\begin{enumerate}
\item[(a)] $\a n +K_1n^{1-\d/2}+O(n^{1-\d}+n^{3/4})$ vertices and
\item[(b)] $\b n +K_2n^{1-\d/2}+O(n^{1-\d}+n^{{3/4}})$ \fix{hyperedges} and
\item[(c)] \fix{average degree $r\beta/\alpha+K_3n^{-\d/2}+O(n^{-\d}+n^{-1/4})$.}
\end{enumerate}
\end{corollary}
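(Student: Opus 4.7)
The plan is to chain Lemma~\ref{l:diff} directly into Lemma~\ref{lcoresize2}: substitute the asymptotic expansions of $\mu(c)$, $\alpha(c)$, $\beta(c)$ into the size estimates for the $k$-core, then for part (c) form the ratio. To avoid notational collision, write $K_1^\ast, K_2^\ast, K_3^\ast$ for the three positive constants of Lemma~\ref{l:diff}. For parts (a) and (b), I would multiply
$$\alpha(c) = \alpha + K_2^\ast n^{-\delta/2} + O(n^{-\delta}), \qquad \beta(c) = \beta + K_3^\ast n^{-\delta/2} + O(n^{-\delta})$$
by $n$ and absorb the $O(n^{3/4})$ error terms from Lemma~\ref{lcoresize2}. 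This immediately yields the claimed formulas with $K_1 = K_2^\ast > 0$ and $K_2 = K_3^\ast > 0$, no further work required.

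For part (c), the average degree of an $r$-uniform hypergraph on $V$ vertices and $E$ hyperedges equals $rE/V$. Rather than dividing the expansions from (a) and (b) directly, it is cleaner to note that, by the definitions in (\ref{alpha2}),
$$\frac{r\beta(c)}{\alpha(c)} = \frac{\mu(c)\, f_{k-1}(\mu(c))}{f_k(\mu(c))} =: g(\mu(c)).$$
The vertex and edge bounds in (a) and (b) give $V/n = \alpha(c) + O(n^{-1/4})$ and $E/n = \beta(c) + O(n^{-1/4})$, so since $\alpha(c) = \Theta(1)$,
$$\frac{rE}{V} = \frac{r\beta(c)}{\alpha(c)} + O(n^{-1/4}) = g(\mu(c)) + O(n^{-1/4}).$$
Taylor-expanding $g$ at $\mu_{\dd,k}$ and plugging in $\mu(c) - \mu_{\dd,k} = K_1^\ast n^{-\delta/2} + O(n^{-\delta})$ then yields
$$\frac{rE}{V} = \frac{r\beta}{\alpha} + g'(\mu_{\dd,k})\,K_1^\ast\, n^{-\delta/2} + O(n^{-\delta} + n^{-1/4}),$$
so $K_3 = g'(\mu_{\dd,k})\,K_1^\ast$.

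The only non-mechanical step is verifying that $K_3 > 0$; since $K_1^\ast > 0$, this reduces to showing $g'(\mu_{\dd,k}) > 0$, which is exactly the monotonicity fact cited in the statement of the corollary. By Lemma~\ref{l:gk}, $g(\mu) = \mu f_{k-1}(\mu)/f_k(\mu)$ is strictly increasing on $\mu > 0$, so $g'(\mu_{\dd,k}) > 0$ and $K_3 > 0$. I do not anticipate any obstacle: the proof is a routine substitution of Lemma~\ref{l:diff} into Lemma~\ref{lcoresize2}, with the monotonicity of $g$ serving only to certify the sign of the leading correction in (c).
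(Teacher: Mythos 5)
Your proposal is correct and follows essentially the same route the paper intends: the corollary is stated as an immediate consequence of substituting the expansions of Lemma~\ref{l:diff} into Lemma~\ref{lcoresize2}, with the monotonicity of $g_k(\mu)=\mu f_{k-1}(\mu)/f_k(\mu)$ from Lemma~\ref{l:gk} (via $g_k(\mu_{r,k})=r\beta/\alpha$, cf.~\eqref{egmrk}) certifying that the leading correction in the average degree is positive, exactly as in your part (c).
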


We close this section by introducing two terms that are standard when analyzing the $k$-core stripping process:
\begin{definition} The {\em light} vertices of a \fix{hypergraph} are the vertices of degree less than $k$.  The {\em heavy} vertices are those of degree at least $k$.
\end{definition}

\section{Bounding the stripping number}\lab{smt1}

We restate the parallel stripping process in a slowed-down version, which will be more convenient to analyze.   Rather than removing all vertices of $S_i$ at once, we remove them one at a time.  When removing a vertex, we slow down further by removing one edge at a time.

To facilitate this, we maintain a queue $\msq$ containing all deletable vertices; i.e. all vertices of degree less than $k$:


\begin{tabbing}
{\bf SLOW-STRIP}\\
{\bf Input:}  A hypergraph $G$.\\
Ini\=tialize: \=$t:=0,G_0:=G$, $\fix{\msq_1=\msq}$ is a list of all vertices of degree less than $k$ in $G$,\\ 
\>\>ordered uniformly at random.\\
While $\msq\neq\emptyset$:\\
\>Let $v$ \fix{be the first vertex in $\msq$}.\\
\>Remove a hyperedge $e$  selected uniformly at random from all those containing $v$.\\
\>If any other vertex of $e$  has its degree drop to below $k$ then add that vertex to the end of $\msq$.\\
\>\fix{Repeatedly remove the vertex in the front of $\msq$ if its degree is zero.}\\
\>$G_{t+1}$ is the resulting hypergraph; \fix{$\msq_{t+1}:=\msq$}; $t:=t+1$.\\
\end{tabbing}


\fix{To be clear: Note that the removal of $v$ might cause the degree of a vertex not at the front of $\msq$ to drop to zero.  That vertex remains in $\msq$ until it reaches the front, at which point it will be removed.  So it it possible that multiple vertices are removed from the front of $\msq$ during one step of SLOW-STRIP.}


At any point, \fix{$\msq$} may contain some vertices from $S_i$ and some from $S_{i+1}$.  However,
the vertices of $S_i$ are removed before the vertices of $S_{i+1}$.  Note also that when processing a vertex $v\in\fix{\msq}$, all edges from $v$ are removed (and hence $v$ is removed) before moving to the next vertex of $\fix{\msq}$.  So this procedure removes vertices in the same order as the parallel stripping process. In particular, if $t$ is the \fix{first step in SLOW-STRIP during which the  vertex at the front of $\msq_t$ is in $S_i$}, then the set of vertices in $\msq_t$ is exactly $S_i$ \fix{minus possibly some}  vertices with degree zero in ${\widehat H}_i$. Therefore,  the total degree of the light vertices in $G_t$ (i.e.\ vertices in $\msq_t$) equals exactly the total degree of $S_i$ in ${\widehat H}_i$. 


\begin{definition}  We use $t(i)$ to denote the \fix{ first iteration of SLOW-STRIP in which the vertex at the front of $\msq_t$ is in $S_i$.}

\fix{If every vertex of $S_i$ has degree 0 in $G_i$ then there is no iteration in which the vertex at the front of $\msq_t$ is in $S_i$ and so we define $t(i)$ to be the iteration during which all vertices of $S_i$ are removed.  If this is the case then $i$ is the final iteration of the parallel stripping process and $t(i)$ is the final iteration of SLOW-STRIP.}
\end{definition}

\fix{Roughly speaking, we can view $t(i)$ as the iteration of SLOW-STRIP in which iteration $i$ of the parallel stripping process begins.  This is not quite accurate in that perhaps iteration $i$ began during iteration $t(i)-1$ of SLOW-STRIP if the first vertices of $S_i$ to reach the front of $\msq$ had degree zero.}

We also define:
\[\tau \mbox{ is the iteration in which SLOW-STRIP halts.}\]

We will focus much of our analysis on the following parameters of $G_t$:
\begin{itemize}
\item $L_t$ is the total degree of the light vertices in $G_t$; i.e.\ of the vertices in $\msq_t$.
\item $N_t$ is the number of heavy vertices in $G_t$; i.e.\ of the vertices outside of $\msq_t$.
\item $D_t$ is the  total degree of the heavy vertices in $G_t$.
\end{itemize}
We denote the triple of these values as:
\[\fix{\calt_t}=(L_t,N_t,D_t),\]
\fix{and
\[
\calf_t=\{\calt_s\}_{s\le t}:=\{\calt_0,\calt_1,\ldots,\calt_t\}.
\]
}

\fix{
We first give an overview of the proof of Theorem~\ref{mt}.  \fix{As mentioned earlier, $L_{t(i)}$ equals the total degree of $S_i$ in ${\widehat H}_i$. This will allow us to relate $|S_i|$ to $L_{t(i)}$.} The key arguments are to rather precisely describe the evolution of $(L_t)_{t\ge 0}$, especially in a critical range of $t$.  We will show that after a sufficiently large but bounded number $B$ of iterations of the parallel stripping process, the number of vertices remaining in the hypergraph becomes very close to $\a n$ (see Lemma~\ref{l:B}). Then, we will prove that in SLOW-STRIP, $L_t$ decreases with at least a certain rate for all $t\ge t(B)$ (see Lemma~\ref{llt1}). Using that we can bound from below the rate at which $L_{t(i)}$ (or $|S_i|$, roughly speaking) decreases for each $i\ge B$. This allows us to obtain the upper bound for $s_k(\H_r(n,cn))$ as in Theorem~\ref{mt}.

To obtain the lower bound of $s_k(\H_r(n,cn))$ in the supercritical case as in Theorem~\ref{mt}(a), we will tightly bound the rate (from both below and above) at which $L_t$ decreases for $t\ge t(B)$ (See Section~\ref{slsi}).  This enables us to establish a rather precise description of $(|S_i|)_{i\ge B}$ (see Lemma~\ref{lsi}), and hence deduce the desired lower bound on the stripping number.    

However, we will first present a slightly weaker lower bound, i.e.\ without the logarithmic factor, with a much simpler proof in Section~\ref{sec:mt}. \fix{We do so because this weaker bound is part of the proof of Theorem~\ref{mt}(b).} The key idea is to focus on the steps of SLOW-STRIP during which the last $Kn^{1-\d/2}$ vertices are removed before reaching the $k$-core, for some constant $K>0$. We will show that with high probability $L_t=O(n^{1-\d})$ in all these steps (see Lemma~\ref{llt2}). We will then consider the iterations of the parallel stripping process during which the last $Kn^{1-\d/2}$ vertices are removed. In each iteration, the total number of vertices being removed must be $O(n^{1-\d})$ since their total degree is $O(n^{1-\d})$. In order to remove $Kn^{1-\d/2}$ vertices, at least $\Omega(n^{\d/2})$ iterations of parallel stripping are required. This yields the slightly weaker lower bound. This same proof, combined with a coupling argument, will then yield the lower bound claimed in Theorem~\ref{mt}(b) (see Sections~\ref{slt} and~\ref{smtb}). 
} 
\subsection{The allocation-partition model}\lab{scm}

{We use the following {\em allocation-partition model} ({\em AP-model}), denoted {by} $AP_r(n,m)$. It was used in~\cite{pw} and is a slight modification of what is called the pairing-allocation model in~\cite{CW}.  We are given a set of $rm$ vertex-copies. We represent each vertex $v$ as a bin. We choose two random objects:  (i) a uniformly random partition of the vertex-copies into parts of size exactly $\dd$; (ii) a uniform  allocation of each vertex-copy into a bin. We call the output a {\em configuration.}  

Having chosen a configuration from $AP_r(n,m)$, we can transform it into a \fix{hypergraph} as follows: We  contract the bins into vertices, and each part of the partition becomes a hyperedge.  It is easy to show, with simple counting arguments, that all simple hypergraphs with $n$ vertices and $m$ edges are generated with equal probability. ({\em Simple} means that no two hyperedges  are identical,
and no vertex appears twice in the same hyperedge).

The AP-model differs from the configuration model of Bollob\'{a}s~\cite{bb} in that the degree sequence is not fixed in advance, and the vertex-copies are not initially assigned to actual vertices.  Note that the random partition and the random allocation are orthogonal and they can be chosen independently of each other.  This will be very helpful below when we condition on events (specifically values of $L_t,N_t,D_t$) which specify partial information about both the partition and the allocation.

$\H_r(n,m)$ only selects {\em simple} hypergraphs.  Of course, the hypergraph formed by the AP-model might not be simple, but when $m=O(n)$, the probability of obtaining a simple hypergraph is at least $\eps$ for some absolute constant $\eps>0$.
This immediately yields:

\begin{corollary}\lab{ccon0}
If $m=O(n)$ and if property $Q$ holds a.a.s.\ for $AP_r(n,m)$, then $Q$ holds a.a.s.\ for $\H_r(n,m)$.
\end{corollary}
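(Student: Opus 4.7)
The plan is to deduce the corollary directly from the two structural facts about $AP_r(n,m)$ already noted in the excerpt: (i) conditional on the output configuration being simple (no repeated hyperedge, no vertex appearing twice in the same hyperedge), the induced hypergraph is distributed uniformly over all simple $r$-uniform hypergraphs on $[n]$ with exactly $m$ hyperedges, which is precisely the distribution $\calh_{\dd}(n,m)$; and (ii) $\pr[\text{configuration is simple}]\ge \eps$ for some absolute constant $\eps=\eps(r)>0$ whenever $m=O(n)$.

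Given those two facts, the argument is a one-line conditioning. Let $Q$ denote the property in question and let $S$ denote the event that the $AP$-configuration is simple. By (i),
\be
\pr_{\calh_{\dd}(n,m)}[\neg Q] \;=\; \pr_{AP_r(n,m)}[\neg Q \mid S] \;=\; \frac{\pr_{AP_r(n,m)}[\neg Q \cap S]}{\pr_{AP_r(n,m)}[S]} \;\le\; \frac{\pr_{AP_r(n,m)}[\neg Q]}{\eps}.
\ee
Since $Q$ holds a.a.s.\ for $AP_r(n,m)$, the numerator is $o(1)$, and hence so is the left-hand side, which gives the conclusion.

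What actually needs justification is therefore just (i) and (ii). For (i), a counting argument suffices: each simple hypergraph $H$ with $n$ vertices and $m$ hyperedges is realized by exactly $m!\,(\dd!)^m\,\prod_{v}d_H(v)!$ configurations in $AP_r(n,m)$ (the factor $m!$ orders the parts, $(\dd!)^m$ orders the vertex-copies within each part, and $\prod_v d_H(v)!$ orders the copies of each vertex within its bin), and in particular this count does not depend on $H$ once we note that $\sum_v d_H(v)=rm$ is fixed and the multinomial over degree sequences cancels in the right way; so every simple $H$ occurs with equal probability.

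For (ii), the main obstacle (and the only nontrivial ingredient) is the standard estimate that the number of "loops" (a vertex appearing twice in the same hyperedge) and "multiple edges" (two parts defining the same hyperedge) in the $AP_r(n,m)$ configuration are each bounded in expectation by an absolute constant when $m=O(n)$, and in fact their joint distribution converges to independent Poisson variables with bounded means. From this, $\pr[\text{no loop and no multiple edge}]$ is bounded below by a positive constant $\eps(r,m/n)$, uniform over $m=O(n)$. This is a routine moment calculation in the configuration/pairing-allocation setup (essentially the same as in~\cite{bb,CW,pw}), so we will simply cite it rather than reproduce the computation here.
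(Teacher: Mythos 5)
Your proof is correct and is essentially the paper's own argument: the paper likewise deduces the corollary immediately from the two facts you isolate, namely that conditioned on simplicity $AP_r(n,m)$ is uniform over simple hypergraphs (which it calls a simple counting argument) and that the probability of simplicity is bounded below by an absolute constant when $m=O(n)$, and then transfers a.a.s.\ statements by exactly your conditioning inequality. (One minor remark: in the AP-model as defined, the number of (partition, allocation) pairs realizing a fixed simple $H$ comes out to $(rm)!$ rather than your $m!\,(r!)^m\prod_v d_H(v)!$, because the allocation contributes a factor $(rm)!/\prod_v d_H(v)!$ which cancels $\prod_v d_H(v)!$ exactly as your parenthetical cancellation remark suggests; the uniformity conclusion is unaffected.)
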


\subsection{SLOW-STRIP on the AP-model}

We will analyse the running of SLOW-STRIP on a random configuration generated by the AP-model, and then use Corollary~\ref{ccon0} to translate the results to $\H_r(n,m)$.  We continue to use hypergraph terminology to refer to the configuration, and so strictly speaking, a ``vertex'' is a bin, a ``hyperedge'' is an $r$-tuple of the partition along with an allocation of the vertex-copies in that $r$-tuple, $G_t$ is the configuration remaining after $t$ iterations, etc. We say a vertex-copy is {\em light/heavy} if the bin it is allocated to is {\em light/heavy}. Where we say ``graph" in this context, we mean the (not neccessarily simple) hypergraph obtained from the configuration by contracting the bins.

SLOW-STRIP runs on a configuration {in $AP_r(n,m)$}  as follows: Initially, the queue $\msq_0$ contains all the light bins; i.e. the bins of size less than $k$. In each step, SLOW-STRIP removes a vertex-copy $x$ of the bin $u$ in the front of the queue, together with the $r-1$ vertex-copies in the same part as $x$. Each time we delete one of the $D_t$ vertex-copies not in $\msq_t$, we query whether the bin containing that copy now has size $k-1$; if so then we move that bin to the queue. When all vertex-copies of $u$ are removed then $u$ is deleted from the queue.  \fix{We define the $k$-core of a configuration to be what remains when SLOW-STRIP terminates.}

\fix{
We use $G_t$ to denote the configuration remaining after $t$ steps.  In terms of a configuration, our three key parameters become:
\begin{itemize}
\item $L_t$ is the number of light vertex-copies in $G_t$; i.e.\ the total number of copies in  $\msq_t$.
\item $N_t$ is the number of heavy bins in $G_t$.
\item $D_t$ is the number of heavy  vertex-copies in $G_t$. 
\end{itemize}
}

We will typically condition on the values of $\calf_t=\fix{\{(L_s,N_s,D_s)\}_{s\le t}}$.  \fix{The reader has likely noticed a lack of symmetry: we do not condition on the number of light bins; i.e.\ there is no light analogue to $N_t$.  This is because the number and sizes of the light bins have no significant effect on the running of SLOW-STRIP - all that matters is the number of copies in $\msq_t$.  Also note that $D_t+L_t=r(cn-t)$, as this is the number of remaining vertex-copies, and so it is not necessary for $\calf_t$ to record both $D_t$ and $L_t$; but we find it convenient to do so.}

The following observation (first shown in ~\cite{CW}) enables our analysis.  \fix{ Recall that a configuration consists of (i) a partition of the vertex-copies into parts of size $r$, and (ii) an allocation of the vertex-copies to bins.  Recall further that in our random model, this partition and allocation are chosen uniformly and independently of each other.} 

\begin{observation}\label{oft}
Upon conditioning on $\calf_t$,
\fix{
\begin{enumerate}
\item[(a)] every partition of the remaining vertex-copies is equally likely; and
\item[(b)] every allocation of the $D_t$ heavy vertex-copies to the $N_t$ heavy bins such that each bin has size at least $k$ is equally likely.
\end{enumerate}
}
\end{observation}

\fix{In other words, $\{(L_t,N_t,D_t)\}_{t\ge 0}$ is Markovian.  This Markovian property will allow easy analysis of $\ex(L_{t+1}-L_t\mid \calf_t)$ for instance.}\smallskip

 \fix{

\proofstart {\em Part (a):}  $L_t,N_t,D_t$ say nothing about the partition; they are parameters of the allocation.  So every partition of the remaining vertex-copies is equally likely. (The deleted vertex copies have already been assigned to parts.)

{\em Part (b):}   We begin with some intuition:

 $N_t,D_t$ only change when one of the $r-1$ copies that we choose to be in the same part as $x$, and hence delete, is heavy.  Suppose that the chosen copy is in a bin of size $k$.  Then that entire bin is moved to $\msq_t$, and (at least intuitively) we have not exposed anything new about the remaining bins. So any allocation in which each remaining heavy bin has size at least $k$ is equally likely.  The more subtle case is when the chosen copy is in a bin of size greater than $k$.  It is important to note that we do not expose the size of that bin, only the fact that the size is at least $k+1$.  So when we delete the vertex-copy, our exposure only says that the bin now has size at least $k$. Again, any allocation in which each  heavy bin has size at least $k$ is equally likely. 
 
 }
 
 \fix{
 And now a proof:  Consider any configuration $G$ with $n$ vertices and $rm$ vertex-copies. Let $G=G_0,G_1,...,G_t$  be the sequence of configurations obtained by running SLOW-STRIP on $G$ for $t$ iterations; define $\calf_0,...,\calf_t$ similarly.  
 
 Expose $\Theta$, the $D_t$ heavy vertex-copies in $G_t$, and $\calb$, the $N_t$ heavy bins. Let $H_t$ be the allocation of $\Theta$ to $\calb$ in $G$.  Let $H_t'$ be any other allocation of $\Theta$ to $\calb$ such that each bin in $\calb$ receives at least $k$ members of $\Theta$.   We will prove that $H'_t$ occurs with the same probability as $H_t$ when conditioning on $\Theta,\calb$ being the heavy vertex-copies and bins.  This is what is asserted by part (b).
 
Let $G'$ be the configuration obtained from $G$ by replacing $H_t$ with $H_t'$; i.e. by allocating every vertex-copy in $\Theta$ according to $H_t'$ and allocating every other vertex-copy according to $G$.  Simliarly, define $G'_i$ to be the configuration obtained from $G_i$ by replacing $H_t$ with $H_t'$ for each $0\leq i\leq t$.  We make two key observations:

 {\em Observation 1:} If SLOW-STRIP on $G'$ chooses the same initial ordering of the light bins as SLOW-STRIP on $G$, and at each step chooses the same vertex-copy from the bin at the front of the queue to be deleted, then it will produce the sequence $G_0',...,G'_t$. 
 
{\em Observation 2:} The probability of the sequence $G_0,....,G_t$ is the same as the probability of the sequence $G_0',...,G_t'$.    To see this, note first that $G,G'$ are chosen as  our random configuration from $AP_r(n,m)$ with the same probability as they both have the same $n$ bins and $rm$ vertex-copies. Then note that $G,G'$ have the same number of light bins and for $0\leq i\leq t$  each bin has the same size in $G_i$ as in $G_i'$.  So Observation 1 implies that the two sequences occur with the same probability.

The probability that after $t$ steps, the set of heavy vertex-copies is $\Theta$, the set of heavy bins is $\calb$ and the allocation of $\Theta$ to $\calb$ is $H_t$, is the  sum over all sequences $G_0,...,G_t$ that end in a $G_t$ with $\Theta,\calb,H_t$ of the probability of that sequence.  The analogous statement is true of $H'_t$.   We have established 
a bijection from the sequences yielding $\Theta,\calb,H_t$ to the sequences yielding $\Theta,\calb,H'_t$ which preserves probabilities.  So the probability of obtaining $H_t$ is equal to the probability of obtaining $H_t'$.  This is part (b).
}

\proofend

\smallskip

Since $\calh_r(n,cn)$ is relatively hard to analyse directly; many of the existing proofs for the size of the $k$-core of $\calh_r(n,cn)$ use simpler models \fix{and then the results are translated to $\calh_r(n,cn)$; specifically~\cite{jhk} proved Lemma~\ref{lcoresize2} for the Poisson cloning model.  We will show that monotone results can be translated from the Poisson cloning model to the AP-model and so we have:}

\begin{lemma}\lab{lem:AP}
Both Lemma~\ref{lcoresize2} and Corollary~\ref{ccoresize} hold for the AP-model.
\end{lemma}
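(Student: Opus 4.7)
The plan is to prove Lemma~\ref{lcoresize2} directly for $AP_r(n,m)$; Corollary~\ref{ccoresize} for the AP-model then follows from Lemma~\ref{l:diff} by exactly the same purely analytic computation that was used for $\H_r(n,m)$, since that derivation depends only on the sizes of the vertex and edge sets of the $k$-core and not on the underlying random model.

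For Lemma~\ref{lcoresize2} in the AP-model, the natural route is to refine the Cain--Wormald analysis from~\cite{CW}. Their method applies the differential equation method to the parallel stripping process on $AP_r(n,m)$ and already establishes the leading-order sizes $\alpha(c)n$ and $\beta(c)n$ for the vertex and edge sets of the $k$-core; the authors themselves remark that a sharp error term is obtainable with more effort from the same framework. I would track the triple $\calt_t=(L_t,N_t,D_t)$ through SLOW-STRIP, where by Observation~\ref{oft} the conditional expectation $\ex[\calt_{t+1}-\calt_t \mid \calf_t]$ is a smooth function of $\calt_t/n$. The deviation of each coordinate from the deterministic trajectory of the associated ODE is then a martingale with $O(1)$ increments, so Freedman's inequality yields concentration of order $\sqrt{n\log n}$, well below the required $O(n^{3/4})$. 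Stopping the process when $L_t$ first becomes sublinear then identifies the $k$-core within the claimed error.

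A cleaner alternative, which avoids rerunning the stripping analysis altogether, is to use the coupling $\H_r(n,m) \stackrel{d}{=} AP_r(n,m) \mid \mathcal{S}$, where $\mathcal{S}$ denotes the event that the configuration is simple; it is standard that $\pr[\mathcal{S}] \geq \varepsilon$ for some absolute $\varepsilon>0$ when $m = \Theta(n)$. Kim's theorem then gives the conclusion conditional on $\mathcal{S}$, and to remove the conditioning one shows that an AP-configuration typically contains only $O(1)$ multi-edges and loops (in the $m=\Theta(n)$ regime the number of such defects is a tight random variable), each of which can perturb the $k$-core by at most $O(1)$ vertices and edges through a bounded cascade of stripping steps.

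The main obstacle in either approach is purely technical bookkeeping: in the first, controlling the martingale concentration sharply enough to reach $O(n^{3/4})$ rather than a weaker polynomial error; in the second, verifying that the removal of a loop or multi-edge from an AP-configuration does not trigger a cascade of stripping that propagates beyond a constant number of additional vertices. Either way, no new conceptual ingredient beyond the tools already present in~\cite{CW} and~\cite{jhk} is required.
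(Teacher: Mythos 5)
The paper does not reprove Lemma~\ref{lcoresize2} for the AP-model at all: its ``proof'' of Lemma~\ref{lem:AP} is the observation that the existing proofs were in fact carried out in such models --- \cite{CW} works directly in the AP-model and \cite{jhk} in the closely related cloning model --- and only afterwards translated to $\calh_r(n,cn)$, so the statements already hold for $AP_r(n,m)$ (with a parenthetical citation of Cain--Wormald's claim that the sharp $O(n^{3/4})$ error is obtainable by their method). Your reduction of Corollary~\ref{ccoresize} to Lemma~\ref{lcoresize2} plus Lemma~\ref{l:diff} is correct and matches the paper. But both of your routes to Lemma~\ref{lcoresize2} for the AP-model overclaim at exactly the points that make the statement delicate.

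For your first route, the assertion that tracking $(L_t,N_t,D_t)$ and applying Freedman's inequality gives $\sqrt{n\log n}$ concentration ``well below $O(n^{3/4})$'' ignores why Kim's proof is technical and why the error term is $n^{3/4}$ with the restriction $c\ge c_{r,k}+n^{-1/2+\eps}$: near the threshold the process passes through a bottleneck where the drift of $L_t$ is only $\Theta(n^{-\d/2})$, the limiting core size $\a(c)$ depends on $c$ with a square-root singularity (Lemma~\ref{l:diff}), and fluctuations accumulated in the bottleneck are amplified rather than simply added; a routine ODE-plus-martingale bound does not deliver the claimed precision uniformly in this range, and if it did it would improve on \cite{jhk}. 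Your second route has a sharper gap: Corollary~\ref{ccon0} transfers a.a.s.\ statements from $AP_r(n,m)$ to $\H_r(n,m)$, not the other way, since $\pr(\mathcal S)$ is bounded away from $1$; to go backwards you must control the $k$-core on non-simple configurations, and your claim that each loop or multi-edge perturbs the $k$-core by $O(1)$ vertices ``through a bounded cascade'' is unjustified and essentially false in the regime of interest. Deleting a single hyperedge near the threshold launches a stripping cascade whose branching parameter is $1-\Theta(n^{-\d/2})$ (cf.\ $\br_\tau=-\Theta(n^{-\d/2})$ in Section~\ref{slsi}), so its expected size is polynomial in $n$, and Theorem~\ref{mt2} of this very paper shows such cascades can reach $n^{\Theta(1)}$; moreover any honest bound on these cascades would require machinery (the depth analysis) whose proof itself uses Lemma~\ref{lem:AP}, creating a circularity. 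So as written, neither route closes the gap that the paper instead closes by appealing to the fact that \cite{CW} and \cite{jhk} already prove the result in (a model equivalent to) the AP-model.
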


\fix{The proof is in the appendix.

The reader may notice that our result does not actually require} Lemma~\ref{lem:AP}. \fix{By Lemma~\ref{lcoresize2} and Corollary~\ref{ccoresize}, we may run SLOW-STRIP on a random hypergraph $H\in \calh_r(n,cn)$ until the remaining subgraph $G_t$ has the desired order as required in Section~\ref{sec:mt}. It is easy to see that by conditioning on ${\calf_t}$, $G_t$ is uniformly distributed over all hypergraphs with graph parameters agreeing with $\calt_t$.  Therefore, we could have applied the AP-model to $G_t$ and started  our analysis from there. However, it makes the proof a little easier to understand if we start with $H\in AP_r(n,cn)$ and apply Lemma~\ref{lem:AP}, which guarantees that with a high probability, we will obtain some configuration $G_t$ of the desired order. This is how Section~\ref{sec:mt} will be presented.}

\subsection{The supercritical case: $c=c_{r,k}+n^{-\d}$} \lab{sec:mt}

We first consider the supercritical case in which we assume $c=c_{r,k}+n^{-\d}$ where $0<\d<1/2$. These conditions on $\d$ are assumed in the rest of Section~\ref{smt1} as desired by the hypotheses in Theorem~\ref{mt}. 
Our goal is to prove that $s(\calh_r(n,cn))$ is bounded above by $O(n^{\d/2}\log n)$ and
below by $\Omega(n^{\d/2})$. \fix{Note that this lower bound is slightly weaker than that of Theorem~\ref{mt}(a), but the proof is much simpler. The tight bound of Theorem~\ref{mt}(a) will be proved in Section~\ref{slsi}.}

Recall that we are running SLOW-STRIP on a configuration generated from the AP-model.

The following lemma allows us to assume that the remaining subgraph is sufficiently close to the $k$-core. Roughly speaking, after a constant number of rounds of the parallel stripping process, we can get within any linear distance of the $k$-core.  Recall that  $\hG_i$ is the subgraph remaining after $i-1$ rounds of the parallel stripping process and $\C_k(H)$ is the $k$-core of $H$.

\begin{lemma}\lab{l:B}  Let $\eps,\eps_0>0$ be fixed. Assume $c\ge c_{\dd,k}+n^{-1/2+\e}$. Then, for $H\in AP_{\dd}(n,cn)$,
 there exists a constant $B=B(r,k,\e_0)>0$, such that a.a.s.\ $|\hG_B\setminus \C_k(H)|\le \eps_0 n$.
\end{lemma}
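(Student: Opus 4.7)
The plan is to estimate $|\hG_B|$ via the branching-process recursion outlined after Lemma~\ref{lcoresize2}, and to show that the convergence of this recursion to the correct limit $\a(c)$ is uniform in $c\ge c_{r,k}$ at a rate of $O(1/t)$. Given the concentration of $|\C_k(H)|$ from Lemma~\ref{lem:AP}, and since $\hG_B\supseteq\C_k(H)$, it suffices to establish that $|\hG_B|\le(\a(c)+\e_0/2)n$ \aas for a suitable constant $B=B(r,k,\e_0)$.

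First I would analyse the deterministic iteration $\rho_t=g_c(\rho_{t-1}):=f_{k-1}(cr\rho_{t-1}^{r-1})$, $\rho_0=1$, together with the predicted survival fraction $\la_t:=f_k(cr\rho_{t-1}^{r-1})$. For $c\ge c_{r,k}$, the map $g_c$ is increasing with $g_c(1)<1$, and $\rho_t$ decreases monotonically to the larger fixed point $\rho_+(c)$. The key analytic input is the Taylor expansion
\[
g_c(\rho_+(c)+x)=\rho_+(c)+(1-\d_c)\,x-\g_c\,x^2+O(x^3),
\]
in which $\d_c\ge 0$ with equality only at $c=c_{r,k}$ (saddle--node bifurcation of $g_c$ at the core threshold), and $\g_c$ is bounded below by a positive constant $\g(r,k)$ in a neighbourhood of $c_{r,k}$. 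Writing $x_t=\rho_t-\rho_+(c)$, the inequality $x_{t+1}\le x_t-\g x_t^2+O(x_t^3)$ gives $1/x_{t+1}\ge 1/x_t+\g-O(x_t)$, hence $x_t\le C_1(r,k)/t$ uniformly in $c\ge c_{r,k}$; Lipschitz continuity of $f_k$ then yields $\la_t-\a(c)\le C_2(r,k)/t$ uniformly.

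Next I would prove that for each fixed $B$, \aas $|\hG_B|=\la_{B-1}n+o(n)$ in $H\in AP_r(n,cn)$. For each vertex $v$, the event $v\in\hG_B$ is determined by the $(B-1)$-neighbourhood of $v$ in $H$, whose expected size is $O(1)$ since $B$ is constant. Standard local weak convergence of this neighbourhood to a Poisson Galton--Watson hypertree with offspring parameter $cr$ gives $\pr[v\in\hG_B]\to\la_{B-1}$. Since the $(B-1)$-neighbourhoods of two vertices $u,v$ overlap for only $O(n)$ of the $n^2$ ordered pairs, a second-moment computation yields $\var(|\hG_B|)=O(n)$, and Chebyshev gives concentration $|\hG_B|=\la_{B-1}n+o(n)$ \aas.

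Finally, choosing $B=\lceil 4C_2(r,k)/\e_0\rceil+1$ ensures $\la_{B-1}-\a(c)\le\e_0/4$ uniformly in $c\ge c_{r,k}$, so $|\hG_B|\le(\a(c)+\e_0/2)n$ \aas. Combined with $|\C_k(H)|\ge(\a(c)-\e_0/2)n$ \aas from Lemma~\ref{lem:AP}, this yields $|\hG_B\setminus\C_k(H)|\le\e_0 n$ \aas. The main obstacle is justifying the uniform $O(1/t)$ convergence rate near the saddle--node bifurcation at $c=c_{r,k}$; once this analytic fact is in hand, the rest follows from standard concentration and local-convergence arguments for random hypergraphs.
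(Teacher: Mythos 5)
Your proposal is sound in outline, but it takes a genuinely different and substantially heavier route than the paper. The paper's proof is a short monotone-coupling argument: it generates $H'\in AP_r(n,(c+\eps')n)$, obtains $H$ by deleting $\eps'n$ random hyperedges (so $H\subseteq H'$ with the right distribution), invokes the already-known constant-supercritical statement (Proposition 31 of~\cite{amxor}) to get $|\hG'_B\setminus\C_k(H')|\le\sigma n$ for a constant $B$, and then uses $\hG_B\subseteq\hG'_B$, $\C_k(H)\subseteq\C_k(H')$ and continuity of $\a(\cdot)$ (via Lemmas~\ref{lcoresize2} and~\ref{lem:AP}) to absorb the difference $|\C_k(H')\setminus\C_k(H)|=(\a(c+\eps')-\a(c))n+o(n)$. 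The crucial point is that this coupling completely sidesteps the issue you correctly identify as your main obstacle: by comparing with a density bounded away from $c_{r,k}$, only the geometric convergence of the stripping process at fixed supercritical density is needed, and no uniformity near the saddle--node bifurcation has to be established. Your route — uniform $O(1/t)$ convergence of the density-evolution recursion for all $c\ge c_{r,k}$, local weak convergence to the Poisson Galton--Watson hypertree, and a second-moment concentration of $|\hG_B|$ — would, if completed, give a self-contained and even quantitative proof (roughly $B=O(1/\eps_0)$), but it requires you to actually prove the bifurcation estimate (a uniform positive lower bound on the quadratic coefficient at the fixed point, uniform handling of the initial constant-order phase, and uniformity when $c=c_n\to c_{r,k}$), to verify the exact identification of ``$v$ survives $B-1$ parallel rounds'' with the tree recursion, and to treat the weak global dependencies of the AP-model in the variance bound (your $O(n)$ is optimistic; $o(n^2)$ suffices and is what one can cleanly prove). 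None of these is a fatal gap, but be aware you are reproving, with extra uniformity, machinery that the paper simply imports and then trades for an $\eps'$-coupling.
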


\begin{proof} 
Let $\eps'>0$ be a small constant to be determined later and let $c'=c+\eps'$. Choose $H'$ {according to the distribution of} $AP_{\dd}(n,c'n)$  and generate $H$ by removing $\eps'n$ edges chosen uniformly at random in $H'$. (I.e.\ we remove $\eps' n$ uniform parts and the vertex-copies contained in those parts.) Then $H\subseteq H'$ and $H$ is distributed as {$AP_r(n,cn)$}.  Run the parallel $k$-stripping process on $H'$, using $\hG'_i$ to denote the subgraph remaining after $i-1$ iterations. Several other papers (see eg. Proposition 31 of~\cite{amxor}), show that for any $\s>0$ there exists a constant $B>0$ such that $|\hG'_B\setminus \C_k(H')|\le \sigma n$, if $H'\in \calh_{r}(n,c'n)$; the same conclusion translates to the AP-model with a similar argument.

Note that every vertex removed during the first $B-1$ iterations of the parallel stripping process applied to $H'$ would also have been removed  during the first $B-1$ iterations of the parallel stripping process applied to $H$.  Thus $\hG_B\subseteq \hG'_B$.  Also, $\calc_k(H)\subseteq\calc_k(H')$ and, by Lemmas~\ref{lcoresize2} {and~\ref{lem:AP}}, 
 $|\calc_k(H')\setminus\calc_k(H)|=(\a(c')-\a(c))n +o(n)$. Therefore:
\bean
|\hG_B\setminus \C_k(H)|&\leq& |\hG'_B\setminus \C_k(H')|+|\calc_k(H')\setminus\calc_k(H)|\le \sigma n+(\a(c')-\a(c))n +o(n)\\
&<&2(\s +\a(c')-\a(c))n<\eps_0 n,
\eean
where the last inequality holds as long as $\e',\s$ are sufficiently small (both depending on $\e_0$).
\end{proof}

Lemma~\ref{l:B} says that for any $\e_0>0$ there is a $B=B(\e_0)$ such that after iteration $B-1$  of the parallel process, or equivalently, at the beginning of step $t(B)$  of SLOW-STRIP, the size of the remaining \fix{hypergraph} $G_{t(B)}$ is at most $\e_0 n$ greater than the size of the $k$-core. This implies that various parameters are very close to those of the $k$-core, {and this is where our analysis starts: we}
focus on $t\geq t(B)$.

A key part of our analysis is to control the change in $L_t$.  We will express the expected change of $L_t$ at step $t$ as a function of $L_t$, $N_t$ and $D_t$. {Recall}:
\[\calf_t=\fix{\{\calt_s\}_{s\le t}=\{(L_s,N_s,D_s)\}_{s\le t}}\]
and we will estimate $\ex(L_{t+1} \mid \calf_t)$. The following lemma bounds this expectation.

\begin{lemma}\lab{llt1}  There are constants $B,K$ such that: If $c=c_{\dd,k}+n^{-\d}$, then 
a.a.s.\ for every $t(B)\le t<\tau$,
\[\ex(L_{t+1}\mid \calf_t )\leq L_t-Kn^{-\d/2}.\]
\end{lemma}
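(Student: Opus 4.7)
The plan is to compute $\ex(L_{t+1}-L_t\mid\calf_t)$ in closed form, identify the dominant contribution as a function $F(\mu_t)$ of a single Poisson parameter $\mu_t$, and then bound $F(\mu_t)$ using the criticality of $\mu_{r,k}$ together with Lemma~\ref{l:diff}. By Observation~\ref{oft}, conditionally on $\calf_t$ the $D_t$ heavy vertex-copies are uniformly allocated to the $N_t$ heavy bins subject to each bin receiving at least $k$ copies. This conditional distribution is well-approximated by $N_t$ iid truncated Poisson variables $\Po(\mu_t)\mid\cdot\geq k$, where $\mu_t$ is the unique positive solution of $\mu_t f_{k-1}(\mu_t)/f_k(\mu_t)=D_t/N_t$. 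Each step of SLOW-STRIP removes a light copy at the front of $\msq_t$ (contributing $-1$ to $L_t$) together with the $r-1$ copies in the same partition part, which are, up to lower-order corrections, a uniformly random $(r-1)$-subset of the remaining $L_t+D_t-1$ vertex-copies. A light companion contributes $-1$ to $L_t$; a heavy companion contributes $+(k-1)$ if its bin has size exactly $k$ (triggering a heavy-to-light transition) and $0$ otherwise, and the conditional probability of the former event equals $p(\mu_t):=1-f_k(\mu_t)/f_{k-1}(\mu_t)$. Collecting terms yields
\[
\ex(L_{t+1}-L_t\mid\calf_t) \;=\; F(\mu_t) \;-\; (r-1)\,\frac{L_t}{L_t+D_t-1}\bigl(1+(k-1)p(\mu_t)\bigr) \;+\; o(n^{-\d/2}),
\]
where $F(\mu):=-1+(r-1)(k-1)\bigl(1-f_k(\mu)/f_{k-1}(\mu)\bigr)$. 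The middle term is nonpositive, so the sign is determined by $F(\mu_t)$.

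The critical ingredient is the local behaviour of $F$ near $\mu_{r,k}$. Using the identity $f_j'(\mu)=f_{j-1}(\mu)-f_j(\mu)$ and the defining equation $h'(\mu_{r,k})=0$ for the minimiser of $h(\mu)=\mu/f_{k-1}(\mu)^{r-1}$, a short calculation shows $f_k(\mu_{r,k})/f_{k-1}(\mu_{r,k})=1-1/((r-1)(k-1))$, which gives $F(\mu_{r,k})=0$. Differentiating $F$ reduces the sign of $F'(\mu_{r,k})$ to that of $f_{k-2}(\mu_{r,k})f_k(\mu_{r,k})-f_{k-1}(\mu_{r,k})^2$, which is strictly negative by the log-concavity of Poisson tails. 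Hence there exists $c_0>0$ such that $F(\mu)\leq -c_0(\mu-\mu_{r,k})$ for all $\mu$ in a small right-neighbourhood of $\mu_{r,k}$. Combining with Lemma~\ref{l:diff}, which gives $\mu(c)-\mu_{r,k}=K_1 n^{-\d/2}+O(n^{-\d})$, it suffices to show that $\mu_t\geq \mu_{r,k}+\tfrac{K_1}{2}n^{-\d/2}$ a.a.s.\ throughout $t(B)\leq t<\tau$ in order to conclude $F(\mu_t)\leq -Kn^{-\d/2}$ with $K=c_0 K_1/2$.

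The main obstacle is establishing this uniform lower bound on $\mu_t$, equivalently on the ratio $D_t/N_t$. Both endpoints are favourable: SLOW-STRIP halts at the $k$-core, so by Corollary~\ref{ccoresize} and Lemma~\ref{lem:AP} we have $\mu_\tau=\mu(c)+o(n^{-\d/2})$; and at step $t(B)$, Lemma~\ref{l:B} with sufficiently small $\e_0$ places the remaining configuration within $\e_0 n$ of the $k$-core. However a naive estimate from these two facts gives only $\mu_t=\mu(c)+O(\e_0)$, which is a constant-order deviation and could in principle push $\mu_t$ below $\mu_{r,k}$; since $\mu(c)-\mu_{r,k}$ itself is only $O(n^{-\d/2})$, a much finer argument is needed. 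The natural route is to combine the deterministic containment $\C_k(H)\subseteq G_t$ (which gives $D_t\geq r\beta(c)n$ and $N_t\geq\alpha(c)n$) with a martingale/concentration argument tracking the joint evolution of $(L_t,N_t,D_t)$ from step $t(B)$, showing that $D_t/N_t$ stays within $o(n^{-\d/2})$ of its terminal value $r\beta(c)/\alpha(c)=\mu(c)f_{k-1}(\mu(c))/f_k(\mu(c))$. Once $\mu_t$ is so controlled, the residual error terms in the one-step expectation (from the Poisson approximation to the bin-size distribution, and the $O(1/(L_t+D_t))$ rounding) are negligible thanks to Corollary~\ref{ccoresize} (which pins $D_t,N_t=\Theta(n)$), and the lemma follows after a union bound over $t(B)\leq t<\tau$.
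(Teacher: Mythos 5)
Your reduction is essentially the paper's: you write the one-step drift of $L_t$ as $-1+(r-1)(k-1)\bar p_t$ up to nonpositive and $O(1/n)$ corrections, identify $\bar p_t$ with the degree-$k$ fraction of a truncated Poisson with parameter $\mu_t$ solving $g_k(\mu_t)=D_t/N_t$ (your $p(\mu)=1-f_k(\mu)/f_{k-1}(\mu)$ is exactly the paper's $\psi$), use the criticality identity $F(\mu_{r,k})=0$ (the paper's $\psi(\zeta)=1/((r-1)(k-1))$, Lemma~\ref{l:degreeK}) and the strict negativity of the derivative (the paper's Lemma~\ref{l2:monotone}), and then observe that everything hinges on a uniform lower bound $\mu_t\geq\mu_{r,k}+\Omega(n^{-\d/2})$, equivalently $\zeta_t=D_t/N_t\geq\zeta+\Omega(n^{-\d/2})$, for all $t(B)\le t<\tau$. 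Up to that point the proposal is sound.

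The gap is precisely at that pivotal step, and the route you sketch for it does not work. First, the deterministic containment $\C_k(H)\subseteq G_t$ gives $D_t\geq r\beta(c)n$ and $N_t\geq\alpha(c)n$ \emph{separately}, which says nothing about the ratio $D_t/N_t$: the non-core heavy vertices still present in $G_t$ could in principle have low degree and drag the average below $\zeta_\tau$, which is exactly the scenario that must be excluded. Second, your stated goal --- that $D_t/N_t$ stays within $o(n^{-\d/2})$ of its terminal value throughout $[t(B),\tau]$ --- is false: since each step changes $\zeta_t$ by $O(1/n)$ and $\tau-t(B)$ can be of order $\eps n$, one has $\zeta_{t(B)}-\zeta_\tau$ of constant order (and the paper's Lemma~\ref{l:zetaseq}(b) shows $\zeta_t-\zeta_\tau=\Theta((\tau-t)/n)$). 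What is both true and sufficient is the \emph{one-sided} bound $\zeta_t\geq\zeta_\tau-n^{-1/2+\eps'}$, but to get it one must first establish that $\zeta_t$ is a supermartingale with per-step drift in $[-\rho_1/n,-\rho_2/n]$ (the paper's Lemma~\ref{l:zetaMartingale}), which itself rests on the nontrivial inequality $k<\zeta<r(k-1)$ (Lemma~\ref{l:rho}); only then does Azuma, combined with $\zeta_\tau=\zeta+\Theta(n^{-\d/2})$ from Lemma~\ref{lcoresize2} and Lemma~\ref{l:diff}, yield $\zeta_t\geq\zeta+Kn^{-\d/2}$ (Corollary~\ref{cor:zetaT}). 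A generic concentration argument on $(L_t,N_t,D_t)$ without this directional drift cannot rule out $\zeta_t$ dipping below $\zeta$ at intermediate times, which would make the drift of $L_t$ positive and defeat the lemma; so the missing supermartingale ingredient is essential, not a technicality. (A minor further point: your claim $F'(\mu_{r,k})<0$ via log-concavity of Poisson tails is correct but would need a proof or reference, as the paper supplies for $\psi'<0$ in its Appendix.)
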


\noindent{\bf Remark}. 
  The key Lemma 32 of~\cite{amxor} implies that, when $c=c_{r,k}+\e$, we have \aas $\ex(L_{t+1}{\mid \calf_t})\leq (1-\z) L_t$ for a constant $\z>0$.  That difference is what causes the stripping number and depth to rise from $O(\log n)$ for $c>c_{r,k}+\eps$ to $n^{\Theta(1)}$ for $c=c_{r,k}+n^{\d}$.  It is also the cause of most of the difficulties in this paper.  However, the weaker fact that $\ex(L_{t+1}-L_t{\mid \calf_t})$  remains bounded below $-Kn^{-\d/2}$,
no matter how small $L_{t}$ gets,  is still very useful to our analysis.

{Let $\a=\a(r,k)$ be the constant given in~\eqn{alpha-beta} and $K_1=K_1(r,k)$ be the constant specified in Corollary~\ref{ccoresize}. Then, by Corollary~\ref{ccoresize}, for} $c=c_{r,k}+n^{-\d}$, \aas the size of the $k$-core is $\a n+K_1 n^{1-\d/2}+o(n^{1-\d/2})$.

Lemma~\ref{llt1} will be used to bound the stripping number from above.   To obtain a lower bound, we will focus on the so-called Phase 2 of SLOW-STRIP, defined as follows:
\[\g=3\fix{K_2}n^{1-\d/2}.\]
Let $i^*$ be the first iteration of the parallel stripping process at the beginning of which the number of \fix{hyperedges} in the remaining \fix{hypergraph} is at most  $\lfloor\fix{\beta} n+\g\rfloor$.

\begin{definition} \lab{def:t0}
 $t_0$ is the first step of SLOW-STRIP at the beginning of which the number of \fix{hyperedges} in the remaining \fix{hypergraph} is exactly $\lfloor\fix{\beta} n+\g\rfloor$.     We refer to steps $t=0,....,t_0-1$ as {\em Phase 1} and the remaining steps as {\em Phase 2}.
\end{definition}

Note that we can obtain a lower bound on the number of \fix{iterations} in Phase 2:

\begin{lemma}\lab{lp0}  If $c=c_{\dd,k} + n^{-\d}$  then \aas Phase 2 \fix{lasts at least $\g/3=K_2n^{1-\d/2}$ iterations}.
\end{lemma}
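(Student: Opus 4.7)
The argument is essentially a subtraction using the sharp $k$-core size estimate of Corollary~\ref{ccoresize}. The plan has three short steps: (i) confirm that $t_0$ is well defined a.a.s.; (ii) identify the hypergraph size at the start of Phase 2 as essentially $\alpha n+\gamma$; (iii) subtract the final $k$-core size and verify that the error terms are negligible for $\delta<1/2$.

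For (i), initially $G_0$ has $n$ vertices, while at termination we are left with the $k$-core, which by Corollary~\ref{ccoresize} and Lemma~\ref{lem:AP} a.a.s.\ has $\alpha n+K_1 n^{1-\delta/2}+O(n^{1-\delta}+n^{3/4})$ vertices. Since $\alpha<1$ and $\gamma=3K_1 n^{1-\delta/2}=o(n)$, the initial count exceeds $\alpha n+\gamma$ and the terminal count is a.a.s.\ strictly below $\alpha n+\gamma$, so the vertex count must pass through this value. SLOW-STRIP removes at most $r$ vertices per step (a single hyperedge deletion can empty at most $r$ bins simultaneously, and bins are removed only upon reaching degree $0$), so the size at the start of Phase 2 lies in $[\alpha n+\gamma-r,\,\alpha n+\gamma]$; the $O(1)$ slack is immaterial below.

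For (ii) and (iii), the number of vertices removed during Phase 2 is a.a.s.\ at least
\[
(\alpha n+\gamma-r)-\bigl(\alpha n+K_1 n^{1-\delta/2}+O(n^{1-\delta}+n^{3/4})\bigr)=2K_1 n^{1-\delta/2}-O(n^{1-\delta}+n^{3/4}),
\]
using $\gamma=3K_1 n^{1-\delta/2}$. Since $0<\delta<1/2$ gives $1-\delta/2>\max(1-\delta,3/4)$, the error is $o(n^{1-\delta/2})$, and the right-hand side is at least $K_1 n^{1-\delta/2}=\gamma/3$ for all sufficiently large $n$.

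There is no substantive obstacle: the estimate is driven entirely by the $2K_1 n^{1-\delta/2}$ gap between the chosen Phase-2 threshold $\alpha n+\gamma$ and the typical $k$-core size $\alpha n+K_1 n^{1-\delta/2}+o(n^{1-\delta/2})$, and this gap dominates every error term appearing in Corollary~\ref{ccoresize} precisely because $\delta<1/2$. The factor $3$ in the definition of $\gamma$ is chosen exactly to leave room for the $K_1 n^{1-\delta/2}$ already absorbed into the $k$-core and to yield the clean $\gamma/3$ bound claimed.
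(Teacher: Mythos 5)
Your argument is correct and is essentially the paper's own proof: both simply compare the $\alpha n+\gamma=\alpha n+3K_1n^{1-\delta/2}$ vertices present at the start of Phase 2 with the a.a.s.\ $k$-core size $\alpha n+K_1n^{1-\delta/2}+o(n^{1-\delta/2})$ from Corollary~\ref{ccoresize}(a) (via Lemma~\ref{lem:AP}) and note the gap exceeds $\gamma/3$. Your extra verification that the vertex count actually passes through the threshold $\alpha n+\gamma$ is a harmless elaboration of what the paper leaves implicit, not a different method.
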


\proofstart  By Corollary~\ref{ccoresize}(b), \aas the $k$-core has fewer than $\fix{\beta} n + 2\fix{K_2}n^{1-\d/2}$ hyperedges and so {SLOW-STRIP has to remove at least $\fix{K_2}n^{1-\d/2}$ hyperedges to reach it}.
\proofend

We will show that, throughout Phase 2, $L_t$ is small.

\begin{lemma}\lab{llt2}  If $c=c_{\dd,k}+n^{-\d}$, then \aas for every $t_0<t\leq\tau$, $L_{t}= O(n^{1-\d}).$
\end{lemma}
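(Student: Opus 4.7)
My plan is to combine an upper bound on the duration of Phase 2 with the negative-drift supermartingale property of Lemma~\ref{llt1}: if $L_t$ ever became much larger than $n^{1-\delta}$ in Phase 2, then the stripping process would require too many additional steps to reach the $k$-core, contradicting the duration bound. First I would establish $\tau - t_0 = O(n^{1-\delta/2})$ a.a.s.\ as follows. By Corollary~\ref{ccoresize}(a) the $k$-core has $\alpha n + K_1 n^{1-\delta/2} + O(n^{1-\delta}+n^{3/4})$ vertices a.a.s., while $G_{t_0}$ has exactly $\alpha n + 3K_1 n^{1-\delta/2}$ vertices by Definition~\ref{def:t0}, so Phase 2 deletes at most $(2K_1+o(1))n^{1-\delta/2}$ vertices. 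Each deleted vertex $v$ has degree at most $k-1$ when it reaches the front of $\msq_t$ and so contributes at most $k-1$ edge-removal steps of SLOW-STRIP. Since SLOW-STRIP removes exactly one edge per step, $\tau-t_0 \le C_2 n^{1-\delta/2}$ a.a.s.\ for a constant $C_2 = C_2(r,k)$.

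Next, fix any $t^* \in (t_0,\tau]$ and suppose, toward a contradiction, that $L_{t^*} \ge C n^{1-\delta}$ for a constant $C$ to be chosen later. By Lemma~\ref{llt1}, the process $\tilde L_s := L_s + K(s-t^*)n^{-\delta/2}$ is a non-negative supermartingale for $s \ge t^*$, with $\tilde L_{t^*} = L_{t^*}$ and increments bounded in absolute value by a constant $C_0$ (since $|L_{s+1}-L_s|\le r$). Set $T_1 := L_{t^*} n^{\delta/2}/(2K)$; note that the event $\{L_s=0\}$ at some $s\in[t^*,t^*+T_1]$ is equivalent to $\tilde L_s \le L_{t^*}/2$. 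I would apply an Azuma--Hoeffding argument to the martingale part of the Doob decomposition of $\tilde L_s - L_{t^*}$, stopped at $\tau$ so as to exploit the non-negativity of $L_s$, to show that with probability at least $1-\exp(-\Omega(L_{t^*}^2/T_1))$ one has $L_s>0$ for every $s\in[t^*, t^*+T_1]$, so that $\tau - t^* \ge T_1 = L_{t^*} n^{\delta/2}/(2K)$. For $L_{t^*} \ge Cn^{1-\delta}$ the exponent simplifies to $\Omega(Cn^{1-3\delta/2})$, which is $\omega(\log n)$ because $\delta<1/2<2/3$.

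Combining the two bounds, on the intersection of the good events of the previous two paragraphs we have $L_{t^*} n^{\delta/2}/(2K) \le \tau - t^* \le \tau - t_0 \le C_2 n^{1-\delta/2}$, and therefore $L_{t^*} \le 2 K C_2\, n^{1-\delta}$; this contradicts the assumption $L_{t^*}\ge Cn^{1-\delta}$ as soon as $C> 2KC_2$. A union bound over the at most $O(n^{1-\delta/2})$ values of $t^*\in(t_0,\tau]$ in Phase 2 then yields $L_t = O(n^{1-\delta})$ for all $t\in(t_0,\tau]$ a.a.s. The main obstacle is the martingale step: Lemma~\ref{llt1} controls only the upper tail of the conditional drift, so the predictable compensator of $\tilde L_s$ could in principle grow arbitrarily negative (if $L$ happens to drop by $\Theta(1)$ per step rather than by $\Theta(n^{-\delta/2})$), and this would spoil a naive Azuma--Hoeffding bound on $\min_s \tilde L_s$. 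The fix is to stop the process at $\tau$, using $L_s\ge 0$ to truncate pathological drift realisations, and then to bound the quadratic variation on the truncated window; that is where the exponent $n^{1-3\delta/2}$ comes from, and is the most delicate part of the argument.
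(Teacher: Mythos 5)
There is a genuine gap at the heart of your argument: the claim that $L_{t^*}\ge Cn^{1-\d}$ forces $\tau-t^*\ge L_{t^*}n^{\d/2}/(2K)$ cannot be extracted from Lemma~\ref{llt1}, because that lemma bounds the conditional drift only from \emph{above} ($\ex(L_{t+1}-L_t\mid\calf_t)\le -Kn^{-\d/2}$). A process satisfying this bound may perfectly well decrease by $\Theta(1)$ per step (e.g.\ deterministically by $1$), in which case your $\tilde L_s$ is still a non-negative supermartingale, yet $L$ dies out after only $\Theta(L_{t^*})$ steps. Stopping at $\tau$ and invoking $L_s\ge 0$ does not repair this: non-negativity constrains upward excursions of a supermartingale (via the maximal inequality), not how fast it can reach $0$, so no concentration bound of the form $\exp(-\Omega(L_{t^*}^2/T_1))$ is available from these hypotheses. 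Concretely, with drift $-1$ per step a value such as $L_{t^*}=n^{1-3\d/4}$ (larger than $n^{1-\d}$) is extinguished in $n^{1-3\d/4}=o(n^{1-\d/2})$ steps, so your comparison with the Phase~2 duration bound $\tau-t_0=O(n^{1-\d/2})$ (which is correct, and is Proposition~\ref{p:tau}) yields no contradiction. To make the time-to-extinction argument work you need the matching \emph{lower} bound on the drift, $\ex(L_{t+1}-L_t\mid\calf_t)\ge -O(n^{-\d/2})$ for $t\ge t_0$; this is exactly Lemma~\ref{llt3}, proved via the allocation-model computation of Lemma~\ref{c:degreeK}, and it is the ingredient your proposal silently assumes when bounding the compensator of $\tilde L_s$.

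For comparison, the paper's proof uses the two-sided drift bound of Lemma~\ref{llt3} together with Azuma's inequality in \emph{both} directions over the at most $k\g=O(n^{1-\d/2})$ steps of Phase~2: the downward deviation bound evaluated at $t=\tau$ (where $L_\tau=0$) forces $L_{t_0}=O(\g^2/n)=O(n^{1-\d})$, and the upward deviation bound then gives $L_t\le L_{t_0}+O(\g^2/n)=O(n^{1-\d})$ for all $t_0\le t\le\tau$. Once you have Lemma~\ref{llt3} in hand, your contradiction scheme can also be completed (it is essentially the lower bound on the stopping time in Lemma~\ref{lem:Lconcentration}(c), used later in Section~\ref{slsi}), but as written, resting only on Lemma~\ref{llt1}, the key probabilistic step fails.
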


We defer the proofs of Lemmas~\ref{llt1} and~\ref{llt2} to Section~\ref{slt}. 

We now show how these lemmas nearly yield a proof of Theorem~\ref{mt}(a). \fix{Note that} Lemma~\ref{lp0}  bounds {from below} the total number of hyperedges that are removed during Phase 2. Lemma~\ref{llt2}  bounds the total degree of each $S_i$, which bounds {from above} the number of hyperedges that are deleted in each iteration. This implies that Phase 2 requires many iterations; specifically, it yields  a lower bound of $\Omega(n^{-\d/2})$ on the stripping number, which is within a factor of $\log n$ of the bound in Theorem~\ref{mt}(a) \fix{(see subsection~\ref{s.mtlb} for details)}.
For the upper bound: Lemma~\ref{llt1} bounds the rate at which $L_t$ decreases and so implies that it drops to zero quickly; of course, when $L_t=0$ then the stripping process has ended \fix{(see subsection~\ref{s.mtub} for details)}.

\subsubsection{Theorem~\ref{mt}(a): proof of a weaker lower bound}
\lab{s.mtlb}

We begin with a lower bound  of $\Omega(n^{-\d/2})$ on the stripping number when $c=c_{r,k}+n^{-\d}$ for some $0<\d<\hf$. This lower bound is slightly weaker than that of Theorem~\ref{mt}(a), but the proof is much simpler. Furthermore, this weaker lower bound will be used in Section~\ref{smtb} to prove Theorem~\ref{mt}(b). The proof for the tight lower bound of Theorem~\ref{mt}(a) will be presented in Section~\ref{slsi}.

We run SLOW-STRIP and recall that this can be viewed as also running the parallel stripping process slowly. 
\fix{Exactly one hyperedge is deleted in each iteration of SLOW-STRIP. By Lemma~\ref{lp0}}, at least $\fix{K_2} n^{1-\d/2}$ hyperedges are removed from $G_t$ during Phase 2. By Lemma~\ref{llt2}, at most $|L_{t(j)}|=O(n^{1-\d})$ of them belong to $S_j$ for each $j$. Therefore, we require at least $\fix{K_2}n^{1-\d/2}/O(n^{1-\d})=\Omega(n^{\d/2})$ iterations of the parallel stripping process to remove them all.

This proves that \aas\ the stripping number of $AP_r(n,cn)$ is at least $\Omega(n^{\d/2})$; Corollary~\ref{ccon0} implies that the same is true of $\H_r(n,cn)$.
\proofend

\subsubsection{Theorem~\ref{mt}(a): proof of the upper bound}
\lab{s.mtub}

Now we turn to the upper bound on the stripping number when $c=c_{r,k}+n^{-\d}$ for some $0<\d<\hf$.

 We will focus on the change in $L_{t(i)}$, the sum, over all $v\in S_i$ of the degree of $v$ at the beginning of iteration $i$ of the parallel stripping process.

From iterations $t(i)$ to $t(i+1)-1$ of SLOW-STRIP, all hyperedges from all vertices in $S_i$ must be deleted. One hyperedge is removed in each iteration, and it touches \fix{at least one and} at most $\dd$ members of $S_i$.  Thus 
\be
\inv{\dd} L_{t(i)} \le t(i+1)-t(i)\le L_{t(i)}. \lab{eq:ti}
\ee

\fix{Let $B$ and $K$ be constants specified in Lemma~\ref{llt1}.} Let $\tau^*$ be the first step \fix{$t$ such that $t\ge t(B)$ and} $\ex(L_{t+1}{\mid \calf_t})> L_t-Kn^{-\d/2}$; if there is no such step then we set $\tau^*=\tau$.  \fix{Then,  for all $t(B)\le t<\tau^*$,  
 \[
\ex(L_{t+1}\mid \calf_t) \le L_t-Kn^{-\d/2}. 
 \]

 Taking conditional expectation on $\calf_{t(i)}$ of both sides, for $i\ge B$, yields
 \[
\ex(L_{t+1}\mid \calf_{t(i)}) \le \ex(L_t\mid \calf_{t(i)})-Kn^{-\d/2},\quad \mbox{for all}\  t(i) \leq t <\tau^*.
 \]
\fix{If $t(i+1)<\tau^*$ then} inductively applying the above for all $t(i)\le t\le t(i+1)$ we get}
\[
\ex(L_{t(i+1)}|\calf_{t(i)})\leq L_{t(i)}-\Big(t(i+1)-t(i)\Big)Kn^{-\d/2}\leq L_{t(i)}-\inv{\dd} L_{t(i)}Kn^{-\d/2}
=L_{t(i)}\left(1-\frac{K}{\dd}n^{-\d/2}\right).
\]

Define the random process $(\LL_i)_{i\ge 0}$ as follows. For all $i$ such that $t(i)\le \tau^*$, let $\LL_i=L_{t(i)}$ and for all $i$ such that $t(i)>\tau^*$,  define $\LL_{i}=\LL_{i-1}(1-(K/\dd) n^{-\d/2})$. Let $T$ denote the minimum integer such that $\LL_{T}\le 0$; thus \aas $\t=t(T)$. {By Lemma~\ref{llt1}} a.a.s.\ $\tau^*=\tau$, and so a.a.s.\ $L_{t(i)}=\LL_i$ for all $0\le i\le T$ and $L_{t(T)}=\LL_T=0$. Hence, we only need to obtain an upper bound for $T$. Now we have that for every $i\ge B$,
\[
\ex (\LL_{i+1}\mid \LL_{i})\leq  \LL_{i}\left(1-\frac{K}{\dd}n^{-\d/2}\right).
\]
Taking expectation on both sides we obtain that
\[
\ex \LL_{i+1}\leq \ex \LL_{i}\left(1-\frac{K}{\dd}n^{-\d/2}\right).
\]
Hence, as $\LL_B\le n$, for each $i\ge B$,
\[\ex \LL_{i}\leq n\left(1-\frac{K}{\dd}n^{-\d/2}\right)^{i-B}.\]
Thus, for $i>B+\frac{2\dd}{K} n^{\d/2}\log n$, $\ex(\LL_{i})=o(1)$ and so \aas $T\le B+(2\dd/K)n^{\d/2}\log n$. This implies that  a.a.s.\ the process of $(\LL_i)_{i\ge 0}$ reaches $\LL_i\le 0$ within $B+(2\dd/K)n^{\d/2}\log n$ iterations and so a.a.s.\ the parallel stripping process halts within $B+(2\dd/K)n^{\d/2}\log n$ iterations.} Therefore \aas the stripping number of $AP_r(n,cn)$ is at most
$B+\frac{2\dd}{K} n^{\d/2}\log n=O(n^{\d/2}\log n)$; Corollary~\ref{ccon0} implies that the same is true of $\H_r(n,cn)$.
\proofend

\subsection{Bounds on $L_i$: proof of Lemmas~\ref{llt1} and~\ref{llt2}}\lab{slt}

In this section, we prove Lemmas~\ref{llt1} and~\ref{llt2}. So throughout, we have  $c=c_{r,k}+n^{-\d}$ for some $0<\d<\hf$.

Let $H\in AP_{\dd}(n,cn)$ and run the SLOW-STRIP algorithm on $H$.  Recall from Definition~\ref{def:t0} that
\[\g=3\fix{K_2}n^{1-\d/2}.\]
and $t_0$ is the first iteration of SLOW-STRIP in which the number of \fix{hyperedges} in the remaining \fix{hypergraph} is exactly $\lfloor\fix{\b} n+\g\rfloor$. The Second Phase of SLOW-STRIP consists of iterations \fix{$t\ge t_0$}. Lemma~\ref{lp0} enables us to \fix{focus on the case when the algorithm does enter this phase}.

\begin{definition} We say that $G_{t_0}$ is {\em nice} if the number of \fix{heavy} vertices in $G_{t_0}$ is \fix{between $\a n$ and $\a n+2K_1n^{1-\d/2}+(r-1)\g$, and the total degree of light vertices in $G_{t_0}$ is at most $r\gamma$.}
\end{definition}

\fix{Each iteration of SLOW-STRIP deletes at most $r-1$ heavy vertices, and reduces $L_t$ by at most $r$.  Moreover, a.a.s.\ Phase 2 of SLOW-STRIP lasts less than $\gamma$ steps by Lemma~\ref{lp0}. So Corollary~\ref{ccoresize} immediately implies} that \aas $G_{t_0}$ is nice.

Recall that for every $t\ge 0$, $G_t$ is the hypergraph remaining at the beginning of iteration $t$ of SLOW-STRIP. The light vertices in $G_t$ are defined to be the vertices with degree less than $k$ and $L_t$ denotes the total degree of the light vertices in $G_t$. Recall that
$\tau$ is the iteration in which SLOW-STRIP halts.

The following proposition follows immediately from Corollary~\ref{ccoresize} and Lemma~\ref{lp0}.

\begin{proposition}\lab{p:tau}
A.a.s.\ $t_0+\gamma/3\le\tau\le \fix{ t_0+\gamma}$.
\end{proposition}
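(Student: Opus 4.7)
The plan rests on one bookkeeping identity about SLOW-STRIP: each iteration removes exactly one hyperedge and removes at most one vertex (only the current front vertex of $\msq_t$ can have its degree fall to $0$ in a given step). Hence $\tau-t_0$ equals the number of hyperedges deleted during Phase~2, and it bounds from above the number of vertices deleted during Phase~2.

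For the lower bound $\tau \ge t_0+\gamma/3$, I would simply invoke Lemma~\ref{lp0}: a.a.s.\ at least $\gamma/3$ vertices are removed in Phase~2, and since each iteration removes at most one vertex, Phase~2 comprises at least $\gamma/3$ iterations.

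For the upper bound $\tau \le t_0+k\gamma$, let $R=G_{t_0}\setminus\calc_k(H)$ denote the set of vertices deleted in Phase~2. By Corollary~\ref{ccoresize}(a), transferred to the AP-model via Lemma~\ref{lem:AP}, a.a.s.\ $|\calc_k(H)|\ge \alpha n + K_1 n^{1-\delta/2}+o(\gamma)$, so
\[
|R| \;\le\; \tfrac{2}{3}\gamma + o(\gamma) \;\le\; \gamma.
\]
Every $v\in R$ is processed exactly once, at the step it reaches the front of $\msq_t$. The key invariant is that a vertex enters $\msq_t$ only when its degree drops below $k$, and its degree can only (weakly) decrease between entering the queue and being processed; hence at the moment $v$ reaches the front, $\deg(v)\le k-1$. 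The processing of $v$ then consumes exactly $\deg(v)$ SLOW-STRIP iterations (one per remaining incident hyperedge) before $v$ is deleted, and every Phase~2 iteration is spent processing some member of $R$. Summing,
\[
\tau - t_0 \;=\; \sum_{v\in R}\bigl(\deg(v)\text{ when }v\text{ reaches the front}\bigr) \;\le\; (k-1)|R| \;<\; k\gamma.
\]

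The whole argument is clean bookkeeping once the queue invariant and the size bound $|R|\le\gamma$ are in hand; I do not anticipate a significant obstacle beyond correctly invoking Corollary~\ref{ccoresize}(a) in the AP-model through Lemma~\ref{lem:AP}.
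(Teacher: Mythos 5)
Your proof is correct and follows essentially the same route as the paper: the lower bound comes from Lemma~\ref{lp0} plus the fact that each SLOW-STRIP step deletes at most one vertex, and the upper bound from the fact that a.a.s.\ at most $\gamma$ vertices are removed in Phase~2 (via Corollary~\ref{ccoresize}) and each such vertex costs at most $k-1<k$ steps once it is at the front of the queue. Your extra bookkeeping (the queue invariant and charging each iteration to a vertex of $R$) just makes explicit what the paper states in one line.
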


 It will be convenient to define
\[\wtau=\min\{\tau,t_0+\gamma\},\]
and so Proposition~\ref{p:tau} implies that 
\be\lab{widetau}
\aas\ \t=\wtau.
\ee
The following proposition follows from the definition of $t_0$ and $\wtau$.

\begin{proposition}\lab{p:Gt} Assume $\tau>t_0$ {and $G_{t_0}$ is nice}.
Then for all $t_0\le t\le \wtau$, 
\begin{enumerate}
\item[(a)] the number of \fix{heavy} vertices in $G_t$ is $\alpha n+O(\gamma)$ and the number of hyperedges in $G_t$ is $\beta n+O(\gamma)$;
\item[(b)] $L_t=O(\g)$. 
\end{enumerate}
\end{proposition}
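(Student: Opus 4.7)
The plan is to observe that each iteration of SLOW-STRIP removes exactly one hyperedge and at most one vertex, and that $L_t$ can grow by only a bounded amount per step. Since $\wtau-t_0\le k\gamma$, we only have $O(\gamma)$ steps to account for.

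For part (a), recall from the definition of $t_0$ that $G_{t_0}$ has exactly $\lfloor\a n+\g\rfloor$ vertices, and from the nice hypothesis that it has $\b n+O(\g)$ hyperedges. At every step of SLOW-STRIP exactly one hyperedge is removed, and a vertex (bin) is removed only when its last vertex-copy is deleted, so at most one vertex is removed per step. Consequently, for any $t_0\le t\le\wtau$, the numbers of vertices and hyperedges removed between steps $t_0$ and $t$ are each at most $\wtau-t_0\le k\gamma=O(\g)$, yielding the claimed estimates $\a n+O(\g)$ and $\b n+O(\g)$.

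For part (b), first note that $G_{t_0}$ has at most $\gamma$ light vertices, each of degree at most $k-1$, so $L_{t_0}\le(k-1)\gamma=O(\gamma)$. Now fix a step $t$ and consider the single hyperedge $e$ removed in that step, which contains $r$ vertex-copies. Let $j$ be the number of these copies that lie in currently light vertices; removing them decreases $L_t$ by $j$. Each of the remaining $r-j$ copies lies in a heavy bin, and such a bin contributes to $L_{t+1}$ only if its degree drops from $k$ to $k-1$, in which case it contributes $k-1$. Therefore
\[
L_{t+1}-L_t\le -j+(r-j)(k-1)\le r(k-1).
\]
Iterating this bound from $t_0$ up to $t\le\wtau$ gives
\[
L_t\le L_{t_0}+r(k-1)(t-t_0)\le (k-1)\g+r(k-1)\cdot k\g=O(\g),
\]
which proves (b). No step of this argument presents a real obstacle; the deterministic bounds on the per-step changes of the vertex count, hyperedge count, and $L_t$ do all the work, and the niceness of $G_{t_0}$ together with $\wtau-t_0\le k\g$ absorbs the error terms.
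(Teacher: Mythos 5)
Your proof is correct and follows essentially the same route as the paper: use the definition of $t_0$ and niceness of $G_{t_0}$ for the initial counts, note $\wtau-t_0\le k\g$, and bound the per-step changes of the vertex count, hyperedge count, and $L_t$ by constants. Your explicit per-step bound $L_{t+1}-L_t\le r(k-1)$ is just a slightly more detailed version of the paper's observation that $L_t$ changes by $O(1)$ each step.
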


\begin{proof} By definition, the number of \fix{hyperedges} in $G_{t_0}$ is $\fix{\b} n+\g$. Since $G_{t_0}$ is nice, the number of \fix{heavy vertices} in $G_{t_0}$ is $\fix{\a n}+O(\g)$. Moreover, by the definition of $\wtau$, $\wtau-t_0=O(\g)$. We remove one hyperedge in each step, so for all $t_0\le t\le \wtau$, the number of hyperedges \fix{and heavy vertices} in $G_t$ change by $O(\g)$ from those in $G_{t_0}$. This immediately confirms part (a). \fix{Moreover, since $G_{t_0}$ is nice, $L_{t_0}=O(\g)$.  As $L_t$ changes by at most $k$ in each step, we have $L_t=O(\g)$ for every $t_0\le t\le \wtau$ and this proves part (b).}
\end{proof}

\subsubsection{Creating light vertices}\lab{sec:deg}
The key to analyzing the evolution of $L_t$ is determining the rate at which new  vertices are added to $\msq_t$; i.e.\ become light.  We begin by examining the distribution from Observation~\ref{oft}.

Given positive integers $N$, $D$ and $k\ge 0$ such that $D\ge kN$, define $Multi(N,D,k)$, the {\em truncated multinomial distribution}, to be the probability space consisting of integer vectors ${\bf X}=(X_1,\ldots,X_N)$ with domain ${\mathcal I}_k:=\{{\bf d}=(d_1,\ldots,d_N):\ \sum_{i=1}^N d_i=D,\ d_i\ge k,\ \forall i\in[N]\}$, such that for any ${\bf d}\in {\mathcal I}_k$,
$$
\pr({\bf X}={\bf d})=\frac{D!}{N^D \Psi}\prod_{i\in [N]}\frac{1}{d_i! }=\frac{\prod_{i\in[N]}1/d_i!}{\sum_{{\bf d}\in{\mathcal I}_k}\prod_{i\in[N]}1/d_i!},
$$
where
$$
\Psi=\sum_{{\bf d}\in {\mathcal I}_k}\frac{D!}{N^D}\prod_{i\in [N]}\frac{1}{ d_i!}.
$$

The degree distribution of the heavy vertices of $G_t$, conditional on $\calf_t$,
is exactly $Multi(N_t,D_t,k)$, by Observation~\ref{oft}.
It was proved in~\cite[Lemma 1]{CW} \fix{(also appeared in~\cite[eq.\ (7)]{AFP})} that the truncated multinomial variables can be well approximated by truncated Poisson random variables \fix{with expectation $D_t/N_t$}. 
The result is stated as follows. 
Recall the definition of $f_k(\la)$ from~(\ref{e.fk}). 
 We define:
\be\lab{e.gkx}
g_k(\la)=\la f_{k-1}(\la)/f_k(\la).
\ee
\fix{
Note that $g_k(\la)$ is the expectation of a Poisson random variable with parameter $\la$ truncated at $k$.
}
\begin{proposition}\lab{p:Poisson}
\fix{Let $k\ge 0$ be fixed, and $N$ and $D$ satisfy $D-kN=\Omega(N)$}. Assume ${\bf X}\sim Multi(N,D,k)$. For any $j\ge k$, let $\rho_j$ denote the proportion of $X$ that equals $j$. Then, {with probability $1-o(1/\fix{N})$,}
\begin{equation}
\rho_j=e^{-\la}\frac{\la^j}{f_k(\la)j!}+O(\fix{N^{-1/2}{\log N}}), \lab{PoissonApprox}
\end{equation}
where $\la$ satisfies {$g_k(\la)=D/N$}.
\end{proposition}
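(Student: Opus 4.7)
The plan is to realize $Multi(N,D,k)$ as the conditional distribution of a product of i.i.d.\ truncated Poissons, and then transfer concentration from the unconditional product measure to the conditional one. Let $Y_1,\ldots,Y_N$ be i.i.d.\ with each $Y_i$ distributed as $\Po(\la)$ conditioned on $\{Y_i\ge k\}$, where $\la$ is chosen so that $\ex Y_i=g_k(\la)=D/N$. Such a $\la$ exists and lies in a bounded positive interval, because $g_k$ is strictly increasing from $k$ (as $\la\to 0^+$) to $\infty$, and the hypothesis $D-kN=\Omega(N)$ forces $D/N-k=\Omega(1)$. A direct computation from the product form of the Poisson mass function shows that $(Y_1,\ldots,Y_N)$ conditioned on $\cale:=\{\sum_i Y_i=D\}$ has exactly the distribution $Multi(N,D,k)$, with each $Y_i$ playing the role of $X_i$.

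For a fixed $j\ge k$, write $\rho_j=\frac{1}{N}\sum_{i=1}^N\mathbf{1}[Y_i=j]$, an empirical average of i.i.d.\ indicators with common mean $\pi_j:=e^{-\la}\la^j/(f_k(\la)\,j!)$. Hoeffding's inequality yields
\[
\pr\bigl(|\rho_j-\pi_j|>C\sqrt{\log N/N}\bigr)\le 2e^{-2C^2\log N},
\]
which is $o(N^{-3/2})$ for a sufficiently large constant $C$. Separately, since $\ex\sum_i Y_i=D$ by our choice of $\la$, and each $Y_i$ is supported on the aperiodic lattice $\{k,k+1,\ldots\}$ with strictly positive, uniformly bounded variance $\sigma^2$, the classical local central limit theorem (e.g.\ Petrov) gives $\pr(\cale)=(1+o(1))/(\sigma\sqrt{2\pi N})=\Omega(N^{-1/2})$. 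Combining,
\[
\pr\bigl(|\rho_j-\pi_j|>C\sqrt{\log N/N}\,\big|\,\cale\bigr)\le \frac{o(N^{-3/2})}{\Omega(N^{-1/2})}=o(1/N),
\]
which is at least as strong as the proposition once $\sqrt{\log N}$ is absorbed into $\log N$.

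The main obstacle is verifying the hypotheses of the local CLT, in particular that $\sigma^2$ is bounded away from $0$ uniformly in $N$. This follows because $\la$ stays in a compact subinterval of $(0,\infty)$, a consequence of $g_k$ being a continuous monotone bijection onto $(k,\infty)$ together with $D/N-k=\Omega(1)$. Once these verifications are in place, both the concentration estimate and the local CLT are standard, and the Poisson approximation transfers cleanly from the product measure to $Multi(N,D,k)$.
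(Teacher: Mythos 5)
The paper never proves Proposition~\ref{p:Poisson}: it is quoted from \cite[Lemma 1]{CW} (see also \cite[eq.~(7)]{AFP}), so there is no in-paper argument to compare with; your proposal is essentially the standard route taken in those references, and it is correct in its main lines. The realization of $Multi(N,D,k)$ as i.i.d.\ $k$-truncated $\Po(\la)$ variables conditioned on $\{\sum_i Y_i=D\}$ is right (the factor $\la^{\sum_i d_i}=\la^D$ cancels on the event, so the conditional law matches the stated density and is independent of $\la$), the choice $g_k(\la)=D/N$ is exactly what centres the sum at $D$ so that the local CLT applies at the mean, the support $\{k,k+1,\dots\}$ has span $1$, and the transfer $\pr(A\mid\cale)\le\pr(A)/\pr(\cale)$ with Hoeffding giving $N^{-2C^2}$ against $\pr(\cale)=\Omega(N^{-1/2})$ does yield the $1-o(1/N)$ bound with error $O(\sqrt{\log N/N})$, which is stronger than claimed. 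The one step you should tighten is the claim that $\la$ lies in a compact subinterval of $(0,\infty)$ ``as a consequence of $D/N-k=\Omega(1)$'': that hypothesis only bounds $\la$ away from $0$; boundedness of $\la$ from above (equivalently $D/N=O(1)$) is what you need for the uniformly bounded variance and for a local CLT applied uniformly over the triangular array $\la=\la_N$, and it does not follow from $D-kN=\Omega(N)$ alone. Either add $D/N=O(1)$ as a hypothesis (it holds wherever the proposition is used in the paper, since $D_t\le rcn$ and $N_t=\Omega(n)$ there), or argue separately for the regime $D/N\to\infty$. Finally, if one reads ``for any $j\ge k$'' as uniform over $j$, a union bound over the at most $D+1$ relevant values of $j$, absorbed by taking the Hoeffding constant larger, closes that reading as well.
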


 By Lemma~\ref{l:gk} (below), $g_k(x)$ is an increasing function on $x>0$. It is easy to show that $\lim_{x\to 0} g_k(x)=k$. Hence, for any $D> kN$, there is a unique $\la$ that satisfies $\la f_{k-1}(\la)/f_k(\la)=D/N$ in the above proposition. 

It is easy to check from the definition of $\mu_{\dd,k}$ above~\eqn{murk} that
\begin{equation}\lab{egmrk}
g_k(\mu_{\dd,k})=r\beta/\alpha.
\end{equation}
\fix{Define:
\be
\zeta=\zeta_{\dd,k}=\dd\beta/\alpha .\lab{zeta}
\ee
\fix{The following lemma justifies that $\zeta>k$.  The proof consists of some tedious calculus, so we defer it to the Appendix.}
\begin{lemma} \lab{l:rho}
Suppose $\dd,k\ge 2$ and $(\dd,k)\neq (2,2)$. Then, $k<\zeta<\dd(k-1)$.
\end{lemma}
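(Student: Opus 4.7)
The plan is to use the stationary condition defining $\mu_{\dd,k}$ to express $\zeta$ in closed form, and then to bound it above and below.

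By the definitions in~\eqn{alpha-beta} and~\eqn{e.gkx},
\[
\zeta = \dd\beta/\alpha = \mu_{\dd,k}\, f_{k-1}(\mu_{\dd,k})/f_k(\mu_{\dd,k}) = g_k(\mu_{\dd,k}).
\]
The lower bound is then immediate: $g_k(\mu)$ is the conditional mean $\ex[X \mid X \ge k]$ of $X \sim \Po(\mu)$, and since $\mu_{\dd,k} > 0$ every integer $\ge k$ has positive mass, so $\ex[X \mid X \ge k] > k$.

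For the upper bound, I first simplify $\zeta$ using the first-order condition $h'(\mu_{\dd,k}) = 0$ for $h(\mu) = \mu/f_{k-1}(\mu)^{\dd-1}$. From $f'_{k-1}(\mu) = f_{k-2}(\mu) - f_{k-1}(\mu)$, one has $h'/h = 1/\mu - (\dd-1)\bigl(f_{k-2}(\mu) - f_{k-1}(\mu)\bigr)/f_{k-1}(\mu)$; setting this to zero and using the identity $\mu\bigl(f_{k-2}(\mu) - f_{k-1}(\mu)\bigr) = (k-1)\bigl(f_{k-1}(\mu) - f_k(\mu)\bigr)$ (both equal $e^{-\mu}\mu^{k-1}/(k-2)!$) gives
\[
f_{k-1}(\mu_{\dd,k}) = A\,\bigl(f_{k-1}(\mu_{\dd,k}) - f_k(\mu_{\dd,k})\bigr), \qquad A := (\dd-1)(k-1).
\]
Since $(\dd,k) \neq (2,2)$ forces $A \ge 2$, this yields $f_{k-1}/f_k = A/(A-1)$ at $\mu_{\dd,k}$, whence $\zeta = \mu_{\dd,k} A/(A-1)$. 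Thus $\zeta < \dd(k-1)$ is equivalent to $\mu_{\dd,k} < \mu^* := \dd(k-1)(A-1)/A = \dd(k-1) - \dd/(\dd-1)$. Setting $R(\mu) := f_{k-1}(\mu)/\bigl(f_{k-1}(\mu) - f_k(\mu)\bigr)$, an elementary series expansion gives $R(\mu) = \sum_{i\ge 0}\mu^i/\bigl(k(k+1)\cdots(k+i-1)\bigr)$, which is strictly increasing in $\mu$. Since $R(\mu_{\dd,k}) = A$ by the stationary relation, it suffices to prove $R(\mu^*) > A$.

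This last inequality is the main obstacle. It is very loose asymptotically, since $R(\mu^*)$ grows essentially like the exponential factor $(k-1)!\,e^{\mu^*}/(\mu^*)^{k-1}$ whereas $A$ is only linear in $(\dd-1)(k-1)$; however the margin is thin for some small values (e.g.\ $R(\mu^*)\approx 2.19$ vs.\ $A=2$ at $(\dd,k)=(2,3)$), so one cannot rely on slack estimates alone. My plan is to verify $R(\mu^*) > A$ by direct computation. For $k = 2$ the stationary equation simplifies to the closed form $\mu_{\dd,2} = \log \dd$, reducing the claim to the elementary inequality $\log \dd < \dd(\dd-2)/(\dd-1)$ for $\dd \ge 3$, which follows by comparing derivatives. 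For the remaining cases, I would expand $R(\mu^*)$ as its power series and keep enough partial sums to beat $A$, combined where convenient with a Chebyshev-type lower bound on $f_{k-1}(\mu^*)$ (valid since $\mu^*$ comfortably exceeds $k-1$ away from the boundary). This is a tedious but conceptually routine calculation, which I expect to be the most laborious part of the proof.
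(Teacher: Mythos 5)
Your reduction is correct and in fact retraces the paper's own route: the lower bound via $\zeta=g_k(\mu_{\dd,k})>k$ is fine (the paper argues via monotonicity of $g_k$ and $\lim_{x\to0}g_k(x)=k$, you via the truncated-mean interpretation, same content), and your manipulation of $h'(\mu_{\dd,k})=0$ is exactly the identity~\eqref{critical}, giving $f_{k-1}/f_k=A/(A-1)$ with $A=(\dd-1)(k-1)$ and hence the same equivalence $\zeta<\dd(k-1)\iff\mu_{\dd,k}<\mu^*=\dd(k-1)-\dd/(\dd-1)$. Your $R(\mu)$ is just $(k-1)/h(\mu)$ for the decreasing function $h$ of Lemma~\ref{l40}, so the inequality $R(\mu^*)>A$ you are left with \emph{is} Lemma~\ref{l40}. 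The genuine gap is that you do not prove it: ``expand the power series and keep enough partial sums, combined with a Chebyshev-type lower bound'' is a plan, not an argument, and the inequality has to hold uniformly over the infinite family of pairs $(\dd,k)$ with $\dd,k\ge2$, $(\dd,k)\neq(2,2)$, so a finite numerical check cannot close it. This is precisely where the paper has to work: it proves $f_{k-1}(\mu)>1/2$ once $\mu$ exceeds roughly $k$ (Claim~\ref{fkinq}), reduces to showing $(\dd-1)e^{-x_1}x_1^{k-1}/(k-2)!<1/2$ at $x_1=\dd(k-2)$, and then needs monotonicity in $\dd$ and in $k$ of the auxiliary quantity $(\dd-1)\phi(x,\dd)$ to reduce to finitely many base pairs, several of which (where, as you note, the margin is thin, e.g.\ $(\dd,k)=(2,3)$ with $R(\mu^*)\approx2.19$ vs.\ $A=2$) must be checked directly. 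Without some such uniform argument the crux of the lemma is missing.

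Moreover, the one case you claim to settle completely, $k=2$, rests on a false identity. The stationarity condition for $k=2$ reads $1-e^{-\mu}=(\dd-1)\mu e^{-\mu}$, i.e.\ $e^{\mu}=1+(\dd-1)\mu$, whose positive root is not $\log\dd$ (for $\dd=3$ it is $\approx1.256$ while $\log3\approx1.099$; note $\mu_{3,2}\approx1.256$ is what reproduces the known threshold $c_{3,2}\approx0.818$). The correct reduction for $k=2$ is $\mu^*=\dd(\dd-2)/(\dd-1)>2\log(\dd-1)$, since $1+(\dd-1)\mu^*=(\dd-1)^2$; this is still elementary for $\dd\ge3$, but it is not the inequality you wrote, and the slip underlines that the ``routine computation'' you defer is exactly the delicate part of the proof.
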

}

 Thus,
Proposition~\ref{p:Poisson} and Corollary~\ref{ccoresize} imply that \aas\ the proportion of degree $k$ vertices in the $k$-core is approximately
\begin{equation}
\bar \rho_{r,k}=e^{-\mu_{\dd,k}}\frac{\mu_{\dd,k}^k}{f_k(\mu_{\dd,k})k!}.\lab{pdk}
\end{equation}

The following technical lemma has appeared in several other papers (e.g.\ in~\cite{amxor}), but as the proof is short we include it \fix{in the Appendix.}
\begin{lemma}\lab{l:degreeK}
For every $\dd,k\ge 2$ $(\dd,k)\neq (2,2)$,
$$
\frac{k \bar\rho_{\dd,k}\cdot\alpha}{\dd\beta}=\frac{1}{(\dd-1)(k-1)}.
$$
\end{lemma}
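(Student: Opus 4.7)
The proof will be a direct calculation using the first-order optimality condition for $\mu=\mu_{r,k}$. Let me sketch the three steps.

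First, I would substitute the definitions of $\bar\rho_{r,k}$, $\alpha$, and $\beta$ directly into the left-hand side. Writing $\mu = \mu_{r,k}$ for brevity, we have $k\bar\rho_{r,k}\cdot\alpha = k \cdot e^{-\mu}\mu^k/(f_k(\mu)k!) \cdot f_k(\mu) = \mu^k e^{-\mu}/(k-1)!$, and $r\beta = \mu f_{k-1}(\mu)$. The $f_k(\mu)$ factors cancel, yielding
\[
\frac{k\bar\rho_{r,k}\cdot\alpha}{r\beta} = \frac{\mu^{k-1}e^{-\mu}/(k-1)!}{f_{k-1}(\mu)}.
\]
So the claim reduces to showing that $\mu^{k-1}e^{-\mu}/(k-1)! = f_{k-1}(\mu)/((r-1)(k-1))$ at $\mu=\mu_{r,k}$.

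Second, I would use the fact that $\mu_{r,k}$ is defined as the minimizer of $h(\mu) = \mu/f_{k-1}(\mu)^{r-1}$, so $h'(\mu_{r,k})=0$. A routine differentiation gives
\[
h'(\mu) = \frac{f_{k-1}(\mu) - (r-1)\mu f_{k-1}'(\mu)}{f_{k-1}(\mu)^{r}},
\]
so the critical-point condition is simply $f_{k-1}(\mu) = (r-1)\mu f_{k-1}'(\mu)$.

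Third, I would compute $f_{k-1}'(\mu)$ explicitly from the definition $f_{k-1}(\mu) = e^{-\mu}\sum_{i\ge k-1}\mu^i/i!$. Term-by-term differentiation and reindexing shows that the sum telescopes to $f_{k-1}'(\mu) = e^{-\mu}\mu^{k-2}/(k-2)!$ (the standard identity that the derivative of the upper Poisson tail equals the point mass at the truncation threshold minus one). Substituting this into the critical-point condition yields
\[
f_{k-1}(\mu_{r,k}) = (r-1)\mu_{r,k} \cdot \frac{e^{-\mu_{r,k}}\mu_{r,k}^{k-2}}{(k-2)!} = (r-1)(k-1)\cdot\frac{e^{-\mu_{r,k}}\mu_{r,k}^{k-1}}{(k-1)!},
\]
which is exactly the identity needed in the first step.

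There is no real obstacle here; the only thing to be a bit careful about is the derivative computation for $f_{k-1}$ (making sure the index shift is correct so that the negative sum cancels all but the boundary term). Once that is in hand, the whole statement is a one-line manipulation combined with the stationarity of $h$ at $\mu_{r,k}$.
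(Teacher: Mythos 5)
Your proposal is correct and follows essentially the same route as the paper's proof: both rest on the stationarity condition $h'(\mu_{r,k})=0$ for the minimizer of $h(\mu)=\mu/f_{k-1}(\mu)^{r-1}$, combined with the boundary-term identity $f_{k-1}'(\mu)=e^{-\mu}\mu^{k-2}/(k-2)!$ (which the paper obtains via $f_k'=f_{k-1}-f_k$ rather than by direct telescoping), and then a direct substitution of the definitions of $\bar\rho_{r,k}$, $\alpha$, $\beta$. The only difference is cosmetic bookkeeping, so no further changes are needed.
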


 And now we can prove the key lemma governing the evolution of $L_t$.  Recall from above~\eqn{widetau} that $\wtau=\fix{\min}\{\t,t_0+\g\}$.

\begin{lemma}\lab{c:degreeK}  Assume that $\t>t_0$ and $G_{t_0}$ is nice.  For any $t_0\le t\le \wtau$, $\calf_t$ must be such that: for each of the $r-1$ random vertex-copies chosen during iteration $t$, the probability, conditional on $\calf_t$, that the vertex-copy belongs to a heavy bin which becomes light in this iteration, is $1/(\dd-1)(k-1)+O(\gamma/n)$.
\end{lemma}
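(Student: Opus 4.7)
The plan is to compute the conditional probability directly, by combining Observation~\ref{oft} with the Poisson approximation of the truncated multinomial (Proposition~\ref{p:Poisson}), and then identify the limiting value via Lemma~\ref{l:degreeK}. First I would control the aggregate parameters: assuming $G_{t_0}$ is nice and $t_0\le t\le\wtau$, Proposition~\ref{p:Gt} gives that the number of remaining vertices is $\a n+O(\g)$ and the number of remaining hyperedges $m_t$ is $\b n+O(\g)$; since each light vertex contributes at least one to $L_t=O(\g)$, the number of heavy bins satisfies $N_t=\a n+O(\g)$, and $D_t=\dd m_t-L_t=\dd\b n+O(\g)$. Hence $D_t/N_t=\dd\b/\a+O(\g/n)$, and because $g_k$ is smooth and strictly increasing at $\mu_{\dd,k}$ with $g_k(\mu_{\dd,k})=\dd\b/\a$ by~\eqref{egmrk}, the parameter $\la_t$ defined by $g_k(\la_t)=D_t/N_t$ satisfies $\la_t=\mu_{\dd,k}+O(\g/n)$.

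Next I would apply Observation~\ref{oft}: conditional on $\calf_t$ the empirical degree distribution of the heavy bins is exactly $Multi(N_t,D_t,k)$. Proposition~\ref{p:Poisson} then implies that with probability $1-o(1/N_t)$ the fraction $\rho_k$ of heavy bins of degree exactly $k$ equals $e^{-\la_t}\la_t^k/(f_k(\la_t)k!)+O(n^{-1/2}\log n)$. By continuity of this expression in $\la$ combined with $\la_t=\mu_{\dd,k}+O(\g/n)$ (and noting $n^{-1/2}\log n=o(\g/n)$ for $\d<1$), this simplifies to $\rho_k=\bar\rho_{\dd,k}+O(\g/n)$. A union bound over the $O(\g)$ relevant values of $t$ confirms the bound holds simultaneously, so a.a.s.\ realizations of $\calf_t$ enjoy this property.

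Finally, I would compute the probability itself. The partition of the remaining $\dd m_t$ vertex-copies is uniform conditional on $\calf_t$ --- this is the partition analogue of Observation~\ref{oft} and follows by the same deferred-decisions argument, since SLOW-STRIP only exposes complete parts --- and it is orthogonal to the heavy-copy allocation. When processing the queue-front vertex $v$, a uniformly random part containing a specific vertex-copy $x$ of $v$ is selected; by the uniform partition, each of the $r-1$ companion copies is marginally uniform over the $\dd m_t-1$ non-$x$ copies. Up to an $O(1/n)$ contribution from non-simple events (two copies of the same bin inside the same part), the event that the companion sits in a heavy bin that becomes light in this iteration is equivalent to its sitting in a heavy bin of degree exactly $k$, and the corresponding probability equals
\[
\frac{k\rho_k N_t}{\dd m_t-1}=\frac{k\bar\rho_{\dd,k}\a}{\dd\b}+O(\g/n)=\frac{1}{(\dd-1)(k-1)}+O(\g/n),
\]
where the last equality is Lemma~\ref{l:degreeK}. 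The step I expect to be the main obstacle is the careful justification that the remaining partition is uniform conditional on $\calf_t$: while this parallels Observation~\ref{oft}, it requires checking that the partial information SLOW-STRIP reveals during the first $t$ iterations does not bias the joint law of the remaining partition and allocation beyond what $\calf_t$ already encodes.
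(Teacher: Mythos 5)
Your proposal follows essentially the same route as the paper's proof: Observation~\ref{oft} together with the orthogonality of partition and allocation makes each companion copy uniform, Proposition~\ref{p:Poisson} combined with $D_t/N_t=\dd\b/\a+O(\g/n)$ (from Proposition~\ref{p:Gt} and niceness, so $\la^*=\mu_{\dd,k}+O(\g/n)$ by monotonicity of $g_k$) gives the degree-$k$ proportion, and Lemma~\ref{l:degreeK} identifies the constant $1/(\dd-1)(k-1)$. The only wobble is the closing a.a.s./union-bound framing: the empirical fraction $\rho_k$ is random given $\calf_t$, not a property of $\calf_t$, so instead one should absorb the $o(1/N_t)$ failure probability of Proposition~\ref{p:Poisson} directly into the error term of the conditional probability, which is exactly how the paper handles it.
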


\begin{proof}   By Observation~\ref{oft}, we can treat the allocation of the $D_t$ heavy vertex-copies to the $N_t$ heavy bins as a uniform allocation subject to each bin \fix{receiving} at least $k$ copies.  We first choose the $r-1$ vertex-copies, then we choose the allocation.  Let $x$ be any one vertex-copy and let $b$ be the bin to which $x$ is allocated; we wish to bound the probability  that $|b|=k$. 

Consider a different experiment: first conduct the allocation, and then choose $x$ uniformly from amongst all vertex-copies, and let $b$ be the bin to which $x$ was allocated; clearly these two experiments are equivalent ways to choose $b$. \fix{Let $\la^*$  be the unique root of $g_k(\la)=D_t/N_t$.} By Proposition~\ref{p:Poisson} the probability that we choose $x$ from a bin of size $k$ is:
\[q=\frac{k}{D_t}\times e^{-\la^*}\frac{{\la^*}^k}{f_k(\la^*)k!}N_t + O(n^{-1/2}\log n).\]
 (Note that the $o(1/n)$ failure probability in Proposition~\ref{p:Poisson} is absorbed into the $O(n^{-1/2}\log n)$ term.)

Since $G_{t_0}$ is nice, Proposition~\ref{p:Gt} yields:
\begin{equation}
N_t=\alpha n+O(\gamma),\ \ D_t=\dd\beta n+O(\gamma). \lab{vertex-edge}
\end{equation}
So
\[g_k(\fix{\la^*})=\fix{\frac{r\b n+O(\g)}{\a n+O(\g)}=\frac{r\b}{\a}+O(\g/n)}=g_k(\mu_{r,k})+O(\g/n).\]
\fix{Let $\d=g'_k(\mu_{r,k})$; recall that $g'_k$ and $\mu_{r,k}$ are independent of $\g,n$ and so $\d$ is a constant independent of $\g,n$.  Lemma~\ref{l:gk} (below) implies that $\d>0$.  Thus $g'_k(\la)>\hf\d$ for all $\la=\mu_{r,k}+o(n)$; similarly,  Lemma~\ref{l:gk} implies that for any $\z>0$, $g_k'(\la)>0$ for all $\la>\z n$.  These two bounds, along with~(\ref{vertex-edge}) easily imply that 
\[\la^*=\mu_{r,k}+O(\g/n).\]}  
Setting $h(x)=e^{-x}\frac{x^k}{f_k(x)k!}$,  recalling~(\ref{pdk}), and noting that $h'(\mu_{\dd,k})=O(1)$,  and applying Lemma~\ref{l:degreeK}, we have
\bean
q&=&\frac{kN_t}{D_t}h(\fix{\la^*})+O(n^{-1/2}{\log n})=\frac{k\a}{r\b}h(\mu_{r,k})+O(\g/n)
=\frac{k\a \bar\r_{r,k}}{r\b}+O(\g/n)\\
&=&\frac{1}{(\dd-1)(k-1)}\fix{+O(\g/n)}.
\eean
 \end{proof}

We close this subsection with a technical lemma proving the monotonicity of $g_k(x)$. \fix{The proof will be given in the Appendix.}

\begin{lemma}\lab{l:gk}
For any $x>0$,
$g'_k(x)>0$.
\end{lemma}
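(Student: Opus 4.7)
The plan is to recognize that $g_k(\lambda)=\lambda f_{k-1}(\lambda)/f_k(\lambda)$ is exactly the conditional expectation $\E[Y\mid Y\ge k]$ where $Y\sim\Po(\lambda)$; this is already stated in the paragraph following the definition of $g_k$. Once in that form, monotonicity is morally clear (as $\lambda$ grows the Poisson shifts to the right, and conditioning on $\{Y\ge k\}$ preserves that), and I will convert this intuition into a one-line identity that exhibits $g_k'(\lambda)$ as a positive quantity.

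Concretely, first I would verify the probabilistic interpretation by the standard computation $\sum_{i\ge k} i e^{-\lambda}\lambda^i/i! = \lambda f_{k-1}(\lambda)$, so that, writing $p_i(\lambda)=e^{-\lambda}\lambda^i/(i!\,f_k(\lambda))$ for $i\ge k$, one has $g_k(\lambda)=\sum_{i\ge k} i\,p_i(\lambda)$. Next I would compute the log-derivative of $p_i(\lambda)$: using the elementary identity $f_k'(\lambda)=f_{k-1}(\lambda)-f_k(\lambda)$ (which follows immediately from termwise differentiation of $f_k(\lambda)=e^{-\lambda}\sum_{i\ge k}\lambda^i/i!$), one obtains
\[
\frac{d}{d\lambda}\log p_i(\lambda)=-1+\frac{i}{\lambda}-\frac{f_k'(\lambda)}{f_k(\lambda)}=\frac{i}{\lambda}-\frac{f_{k-1}(\lambda)}{f_k(\lambda)}=\frac{i-g_k(\lambda)}{\lambda}.
\]

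Differentiating $g_k(\lambda)=\sum_{i\ge k} i\,p_i(\lambda)$ termwise and substituting the above gives
\[
g_k'(\lambda)=\sum_{i\ge k} i\,p_i(\lambda)\cdot\frac{i-g_k(\lambda)}{\lambda}=\frac{1}{\lambda}\Bigl(\E[Y^2\mid Y\ge k]-g_k(\lambda)^2\Bigr)=\frac{\mathrm{Var}(Y\mid Y\ge k)}{\lambda}.
\]
For any $\lambda>0$ the conditional distribution $Y\mid Y\ge k$ is supported on $\{k,k+1,\dots\}$ and assigns positive mass to at least two values, so the conditional variance is strictly positive, yielding $g_k'(\lambda)>0$.

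I do not anticipate any real obstacle here: the only thing to be a little careful about is the validity of interchanging differentiation and the infinite sum $\sum_{i\ge k} i\,p_i(\lambda)$, which is routine since the series and its termwise derivative converge locally uniformly on $\lambda>0$ (they are dominated by Poisson tails multiplied by polynomials in $i$). The cleanest presentation will just state the identity $g_k'(\lambda)=\mathrm{Var}(Y\mid Y\ge k)/\lambda$ and conclude.
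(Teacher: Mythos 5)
Your proof is correct, but it takes a genuinely different route from the paper. The paper sets $f(x)=e^xf_k(x)$ and $h(x)=xe^xf_{k-1}(x)$ and proves $f'(x)h(x)<f(x)h'(x)$ by expanding both sides as double power series (Cauchy products), pairing the $(i,j)$ and $(j,i)$ terms, and observing that $\frac{i}{j}+\frac{j}{i}>2$ for $i\neq j$ — a purely algebraic coefficient comparison requiring no interchange of differentiation and summation beyond termwise operations on entire functions. You instead exploit the probabilistic interpretation $g_k(\lambda)=\E[Y\mid Y\ge k]$ for $Y\sim\Po(\lambda)$ and the log-derivative identity $\frac{d}{d\lambda}\log p_i(\lambda)=(i-g_k(\lambda))/\lambda$ to arrive at the exact formula $g_k'(\lambda)=\mathrm{Var}(Y\mid Y\ge k)/\lambda$, with strict positivity following since the conditional law charges at least two points. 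Your identity is more informative (it quantifies $g_k'$, not just its sign) and is the standard exponential-family argument; the paper's computation is more elementary and self-contained, and in spirit the two are the same fact — the symmetrized inequality $\frac{i}{j}+\frac{j}{i}>2$ is exactly the positivity of the variance in disguise. The only step you should spell out slightly is the justification of termwise differentiation of $\sum_{i\ge k}i\,p_i(\lambda)$, which, as you say, is routine by local uniform convergence of the differentiated series on compact subsets of $(0,\infty)$; alternatively you could bypass it entirely by differentiating $\lambda f_{k-1}(\lambda)/f_k(\lambda)$ with the quotient rule and the identity $f_k'=f_{k-1}-f_k$, then regrouping the result into the same variance expression.
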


\subsubsection{Proof of Lemma~\ref{llt2}}\lab{sec:llt2}

Recall that we are running SLOW-STRIP on the AP-model.
Recall also from Section~\ref{smt1} that $L_t$ denotes the total degree of the vertices with degree less than $k$ in $G_t$, and recall from Definition~\ref{def:t0} that $t_0$ is the beginning of Phase 2 of SLOW-STRIP. {Corollary~\ref{ccoresize} {ensures that a.a.s.\ $G_{t_0}$ is nice} and so Proposition~\ref{p:Gt} immediately yields a weaker version of Lemma~\ref{llt2}:} \aas for all $t_0\le t\le \wtau$,
$L_t=O(n^{1-\d/2})=O(\gamma)$.
This allows us to prove that a weaker form of Lemma~\ref{llt1} holds for $t\geq t_0$:

\begin{lemma}\lab{llt3} {Assume that $\wtau>  t_0$ and $G_{t_0}$ is nice}. For every {$t_0\le t<\wtau$},

\[\ex(L_{t+1}{\mid \calf_t})= L_t\pm O(n^{-\d/2}).\]

\end{lemma}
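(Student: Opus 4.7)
The plan is to decompose $L_{t+1}-L_t$ into per-vertex-copy contributions and then use Lemma~\ref{c:degreeK} together with a uniform bound to estimate the conditional expectation. In a single step of SLOW-STRIP a hyperedge $e$ containing the front-of-queue vertex $v$ is removed; the contribution of $v$ to the light total degree drops by exactly $1$ (whether $v$ survives with positive degree or is deleted). Writing $x_1,\dots,x_{r-1}$ for the other $r-1$ vertex-copies in the part corresponding to $e$, let $\Delta_i$ denote the change in the total light-degree attributable to the bin containing $x_i$:
\[\Delta_i=\begin{cases}-1&\text{if }x_i\text{ lies in a light bin,}\\ k-1&\text{if }x_i\text{ lies in a heavy bin of size exactly }k\text{ (which becomes light),}\\ 0&\text{if }x_i\text{ lies in a heavy bin of size }>k.\end{cases}\]
Thus $L_{t+1}-L_t=-1+\sum_{i=1}^{r-1}\Delta_i$, up to an $O(1)$ correction whenever two of the $x_i$ lie in a common bin or one is a copy of $v$, a $O(1/n)$ event that is easily absorbed into the error.

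Two conditional probabilities drive the calculation: $p_H:=\pr(\Delta_i=k-1\mid\calf_t)$ and $p_L:=\pr(\Delta_i=-1\mid\calf_t)$. Lemma~\ref{c:degreeK} gives $p_H=\tfrac{1}{(r-1)(k-1)}+O(\gamma/n)$. For $p_L$, the assumption that $G_{t_0}$ is nice combined with $t_0\le t\le\wtau$ and Proposition~\ref{p:Gt}(b) forces $L_t=O(\gamma)$, while~(\ref{vertex-edge}) gives $L_t+D_t=\Theta(n)$; hence a uniformly random remaining vertex-copy lies in a light bin with probability $O(\gamma/n)$, and this transfers to each $x_i$ (up to an $O(1/n)$ correction for excluding the $O(1)$ copies of $v$ itself). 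By symmetry and linearity,
\[\ex\bigl[L_{t+1}-L_t\mid\calf_t\bigr]=-1+(r-1)\bigl((k-1)p_H-p_L\bigr)=-1+(r-1)\Bigl(\tfrac{1}{r-1}+O(\gamma/n)\Bigr)=O(\gamma/n),\]
which is $O(n^{-\d/2})$ since $\gamma=3K_1n^{1-\d/2}$, proving the claim.

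The main obstacle will be justifying that each of the $r-1$ random copies $x_i$ may legitimately be treated as approximately uniform over the remaining vertex-copies, so that Lemma~\ref{c:degreeK} and the simple light-bin estimate for $p_L$ apply coordinatewise. This relies on Observation~\ref{oft}, which guarantees that conditional on $\calf_t$ the allocation of the $D_t$ heavy copies to the $N_t$ heavy bins is uniform subject to each bin receiving at least $k$, together with the standard equivalence between choosing a uniformly random hyperedge through $v$ and taking the $r-1$ other vertex-copies in a uniformly random part containing a uniformly random copy of $v$. Once this marginal-uniformity is in place, the rest is a short accounting exercise.
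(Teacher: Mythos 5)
Your proposal is correct and follows essentially the same route as the paper: decompose the step into the $-1$ from the light copy of $v$ plus the $r-1$ other copies, bound the light-bin probability by $O(L_t/(D_t+L_t))=O(\gamma/n)$ using niceness and Proposition~\ref{p:Gt}, use Lemma~\ref{c:degreeK} (itself resting on Observation~\ref{oft}) for the size-$k$ heavy bins, absorb the coincidence event into an $O(1/n)$ term, and conclude $\ex(L_{t+1}-L_t\mid\calf_t)=O(\gamma/n)=O(n^{-\d/2})$. This matches the paper's case analysis step for step.
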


\begin{proof} Let $v$ be the vertex taken from $\msq_t$ at iteration $t$.  Let $u_1,\ldots,u_{r-1}$ denote the other vertex-copies (except the copy in $v$) that are selected to be deleted in this step. The removal of the vertex-copy in $v$ contributes $-1$ to $\Delta L_t:=L_{t+1}-L_t$ always. We consider the contribution to $\Delta L_t$ from the removal of $u_i$. Let $v(u_i)$ denote the bin that contains $u_i$. There are four cases.

{\em Case 1}: The removed edge contains two copies of the same vertex.  The probability of this case is $O(1/n)$ and so the contribution of this case to $\ex(L_{t+1}-L_t\mid \calf_t)$ is $O(1/n)$.

For the remaining three cases, we can assume that no other copy in $v(u_i)$ has already been removed during iteration $t$.

{\em Case 2}: $v(u_i)$ was in $\msq_t$ in $G_t$. In this case, the contribution of the removal of $u_i$ to $\Delta L_t$ is $-1$. Since $u_i$ is chosen by the algorithm u.a.r.\ from all remaining vertex-copies, the probability of this event is $O(L_t/(D_t+L_t))=O(\gamma/n)$ by~\eqn{vertex-edge} and {Proposition~\ref{p:Gt}(b)}.

{\em Case 3}: $v(u_i)$ enters $\msq_t$ at iteration $t$. In this case, the size of $v(u_i)$ is $k$ in $G_t$ and the contribution to $\Delta L_t$ from the removal of $u_i$ is $k-1$. The probability of this event is $1/(\fix{\dd}-1)(k-1)+O(\gamma/n)$ by Lemma~\ref{c:degreeK}.

{\em Case 4}: $v(u_i)$ was not in \fix{$\msq_t$} and does not enter \fix{$\msq_{t+1}$} at iteration \fix{$t+1$}. In this case, the size of $v(u_i)$ is more than $k$ in $G_t$ and the contribution to $\Delta L_t$ from the removal of $u_i$ is $0$. The probability of this event is $1-1/\fix{(\dd-1)}(k-1)+O(\gamma/n)$.

By the linearity of expectation, summing the contributions of $u_1,\ldots,u_{r-1}$, we have
\bean
\ex(L_{t+1}-L_t\mid \calf_t)&=&-1+(\dd-1)\left((-1)\cdot O(\gamma/n)+(k-1)\left(\frac{1}{(\dd-1)(k-1)}+O(\gamma/n)\right)\right)\\
&&+O(1/n)=O(\gamma/n).
\eean
The lemma follows by noting that $\gamma=\Theta(n^{1-\d/2})$ by Definition~\ref{def:t0}.
\end{proof}

The following lemma is a simple application of the Hoeffding-Azuma Inequality\cite{azuma,hoeffding}.

\begin{lemma}\lab{l:azuma} Let $a_n$ and $c_n\ge 0$ be real numbers and $(X_{n,i})_{i\ge 0}$ be random variables with respect to a random process $(G_{n,i})_{i\ge 0}$   such that for all $t\ge 0$
$$
\ex(X_{n,i+1}\mid \fix{\{G_{n,s}\}_{s\le i}})\le X_{n,i}+a_n,
$$
and $|X_{n,i+1}-X_{n,i}|\le c_n$,
for every $i\ge 0$ and all (sufficiently large) $n$. Then, for any real number $j\ge 0$,
$$
\pr(X_{n,t}-X_{n,0}\ge ta_n+j )\le \exp\left(-\frac{j^2}{2t(c_n+|a_n|)^2}\right).
$$
\end{lemma}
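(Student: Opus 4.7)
The plan is to reduce this to a direct application of the standard Azuma--Hoeffding inequality for supermartingales by subtracting off the drift. Define the centred process
$$Y_{n,i} := X_{n,i} - i\,a_n, \qquad i\ge 0.$$
The hypothesis $\ex(X_{n,i+1}\mid \{G_{n,s}\}_{s\le i})\le X_{n,i}+a_n$ translates directly into
$$\ex(Y_{n,i+1}\mid \{G_{n,s}\}_{s\le i}) = \ex(X_{n,i+1}\mid \{G_{n,s}\}_{s\le i}) - (i+1)a_n \le X_{n,i}+a_n - (i+1)a_n = Y_{n,i},$$
so $(Y_{n,i})_{i\ge 0}$ is a supermartingale with respect to the filtration $\mathcal{F}_i=\sigma(\{G_{n,s}\}_{s\le i})$.

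Next I would check the bounded differences condition for $Y_{n,i}$. By the triangle inequality,
$$|Y_{n,i+1}-Y_{n,i}| = |X_{n,i+1}-X_{n,i}-a_n|\le |X_{n,i+1}-X_{n,i}|+|a_n| \le c_n+|a_n|.$$
Thus the increments of $Y_{n,i}$ are bounded (almost surely) by $c_n+|a_n|$.

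Now I would invoke the standard one-sided Azuma--Hoeffding inequality for supermartingales with bounded differences: if $(Y_i)$ is a supermartingale with $|Y_{i+1}-Y_i|\le d$, then for every $\lambda\ge 0$,
$$\pr(Y_t-Y_0\ge \lambda) \le \exp\!\left(-\frac{\lambda^2}{2td^2}\right).$$
Applying this with $d=c_n+|a_n|$ and $\lambda=j\ge 0$ gives
$$\pr(Y_{n,t}-Y_{n,0}\ge j) \le \exp\!\left(-\frac{j^2}{2t(c_n+|a_n|)^2}\right).$$
Finally, I would unwind the definition of $Y_{n,i}$: since $Y_{n,t}-Y_{n,0}=X_{n,t}-X_{n,0}-ta_n$, the event $\{Y_{n,t}-Y_{n,0}\ge j\}$ is exactly $\{X_{n,t}-X_{n,0}\ge ta_n+j\}$, yielding the claimed bound.

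There is essentially no obstacle here; the only mild point of care is making sure that the martingale-style Azuma--Hoeffding is stated for \emph{supermartingales} (not just martingales) in the cited references, but this is classical and follows from the usual exponential moment argument (the exponential moment inequality $\ex(e^{s(Y_{i+1}-Y_i)}\mid \mathcal{F}_i)\le e^{s^2 d^2/2}$ holds for any centred random variable bounded in $[-d,d]$, and combining with $\ex(Y_{i+1}-Y_i\mid \mathcal{F}_i)\le 0$ gives the one-sided tail without needing a martingale identity).
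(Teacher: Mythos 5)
Your proposal is correct and follows exactly the paper's argument: subtract the drift to form $Y_{n,i}=X_{n,i}-ia_n$, check the supermartingale and bounded-difference conditions with bound $c_n+|a_n|$, and apply the one-sided Hoeffding--Azuma inequality. No further comment is needed.
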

\begin{proof} Define $Y_{n,i}=X_{n,i}-ia_n$ for all $i\ge 0$. Then, 
$$
\ex(Y_{n,i+1}\mid \fix{\{G_{n,s}\}_{s\le i}})=\ex(X_{n,i+1}\mid \fix{\{G_{n,s}\}_{s\le i}})-(i+1)a_n\le  X_{n,i}-ia_n = Y_{n,i}.
$$
Thus, $(Y_{n,i})_{ i\ge 0}$ is a supermartingale. Moreover, $|Y_{n,i+1}-Y_{n,i}|\le c_n+|a_n|$. By the Hoeffding-Azuma Inequality,
$$
\pr(Y_{n,t}-Y_{n,0}\ge j)\le \exp\left(-\frac{j^2}{2t(c_n+|a_n|)^2}\right).
$$

This completes the proof of the lemma.
\end{proof}

We now recall the statement of Lemma~\ref{llt2}:

\newtheorem*{llt2b}{Lemma~\ref{llt2}}
\begin{llt2b} If $c=c_{\dd,k}+ n^{-\d}$, then \aas for every {$ t_0\le t\le \tau$}:
\[L_{t}= O(n^{1-\d}).\]
\end{llt2b}

\begin{proof} 
{Without loss of generality we may assume $t_0<\tau$. By Corollary~\ref{ccoresize} we may also assume that $G_{t_0}$ is nice. Recall that $\wtau=\min\{\tau,t_0+\g\}$. }
By Lemma~\ref{llt3}, there exists a nonnegative sequence $(a_n)_{n\ge 1}$ such that $a_n=O(n^{-\d/2})=O(\gamma/n)$ and {for every $t_0\le t<\wtau$,}
$$
\ex(L_{t+1}\mid \calf_t)\le L_t+a_n,\ \ \ \ex(-L_{t+1}\mid \calf_t)\le -L_t+a_n.
$$
{Define ${\LL_t}$ such that $\LL_t=L_t$ for every $t\le \wtau$ and $\LL_{t+1}=\LL_t$ for all $t\ge \wtau$. Now for every $t\ge t_0$,
$$
\ex(\LL_{t+1}\mid \fix{\{\LL_s\}_{s\le t}})\le \LL_t+a_n,\ \ \ \ex(-\LL_{t+1}\mid \fix{\{\LL_s\}_{s\le t}})\le -\LL_t+a_n.
$$
It is clear that $|\LL_{t+1}-\LL_t|=O(1)$ always as $L_{t+1}=L_t+O(1)$ always.
}
By Lemma~\ref{l:azuma} with $j=\gamma^{1/2}\log n$ {and $c_n=O(1)$}, 
{for every $t_0\le t\le t_0+\g$,}
$$
\pr(\LL_t\ge \LL_{t_0}+a_n(t-t_0)+\gamma^{1/2}\log n)=o(n^{-1}),\quad \pr(-\LL_t\ge -\LL_{t_0}+a_n(t-t_0)+\gamma^{1/2}\log n)=o(n^{-1}).
$$

We apply the union bound over all $t_0\le t<{t_0+\g}$, along with the asymptotics $a_n=O(\gamma/n)$, 
 and $\gamma^{1/2}\log n =o(\gamma^2/n)$
(since $\g=3\fix{K_2}n^{1-\d/2}$ and  $\d<1/2$), to obtain that \aas for all $t_0\le t\le {t_0+\g}$:
\begin{eqnarray}
\LL_{t}&\ge& \LL_{t_0}-O(\g^2 / n)\lab{L0a}\\
\LL_{t}&\le& \LL_{t_0}+O(\g^2 / n) \lab{Lt}
\end{eqnarray}
By{~\eqn{widetau}}, a.a.s.\ {$\wtau=\tau$ and so a.a.s.\ $\tau=\wtau\le t_0+\g$ and $L_{\wtau}=L_{\tau}=\LL_{\tau}=0$}.  Therefore, (\ref{L0a}) with $t=\tau$ yields
\begin{equation}\lab{L0b}
L_{t_0}={\LL_{t_0}=} O(\gamma^2/n).
\end{equation}
Substituting that into (\ref{Lt}) yields that for all $t_0\le t\le\tau$,
$L_t={\LL_t}=O(\gamma^2/n)=O(n^{1-\d})$, thus establishing the lemma.
\end{proof}

\subsubsection{$\z_t$ and $\bar p_t$}\lab{s.vdk}

In this section, we  study two parameters $\zeta_t$ and $\bar p_t$ to be defined below,
which allow for a more careful analysis of the rate at which new light vertices are formed.

Recall that $L_t$ is the {total degree} of the light vertices in $G_t$ (i.e. those of degree less than $k$); {eventually, our goal is to prove Lemma~\ref{llt1}, i.e.\ to bound $\ex (L_{t+1}-L_t\mid \calf_t)$ from above. This quantity has been bounded from below in the previous section for $t\ge t_0$ (Lemma~\ref{llt3}).  The main challenge for Lemma~\ref{llt1} is that we need to show a uniform bound for all $t\ge t(B)$, as long as $B$ is a sufficiently large constant (recall that $t_0$ grows with $n$). This also makes a significant difference from Lemma~\ref{llt3} which only requires $t\ge t_0$.}

Now let $H\in AP_{\dd}(n,cn)$, where $c=c_{\dd,k}+n^{-\delta}$ and let $B$ be a sufficiently large constant whose value is to be determined later. Recall that $G_t$ is the hypergraph remaining after $t$ iterations of SLOW-STRIP and so  $G_{t(B)}=\hG_B$.

Recalling that $N_t$ is the number of heavy vertices, and $D_t$ is the total degree of the heavy vertices, 
we define:
\[\zeta_t=\z_t(\calf_t)=D_t/N_t \mbox{ is the average degree of the heavy vertices in } G_t.\]

Recalling that, by Corollary~\ref{ccoresize}, the $k$-core \aas has $\a n +o(n)$ vertices and $\b n+o(n)$ edges and $
\zeta=\dd\beta/\alpha$.
Therefore the $k$-core \aas has average degree $\z+o(1)$ and so $\z_t$ approaches $\z$.
Note that $\z=g_k(\mu_{r,k})$ by~(\ref{egmrk}).

We are interested in the probability that a particular heavy vertex-copy is allocated to a bin of size $k$
since this tells the proportion of the heavy vertices in deleted hyperedges that become light. So, recalling Observation~\ref{oft}, we define:

\begin{definition}
$\bar p_t=\bar p_t(\fix{\calt_t})$ \hspace{1ex}  is the probability that a given vertex-copy is assigned to a bin of size $k$ in a uniformly random allocation of $D_t$ points to $N_t$ bins  subject to each bin receiving at least $k$ points.
\end{definition}

{We will deduce $\bar p_t$ as a certain function of $\zeta_t$, approximately.}
For all $x>k$, we define:

\begin{eqnarray}
\la(x) \mbox{ is the root of }  g_k(\la)&=&x; \qquad \mbox{ i.e. } \la f_{k-1}(\la)=x f_k(\la)
\label{e.lax}\\
\psi(x)&=&\frac{e^{-\la{(x)}}\la{(x)}^{k-1}}{f_{k-1}(\la{(x)})(k-1)!} \lab{h}
\end{eqnarray}

With some basic calculations we can show that $\psi(x)$ is \fix{strictly decreasing for $x>k$}. 

\begin{lemma}\lab{l2:monotone}
For all $x>k$, $\psi'_k(x)<0$.
\end{lemma}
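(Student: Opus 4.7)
The plan is to reduce the claim to a monotonicity statement in $\la$ rather than $x$. First, recall that $\la(x)$ is defined implicitly by $g_k(\la(x))=x$ where $g_k(\la)=\la f_{k-1}(\la)/f_k(\la)$. By Lemma~\ref{l:gk}, $g_k$ is strictly increasing on $\la>0$, and it is easy to check that $g_k(\la)\to k$ as $\la\to 0^+$; hence $\la(x)$ is well-defined and strictly increasing in $x$ for $x>k$, with $\la(x)>0$. Therefore it suffices to prove that the function
\[
\widetilde\psi(\la):=\frac{e^{-\la}\la^{k-1}/(k-1)!}{f_{k-1}(\la)}
\]
is strictly decreasing on $\la>0$, since $\psi(x)=\widetilde\psi(\la(x))$ and the composition of a strictly decreasing function with a strictly increasing one is strictly decreasing.

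Second, to show $\widetilde\psi'(\la)<0$, I would avoid computing $f_{k-1}'(\la)$ directly and instead rewrite $\widetilde\psi$ in a transparent form. Cancelling the $e^{-\la}$ and factoring out $\la^{k-1}/(k-1)!$ from the series $f_{k-1}(\la)=e^{-\la}\sum_{i\ge k-1}\la^i/i!$ yields
\[
\widetilde\psi(\la)=\frac{\la^{k-1}/(k-1)!}{\sum_{i\ge k-1}\la^i/i!}=\frac{1}{\sum_{j\ge 0}a_j\la^j},\qquad \text{where } a_j=\frac{(k-1)!}{(k-1+j)!}.
\]
Note $a_0=1$ and $a_j>0$ for all $j\ge 1$, so the denominator equals $1+\sum_{j\ge 1}a_j\la^j$, which is a power series with positive coefficients and hence strictly increasing in $\la$ on $(0,\infty)$. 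Consequently $\widetilde\psi(\la)$ is strictly decreasing on $(0,\infty)$.

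Combining the two steps yields $\psi'(x)<0$ for all $x>k$. The only mildly delicate point is verifying that $\la(x)$ is well-defined and strictly positive on $x>k$, but this is an immediate consequence of Lemma~\ref{l:gk} together with the elementary limit $g_k(\la)\to k$ as $\la\to 0^+$, which itself follows from the leading-order expansions $f_{k-1}(\la)\sim \la^{k-1}/(k-1)!$ and $f_k(\la)\sim\la^k/k!$ as $\la\to 0^+$. No obstacle beyond this bookkeeping is anticipated; the algebraic rewriting of $\widetilde\psi$ as the reciprocal of a manifestly increasing power series makes the monotonicity entirely transparent, with no need to differentiate $f_{k-1}$ explicitly.
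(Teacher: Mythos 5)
Your proof is correct, and while the first step coincides with the paper's (both reduce via the increasing change of variable $\la(x)=g_k^{-1}(x)$, using Lemma~\ref{l:gk} and $\lim_{\la\to 0^+}g_k(\la)=k$, to showing that $h(\la)=e^{-\la}\la^{k-1}/f_{k-1}(\la)(k-1)!$ is decreasing in $\la$ on $(0,\infty)$), your treatment of that key step is genuinely different and cleaner. The paper differentiates $h$ explicitly, reduces to showing $(k-1)\sum_{j\ge k-1}\la^j/j!-\la\sum_{j\ge k-2}\la^j/j!<0$, and then splits into the cases $\la\ge k-1$ (trivial) and $\la<k-1$ (a term-by-term comparison with a geometric series). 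You instead cancel $e^{-\la}$ and factor $\la^{k-1}/(k-1)!$ out of the series for $f_{k-1}$, writing $h(\la)=1/\sum_{j\ge 0}a_j\la^j$ with $a_j=(k-1)!/(k-1+j)!>0$; since that power series has strictly positive derivative on $(0,\infty)$, its reciprocal has strictly negative derivative, with no case analysis needed. One small precision: the lemma asserts the pointwise inequality $\psi'(x)<0$, and "strictly decreasing composed with strictly increasing" only gives strict monotone decrease; but your computation in fact yields $\widetilde\psi'(\la)=-P'(\la)/P(\la)^2<0$ for every $\la>0$, and since $g_k'>0$ (Lemma~\ref{l:gk}) gives $\la'(x)=1/g_k'(\la(x))>0$, the chain rule delivers $\psi'(x)=\widetilde\psi'(\la(x))\,\la'(x)<0$ exactly as in the paper's reduction. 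With that phrasing tightened, your argument is a simpler, fully elementary proof of the same statement.
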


We defer the proof to \fix{the Appendix}.

In the $k$-core, {a.a.s.}\ the proportion of the total degree that comes from vertices of degree $k$ is {approximately} $\psi(\z)$ {by Corollary~\ref{ccoresize} and Proposition~\ref{p:Poisson}}.

Since $\bar p_t$ approaches $\psi(\z)$, we should have $\bar p_t \approx \psi(\z_t)$.
Our next lemma formalizes this approximation:

\begin{lemma}\lab{l:monotone}
\fix{Assume $N_t=\Omega(n)$} for every $t\le \tau$. Then  $\bar p_t= (1+O(n^{-1/2}\log n))\psi(\zeta_t)$ \fix{for all $t\le \tau$}.
\end{lemma}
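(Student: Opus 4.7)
The plan is to express $\bar p_t$ via size-biased sampling in the truncated multinomial, invoke Proposition~\ref{p:Poisson} to approximate $\ex[\rho_k]$, and then use the defining relation of $\la(\cdot)$ to simplify the resulting expression to $\psi(\z_t)$.

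First I would apply Observation~\ref{oft}: conditional on $\calf_t$, the degree sequence of the heavy bins has distribution $Multi(N_t,D_t,k)$. Let $\rho_k$ denote the (random) proportion of heavy bins of size exactly $k$. By exchangeability of the $D_t$ heavy vertex-copies, the probability that any one of them sits in a bin of size $k$ equals the expected fraction of copies in such bins, so
\[\bar p_t \;=\; \frac{k N_t}{D_t}\,\ex[\rho_k] \;=\; \frac{k}{\z_t}\,\ex[\rho_k].\]

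Next, the hypothesis $N_t=\Omega(n)$, combined with $D_t\le rm = O(n)$, keeps $\z_t = D_t/N_t$ bounded above by a constant, so Proposition~\ref{p:Poisson} applies and gives, with failure probability $o(1/n)$ absorbed into the error since $\rho_k\in[0,1]$,
\[\ex[\rho_k] \;=\; \frac{e^{-\la}\la^k}{f_k(\la)\,k!} \;+\; O(n^{-1/2}\log n),\]
where $\la = \la(\z_t)$ is the unique solution of $\la f_{k-1}(\la) = \z_t f_k(\la)$. Substituting and using this very identity to cancel $\z_t f_k(\la)$,
\[\bar p_t \;=\; \frac{k}{\z_t}\cdot\frac{e^{-\la}\la^k}{f_k(\la)\,k!} + O(n^{-1/2}\log n) \;=\; \frac{e^{-\la}\la^{k-1}}{f_{k-1}(\la)(k-1)!} + O(n^{-1/2}\log n) \;=\; \psi(\z_t) + O(n^{-1/2}\log n).\]

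To upgrade the additive error to the multiplicative one claimed, it suffices that $\psi(\z_t)=\Omega(1)$: since $\z_t\in[k,C]$ for some constant $C$ under our hypotheses, $\la(\z_t)$ lies in a bounded interval on which $\psi$ is continuous and strictly positive (at the left endpoint $\z_t\to k$, $\la\to 0$, and the expansion $f_{k-1}(\la)=\la^{k-1}/(k-1)!+O(\la^k)$ yields $\psi\to 1$). The only real obstacle is checking this boundedness of $\z_t$ and $\la(\z_t)$, which is immediate from the hypothesis $N_t=\Omega(n)$ plus $D_t=O(n)$; once in hand, the rest is routine algebra guided by the definitions of $g_k$ and $\psi$, and no concentration work beyond Proposition~\ref{p:Poisson} is needed because its $\log n$ factor already matches the target bound.
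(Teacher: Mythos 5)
Your proposal is correct and takes essentially the same route as the paper's proof: Observation~\ref{oft} gives the $Multi(N_t,D_t,k)$ degree distribution of the heavy bins, Proposition~\ref{p:Poisson} supplies the truncated-Poisson approximation of the proportion of size-$k$ bins, and the defining relation $\la f_{k-1}(\la)=\zeta_t f_k(\la)$ collapses $\frac{k}{\zeta_t}\cdot\frac{e^{-\la}\la^k}{f_k(\la)\,k!}$ to $\psi(\zeta_t)$, with the additive error turned into the multiplicative one because $\psi(\zeta_t)=\Theta(1)$ (a step the paper performs implicitly). The only (shared) imprecision is the applicability check for Proposition~\ref{p:Poisson}: its stated hypothesis is $D_t-kN_t=\Omega(N_t)$, i.e.\ $\zeta_t\ge k+\Omega(1)$, rather than the upper bound on $\zeta_t$ you invoke, though the paper's own proof leaves this condition equally implicit.
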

\begin{proof} Let $\la$ be chosen such that $\la f_{k-1}(\la)=\zeta_t f_k(\la)$. Then by Proposition~\ref{p:Poisson}, the following holds for all $t\le \tau$:
\bean
{\bar p_t}&=&\frac{ke^{-\la}\la^k N_t}{f_k(\la)(k-1)!D_t}(1+O(n^{-1/2}\log n))\fix{+o(n^{-1})}=\frac{e^{-\la}\la^k}{f_k(\la)(k-1)!\zeta_t}(1+O(n^{-1/2}\log n))\\
&=&\frac{e^{-\la}\la^{k-1}}{f_{k-1}(\la)(k-1)!}(1+O(n^{-1/2}\log n))\qquad\mbox{by (\ref{e.lax}).}\qedhere
\eean
\end{proof}

We will analyze $\z_t,\bar p_t$ in order to prove Lemma~\ref{llt1}.

Recall that $g_k(x)=x f_{k-1}(x)/f_k(x)$ as defined in~\eqn{e.gkx} and that $\zeta=\dd \beta/\alpha$. Then, $\mu_{\dd,k}$ is the root of $g_k(x)=\zeta$ by~\eqn{egmrk}. Recall also that
\[
\psi(x)=\frac{e^{-\la(x)}\la(x)^{k-1}}{f_{k-1}(\la(x))(k-1)!},
\]
where $\la(x)$ is the root of $g_k(\la)=x$. Now define
$p^*=\psi(\zeta)$. Then, we have
\[
p^*=\frac{e^{-\mu_{\dd,k}}\mu_{\dd,{k}}^{k-1}}{f_{k-1}(\mu_{\dd,k})(k-1)!}.
\]
By the definition of $\alpha$, $\beta$ below~\eqn{murk} and $\bar\rho_{r,k}$ in~\eqn{pdk}, we have $p^*=k\bar\rho_{r,k}/\zeta$. Then, by Lemma~\ref{l:degreeK}, we have $p^*=1/(\dd-1)(k-1)$. As a summary of the above discussion, we have the following equalities.
\begin{eqnarray}
g_{k}(\mu_{\dd,k})&=&\zeta=\dd\beta/\alpha;\lab{relation1}\\
p^*&=&\psi(\zeta)=\frac{1}{(\dd-1)(k-1)}.\lab{relation2}
\end{eqnarray}

\fix{The expected change in $L_t$, is closely tied to $\z_t$.  So in order to prove Lemma~\ref{llt1},  we} begin by bounding  $\z_t$ over the next two lemmas. {Recall that $\z_t=D_t/N_t$ denotes the average degree of heavy vertices in $G_t$. We will apply Lemma~\ref{l:monotone} to relate $\bar p_t$ with $\fix{\psi}(\z_t)$ for all $t$ in the range we analyse and therefore we must restrict to a sequence of ``typical'' \fix{hypergraphs} $(G_t)$ \fix{such that $N_t=\Omega(n)$ holds in the whole process}. To formalise the idea, we define: 

\begin{center}
$G_t$ is {\em normal} if $|\bar p_t-\psi(\zeta_t)|\le n^{-1/2}\log^2 n$. 
\end{center}

Note that whether $G_t$ is normal is determined by $\calf_t$ so, equivalently, we could say {\em $\calf_t$ is normal}.
We define the stopping times:

\bea
&&\mbox{$\tau_1$ is the minimum integer $t$ such that $G_t$ is not normal;}\lab{tau1}\\
&&\mbox{$\tau_1=\tau$ if $G_t$ is normal for all $0\le t\le \tau$.}\non
\eea
We will focus on steps $t$ such that $\zeta_t$ is close to $\zeta$. Hence, we will restrict to sequences $(G_t)$ such that $\zeta_t$ gets close to $\zeta$ eventually. Given a constant $\eps>0$, define:
\bea
&&\mbox{$t_{\e}$ is the minimum integer that $||V(G_t)|-\a n|\le\eps n$ and $|\zeta_t-\z|\le \eps$;}\lab{te}\\
&& \mbox{$t_{\e}=\tau$ if such an integer does not exist.}\non
  \eea

\begin{observation}\label{lztz} For all $t_{\e}\leq t\leq t_{\e}+k\eps n$, we have $\z_t=\z +O(\e)$.
\end{observation}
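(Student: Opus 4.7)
The plan is to track the per-step changes of $N_t$ and $D_t$ over the window $[t_\e, t_\e + k\e n]$ and use that each quantity changes by only $O(1)$ per step of SLOW-STRIP. Each step removes exactly one hyperedge, which consists of $r$ vertex-copies. One of these copies belongs to the light vertex at the front of $\msq_t$ and so affects only $L_t$. Each of the remaining $r-1$ copies, if it lies in a heavy bin, either leaves the bin heavy (decreasing $D_t$ by $1$) or causes it to become light (decreasing $N_t$ by $1$ and $D_t$ by $k$, since all $k$ degrees of that bin are reallocated from $D$ to $L$). Consequently $|N_{t+1} - N_t| \le r-1$ and $|D_{t+1} - D_t| \le (r-1)k$, both $O(1)$.

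Summing these bounds over the $t - t_\e \le k\e n$ steps between $t_\e$ and $t$ yields $|N_t - N_{t_\e}| = O(\e n)$ and $|D_t - D_{t_\e}| = O(\e n)$. The next step is to establish $N_{t_\e} = \Theta(n)$: the upper bound is immediate from $N_{t_\e} \le |V(G_{t_\e})| \le (\a + \e)n$, and the lower bound follows from $|V(G_{t_\e})| \ge (\a - \e)n$ together with the observation that the light vertices at $t_\e$ account for only a vanishing fraction of $V(G_{t_\e})$ (via the bounds on $L_t$ in Lemma~\ref{llt2} and Proposition~\ref{p:Gt}, or from the context in which $t_\e$ is invoked).

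Given $N_{t_\e} = \Theta(n)$, a direct computation gives
\[
\z_t \;=\; \frac{D_t}{N_t} \;=\; \frac{D_{t_\e} + O(\e n)}{N_{t_\e} + O(\e n)} \;=\; \frac{D_{t_\e}}{N_{t_\e}} + O(\e) \;=\; \z_{t_\e} + O(\e),
\]
and combining with $|\z_{t_\e} - \z| \le \e$ from the definition of $t_\e$ yields $\z_t = \z + O(\e)$, as claimed.

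The only non-mechanical step is the lower bound $N_{t_\e} = \Omega(n)$; a priori the queue could contain a linear fraction of $V(G_{t_\e})$, but this is excluded by the bounds on the number and total degree of light vertices established in the preceding subsection together with the hypothesis that $\z_{t_\e}$ is within $\e$ of the bounded constant $\z = r\beta/\alpha$. Everything else is routine bookkeeping of per-step changes.
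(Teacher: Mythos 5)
Your proof is correct and follows essentially the same route as the paper: per-step changes of $D_t$ and $N_t$ are $O(1)$, so over at most $k\e n$ steps $\z_t$ moves by only $O(\e)$ from $\z_{t_\e}$, which is within $\e$ of $\z$ by the definition of $t_\e$. One small remark: the lower bound $N_{t_\e}=\Omega(n)$ is cleanest via the fact that the $k$-core (a.a.s.\ of linear size by Corollary~\ref{ccoresize}) is contained in every $G_t$ and all its vertices remain heavy — Lemma~\ref{llt2} and Proposition~\ref{p:Gt} only cover $t\ge t_0$ and so do not directly apply at $t_\e$; this core-containment bound is what the paper implicitly invokes when it writes $N_{t_\e}\ge\a n$ and $D_{t_\e}\ge r\b n$.
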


\begin{proof}  Each iteration of SLOW-STRIP removes exactly one hyperedge.  So it reduces each of $D_t$ and $N_t$ by at most $r$.  Since $D_{t_{\e}}\geq r\b n$ and $N_{t_{\e}}\geq \a n$, each step can only change $\z_t=D_t/N_t$ by at most $O(\e)$, and this will be true for at least ${k}\e n$ steps. 
\end{proof}

\begin{lemma}\lab{l:zetaMartingale}
\begin{enumerate}
\item[(a)] There are a sufficiently small constant $\eps=\e(r,k)>0$  and two constants $\rho_1=\rho_1(r,k)>0$ and $\rho_2=\rho_2(r,k)>0$ such that such that  for all $t_{\e}\le t< \min\{t_{\e}+k\eps n,\tau_1\}$, $$
-\frac{\rho_1}{n}\le \ex(\zeta_{t+1}-\zeta_t\mid \calf_t)\le -\frac{\rho_2}{n}.
$$
\item[(b)] Given a constant $\eps>0$, a.a.s.\  there is a large constant $B=B(r,k,\eps)$ for which: if $c=c_{r,k}+n^{-\d}$ then $t_{\e}\le t(B)$. Moreover, a.a.s.\ $\tau_1=\tau$ and $\tau<t_{\e}+k\eps n$.
\end{enumerate}
\end{lemma}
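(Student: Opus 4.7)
My plan is to analyze the one-step drift of $\zeta_t=D_t/N_t$ using Observation~\ref{oft} and to show it is a strictly negative $\Theta(1/n)$ quantity in the relevant range. First I would expand
\[
\zeta_{t+1}-\zeta_t \;=\; \frac{\Delta D_t-\zeta_t\,\Delta N_t}{N_t}+O(1/n^2),
\]
which reduces part (a) to computing $\ex[\Delta D_t-\zeta_t\,\Delta N_t\mid\calf_t]$. At each step of SLOW-STRIP, one light vertex-copy is removed (contributing nothing to $D_t$ or $N_t$) together with the $r-1$ other copies of its part; by Observation~\ref{oft}, modulo a $O(1/n)$ error for collisions inside a bin, each of those $r-1$ copies is a uniform copy among the remaining ones. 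Each lies in a heavy bin with probability $\pi_t=D_t/(D_t+L_t-1)=1+O(\gamma/n)$ and, given so, in a bin of size exactly $k$ with probability $\bar p_t$. A heavy copy in a size-$k$ bin contributes $-k$ to $\Delta D_t$ and $-1$ to $\Delta N_t$ (the copy plus the $k-1$ siblings leaving the heavy class); any other heavy copy contributes only $-1$ to $\Delta D_t$. Summing the $r-1$ contributions,
\[
\ex[\Delta D_t-\zeta_t\,\Delta N_t\mid\calf_t]\;=\;(r-1)\pi_t\bigl(-1+(\zeta_t-k+1)\bar p_t\bigr)+O(1/n).
\]

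The sign analysis is where Lemma~\ref{l:rho} enters. At the equilibrium $\zeta_t=\zeta$, $\bar p_t=\psi(\zeta)=1/((r-1)(k-1))$ by~\eqn{relation2}, and the bracket becomes $-1+(\zeta-k+1)/((r-1)(k-1))$; Lemma~\ref{l:rho} ($k<\zeta<r(k-1)$) puts this strictly in $(-1,0)$, bounded away from the endpoints by a gap depending only on $r,k$. Throughout $t_\e\le t<\min\{t_\e+k\e n,\tau_1\}$, Observation~\ref{lztz} gives $\zeta_t=\zeta+O(\e)$, and the definition of $\tau_1$ (normality) gives $\bar p_t=\psi(\zeta_t)+O(n^{-1/2}\log^2 n)$; continuity of $\psi$ lets me choose $\e=\e(r,k)$ small enough that the bracket stays in $(-1+\eta,-\eta)$ for some $\eta=\eta(r,k)>0$. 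Combined with $\pi_t=1+o(1)$ and $N_t=\a n+O(\e n)=\Theta(n)$, the drift formula produces the two-sided bound $-\rho_1/n\le\ex[\zeta_{t+1}-\zeta_t\mid\calf_t]\le-\rho_2/n$ claimed in part~(a).

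Part (b) splits into three sub-claims. For $t_\e\le t(B)$ I would combine Lemma~\ref{l:B} with the standard trajectory analysis of the parallel stripping process (as in~\cite{amxor,jhk}): fix $B=B(r,k,\e)$ large enough that $|\hG_B|/n$ and $D_{t(B)}/N_{t(B)}$ are within $\e/2$ of $\a$ and $\zeta$ a.a.s., which forces $t_\e\le t(B)$. For $\tau_1=\tau$, note that $N_t$ is non-increasing in $t$ (a vertex only leaves the heavy class), so $N_t\ge N_\tau=|C_k(H)|=\a n+O(n^{1-\d/2})=\Omega(n)$ by Corollary~\ref{ccoresize}; Lemma~\ref{l:monotone} then applies uniformly on $[0,\tau]$ with slack $O(n^{-1/2}\log n)\ll n^{-1/2}\log^2 n$. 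For $\tau<t_\e+k\e n$, I would bound $\tau-t_\e=m_{t_\e}-m_\tau$. Using that each light vertex has degree at most $k-1$ and $\zeta_{t_\e}>k-1$,
\[
r\,m_{t_\e}\;=\;L_{t_\e}+D_{t_\e}\;\le\;(k-1)\bigl(|V(G_{t_\e})|-N_{t_\e}\bigr)+\zeta_{t_\e}N_{t_\e}\;\le\;\zeta_{t_\e}|V(G_{t_\e})|,
\]
and plugging $\zeta_{t_\e}\le\zeta+\e$, $|V(G_{t_\e})|\le\a n+\e n$, and $D_\tau=r\b n-O(n^{1-\d/2})$ yields $\tau-t_\e\le\tfrac{\zeta+\a}{r}\e n+o(\e n)$. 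Lemma~\ref{l:rho} gives $\zeta/r<k-1$, and $\a<1$ gives $\a/r<1/r\le 1/2$, so the coefficient is strictly below $k$ and $\tau-t_\e<k\e n$ for small enough $\e$ and large $n$. The main technical point that must be handled carefully is the uniformity of the $O(1/n)$ drift error in part (a), which is precisely what requires $t<\tau_1$ and hence justifies proving $\tau_1=\tau$ first.
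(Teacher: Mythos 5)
Your proposal follows essentially the same route as the paper: for (a) you compute the one-step conditional drift of $\zeta_t$ via Observation~\ref{oft}, replace $\bar p_t$ by $\psi(\zeta_t)\approx\psi(\zeta)=1/(\dd-1)(k-1)$ using normality ($t<\tau_1$) and Observation~\ref{lztz}, and get a bracket $-1+\bar p_t(\zeta_t-k+1)$ pinned strictly inside $(-1,0)$ by Lemma~\ref{l:rho}; for (b) you use Lemma~\ref{l:B} plus the core-size bounds for $t_\e\le t(B)$, the bound $N_t\ge N_\tau=|\C_k|=\Omega(n)$ plus Lemma~\ref{l:monotone} for $\tau_1=\tau$, and a counting argument for $\tau<t_\e+k\e n$. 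This is the paper's proof in slightly different bookkeeping (you work with $\Delta D_t-\zeta_t\Delta N_t$ over $N_t$; the paper computes the per-copy change of $D/N$ directly), so I only flag two small inaccuracies. First, your estimate $\pi_t=D_t/(D_t+L_t-1)=1+O(\gamma/n)$ is false in the range $t\ge t_\e$: the bound $L_t=O(\gamma)$ is only available for $t\ge t_0$ (Proposition~\ref{p:Gt}), while at $t_\e$ one only knows $L_t=O(\e n)$, so $\pi_t=1-O(\e)$; this does not hurt you, since all the two-sided drift bound needs is $\pi_t$ bounded below by a positive constant (the paper simply notes the heavy case has probability at least $1/2$ for small $\e$), but the stated estimate should be corrected. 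Second, your proof of $\tau<t_\e+k\e n$ via edge counting gives a coefficient $\tfrac{\zeta+\a}{r}+O(\e)$, which is below $k$ only for $\e$ smaller than an absolute constant (and the error term is $O(\e^2 n)+o(n)$, not $o(\e n)$), whereas the lemma is stated for an arbitrary constant $\e$; the paper's count — at most $\e n+o(n)$ non-core vertices remain in $G_{t_\e}$ and each queued vertex is removed in at most $k-1$ steps — yields $(k-1)\e n+o(n)<k\e n$ for every constant $\e$. In the regime where the lemma is actually used ($\e$ sufficiently small) your argument suffices, so these are repairs rather than gaps.
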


\begin{proof} Let $\eps>0$ be be a small constant whose value is to be determined later. We first prove part (a). We may assume that $t_{\e}<\tau_1$ since otherwise there is nothing to prove.  

 Consider any $t\ge t_{\e}$, and let $v$ be the vertex taken from $\msq_t$ during iteration $t$. {Recall that when SLOW-STRIP runs on the AP-model,} it removes one vertex-copy from $v$ and another $\dd-1$ vertex-copies $u_1,\ldots,u_{\dd-1}$ chosen uniformly at random  from the remaining ones.  We will split the single step into $\dd-1$ substeps $(T_{t,i})_{1\le i\le \dd-1}$, such that $u_i$ is removed in step $T_{t,i}$ for all $1\le i\le \dd-1$ and let $T_{t,0}=t$. We consider the contribution of \fix{deleting} $u_i$, \fix{denoted by $C(u_i)$,} to $\ex(\zeta_{{t+1}}-\zeta_{t} \fix{\mid \calf_t})$.  If $u_{i+1}$ is light  then $\fix{C(u_i)=0}$. If it is heavy (which occurs with probability at least $1/2$ if $\eps$ is sufficiently small) then 
{(extending the definition of $D_t,N_t, \bar p_t$ to $D_{T_{t,i}}, N_{T_{t,i}}, \bar p_{T_{t,i}}$ in the obvious manner):}
\be
\fix{C(u_i)=}\frac{D_{T_{t,i}}-1}{N_{T_{t,i}}}(1-\bar p_{T_{t,i}}) + \frac{D_{T_{t,i}}-k}{N_{T_{t,i}}-1} \bar p_{T_{t,i}}-\frac{D_{T_{t,i}}}{N_{T_{t,i}}}.\lab{expect1}
\ee
\fix{In this case, dropping} the subscript and substituting $1/(N-1)=(1/N)(1+1/N+O(1/n^2))$ (as $N=\Omega(n)$), \fix{we have that uniformly for all $1\le i\le \dd-1$, }
\bea
\fix{C(u_i)}&=&\frac{1}{N}\Big((D-1)(1-\bar p)+\bar p(D-k)(1+1/N+O(n^{-2}))-D\Big)\non\\
&=&\frac{1}{N}\Big(-1 - \bar p(D-1)+\bar p(D+\frac{D}{N}-k+O(n^{-1}))\Big) \non\\
&=&\frac{1}{N}\left(-1+\bar p\left(\frac{D}{N}-(k-1)+O(n^{-1})\right)\right).\lab{expect2}
\eea
By the definition of $t_{\e}$ and applying Observation~\ref{lztz}, we have
$\zeta_t=\dd\beta /\alpha +O(\eps){=\z +O(\eps)}$ for every $t_{\e}\le t\le t_{\e}+k\eps n$. Recall from~\eqn{relation2} that
\[
 p^*=\psi(\zeta)=\frac{1}{(\dd-1)(k-1).}
 \]

 By Lemma~\ref{l:rho}  $\zeta>k$. Since $\z=\z_{r,k}$ is a constant,  Lemma~\ref{l2:monotone} implies that $\psi'(\zeta)<0$ and $\psi'(x)$ is uniformly bounded away from zero in a small neighbourhood of $\zeta$.
 By the definition of $\tau_1$ in~\eqn{tau1}, for all ${t_{\e}}\le t<\min\{t_{\e}+k\eps n,\tau_1\}$ and $0\le i\le \dd-1$, 
\[\bar p_{{T_{t,i}}}=\psi(\zeta_t)+o(1)=\psi(\zeta)+O(\eps)=p^*+O(\eps).
\]
If we were to substitute $\bar p=p^*$, $D=r\beta n$, $N=\alpha n$ into the RHS of (\ref{expect2}), and simplify it, {we would} obtain:

 \bea
\fix{C(u_i)} &=&\inv{\a {n}}\left(-1+\frac{1}{(\dd-1)(k-1)}\left(\frac{\dd\beta}{\alpha}-(k-1)+O(n^{-1})\right)\right)\non\\
 &=& \inv{\a {n}}\left(-1+\frac{1}{(\dd-1)(k-1)}\left(\zeta-(k-1)+O(n^{-1})\right)\right).\lab{expect3}
 \eea	

By Lemma~\ref{l:rho}, there is a $\sigma>0$ such that $\zeta<\dd(k-1)-\sigma$. Hence,
\bea
-1+\frac{1}{(\dd-1)(k-1)}\left(\zeta-(k-1)+O(n^{-1})\right)&<&-1+\frac{1}{(\dd-1)(k-1)}((\dd-1)(k-1)-\sigma/2)\nonumber\\
&<&-\sigma/2(\dd-1)(k-1).\nonumber
\eea
Since in every step, the quantity of variables in~\eqn{expect2} (e.g.\ $\bar p$ and $D/N$) differs from that in~\eqn{expect3} by $O(\eps)$, by choosing $\eps>0$ sufficiently small, there exists {constants $\rho_1',\rho_2'>0$ such that a.a.s.\ }
$$
 {-\frac{\rho_1'}{n}\le }\fix{C(u_i)}\le -\frac{\rho_2'}{n}\quad \mbox{for all}\ 1\le i\le \dd-1,
$$
for all {${t_{\e}}\le t<\min\{t_{\e}+k\eps n,\tau_1\}$}.
Since there are $r-1$ $u_i$'s, part (a) follows with $\r_1=(r-1)\r_1',\r_2=(r-1)\r_2'$.

{Next, we prove part (b).
By Lemma~\ref{l:B}, there is a constant $B=B(r,k,\eps')$, such that  a.a.s.\  $|V(G_{t(B)})\setminus \C_k|\le \eps' n$. By Corollary~\ref{ccoresize}, a.a.s.\ the number of vertices and  hyperedges in $G_{t(B)}$ is $\alpha n+O(\eps' n)$ and $\dd\beta n+O(\eps' n)$ respectively. Therefore, $|\z_{t(B)}-\z|<A\eps'$ for some constant $A>0$. We choose $\eps'$ sufficiently small (correspondingly $B$ sufficiently large), so that $\eps'<\eps$ and $\eps'<\eps/A$. Thus we have $t(B)\ge t_{\e}$. Hence, a.a.s.\ there exists large constant $B=B(\e)$ so that $t_{\e}\le t(B)$. The fact that a.a.s.\ $\tau_1=\tau$ follows from the definition of $\tau_1$ in~\eqn{tau1} and  Lemmas~\ref{l:monotone} \fix{and~\ref{lcoresize2}.}

The fact that a.a.s.\ $\tau\le t_{\e}+k\eps n$ follows by the definition of $t_{\e}$ in~\eqn{te} and Corollary~\ref{ccoresize}: a.a.s.\ there are at most $\eps n +o(n)$ vertices to be deleted from $G_{t_{\e}}$ until SLOW-STRIP terminates, and it takes at most $k-1$ steps to remove each vertex in $\msq_t$.
}

\end{proof}

In the next lemma, we obtain a coarse bound on $\zeta_t$, \fix{which will suffice to prove Lemma~\ref{llt1}.}  We will refine this bound in the later part of this paper \fix{when we require a stronger form of Lemma~\ref{llt1}} (see Lemma~\ref{l:zetaseq}).

\begin{lemma} \lab{l:zetaT} For any sufficiently small constant $\eps>0$, there exists a sufficiently large constant $B$ such that  for every $\eps'>0$,
a.a.s.\ for all $t\ge t(B)$: $\zeta_t\ge \zeta_{\tau}-n^{-1/2+\eps'}$ and {$|\zeta_t-\z|\leq \eps$}.
\end{lemma}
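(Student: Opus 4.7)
The plan is to combine the drift estimates of Lemma~\ref{l:zetaMartingale}(a) with the Hoeffding--Azuma concentration inequality (Lemma~\ref{l:azuma}), after using Lemma~\ref{l:zetaMartingale}(b) to confine the analysis to a deterministic window. The bound $|\zeta_t-\z|\le \eps$ will in fact be essentially deterministic, following directly from Observation~\ref{lztz}, while the bound $\zeta_t\ge\zeta_\tau - n^{-1/2+\eps'}$ will exploit the (one-sided) supermartingale structure of $(\zeta_s)$ on that same window.

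Concretely, let $A=A(r,k)$ denote the implicit constant in Observation~\ref{lztz}, so that $|\zeta_s-\z|\le A\e$ uniformly for $s\in[t_\e,t_\e+k\e n]$. Set $\eps_0:=\eps/(2A)$ and apply Lemma~\ref{l:zetaMartingale}(b) with $\eps_0$ in place of $\eps$. This furnishes a constant $B=B(r,k,\eps_0)$ such that the event $\mathcal E:=\{t_{\eps_0}\le t(B),\ \tau_1=\tau,\ \tau<t_{\eps_0}+k\eps_0 n\}$ holds a.a.s. On $\mathcal E$, the whole interval $[t(B),\tau]$ sits inside $[t_{\eps_0},t_{\eps_0}+k\eps_0 n]$, and Observation~\ref{lztz} immediately gives $|\zeta_t-\z|\le A\eps_0\le\eps/2<\eps$ for every $t\ge t(B)$, which disposes of the second conclusion.

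For the first conclusion I would use that on $\mathcal E$, Lemma~\ref{l:zetaMartingale}(a) provides $\ex(\zeta_{s+1}-\zeta_s\mid\calf_s)\le -\rho_2/n\le 0$ throughout $[t(B),\tau)$, while removing one hyperedge changes $D_s,N_s$ by $O(1)$ with $N_s=\Theta(n)$, giving $|\zeta_{s+1}-\zeta_s|=O(1/n)$. Fix any $t\in[t(B),t_{\eps_0}+k\eps_0 n]$ and apply Lemma~\ref{l:azuma} to the process $(\zeta_{t+s})_{s\ge 0}$ with $a_n:=-\rho_2/n\le 0$, $c_n=O(1/n)$, horizon $k\eps_0 n$, and $j:=n^{-1/2+\eps'}$; since $(\tau-t)a_n\le 0$ on $\mathcal E$, this yields
\[
\pr\bigl(\zeta_\tau-\zeta_t\ge n^{-1/2+\eps'}\bigr)\le \exp\bigl(-\Omega(n^{2\eps'})\bigr).
\]
A union bound over the $O(n)$ admissible starting points $t$ preserves an $o(1)$ total failure probability, so a.a.s.\ $\zeta_t\ge \zeta_\tau - n^{-1/2+\eps'}$ holds uniformly on $[t(B),\tau]$. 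Corollary~\ref{ccon0} then transfers the conclusion from the AP-model to $\calh_r(n,cn)$.

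The main technical wrinkle is that $\tau$ is itself a random stopping time, so Lemma~\ref{l:azuma} cannot be invoked directly over the random interval $[t,\tau]$. The remedy is to exploit the deterministic upper bound $\tau<t_{\eps_0}+k\eps_0 n$ supplied by $\mathcal E$, run Azuma over the fixed horizon $k\eps_0 n$ starting at each candidate $t$, and only then specialize the resulting uniform estimate at the single time $s=\tau$; the extra factor of $n$ from the union bound is harmless against the super-polynomial Azuma tail.
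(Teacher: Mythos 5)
Your proposal is correct and takes essentially the same route as the paper's proof: confine $[t(B),\tau]$ inside $[t_{\eps},t_{\eps}+k\eps n]$ via Lemma~\ref{l:zetaMartingale}(b), read off $|\zeta_t-\zeta|\le\eps$ from Observation~\ref{lztz}, and obtain the one-sided bound from the supermartingale drift of Lemma~\ref{l:zetaMartingale}(a) together with Azuma at $j=n^{-1/2+\eps'}$ and a union bound. The only difference is bookkeeping: the paper freezes the process at $\tau'=\min\{t_{\eps}+k\eps n,\tau_1\}$ so that Azuma applies unconditionally rather than ``on $\mathcal{E}$'', and it union-bounds over all $O(n^2)$ pairs of times so the random endpoint $\tau$ is genuinely covered, whereas you union only over starting points and would need that extra (harmless, given the super-polynomial tails) union over endpoints to evaluate at $s=\tau$.
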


\begin{proof} 
Take a sufficiently small constant $\eps_1$ to satisfy 
Lemma~\ref{l:zetaMartingale}(a); then a sufficiently large constant $B=B(r,k,\eps_1)$ as in Lemma~\ref{l:zetaMartingale}(b). Then, a.a.s.\ 
\be
t_{\e_1}\le t(B)\ \  \mbox{and} \ \ \tau_1=\tau\le t_{\e_1}+k\eps_1 n.\lab{tB}
\ee
 Moreover,  by Lemma~\ref{l:zetaMartingale}(a), for all $t_{\eps_1}\le t\le \tau':=\min\{t_{\eps_1}+k\eps_1 n,\tau_1\}$: $\zeta_t$ is a supermartingale and $\zeta_{t+1}-\zeta_t$ is at most $O(1/n)$. {We couple $\z_t$ with another process $Z_t$ as in the proof of Lemma~\ref{llt2}: for all $t_{\eps_1}\le t\le \tau'$, let $Z_t=\z_t$;
 for all $t\ge \tau'$, let $Z_{t+1}=Z_t$. Now, $(Z_t)$ is a supermartingale for $t\ge t_{\eps_1}$ and $Z_{t+1}=Z_t+O(1/n)$. 
 } By Azuma's inequality, we have that for all {$t_{\eps_1}\le t_2<t_1\le t_{\eps_1}+k\eps_1 n$} and for any $j>0$,
$$
\pr(Z_{t_1}-Z_{t_2}\ge j)\le \exp\left(-\Omega\left(\frac{j^2}{(t_1-t_2)n^{-2}}\right)\right).
$$
{Taking $j=n^{-1/2+\eps'}$ and taking the union bound over all pairs $t_{\eps_1}\le t_1<t_2\le t_{\eps_1}+k\eps_1 n$, the probability that there is a pair $t_2<t_1$ in the above range such that 
$Z_{t_1}-Z_{t_2}>n^{-1/2+\eps'}$ is at most
$n^2\exp\left(-\Omega\left(n^{2\eps'}\right)\right)=o(1)$. Hence, a.a.s.\ $Z_{t_2}\ge Z_{t_1}-n^{-1/2+\eps'}$ for any pair $t_{\eps_1}\le t_1<t_2\le t_{\eps_1}+k\eps_1 n$.

By~\eqn{tB}, a.a.s.\ $t(B)\ge t_{\e_1}$ and $\tau'=\tau_1=\tau$ and so a.a.s.\ $Z_{t}=\z_{t}$ for all $t_{\eps_1}\le t\le \tau$. So, a.a.s.\
$\zeta_t\ge \zeta_{\tau}-n^{-1/2+\eps'}$ for all $t\ge t(B)$.}

To prove that a.a.s.\ $|\z_t-\z|\leq\eps$, we simply choose $\e_1$ to be sufficiently small in terms of $\e$ (and the implicit constant in  Observation~\ref{lztz}).  (\ref{tB}) yields that \aas for all $t(B)\leq t\leq \t$ we have $t_{\e_1}\leq t\leq t_{\e_1}+k\e_1 n$, so Observation~\ref{lztz} yields
\[ |\z_t-\z| \leq O(\e_1)\leq \e. \qedhere\]
\end{proof}

This immediately yields the following corollary.
\begin{corollary}\lab{cor:zetaT} For any sufficiently small constant $\eps>0$, there exist constants $B,K>0$ such that
a.a.s.\  for all $t\ge t(B)$: 
\begin{enumerate}
\item[(a)] $\zeta_t\ge \zeta+Kn^{-\d/2}$;
\item[(b)] $|\zeta_t-\z|\leq \eps$.
\end{enumerate}
\end{corollary}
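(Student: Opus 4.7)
\noindent\textbf{Proof proposal for Corollary~\ref{cor:zetaT}.} The plan is to derive both parts directly from Lemma~\ref{l:zetaT}, using Corollary~\ref{ccoresize}(c) to anchor the value of $\zeta_\tau$ slightly above $\zeta$.

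Part (b) requires essentially no extra work: given the target $\eps>0$, I would apply Lemma~\ref{l:zetaT} with this $\eps$ to obtain a constant $B$ such that a.a.s.\ $|\zeta_t-\zeta|\le\eps$ for all $t\ge t(B)$. The same $B$ will serve for part (a).

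For part (a), the key observation is that at time $\tau$ we are exactly at the $k$-core. By Corollary~\ref{ccoresize}(c) (which also holds in the AP-model by Lemma~\ref{lem:AP}), a.a.s.\ the $k$-core has average degree
\[
\zeta_\tau \;=\; \frac{r\beta}{\alpha} + K_3 n^{-\d/2} + O\bigl(n^{-\d}+n^{-1/4}\bigr) \;=\; \zeta + K_3 n^{-\d/2} + O\bigl(n^{-\d}+n^{-1/4}\bigr),
\]
where $K_3=K_3(r,k)>0$. Hence for all sufficiently large $n$,
\[
\zeta_\tau \;\ge\; \zeta + \tfrac{2K_3}{3}\,n^{-\d/2}.
\]

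Now I would invoke Lemma~\ref{l:zetaT} with a small constant $\eps'>0$ chosen so that $n^{-1/2+\eps'}=o(n^{-\d/2})$; since $\d<1/2$, any $\eps'<(1-\d)/2$ works, e.g.\ $\eps'=(1-\d)/4$. This yields a constant $B$ for which a.a.s.\ $\zeta_t\ge \zeta_\tau - n^{-1/2+\eps'}$ for every $t\ge t(B)$. Combining with the lower bound on $\zeta_\tau$ above and absorbing the $o(n^{-\d/2})$ error into the constant, we obtain
\[
\zeta_t \;\ge\; \zeta + \tfrac{2K_3}{3}\,n^{-\d/2} - n^{-1/2+\eps'} \;\ge\; \zeta + \tfrac{K_3}{2}\,n^{-\d/2}
\]
for $n$ large, establishing part (a) with $K:=K_3/2$.

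There is no substantial obstacle: the only things to watch are that $\eps'$ is chosen strictly below $(1-\d)/2$ so the error term from Lemma~\ref{l:zetaT} is genuinely dominated by the $k$-core excess $K_3 n^{-\d/2}$, and that the same constant $B$ simultaneously suffices for both the normality/concentration estimates of Lemma~\ref{l:zetaT} and the target window in part (b); this is arranged by taking $B$ as the maximum of the two values produced by the two applications.
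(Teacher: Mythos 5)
Your proposal is correct and follows essentially the same route as the paper: anchor $\zeta_\tau$ at $\zeta+\Theta(n^{-\d/2})$ and then transfer this to all $t\ge t(B)$ via the supermartingale bound $\zeta_t\ge\zeta_\tau-n^{-1/2+\eps'}$ from Lemma~\ref{l:zetaT}, with part (b) read off directly from that lemma. The only cosmetic difference is that you cite Corollary~\ref{ccoresize}(c) (via Lemma~\ref{lem:AP}) for $\zeta_\tau$, whereas the paper re-derives the same estimate from Lemma~\ref{lcoresize2} together with Lemmas~\ref{l:diff} and~\ref{l:gk} — the very ingredients behind Corollary~\ref{ccoresize}(c) — so the arguments coincide.
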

\begin{proof}

Choose $B$ and $\eps$ to satisfy Lemma~\ref{l:zetaT}. Immediately, we have a.a.s.\ $|\z_t-\z|\leq\eps$ for all $t\ge t(B)$.

Recall the definition of $g_k(x)$ in~\eqn{e.gkx}. By Lemma~\ref{lcoresize2}  and since $\mu_{r,k}>0$, 
a.a.s.\ 
\bean
\zeta_{\tau}&=&\frac{r\b(c)+O(n^{3/4})}{\a(c)+O(n^{3/4})}=g_k(\mu(c))+O(n^{-1/4})\\
&=& g_k(\mu_{r,k})+g'_k(\mu_{r,k})(\mu(c)-\mu_{r,k})+o(\mu(c)-\mu_{r,k})+O(n^{-1/4}),
\eean
By Lemma~\ref{l:gk}, $g'_k(\mu_{r,k})>0$. By Lemma~\ref{l:diff}, $\mu(c)-\mu_{r,k}=K_1n^{-\d/2}+o(n^{\fix{-}\d/2})$ for some constant $K_1>0$. Recall also that  $\zeta=\dd\beta/\alpha=g_k(\mu_{r,k})$.
 So a.a.s.
\be
\zeta_{\tau}= \zeta+\Theta(n^{-\d/2}),\lab{ztau}
 \ee
 as $n^{-1/4}$ is absorbed by $o(n^{-\d/2})$ since $\d<1/2$.
   Now applying Lemma~\ref{l:zetaT} with some $\e'<\inv{4}$ so that $n^{-\hf+\e'}=o(n^{-\d/2})$, we have that a.a.s.\ 
\[
\mbox{for all $t\ge t(B)$, } \zeta_{t}\ge\z_{\t}-n^{-\hf+\e'}\ge \zeta + Kn^{-\d/2},
\] 
for some appropriate constant $K>0$.
\end{proof}

\subsubsection{Proof of Lemma~\ref{llt1}}\lab{sec:lp2}

Recall that we analyze the running of SLOW-STRIP on $AP_r(n,cn)$. In each step $t(B)\le t\le\tau$, the algorithm removes a vertex-copy in a light vertex and another $\dd-1$ vertex-copies $u_1,\ldots,u_{\dd-1}$ chosen uniformly from all remaining ones. For each $1\le i\le \dd-1$, let $h_{t,i}$ denote the probability that $u_i$ is light. Recall that $\bar p_t$ is the probability that a heavy vertex-copy is allocated to a bin of size $k$.  Then
\begin{equation}
\ex(L_{t+1}-L_t\mid \calf_t)=-1+\sum_{i=1}^{\dd-1} \Big(-h_{t,i}+(1-h_{t,i})(k-1) \bar p_t+O(n^{-1})\Big),\lab{barpt}
\end{equation}
where $O(n^{-1})$ accounts for the change of $\bar p_t$ caused by the removal of the first $i-1$ points and the possibility that we select two copies of the same vertex. \eqn{barpt} is maximized when $h_{t,i}=0$ for all $1\le i\le \dd-1$. Thus,
\begin{equation}
\ex(L_{t+1}-L_t\mid \calf_t)\le -1 + (\dd-1)(k-1)\bar p_t+O(n^{-1}).\lab{barpt2}
\end{equation}

As in the proof of Lemma~\ref{l:zetaMartingale}, we need a relation between $\bar p_t$ and $\zeta_t$ and thus it is convenient to restrict to steps $t< \tau_1$ where $\tau_1$ is defined in~\eqn{tau1}. Moreover, we want to restrict to sequences $(G_t)$ such that Corollary~\ref{cor:zetaT}(a,b) hold. To formalise the idea, we first choose constants $B$, $\eps$ and $K$ to satisfy Corollary~\ref{cor:zetaT} (Note that $\eps$ can be chosen arbitrarily small which results in larger $B$).  Then, we define:
\bea
&&\mbox{$\tau_2$ is the minimum $t\ge t(B)$ such that $\zeta_{t}<\zeta+Kn^{-\d/2}$ or $|\z_{t}-\z|>\eps$;}\lab{tau2}\\
&&\mbox{$\tau_2=\tau$ if no such integer exists.}\non
\eea
Define
\be
\tau^*=\min\{\tau_1,\tau_2\}. \lab{taustar}
\ee
By Lemma~\ref{l:zetaMartingale}(b) and Corollary~\ref{cor:zetaT}, a.a.s.\ $\tau^*=\tau$.

By the definition of $\tau_1$ in~\eqn{tau1} and noting that $\tau^*\le \tau_1$, we have for all $t(B)\le t< \tau^*$: 
\be
\bar p_t=\psi(\z_t)+n^{-1/2}\log^2 n=\psi(\z)+\psi'(\z)(\z_t-\z)+O((\z_t-\z)^2)+n^{-1/2}\log^2 n, \lab{et*}
\ee
{by expanding $\psi(\z_t)$ at $\z$.}

By Lemmas~\ref{l2:monotone} and~\ref{l:rho}, and since $\z=\z(r,k)$ is a constant, we have $\psi'(\z)<-C$ for some constant $C>0$. By the definition of $\tau_2$ and $\tau^*$,
for all $t(B)\le t<\tau^*$, $\z_t-\z$ can be assumed sufficiently small (by choosing sufficiently small $\eps$) so that
\[
\psi'(\z)(\z_t-\z)+O((\z_t-\z)^2)\le -\frac{C}{2}(\z_t-\z).
\]

By~\eqn{relation2}, $\psi(\z)=\frac{1}{(\dd-1)(k-1)}$.
By Corollary~\ref{cor:zetaT}, there is a constant $K_1>0$ such that a.a.s.\ for all $t(B)\le t< \tau^*$, $\z_t\ge \z+K_1n^{-\d/2}$. Putting all this together yields:

\bea
\bar p_t&\le& \frac{1}{(\dd-1)(k-1)}-\frac{C}{2}K_1n^{-\d/2}+n^{-1/2}\log^2 n\non\\
&\le &\frac{1}{(\dd-1)(k-1)}-K_2n^{-\d/2},\lab{br1}
\eea
for any constant $K_2<K_1C/2$, as $\d/2 < \inv{2}$.

It follows then from (\ref{barpt2}) that there is a $K>0$ such that for all $t(B)\le t<\tau^*$,
\be
\ex(L_{t+1}-L_t\mid \calf_t)\le -Kn^{-\d/2}.\lab{brr}
\ee
By~(\ref{et*}), this holds \aas\ for all $t(B)\leq t<\tau^*$, and by~(\ref{taustar}) \aas\ $\tau=\tau^*$; this proves the lemma.\qed

\subsection{Proof of Theorem~\ref{mt}(b)} \label{smtb}

In this section, we prove Theorem~\ref{mt}(b). So $|c-c_{r,k}|\leq n^{-\d}$ for some $0<\d<\hf$. Our goal is to show that \aas\ the stripping number of $H\in \calh_{\dd}(n,cn)$ is $\Omega(n^{\d/2})$.
{Similarly to the supercritical case, we will analyse the process on $H\in AP_r(n,cn)$ instead.}

We will define $\d'\approx\d$ so that $c$ is not very far from $c'=c_{r,k}+n^{-\d'}$ and then argue
that the stripping number of $H$ is not much smaller than the stripping number of $H'={AP_{\dd}(n,c'n)}$.  We have already proven that the latter stripping number is $\Omega(n^{\d'/2})$ in Section~\ref{s.mtlb}. 

We specify a small $\e>0$ and define $\d'$ such that:
\[
n^{-\d'}=\left\{
\begin{array}{ll}
n^{-\d} & \mbox{if}\ c= c_{r,k}-n^{-\d}\ \mbox{where}\ \d\le 1/2-\e\\
2n^{-1/2 +\e} & \mbox{if} \ |c-c_{r,k}|< n^{-1/2 + \e}.
\end{array}
\right.
\]

\noindent {\bf Remark} So if $|c-c_{r,k}|<n^{-1/2 + \e}$, then $\d'$ is a function of $n$, rather than a constant.  We can still apply the results of Section~\ref{s.mtlb} to $H'$ as the proofs work even for non-constant $\d$, see the remark following the statement of Theorem~\ref{mt}.

It follows then that in both cases,
\be
\frac{1}{2}n^{-\d'}\le n^{-\d'}-(c-c_{r,k})\le 2n^{-\d'}. \lab{cdiff}
\ee
To prove Theorem~\ref{mt} for these ranges of $c$, it suffices to show that a.a.s.\
$s(\calh_r(n,cn))=\Omega(n^{\d'/2})$ with $\d'$ defined above, since $n^{\d'/2}={\Omega}(n^{\d/2})$.

Let $H'={AP}_{\dd}(n,c'n)$ where $c'=c_{r,k}+n^{-\d'}$, and we generate $H$ by removing $\fix{(c'-c)n}$ edges ({i.e.\ $r$-tuples}), {chosen uniformly at random} in $H'$. This way we couple $H\subseteq H'$ and $H$ is distributed as ${AP}_{r}(n,cn)$. Consider the following stripping procedure to find the $k$-core of $H$:

\begin{enumerate}
\item Run Phase 1 of SLOW-STRIP on $H'$, thus obtaining $G'_{t_0}\subseteq H'$.
\item For each vertex $v$ removed from $H'$ in Step 1, we also remove $v$ from $H$.
We call the remaining hypergraph $G_{t_0}$.
\item Run SLOW-STRIP on $G_{t_0}$.
\end{enumerate}

To be clear:  $\g$ and $t_0$ are defined for {the process of running SLOW-STRIP on} $H'$, not for $H$.  So:
\[\g=3\fix{K_2}n^{1-\d'/2},\]
and $t_0$ is the first step at which exactly $\fix{\b} n+\g$ \fix{hyperedges} remain in $H'$.

Note that since $H\subseteq H'$, the $k$-core of $H$ is contained in the $k$-core of $H'$, and so the $k$-core of $H$ is contained in $G'_{t_0}$.  $G_{t_0}$ contains every vertex of $G'_{t_0}$ and contains every edge of $G'_{t_0}$ that is an edge of $H$. So  the $k$-core of $H$ is contained in $G_{t_0}$. Thus, this is a valid way to obtain the $k$-core of $H$.

Note also that this is not equivalent to running SLOW-STRIP on $H$, since doing so could remove 
the vertices in a different order.  Nevertheless,
we still have $G_{t_0}\subseteq H$ and so the stripping number of $H$ is at least  the stripping number of $G_{t_0}$.

The number of hyperedges in $H'$ but not in $H$ is \fix{$(c'-c)n$, which is $\Theta(n^{1-\d'})$ by~\eqn{cdiff}}.  
We use $G_{i}$ to denote the subgraph of $H$ remaining after $i-t_0$ iterations of SLOW-STRIP on $G_{t_0}$.  We define $L_i, D_i, N_i$ as we did in Sections~\ref{sec:deg} and~\ref{sec:llt2}. Similarly we define $L'_{t_0}, D'_{t_0}, N'_{t_0}$ to be the values of the same parameters for $G'_{t_0}$.

$G'_{t_0}$ is the result of carrying out Phase 1 of SLOW-STRIP on $H'=AP_{\dd}(n,c'n)$. So \aas $L'_{t_0}, D'_{t_0}, N'_{t_0}$ satisfy the bounds in (\ref{vertex-edge}) and (\ref{L0b}). 
 Clearly, $|L_{t_0}-L'_{t_0}|, |D_{t_0}-D'_{t_0}|, |N_{t_0}-N'_{t_0}|$ are bounded by $r$ times the number of edges in $E(G'_{t_0})\bk E(G_{t_0})$. {Note that $|E(G'_{t_0})\bk E(G_{t_0})|$} is at most the number of edges in $E(H')\bk E(H)$, {which is $\fix{\Theta(n^{1-\d'})}=o(\g)$}. Combining this with the bounds (\ref{vertex-edge}) and (\ref{L0b}) on $L'_{t_0}, D'_{t_0}, N'_{t_0}$, we have:
\begin{eqnarray*}
N_{t_0}&=&N'_{t_0}+o(\g)=\a n+O(\g)\\
D_{t_0}&=&D'_{t_0}+o(\g)=r\b n+O(\g)\\
L_{t_0}&=&L'_{t_0}+O(\g^2/n)=O(\g^2/n)
\end{eqnarray*}

Next we prove that a.a.s.\ SLOW-STRIP applied to $G_{t_0}$ lasts for at least $\g/4$ steps. Applying Proposition~\ref{p:tau} to $H'$ tells us that applying SLOW-STRIP to $G'_{t_0}$
would take  at least $\g/3$ iterations.  Each iteration removes one hyperedge, and the removed hyperedge is not in the $k$-core of $G'_{t_0}$ and hence not in the $k$-core of $G_{t_0}$.  We have shown that at most $\fix{\Theta(n^{1-\d'})}=o(\g)$ of those  hyperedges are not in $G_{t_0}$. Therefore,
SLOW-STRIP takes at least $\g/3-o(\g)>\g/4$ iterations on $G_{t_0}$ {(c.f.\ Lemma~\ref{lp0})}.  
{It is then convenient to define $\tau'=\min\{ t_0+\g/4,\tau\}$ and we have just shown that a.a.s.\ $\tau\ge \tau'$.} 

Corollary~\ref{ccoresize} implies that \aas\ $G'_{t_0}$ is nice.  Since $G_{t_0}$ contains all but  $o(\g)$ hyperedges of $G'_{t_0}$, \aas\ $G_{t_0}$ is also nice.
Thus, the analysis of Section~\ref{sec:llt2} applies to the running of SLOW-STRIP on $G_{t_0}$. The analysis is similar except that $\d$ is replaced by $\d'$, and yields the conclusion of Lemma~\ref{llt3}; i.e.\ for every $ t_0\leq t\leq \tau'$,
\[\ex(L_{t+1}{\mid \calf_t})= L_t \pm O(n^{-\d'/2}).\]
  
The  same analysis as in the proof of Lemma~\ref{llt2} yields that (\ref{Lt}) holds; i.e. \aas for every $ t_0\leq t\leq {\tau'}$,
\begin{equation}\lab{ecoup1} L_{t}\leq  L_{t_0}+O(\g^2/ n)=O(n^{1-\d'}).
\end{equation}

The rest of the proof follows as in Section~\ref{s.mtlb}. We argued above that
SLOW-STRIP takes at least $\g/3-o(\g)>\g/4$ iterations on $G_{t_0}$ ({thus, a.a.s.\ $\tau'=t_0+\g/4$}).  It takes at most $k-1$ iterations to remove a vertex, and so the parallel stripping process, applied to $G_{t_0}$ removes at least $\g/(4(k-1))$ vertices.  By~(\ref{ecoup1}) each iteration $i$ removes  $|L_{t(i)}|=O(n^{1-\d'})$ vertices. So there must be at least $\g/O(n^{1-\d'})=\Omega(n^{\d'/2})$ ({recalling that $\gamma=\Theta(n^{1-\d'/2})$}) iterations of the parallel stripping process. I.e. the stripping number of $G_{t_0}$ is at least $\Omega(n^{\d'/2})$ and hence the stripping number of $H$ is also $\Omega(n^{\d'/2})=\Omega(n^{\d/2})$.

This proves that the lemma holds for $AP_r(n,cn)$.  Corollary~\ref{ccon0} implies that it also holds for $\calh_r(n,cn)$.
\proofend

\section{Bounding the maximum depth: proof of Theorem~\ref{mt2}}\lab{smd}

We first note that the stripping number provides a lower bound on the maximum depth over all non-$k$-core vertices.
 \begin{lemma} \lab{ls2} For any vertex $v\in S_i$, the depth of $v$ is at least $i$.
\end{lemma}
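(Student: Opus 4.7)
The plan is to prove, by induction on $j$, the following claim: for any $k$-stripping sequence $v_1,\ldots,v_j$ on the hypergraph $H$, the last vertex satisfies $v_j\in S_1\cup S_2\cup\cdots\cup S_j$, where $S_1,S_2,\ldots$ are the levels produced by the parallel $k$-stripping process applied to $H$. Applying this claim to a shortest stripping sequence ending with $v\in S_i$, whose length is $m=\mathrm{depth}(v)$, gives $v\in S_1\cup\cdots\cup S_m$; since the $S_\ell$ are disjoint and $v\in S_i$, we must have $i\le m$, which is exactly the statement of the lemma.

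For the base case $j=1$, the vertex $v_1$ has degree less than $k$ in $H=\widehat H_1$, so by definition of the parallel process $v_1\in S_1$. For the inductive step, assume the claim for all shorter prefixes, so that $v_\ell\in S_1\cup\cdots\cup S_\ell\subseteq S_1\cup\cdots\cup S_{j-1}$ for every $\ell<j$. Hence $\{v_1,\ldots,v_{j-1}\}\subseteq S_1\cup\cdots\cup S_{j-1}$, which means that $\widehat H_j$ (the hypergraph after the first $j-1$ parallel iterations) is obtained from $H\setminus\{v_1,\ldots,v_{j-1}\}$ by removing a further (possibly empty) set of vertices together with their incident hyperedges. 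This is the only monotonicity needed: deleting more vertices can only decrease a surviving vertex's degree. Since the definition of a stripping sequence guarantees that $v_j$ has degree less than $k$ in $H\setminus\{v_1,\ldots,v_{j-1}\}$, it follows that if $v_j$ is still present in $\widehat H_j$ then its degree there is also less than $k$, and hence the parallel process removes it in iteration $j$, placing $v_j\in S_j$. Otherwise $v_j$ was already removed in an earlier parallel iteration, so $v_j\in S_1\cup\cdots\cup S_{j-1}$. Either way $v_j\in S_1\cup\cdots\cup S_j$, completing the induction.

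There is really no main obstacle here: the lemma is the clean formal expression of the intuition that the parallel process is the pointwise fastest legal stripping, and the only mild care is to justify the degree comparison between $\widehat H_j$ and $H\setminus\{v_1,\ldots,v_{j-1}\}$, which is immediate from the inclusion $\{v_1,\ldots,v_{j-1}\}\subseteq S_1\cup\cdots\cup S_{j-1}$ provided by the inductive hypothesis.
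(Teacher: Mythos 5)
Your proof is correct, and it takes a slightly different (essentially dual) route from the paper's. The paper inducts on the level index $i$: it shows every vertex of $\cup_{j\geq i}S_j$ has depth at least $i$, by observing that a vertex outside $S_1,\ldots,S_i$ still has degree at least $k$ among vertices in later levels or the $k$-core, so any stripping sequence ending at it must first contain a full stripping sequence for one of those neighbours, which by induction has length at least $i$. You instead induct on the position $j$ in an arbitrary stripping sequence and prove the complementary statement that the $j$-th removed vertex always lies in $S_1\cup\cdots\cup S_j$, using only the monotonicity that $\widehat{H}_j$ is a subhypergraph of $H\setminus\{v_1,\ldots,v_{j-1}\}$ (so degrees can only drop), and then conclude via disjointness of the levels. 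What your version buys is a cleaner invariant -- the parallel process is pointwise the fastest legal stripping, so within $j$ steps no sequence can reach beyond level $j$ -- and it avoids the neighbour-counting step; what the paper's version buys is a statement phrased directly in terms of depth, which is the quantity reused later (it is the depth of the neighbours that feeds the induction). Both are short, elementary inductions and each can be derived from the other by contraposition; your argument is complete as written, including the correct handling of the case where $v_j$ has already been removed by the parallel process before iteration $j$.
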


\proofstart  We prove by induction that every vertex $v\in\cup_{j\geq i} S_j$ has depth at least $i$.  This is trivial for $i=1$. Suppose it is true for $i$, and consider $v\in\cup_{j\geq i+1} S_j$.
Since $v\notin S_i$, $v$ has at least $k$ neighbours which are either in the $k$-core or in $\cup_{j\geq i} S_j$.  At least one of those neighbours must be removed before $v$ can be removed, and by our induction hypothesis, each such neighbour has depth at least $i$.  So any stripping sequence which removes $v$ must first include a sequence of length at least $i$ which removes a neighbour of $v$. Thus it must have length at least $i+1$; i.e. the depth of $v$ is at least $i+1$.
\proofend

Therefore, Theorem~\ref{mt}(a) provides the lower bound of Theorem~\ref{mt2}.  We will focus on the upper bound.

Recall that $S_i$ is the set of vertices removed during iteration $i$ of the parallel stripping process, and $\hG_i$ is the subhypergraph remaining after $S_1,...,S_{i-1}$ are removed. 

\fix{Define:
\[\imax \mbox{ is the number of iterations carried out by the parallel stripping process.}\]
}
We define the following hypergraph formed by the vertices of $S_i$:
\begin{definition}\label{dsi} The vertices of $\cals_i$ are $S_i$. For any hyperedge $f$ in $\hG_i$ that includes at least one vertex of $S_i$, $f'=f\cap S_i$ is a hyperedge of $\cals_i$. If $|f'|=a$ then $f'$ is said to be an {\em $a$-edge}.
\end{definition}
Note that $\cals_i$ may contain hyperedges of size one  (in fact, for large $i$, most of the edges will have size one).

We wish to bound the depth of a non-$k$-core  vertex $v$. We begin by defining a set $R(v)$ that contains such a stripping sequence.

\begin{definition}\lab{drv}  For each $1\leq i\leq \imax$ and any $v\in S_i$, we set $R'_i=R'_i(v):=\{v\}$ and for each $j = i$ to 1:
\begin{enumerate}
\item[(a)] we set $R_j=R_j(v)$ to be the union of the vertex sets of all components of $\cals_j$ that contain vertices of $R'_j$.
\item[(b)]  we set $R'_{j-1}$ to be the set of all vertices $v\in S_{j-1}$ that are adjacent to $\cup_{\ell=i}^{j} R_{\ell}$.
\end{enumerate}
We define $R(v)=\cup_{\ell=i}^{1} R_{\ell}$.
\end{definition}

\no {\bf Remark.}
For the purposes of this paper, we could have omitted step (a) and replaced $R'$ by $R$ elsewhere.  The set $R(v)$ would still have contained a stripping sequence leading to the removal of $v$.  We define, and bound, this larger set for our application to clusters in random XOR-SAT in another paper\cite{gmxor}. (A preliminary version is in~\cite{gmarxiv}).

\begin{observation}\label{ossr}
$R(v)$ contains a stripping sequence ending with $v$.
\end{observation}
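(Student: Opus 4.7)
The plan is to exhibit an explicit ordering of the vertices of $R(v)$ that is a valid stripping sequence and ends with $v$. Order them layer-by-layer: first all vertices of $R_1$, then $R_2$, and so on up to $R_i$, placing $v$ last within the block $R_i$. Within each block $R_j$, the order may be arbitrary apart from this last-position constraint. I then have to verify two things: (i) once the preceding blocks $R_1,\ldots,R_{j-1}$ have been removed, every vertex $w\in R_j$ has degree less than $k$ in what remains; and (ii) removing the vertices of $R_j$ in any order keeps all remaining vertices of $R_j$ of degree less than $k$, so the intra-block ordering is unconstrained.

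The key structural lemma, which drives (i), is the following. For every $w\in R_j$ and every vertex $u\neq w$ contained in an edge of $H$ through $w$, if $u\in S_m$ for some $m<j$, then $u\in R_m$. To see this, $u\in S_m$ is adjacent in $\hG_m$ (hence in $H$) to $w\in R_j\subseteq\bigcup_{\ell=i}^{m+1} R_\ell$, so by Definition~\ref{drv}(b) we have $u\in R'_m\subseteq R_m$. Consequently, every edge of $H$ incident to $w$ that was destroyed in the first $j-1$ iterations of the parallel process (i.e., contains some vertex of $S_1\cup\cdots\cup S_{j-1}$) already contains a vertex of $R_1\cup\cdots\cup R_{j-1}$, and has therefore already been removed when we start processing $R_j$. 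Together with the trivial converse (since $R_m\subseteq S_m$), this shows that the set of edges through $w$ surviving at the start of block $R_j$ is exactly the edge set of $w$ in $\hG_j$. Because $w\in S_j$, this degree is at most $k-1$, giving (i).

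For (ii), observe that $R_j$ is by construction a union of connected components of the hypergraph $\mathcal{S}_j$ from Definition~\ref{dsi}, so the sub-hypergraph of $\mathcal{S}_j$ induced on $R_j$ coincides with those components. Since every vertex of $S_j$ has the same degree in $\mathcal{S}_j$ as in $\hG_j$, every vertex of $R_j$ has degree less than $k$ in this sub-hypergraph. Deleting vertices only decreases degrees, so any ordering of $R_j$ is a valid stripping order once all of $R_1\cup\cdots\cup R_{j-1}$ have been removed; in particular we may postpone $v$ to the end of block $R_i$.

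The one subtlety worth checking carefully, which I anticipate as the only non-routine point, is that the edges destroyed through $w$ by removing a prefix of $R_j$ already agree with what $\mathcal{S}_j$ records, i.e.\ that intra-block removals do not create any spurious additional incidences; this is automatic because edges of $H$ through $w$ whose other endpoints all lie in the $k$-core or in $\bigcup_{\ell>j}S_\ell$ are precisely those still present in $\hG_j$, and none of the still-unremoved vertices from later blocks or the $k$-core have yet been touched. Combining (i) and (ii) shows the ordering is a legitimate $k$-stripping sequence, and by construction it ends with $v$.
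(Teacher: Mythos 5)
Your proposal is correct, and it reaches the conclusion by a somewhat different route than the paper. The paper proves the observation by induction on the level $i$ of $v$: it notes that all neighbours of $v$ in levels $1,\ldots,i-1$ lie in $R(v)$, that $R(v)$ contains $R(u)$ for each such neighbour $u$, and then concatenates the recursively obtained sequences ending at those neighbours before appending $v$. You instead exhibit one explicit global order — all of $R_1$, then $R_2$, up to $R_i$ with $v$ last — and verify directly that at the start of block $R_j$ every $w\in R_j$ already has degree below $k$, because every edge of $H$ through $w$ that was destroyed before iteration $j$ contains a vertex of $R_1\cup\cdots\cup R_{j-1}$ which has already been deleted. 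This is a global, non-recursive reformulation of the same structural feature of Definition~\ref{drv}(b); it buys a slightly stronger conclusion (the whole of $R(v)$, in this order, is a stripping sequence, not merely that $R(v)$ contains one), at the cost of a more careful bookkeeping argument, whereas the paper's induction is shorter because it never needs to compare the surviving edge set with that of $\hG_j$. Your point (ii) is also more elaborate than necessary: once (i) gives degree $<k$ at the start of the block, intra-block deletions only decrease degrees, so no appeal to degrees in $\cals_j$ is needed.

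One step deserves a correction, though it does not affect the conclusion you actually use. In your key lemma you assert that any $u\in S_m$ sharing an $H$-edge $e$ with $w\in R_j$ ($m<j$) is ``adjacent in $\hG_m$'' to $w$; this fails if $e$ also contains a vertex of level smaller than $m$, since then $e$ is removed before iteration $m$ and is absent from $\hG_m$. Whether your lemma as stated survives depends on whether the (unspecified) adjacency in Definition~\ref{drv}(b) is taken in $H$ or in the current hypergraph; under the latter reading it can fail for non-minimum-level vertices of $e$. The repair is one line: for each edge $e$ through $w$ destroyed before iteration $j$, take $u$ to be a vertex of $e$ of minimum level $m_e<j$; then $e$ survives to $\hG_{m_e}$ and is deleted in iteration $m_e$, so $u$ is adjacent there to $w\in\cup_{\ell\ge m_e+1}R_\ell$ and hence $u\in R'_{m_e}\subseteq R_{m_e}$. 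This already yields exactly the consequence you need — every destroyed edge through $w$ meets $R_1\cup\cdots\cup R_{j-1}$ — so the argument goes through.
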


\proofstart We prove this for each $v\in S_i$ using a simple induction on $i$.  For $i=1$, $v$ is a stripping sequence of length 1.  For $i\geq 2$, note that $R(v)$ contains every neighbour of $v$ lying in levels $1,...,i-1$; call these neighbours $u_1,...,u_q$.  The recursive construction
ensures that $R(v)$ also contains $R(u_1),...,R(u_q)$ and so, by induction, contains stripping sequences ending with $u_1,...,u_q$.  After the deletion of $u_1,...,u_q$, the degree of $v$ drops below $r$ (since $v\in S_i$), and so adding $v$ to the concatenation of those stripping sequences produces a stripping sequence ending in $v$.
\proofend

\begin{theorem}\lab{mt3}
{Let $r,k\ge 2$, $(r,k)\neq (2,2)$ be fixed. There is a constant $\kappa=\kappa(r,k)$ such that: for any $0<\d<1/2$, if $c=c_{r,k}+n^{-\d}$, then a.a.s.\ for every $v\in\H_r(n,c n)$:} $|R(v)|\le n^{\kappa\d}$.
\end{theorem}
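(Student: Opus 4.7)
The plan is to bound $|R(v)|$ by peeling back the recursive definition one level at a time, going from level $i$ down to level $1$, and to exploit the conditional randomness of the hyperedges given the level partition. By Theorem~\ref{mt}(a), a.a.s.\ $\imax = O(n^{\delta/2}\log n)$, so every non-$k$-core vertex $v$ lies in some $S_i$ with $i$ of this order; by Corollary~\ref{ccoresize} the total number of non-$k$-core vertices is $O(n^{1-\delta/2})$, making a final union bound over $v$ affordable. I would work in the AP-model and translate to $\H_r(n,cn)$ via Corollary~\ref{ccon0}.

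The key device, foreshadowed in Section~\ref{sec:exposure}, is to run SLOW-STRIP while revealing only the vertex partition $(S_1,\ldots,S_{\imax})$ together with the tuples $\calt_t$, not the identities of individual hyperedges. Conditioned on this information, the actual hyperedges are uniformly random among all configurations consistent with the revealed data, and this is what powers the analysis. The recursion expands in two stages per level: first $R'_j \to R_j$, absorbing entire components of $\cals_j$; then $R_j \to R'_{j-1}$, adding neighbors in $S_{j-1}$. For the first stage, using that $|S_j|$ is small (bounded via $L_{t(j)} = O(n^{1-\delta})$ from Lemma~\ref{llt2}) and that the hyperedges fully contained in $S_j$ form a subcritical structure, a standard branching argument gives components of $\cals_j$ of size at most $\mathrm{polylog}(n)$ a.a.s. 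For the second stage, the conditional edge distribution controls the number of neighbors each $u \in R_j$ has in $S_{j-1}$.

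Combining the two stages naively would give a per-level multiplicative factor of $\mathrm{polylog}(n)$, which over $\imax$ levels is far too weak. The essential saving is that most vertices in $R_j$ have no neighbors in $S_{j-1}$ at all, so the effective expected multiplicative growth per level is only $1 + O(n^{-\delta/2})$. Telescoping over $\imax = O(n^{\delta/2}\log n)$ levels yields $\ex|R(v)| = n^{O(\delta)}$, and a concentration argument (a supermartingale bound on $\log|R'_j|$ with bounded differences, via Lemma~\ref{l:azuma}) promotes this to a high-probability bound of the form $|R(v)| \le n^{\kappa\delta}$, followed by a union bound over $v$.

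The main obstacle will be controlling the second stage of the expansion: establishing, uniformly over all levels $j$, that the conditional distribution of the neighbors of $R_j$ in $S_{j-1}$ has the right expected size and sufficient concentration. This hinges on careful analysis of the conditional hyperedge distribution given the exposed level data, analogous to (but more delicate than) the analysis of $\bar p_t$ and $\zeta_t$ in Section~\ref{sec:lp2}, and on ensuring the bounds are strong enough to survive the union bound over non-$k$-core vertices.
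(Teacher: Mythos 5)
Your two-phase exposure idea and the level-by-level peeling of $R(v)$ do match the paper's strategy (EXPOSURE/EDGE-SELECTION in Section~\ref{sec:exposure}, Definition~\ref{drv}, Lemma~\ref{l.rj}), but the quantitative heart of your argument does not work as stated. You claim a uniform per-level growth factor $1+O(n^{-\delta/2})$ and telescope over $\imax=O(n^{\delta/2}\log n)$ levels to conclude $|R(v)|=n^{O(\delta)}$. The arithmetic goes the other way: $\bigl(1+cn^{-\delta/2}\bigr)^{an^{\delta/2}\log n}=n^{ca+o(1)}$, and the constants $c,a$ depend only on $r,k$, so this route gives $n^{O(1)}$ with an exponent that does \emph{not} scale with $\delta$ --- it does not prove $|R(v)|\le n^{\kappa\delta}$ and is vacuous when $\delta$ is small. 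Moreover, the factor $1+O(n^{-\delta/2})$ is not even correct for the early levels: for $j$ just above the constant $B$, $|S_j|$ can be a small linear fraction of $n$, and the per-level growth there is $1+\Theta\bigl(\sqrt{|S_j|/n}\bigr)$, much larger than $1+O(n^{-\delta/2})$.

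To get $n^{O(\delta)}$ one must tie the per-level factor to $\sqrt{|S_j|/n}$ and telescope the product against the decay of the $|S_j|$ themselves. This is what the paper does: Lemma~\ref{lsi}(a,b) gives the \emph{two-sided} bounds $|S_{i+1}|=(1-\Theta(\sqrt{|S_i|/n}))|S_i|$ when $|S_i|\ge n^{1-\delta}$ and $|S_{i+1}|=(1-\Theta(n^{-\delta/2}))|S_i|$ below that, so that $\exp\bigl(D\sum_j\sqrt{|S_j|/n}\bigr)$ is bounded by a constant power of $|S_B|/|S_{\ell_0+1}|=O(n^{\delta})$ for the early levels and by $e^{O(1)}$ for the late levels (there the terms $\sqrt{|S_j|/n}$ decay geometrically, which is the real reason the sum is $O(1)$ --- not that each term is at most $n^{-\delta/2}$). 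Establishing these level-size bounds is a large missing component of your sketch: it requires matching upper \emph{and lower} bounds of order $\sqrt{L_t/n}$ on the drift of $L_t$ (Corollary~\ref{cor:l-br}), i.e.\ the refined analysis of $\zeta_t$, $\bar p_t$ and $\br_t$ in Section~\ref{slsi}, well beyond Lemma~\ref{llt2}. A further structural point: $R'_{j-1}$ collects neighbours of the whole accumulated set $\cup_{\ell\ge j}R_\ell$, not just of $R_j$, so the growth obeys the second-order recurrence of Lemma~\ref{l.rj} and Section~\ref{sec:rec2}, and it is from solving that recurrence that the $1+O(\sqrt{|S_j|/n})$ factors emerge --- not from a simple product of independent per-level branching factors. (Two smaller slips: the number of non-$k$-core vertices is $\Theta(n)$, not $O(n^{1-\delta/2})$, which is harmless for the union bound; and every vertex of $S_j$ with $j\ge 2$ has at least one neighbour in $S_{j-1}$, so ``most vertices of $R_j$ have no neighbours in $S_{j-1}$'' is not the correct mechanism for the small growth.)
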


We will prove the upper bound in Theorem~\ref{mt3} by showing that a.a.s.\ $|R(v)|=n^{O(\d)}$ for every non-$k$-core vertex $v$. {Here and in the following sections, the implicit constant involved in $O(.)$ is always independent of $\d$.} Theorem~\ref{mt2} follows immediately from Theorem~\ref{mt3} and Observation~\ref{ossr}. 

\fix{We close this section with an  overview of the proof for Theorem~\ref{mt3}. One natural approach to bound $R(v)$ is to: (a) carry out the stripping process until $v$ is removed in some iteration $i$, then (b) explore $R(v)$ starting with $R_i(v)$ and working through $R_{i-1}(v), R_{i-2}(v),...R_1(v)$ according to Definition~\ref{drv}.  However, after exposing all the edges deleted in iterations $1,...,i$ in part (a), we  have no randomness left to facilitate the analysis of part (b).   

To overcome this difficulty, we only expose a minimal amount of information in part (a).  We expose the vertices of each $S_i$; some information about the deleted edges, such as the number of vertices an edge has in $S_i$ and in $S_{i+1}$; and some degree information. But crucially we do not expose the actual vertices in each deleted edge.  When we carry out part (b), we expose the vertices of the edges relevant to $R(v)$.  This is enough randomness for us to bound $|R(v)|$.

To carry out part (a), we complete the entire stripping process using a procedure called  {\em EXPOSURE}.  The parameters we expose in this phase are listed at the beginning of the next subsection.  Lemma~\ref{lsi} bounds these parameters.  
We then expose the vertices in all deleted edges using a procedure called {\em EDGE-SELECTION}.  We use this procedure to expose the vertices in $R_i(v),R_{i-1}(v),...R_1(v)$ and obtain a recursive bound on $|R_j(v)|$ in terms of $R_{j+1}(v),...,R_i(v)$; see Section~\ref{s.arb} and its proof in Section~\ref{sec:SS1}.  

}


\subsection{Vertex-exposure and edge-selection}
\lab{sec:exposure}

As in Section~\ref{smt1}, we will analyze the running of the parallel stripping process on the AP-model. We will work with the AP-model for the remainder of Section~\ref{smd}. Again, when we use a graph theoretic term, we mean the obvious analogue for a configuration.
\fix{Recall}:
\[\imax \mbox{ is the number of iterations carried out by the parallel stripping process.}\]

 We parameterize the hyperedges removed during iteration $i$ as follows:
\begin{definition} A hyperedge removed during iteration $i$ is called an $(a,b)$-edge, where $a$ is the number of vertex-copies it contains from $S_i$ and $b$ is the number of vertex-copies it contains from $S_{i+1}$. Thus $a\geq 1$ and $0\leq b\leq r-a$ and if $i=\imax$ then we must have $b=0$.
\end{definition}

For every $i,1\leq a\leq r,0\leq b\leq r-a$ we define:
\[
 M_i^{a,b} \mbox{ is the number of $(a,b)$-edges in } \hG_i.\]

\begin{definition}
\fix{
\begin{itemize} For each $v\in S_i$:
\item  $d^+(v)$ is the degree of $v$ in $\hG_i$;  i.e. the number of vertex-copies of $v$ that are removed during iteration $i$.
\item (if $i<\imax$), $d^-(v)$ is the total degree of $v$ amongst the  hyperedges in $\hG_{i-1}$ that contain at least one vertex of $S_{i-1}$; i.e. the number of vertex-copies of $v$ in hyperedges  that are removed during iteration $i-1$.
\end{itemize}
}
\end{definition}

\fix{As described above,} we expose the hyperedges removed during the parallel stripping process in two phases.  First, we expose the vertices that are removed in each iteration, along with some degree and edge-count information:

\smallskip

{\bf EXPOSURE:}
\begin{enumerate}
\item Expose $\imax$.
\item Expose the vertices in $S_1,...,S_{\imax}$.
\item For each $0\leq i\leq \imax$ and each vertex $v\in S_i,u\in S_{i+1}$, expose $\fix{d^+}(v),d^-(u)$.
\item For each $0\leq i\leq \imax, 1\leq a\leq r,0\leq b\leq r-a$, expose $M_i^{a,b}$. 
\end{enumerate}
Of course, this also exposes the vertices and the number of edges in the $k$-core $\calc_k=\calc_k(H)$.

To clarify what is exposed in terms of the AP-model: We have exposed the bins (vertices) in  $S_1,...,S_{\imax}$.  For each bin $v\in S_i$, we have exposed the number of copies of $v$ that are removed in iteration $i-1$ and in iteration $i$ {respectively}. In fact, \fix{for convenience we} expose those actual copies. Other vertex-copies may still be allocated to bin $v$, but any such vertex-copy must be deleted during iterations $1,...,i-2$. For each relevant $i,a,b$, we have exposed the number of $r$-tuples removed in iteration $i$ that contain $a$ vertex-copies from $S_i$ and $b$ vertex-copies from $S_{i+1}$; but for each such $r$-tuple, we do not expose the actual bins (vertices) those vertex-copies lie in, nor do we expose the remaining $r-a-b$ vertex-copies in the $r$-tuple.

Next we expose the actual hyperedges (i.e.\ $r$-tuples of vertex-copies which are allocated to bins) that are removed during each iteration. We define:
\[\cale_i \mbox{ is the set of hyperedges removed during iteration } i.\]
$\cale_i$  must satisfy the following conditions for each $1\leq i\leq \imax$:

\smallskip

\noindent (P1) Each $v\in S_i$ has exactly $\fix{d^+}(v)$ vertex-copies in $\cale_i$.

\noindent (P2) Each $u\in S_{i+1}$ has exactly $d^-(u)$ vertex-copies in $\cale_i$.

\noindent (P3) $\cale_i$ contains exactly $M_i^{a,b}$ $(a,b)$-edges for each $a,b$.

\noindent (P4) In each of those $M_i^{a,b}$ $(a,b)$-edges,  the $r-a-b$ vertex-copies not in $S_i\cup S_{i+1}$ must be allocated to a bin in $\calc_k\bigcup_{j=i+2}^{\imax}S_j$.

\smallskip

\begin{lemma}\lab{lem:edgeSelect} Conditional on the outcome of EXPOSURE, any set of hyperedges ($r$-tuples) satisfying properties (P1--P3)  is equally likely to be $\cale_i$.
\end{lemma}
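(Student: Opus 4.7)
The plan is to prove the lemma by a symmetry argument based on the uniformity of the AP-model. A configuration in $AP_r(n,cn)$ is a uniformly random pair $(\pi,\alpha)$ consisting of a partition of the $rm$ labeled vertex-copies into $r$-tuples together with an allocation of these copies to bins, and the outcome of EXPOSURE is a deterministic function of $(\pi,\alpha)$; hence the conditional distribution of $(\pi,\alpha)$ given the EXPOSURE output is uniform on configurations consistent with it. It will therefore suffice to show that, for any two sets $E,E'$ of $r$-tuples both satisfying (P1)--(P3), the number of configurations consistent with EXPOSURE and with $\cale_i = E$ equals the corresponding number with $\cale_i = E'$.

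The key observation is that EXPOSURE fixes the bin of every exposed vertex-copy (the $d^+(v)$-copies of each $v\in S_i$ and the $d^-(u)$-copies of each $u\in S_{i+1}$) but does not distinguish between any two non-exposed vertex-copies allocated to a common bin. By (P1) and (P2), $E$ and $E'$ use the same exposed vertex-copies in their $S_i$- and $S_{i+1}$-positions; by (P3), they have identical multisets of $(a,b)$-types. The only difference lies in which specific non-exposed copies of bins in $\calc_k\cup\bigcup_{j\geq i+2}S_j$ are used in the non-$S_i,S_{i+1}$ positions of each edge. To produce the desired bijection, I would exhibit a permutation $\sigma$ of the vertex-copies that (i) fixes every exposed vertex-copy, (ii) permutes non-exposed copies only within their assigned bin, and (iii) carries the edges of $E$ to those of $E'$. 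Applied simultaneously to $\pi$ and to $\alpha$, such a $\sigma$ maps any configuration with $\cale_i = E$ to one with $\cale_i = E'$, and is clearly invertible.

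The main obstacle is to verify that $\sigma$ preserves the full output of EXPOSURE, in particular the counts $M_j^{a,b}$ and $d^\pm$-values for iterations $j\neq i$, despite simultaneously rearranging how non-exposed copies of bins in $\calc_k\cup\bigcup_{j\geq i+2}S_j$ appear in every $\cale_j$ and in the $k$-core. Because $\sigma$ permutes only within bins, the bin of every vertex-copy is preserved; consequently every edge's $(a,b)$-type in every iteration, and every $d^\pm$-count, is automatically invariant. Constructing $\sigma$ itself then reduces to a bin-by-bin matching of non-exposed copies appearing in corresponding edges of $E$ and $E'$, possibly after simultaneously re-selecting which non-exposed copies of each such bin are allocated to edges outside $\cale_i$; this step is legitimate because, under the conditional distribution given EXPOSURE, non-exposed vertex-copies within a common bin are exchangeable, so any such re-selection produces an equally likely configuration and does not affect any quantity revealed by EXPOSURE.
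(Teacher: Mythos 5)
There is a genuine gap in your construction of the bijection: conditions (ii) and (iii) on $\sigma$ are incompatible in general. Properties (P1)--(P4) pin down the exposed copies used in the $S_i$- and $S_{i+1}$-positions and the multiset of $(a,b)$-types, but they say nothing about \emph{which} bins of $\calc_k\cup\bigcup_{j\geq i+2}S_j$ receive the remaining $r-a-b$ vertex-copies of each edge: two admissible sets $E,E'$ may place these extra copies in completely different bins, and with different multiplicities per bin. A permutation that moves vertex-copies only within their bins leaves every copy's bin unchanged, so it maps $E$ to an edge set occupying exactly the same bins as $E$; it can therefore never carry $E$ to such an $E'$. Your closing patch (``re-selecting which non-exposed copies of each such bin are allocated to edges outside $\cale_i$'') does not help, because it again only exchanges copies inside a bin and cannot alter which bins the extra slots of $\cale_i$ point to. Moreover, the appeal to within-bin bin-preservation is exactly what makes your step ``every $(a,b)$-type and every $d^\pm$-count is automatically invariant'' true -- a bin-preserving relabelling produces the identical multi-hypergraph -- so the easy invariance and the needed flexibility cannot coexist.

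The paper's proof takes the route you would be forced into once this is noticed: given a configuration $H_1$ consistent with EXPOSURE and having $\cale_i=E_1$, it performs surgery directly, replacing the edges $E_1$ by $E_2$ (which in general changes the allocation of the non-exposed copies involved, and hence bin degrees), and then \emph{verifies} that the stripping history is unchanged: the edges removed in iterations $1,\ldots,i-1$ are untouched; $E_1$ and $E_2$ are each precisely the edges meeting $S_i$ in what remains, so iteration $i$ still removes exactly $S_i$; every vertex of $\calc_k\cup\bigcup_{j\ge i+2}S_j$ keeps degree at least $k$ at all iterations up to $i$ (its degree there is at least its degree in $\hG_{i+1}$, which is unchanged); and the configuration remaining after iteration $i$ is identical, so $S_{i+1},\ldots,S_{\imax}$, the core, and all exposed quantities $d^\pm$, $M_j^{a,b}$ are preserved. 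This consistency check is the real content of the lemma, and it is precisely the step your ``automatic invariance'' claim skips. If you repair your argument, you will need this verification in essentially the same form, at which point your proof collapses into the paper's edge-swap bijection combined with uniformity of the AP-model.
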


\proofstart  Fix all parameters that are  exposed in EXPOSURE (i.e.\ $\imax$, $S_1,\ldots, S_{\imax}$, $\fix{d^+}(v)$, etc). Consider two sets of hyperedges (i.e.\ $r$-tuples of vertex-copies, and an allocation of those vertex-copies into bins) $E_1,E_2$ satisfying properties (P1--P4).  We need to show that $E_1,E_2$ are equally likely to be $\cale_i$.

Recall that a configuration is a partition of the $rm$ vertex-copies into $r$-tuples, and an allocation of the $rm$ vertex-copies into the $n$ bins.
 Let $H_1$ be a configuration that is consistent with the  outcome of EXPOSURE (i.e.\ applying the parallel stripping process to $H_1$ will result in the various parameters being equal to what was exposed in EXPOSURE), and such that applying the parallel stripping process to $H_1$ will result in $\cale_i=E_1$.  Let $H_2$ be the hypergraph obtained from $H_1$ by replacing $E_1$ with $E_2$.  

We claim that: $H_2$ is also consistent with the outcome of EXPOSURE, and  that applying the parallel stripping process to $H_2$ will result in $\cale_i=E_2$. To verify this claim, we only need to show that, when we apply the stripping process to $H_2$, we take $\imax$ iterations, and for each $1\leq j\leq \imax$, the set of vertices removed in iteration $j$ is $S_j$ (the fact that $E_2$ satisfies (P1--P4) confirms the remainder of the claim.)  This follows easily from the fact that $E_1,E_2$ do not include any hyperedges that contain vertices from $S_1,...,S_{i-1}$ and that $E_1,E_2$ each consist of all the hyperedges that contain vertices from $S_i$ in what remains after removing $S_1,...,S_{i-1}$ from $H_1,H_2$, resp.

So we have  a bijection between the configurations which yield $\cale_i=E_1$ and $\cale_i=E_2$. Furthermore, every configuration is equally likely to be chosen as $AP_{\dd}(n,cn)$. Thus $E_1$ and $E_2$ are equally likely to be $\cale_i$, which establishes the lemma.
\proofend\smallskip

So our goal is to choose a uniform set of $r$-tuples satisfying (P1--P4), for each $1\leq i\leq\imax$.

{\bf EDGE-SELECTION:} For each $1\leq i\leq \imax$, we expose $\cale_i$, the hyperedges deleted in iteration $i$: We have already exposed $M_i^{a,b}$, the number of $(a,b)$-edges.  Each such edge will be assigned $a$ vertex-copies from $S_i$, $b$ vertex-copies from $S_{i+1}$, and $r-a-b$ vertex-copies from  $\calc_k\bigcup_{j=i+2}^{\imax}S_j$.  We assign those copies as follows:
\begin{enumerate}
\item Each vertex $v\in S_{i}$ has $\fix{d^+}(v)$ copies.  Noting that $\sum_{v\in S_{i}} \fix{d^+}(v)=\sum_{a,b} aM_i^{a,b}$, we take a uniformly random partition of all vertex-copies in $S_{i}$ so that each $(a,b)$-edge receives a part of size $a$.
\item Each vertex $u\in S_{i+1}$ has $d^-(u)$ copies.  Noting that $\sum_{u\in S_{i+1}} d^-(v)=\sum_{a,b} bM_i^{a,b}$, we take a uniformly random partition of all vertex-copies in $S_{i+1}$ so that each $(a,b)$-edge receives a part of size $b$.
\item For each $(a,b)$-edge, we choose the remaining $r-a-b$ vertex-copies uniformly from those that have not yet been allocated to bins.  Then we allocate each of these vertex-copies to a bin selected uniformly from $\calc_k\bigcup_{j=i+2}^{\imax}S_j$.
\end{enumerate}

Any two sets of hyperedges  (i.e.\ $r$-tuples of vertex-copies which are allocated to bins) satisfying (P1--P4) are equally likely to be chosen by EDGE-SELECTION.  Therefore, if we carry out EDGE-SELECTION, then by Lemma~\ref{lem:edgeSelect}, we have chosen the hyperedges from the correct distribution.

\subsection{A recursive bound}\label{s.arb}

\fix{Consider some vertex $v\in S_i$ and recall Definition~\ref{drv} where we define $R(v)=R_i(v)\cup R_{i-1}(v)\cup ...\cup R_1(v)$.  That definition naturally lends itself to a recursive bound of $|R_j(v)|$ in terms of $R_{j-1}(v),...,R_i(v)$.  We present that definition in this subsection, prove it in Section~\ref{sec:SS1}, and then use it to bound $|R(v)|$ in Section~\ref{sec:rec2}.  It will be convenient to restrict our analysis to deletion rounds which are late enough that certain bounds on various parameters hold.  So we} let $B=B(r,k)$ be a sufficiently large constant to be named later (it will come from Lemma~\ref{lsi} below) and set
\be
R^{(B)}(v)=\cup_{j=i}^B R_j= R(v)\setminus (\cup_{1\le i<B} S_i)\lab{RB}
\ee
\fix{Lemma~\ref{lem:neighbours}  bounds $|R(v)| =O(|R^{(B)}(v)|+\log n)$ and so bounding $R^{(B)}(v)$ will suffice to 
prove Theorem~\ref{mt3}.} 

The expected size of $R_{j-1}$ depends not just on the size of $R_j$, but also on the $d^-$ values of the vertices in $R_j$. So we will recursively bound the sum of those values, rather than bound the (nearly equal)  $|R_j|$:

\begin{definition}\label{d.d-}  For
$X\subseteq S_i$, we define
\[D^-(X)=\sum_{u\in X} d^-(u).\]
\end{definition}

Note that if $i>1$ then $ d^-(u)\geq 1$ for all $u\in S_i$ as otherwise, $u$ would have been deleted in iteration $i-1$, and so \fix{$D^-(R_j(u))\geq|R_j(u)|$ for all $j\le i$}. Therefore, bounding $D^-(R_j\fix{(u)})$ will suffice to bound $|R_j\fix{(u)}|$.

\fix{Given our} non-$k$-core vertex $v$, we define:
\[\mbox{$I_v$ is the integer $i$ such that $v\in S_i$; i.e.\ the iteration during which $v$ is removed.}\]

\fix{As described above, we are restricting our analysis to vertices deleted after iteration $B$, so we will assume that $I_v\geq B$. We will} bound $D^{-}(R^{\fix{(B)}}(v))$. The recursion starts with a base case bound on $D^-(R_{I_v}(v))$, and then bounds $D^-(R_{I_v-1}(v))$, $D^-(R_{I_v-2}(v))$,...,$\fix{D^-(R_{B}(v))}$.  Thus, we express $D^-(R_j(v))$ in terms of $D^-(R_{\ell}(v))$ for $\ell>j$, rather than the usual $\ell<j$.

\begin{lemma}\label{l.rj} There are constants $B=B(r,k),Z=Z(r,k)>0$ such that with probability at least
$1-n^{-3}$: for all {$B\le j\le I_v$}  with $|S_j|\ge n^{\d}\log^2n$, we have
\be
D^-(R_j(v))\leq D^-(R_{j+1}(v))+Z\frac{|S_j|}{n}\sum_{\ell=I_v}^{j+1}D^-(R_{\ell}(v))
+\log^{{14}} n.\lab{recMain}
\ee
\end{lemma}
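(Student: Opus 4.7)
The plan is to exploit the two-stage exposure of Section~\ref{sec:exposure}. First condition on the full outcome of EXPOSURE, then run EDGE-SELECTION sequentially in order of \emph{decreasing} iteration index, revealing the edges that determine $R_{I_v}(v),\ldots,R_{j+1}(v)$ \emph{before} those that determine $R_j(v)$. By Lemma~\ref{lem:edgeSelect}, the still-unrevealed portion of the configuration is uniform subject to the residual constraints (P1)--(P4), giving us genuine randomness at each step. The inequality~\eqref{recMain} will follow from (i)~computing $\ex(D^-(R'_j(v))\mid\mathcal G)$ for this conditioning $\mathcal G$, (ii)~concentrating $D^-(R'_j(v))$ around its mean, and (iii)~absorbing the (minor) $R'_j(v)\to R_j(v)$ component expansion of Definition~\ref{drv}(a) into the constant $Z$.

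For (i), the leading term $D^-(R_{j+1}(v))$ corresponds to the ``direct descendants'' contribution: each $w\in R_{j+1}(v)$ sits in $d^-(w)$ iteration-$j$ edges, and step~(2) of EDGE-SELECTION places the $S_{j+1}$-copies into the $(a,b)$-edges of iteration $j$ by a uniform random partition. Each such edge has $a$ copies from $S_j$, and summing the $d^-$-weights picked up by these $S_j$-vertices yields, via a counting identity driven by the edge-type profile $(M_j^{a,b})$---whose typical shape is controlled by Proposition~\ref{p:Poisson} and Corollary~\ref{ccoresize}---a contribution bounded by $D^-(R_{j+1}(v))$ plus lower-order terms. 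All other contributions to $\ex(D^-(R'_j(v))\mid\mathcal G)$ come from vertices of $T:=\bigcup_{\ell>j}R_\ell(v)$ reaching $u\in S_j$ through ``external'' slots of $(a,b)$-edges: step~(3) of EDGE-SELECTION places each such external copy uniformly in a bin of $\calc_k\cup\bigcup_{s>i+1}S_s$, a set of size $\Theta(n)$ by Corollary~\ref{ccoresize}, so a given vertex of $T$ is hit with probability $O(1/n)$. Summing over the $O(|S_j|)$ edges touching $S_j$ and weighting by $d^-$ produces exactly the correction $Z\tfrac{|S_j|}{n}\sum_\ell D^-(R_\ell(v))$.

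For (ii), form a Doob martingale on $D^-(R'_j(v))$ by revealing the partitions and bin-allocations of EDGE-SELECTION one decision at a time. After cutting off the negligible event that some vertex has degree $\omega(\log n)$, each step perturbs $D^-(R'_j(v))$ by $O(\log n)$, and there are only polynomially many steps, so Azuma--Hoeffding delivers deviations of $O(\mathrm{polylog}\,n)$ with super-polynomially small tail probability; the $\log^{14}n$ slack in~\eqref{recMain} sets the power generously, allowing a union bound over $v$ and $j$ to yield the claimed $1-n^{-3}$. For (iii), a first-moment/branching argument handles the passage $R'_j(v)\to R_j(v)$: because $|S_j|\ge n^\d\log^2 n$, the hypergraph $\mathcal{S}_j$ is subcritical enough at each $u\in R'_j(v)$ that the component of $\mathcal{S}_j$ through $u$ has expected size $O(1)$, and this multiplicative constant is absorbed into $Z$.

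The main obstacle is the careful bookkeeping of conditional distributions across the nested exposures: in particular, we must verify that the degree data revealed in EXPOSURE, together with the partial edge-exposure used to reveal $R_{I_v}(v),\ldots,R_{j+1}(v)$, still leave the step-(3) bin-allocations uniform on $\calc_k\cup\bigcup_{s>i+1}S_s$ for the not-yet-revealed edges, since this uniformity is the sole source of the $O(1/n)$ estimate underlying the correction term.
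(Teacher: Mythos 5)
Your overall architecture matches the paper's: condition on EXPOSURE, reveal EDGE-SELECTION from iteration $\imax$ down to $j+1$, use the uniformity of Lemma~\ref{lem:edgeSelect} as the only source of randomness, identify $D^-(R_{j+1}(v))$ as the main term coming from the placement of the $S_{j+1}$-copies, and the $Z|S_j|/n$ correction as coming from the uniform allocation of the external copies into $\calc_k\cup\bigcup_{s\ge j+2}S_s$. However, two of your steps have genuine gaps. First, the concentration step (ii): a Doob martingale with Azuma--Hoeffding over the individual decisions of EDGE-SELECTION does not give polylogarithmic deviations. There are up to $\Theta(|S_j|)$ (possibly $\Theta(n)$) exposure steps, each with a Lipschitz constant you can only bound by $\mathrm{polylog}(n)$, so Azuma yields deviations of order $\mathrm{polylog}(n)\cdot\sqrt{m}$ with $m$ the number of steps, i.e.\ polynomial in $n$; this swamps both the $\log^{14}n$ slack and, when $|S_j|$ is small, the main term itself. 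The paper avoids this by (a) recentring --- it works with $X=\Lambda_2^*-D^-(R_{j+1}(v))$ in its Step 2 and with $X=\sum_{f\in\Phi}\min\{D^-(C_f),\log^3 n\}-|f|^*$ in its Step 3, both of which have small expectation --- and (b) using McDiarmid's permutation inequality (a Talagrand-type, certificate-based bound), whose deviation scales like $\sqrt{\ex(X)}$ rather than like the square root of the number of random choices. Without this (or an equivalent certificate/self-bounding argument) your tail bound does not close, and the union bound over $j$ cannot deliver the stated $1-n^{-3}$.

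Second, your step (iii) for the passage $R'_j(v)\to R_j(v)$ is too weak. You claim the $\cals_j$-component through each $u\in R'_j(v)$ has expected size $O(1)$ and that this multiplicative constant can be absorbed into $Z$. It cannot: the constant multiplies the main term $D^-(R_{j+1}(v))$, whose coefficient in \eqref{recMain} must be exactly $1$ --- any fixed factor $C>1$ would, after iterating the recursion over the $\Theta(n^{\d/2}\log n)$ levels in Section~\ref{sec:rec2}, produce a factor $C^{\Theta(n^{\d/2}\log n)}$ and destroy the $n^{O(\d)}$ bound. What is actually needed (and what the paper proves via the branching process, using Lemma~\ref{lsi}(f) to show that almost all edges of $\cals_j$ are $1$-edges) is that the expected $D^-$-weight of the component of an $a$-edge $f$ is $a\bigl(1+O(|S_j|/n)+O(\log^2 n/|S_j|)\bigr)$, i.e.\ the expansion is only a lower-order multiplicative correction, absorbable into the $Z\frac{|S_j|}{n}\sum_\ell D^-(R_\ell(v))$ term. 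Moreover a first-moment bound alone does not suffice: you need the inequality to hold with failure probability $O(n^{-4})$ for each $j$, which again forces the truncation-plus-McDiarmid concentration used in the paper's Step 3, together with the separate high-probability bound that every component has size at most $\log^2 n$ (hence $D^-$-weight at most $\log^3 n$ via Lemma~\ref{lsi}(i)).
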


We will analyze the recursive equation in Lemma~\ref{l.rj} to obtain a bound on $D^-(R_j)$ for each $j$, in Section~\ref{sec:rec2}.

We prove~(\ref{recMain}) by analyzing the exposure process from Section~\ref{sec:exposure}.  We begin by analyzing EXPOSURE: we bound $\imax$ and prove that the parameters $\fix{d^+}(v),d^-(v),M^{a,b}_i$ satisfy certain properties (Sections~\ref{sec:Si} and~\ref{slsi}).  Next, we analyze
EDGE-SELECTION one iteration at a time, with $i$ decreasing from $\imax$. When analyzing $R_j$ to prove~(\ref{recMain}), iterations $i=\imax,\ldots,j+1$ have already been completed. So for $v\in S_{I_v}$, we have already exposed $R_{I_v}, R_{I_v-1},...,R_{j+1}$ .

\subsection{Properties of $S_i$}
\lab{sec:Si}

We first run EXPOSURE. We prove that \aas the sets $S_i$ satisfy certain properties.

\begin{lemma}\lab{lsi} There exist constants $B,Y_1,Y_2,Z_1$, {dependent only on $r,k$,} such that \aas for every $ B\leq i< \imax$ with $|S_i|\ge n^{\d}\log^2 n$: 
\begin{enumerate}
\item[(a)] if $|S_i|< n^{1-\d}$ then $(1-Y_1n^{-\d/2})|S_i|\leq |S_{i+1}|\leq  (1-Y_2n^{-\d/2})|S_i|$;
\item[(b)] if $|S_i|\geq n^{1-\d}$ then $(1-Y_1\sqrt{\frac{|S_i|}{n}})|S_i|\leq |S_{i+1}|\leq  (1-Y_2\sqrt{\frac{|S_i|}{n}})|S_i|$;
\item[(c)] $\sum_{j\ge i}|S_j|\le {Z_1}|S_i|n^{\d/2}$.
\item[(d)] $|S_i|\leq\sum_{u\in S_{i}} d^-(u) < |S_{i}|+Z_1\frac{|S_{i}|^2}{n}+\log^2n$;
\item[(e)] $|S_{i+1}|\leq \sum_{a,b}abM_i^{a,b}\leq |S_{i+1}|+Z_1\frac{|S_{i}|^2}{n}+\log^2n$; 
\item[(f)]$\sum_{a\geq 2,b\leq r-a}abM_i^{a,b}\leq Z_1\frac{|S_{i}|^2}{n}+\log^2n$;   
\item[(g)] $\sum_{u\in S_i}\fix{d^+}(u)d^-(u)\leq\sum_{u\in S_i}\fix{d^+}(u)+Z_1\frac{|S_{i}|^2}{n}+\log^2n$;
\item[(h)] $\sum_{u\in S_{i}} (d^-(u))^2\le Z_1|S_i|$;
\item[(i)] $d^-(u)<\log n$ for all $u\in\cup_{i=2}^{\imax}S_i$.
\end{enumerate}
\end{lemma}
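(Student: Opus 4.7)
The plan is to combine the tight control of $L_t$ and $\z_t$ from Sections~\ref{slt} and~\ref{slsi} with local moment computations on the AP-model. Throughout I choose $B=B(r,k)$ large enough that Lemma~\ref{llt1}, Corollary~\ref{cor:zetaT}, and Lemma~\ref{l:monotone} all apply uniformly for $t\ge t(B)$, so $\z_t=\z+\Theta(n^{-\d/2})$ and $\ex(L_{t+1}-L_t\mid \calf_t)$ is sandwiched between two negative constant multiples of $n^{-\d/2}$, the matching \emph{lower} bound on the drift coming from the refined analysis promised in Section~\ref{slsi}. Concentration comes from Lemma~\ref{l:azuma} using $|L_{t+1}-L_t|=O(1)$ per step.

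For parts (a)--(c), iteration $j$ of SLOW-STRIP occupies exactly $L_{t(j)}=\Theta(|S_j|)$ steps since each vertex in $S_j$ has degree between $1$ and $k-1$; comparing $L_{t(j+1)}$ with $L_{t(j)}$ across those steps, the expected drop is $\Theta(|S_j|n^{-\d/2})$ with Azuma deviation $O(\sqrt{|S_j|}\log n)$, which is genuinely smaller precisely when $|S_j|\ge n^\d\log^2 n$; since most vertices in $S_{j+1}$ enter the queue with degree $k-1$ (Proposition~\ref{p:Poisson}), this establishes (a). For (b), the drift is actually of order $\sqrt{|S_j|/n}$ rather than $n^{-\d/2}$, because a large current level pushes $\z_t$ further from $\z$ than the baseline; this sharpened drift bound, obtained from a quantitative expansion of~\eqn{et*}, is the content of the key lemma in Section~\ref{slsi}. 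Part (c) is a telescoping consequence: once $|S_j|$ drops below $n^{1-\d}$ the sequence decays geometrically with ratio $1-Y_2 n^{-\d/2}$, contributing $O(|S_i|n^{\d/2})$; the faster decay while $|S_j|\ge n^{1-\d}$ and the final $O(n^{\d/2}\poly\log n)$ tail from the upper bound in Theorem~\ref{mt}(a) are absorbed.

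For parts (d)--(i), I use Observation~\ref{oft}: conditional on $\calf_{t(i-1)}$, the allocation of heavy vertex-copies is uniform subject to each bin receiving at least $k$ copies. Parts (d), (e), (g) each assert a sum equals its deterministic lower bound plus an error $O(|S_i|^2/n+\log^2 n)$; the lower bound is trivial (e.g.\ every $u\in S_i$ is pushed below degree $k$ by some hyperedge deleted in iteration $i-1$, giving $d^-(u)\ge 1$), while the error counts hyperedges containing two or more vertex-copies from $S_i\cup S_{i+1}$, whose expected count is $O(|S_i|^2/n)$ since any two heavy copies coincide in the same bin with probability $O(1/n)$ under the uniform allocation. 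Azuma concentration on these sums provides the $\log^2 n$ slack. Part (f) is the same two-copy coincidence computation specialized to $a\ge 2$; (h) combines $\ex (d^-(u))^2=O(1)$ (bounded hyperedge size plus the Poisson tail from Proposition~\ref{p:Poisson}) with the same concentration argument; and (i) is an $L^{\infty}$ bound, since $d^-(u)$ is stochastically dominated by a Poisson-like variable with bounded mean, so a union bound over $\le n$ vertices yields $d^-(u)<\log n$ a.a.s. The main technical obstacle is part (b): the drift bound from Lemma~\ref{llt1} is too weak in that regime, and one must leverage the extra push on $\z_t$ from a large current level, which is precisely what the key lemma in Section~\ref{slsi} provides.
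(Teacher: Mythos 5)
Your treatment of parts (d)--(i) is essentially the paper's: trivial lower bounds, an error term counting multiply-hit edges/vertices with expectation $O(|S_i|^2/n)$ under the uniform allocation of Observation~\ref{oft}, plus concentration for the $\log^2 n$ slack, and a bin-size bound for (i). But the heart of the lemma is parts (a) and (b) (and (c), which depends on them), and there your argument has a genuine gap: you assume precisely the ingredient that has to be proved. The two-sided drift estimate you invoke --- in particular the matching \emph{lower} bound $\ex(L_{t+1}-L_t\mid\calf_t)\ge -D_1 n^{-\d/2}$ for all $t\ge t(B)$, and above all the sharpened two-sided bound $\ex(L_{t+1}-L_t\mid\calf_t)=-\Theta(\sqrt{L_t/n})$ when $L_t\ge n^{1-\d}$ --- is not available from Lemma~\ref{llt1}, Corollary~\ref{cor:zetaT} or Lemma~\ref{llt3} (the latter only covers $t\ge t_0$). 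It is exactly Lemma~\ref{lem:l-br}/Corollary~\ref{cor:l-br}, which is proved in the same section as the statement you are proving, so citing ``the key lemma in Section~\ref{slsi}'' is circular.

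Your proposed justification, ``a quantitative expansion of~\eqn{et*} because a large current level pushes $\z_t$ further from $\z$,'' does not close this gap. The expansion of $\psi$ only gives $\br_t=-\Theta(\z_t-\z)$ up to small errors; there is no instantaneous relation between $L_t$ and $\z_t-\z$. In the paper the link is made through the \emph{future} of the process: one needs the tight two-sided rate $\z_i=\z_t-\Theta((i-t)/n)$ (Lemma~\ref{l:zetaseq}, itself requiring the two-sided per-step estimate of Lemma~\ref{l:zetaMartingale} and an Azuma argument), the identification $\tau-t=\Theta(\pi(G_t))$ relating $\z_t-\z$ to the number of non-core vertices still to be removed (Lemma~\ref{lem:pi-br}), and then a self-consistent bootstrap via Lemma~\ref{lem:Lconcentration}: the drift bound controls the stopping time $\tau-t$ in terms of $L_t/|\br_t|$, which feeds back through $\pi(G_t)$ to give $|\br_t|=\Theta\bigl(\max\{\sqrt{L_t/n},\,n^{-\d/2}\}\bigr)$. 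None of this appears in your outline, and without it neither the lower bounds in (a),(b) nor the $\sqrt{|S_i|/n}$ rate in (b) is established; consequently (c), which you derive by telescoping these rates, is also unsupported. (A smaller issue: the transfer from $L_{t(i)}$ to $|S_i|$, i.e.\ $L_{t(i)}=(k-1)|S_i|(1+O(|S_i|/n+\log^2 n/|S_i|))$, needs the quantitative count of vertices with $d^-\ge 2$ from part (d), not just the heuristic that ``most vertices enter the queue with degree $k-1$.'')
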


The proof is deferred until Section~\ref{slsi}.

\subsection{Proving the recursive bound}
\lab{sec:SS1}

Here, we will prove the recursive bound from Section~\ref{s.arb}.
Recall that we specified a vertex $v$, and $I_v\le \imax$ is the iteration during which $v$ was deleted.  

Our goal is to prove Lemma~\ref{l.rj}, and so we can assume  $j\geq B$ and $|S_j|>n^{\d}\log^2n$.  For each $j$, we will prove the probability that  (\ref{recMain}) fails is less than $n^{-4}$, and so a union-bound implies that the probability it holds for every $j$ is at least $1-I_vn^{-4}>1-n^{-3}$.

The setting is that
we have carried out EXPOSURE, and we assume that all properties in Lemma~\ref{lsi} hold. We have also carried out EDGE-SELECTION for iterations $\imax,...,j+1$, and in particular, we have exposed $R_{I_v},...,R_{j+1}$.

The random experiment in this section is  iteration $j$ of EDGE-SELECTION.  At this point, we know that there are $M_{j}^{a,b}$ $(a,b)$-edges for each $a\geq 1, 0\leq  b\leq r-a-b$, but we don't know which vertices are in these edges.  Each $(a,b)$-edge consists of $r$ {\em blanks} which will be filled with vertex-copies: the $a$ {\em level-$j$ blanks} will receive vertex-copies from $S_{j}$; the $b$ {\em level-$(j+1)$ blanks} will receive vertex-copies from $S_{j+1}$; the remaining $r-a-b$ blanks will receive vertex-copies which will then be allocated to bins in $\calc_k\bigcup_{i=j+2}^{\imax}S_{i}$.

We fill in these blanks with three independent steps, in reverse order of how they are listed above.

{\bf Step 1:   For each $(a,b)$-edge, we choose the $r-a-b$  vertex-copies and then allocate them to $\calc_k\bigcup_{i=j+2}^{\imax}S_{i}$.} 

In order to bound $D^-(R_j)$, we will first bound 
\bea\lab{Ydef}
\blank: && \mbox{the total number of level-$j$ blanks, amongst all hyperedges}\non\\
&& \mbox{deleted in iteration $j$ that contain  a vertex-copy from $\cup_{i=I_v}^{j+1} R_i$.} 
\eea
 Note that $\blank$ is an upper bound on $|R'_j|$. 

We let $\blank_1$ denote the number of these level-$j$ blanks whose hyperedge contains a  vertex from $\cup_{i=I_v}^{j+2} R_i$.  Note that $\blank_1$ is determined by Step 1: After allocating these vertex-copies to $\calc_k\bigcup_{i=j+2}^{\imax}S_{i}$, we know exactly which $(a,b)$-edges contain vertex-copies that are allocated to $\cup_{i=I_v}^{j+2} R_i$. 

Each time we allocate one of these copies, the probability that we allocate it to a bin from
$\cup_{i=I_v}^{j+2} R_i$ is at most
\[\frac{|\cup_{i=I_v}^{j+2} R_i|}{|\calc_k\bigcup_{i=j+2}^{\imax}S_i|}
\le \inv{\a(c) n+o(n)}\sum_{i=I_v}^{j+2} |R_i|,\qquad\mbox{where $\a(c)$ is from Lemma~\ref{lcoresize2}}.\]
There are at most $(k-1)|S_j|$ such hyperedges, and for each one we choose at most $r-1$ vertices.  So the expected number of hyperedges which have at least one vertex-copy allocated to $\cup_{i=I_v}^{j+2} R_i$ is at most  $\frac{(r-1)(k-1)}{\a(c) n+o(n)}|S_j|\sum_{i=I_v}^{j+2} |R_i|$.  Standard concentration bounds on binomial variables imply that the probability this number  exceeds $\frac{rk}{\a(c) n}|S_j|\sum_{i=I_v}^{j+2} |R_i|+\log^2 n$ is less than $n^{-5}$.   Each such hyperedge contains fewer than $r$ vertex-copies from $S_j$. So with probability at least $1-n^{-5}$, we have
\begin{equation}\lab{ey1}
\blank_1\leq\frac{r^2k}{\a(c)}\frac{|S_j|}{ n}\sum_{i=I_v}^{j+2} |R_i|  +{r}\log^2 n.
\end{equation}

{\bf Step 2: For each $(a,b)$-hyperedge, we choose the $b$ vertex-copies from $S_{j+1}$.}  

We let $\blank_2=\blank-\blank_1$.  So $\blank_2$ is the number  of level-$j$ blanks  whose hyperedge contains a vertex-copy from $R_{j+1}$ and was not counted in $\blank_1$. Note that $\blank_2$ is determined by Steps 1 and 2.

It will be easier to focus on $\blank_2^*\geq \blank_2$ which is defined to be the number of pairs  of vertex-copies $v\in S_j, u\in R_{j+1}$ that both lie in an $(a,b)$-edge for some $a,b$.  We only require an upper bound on $\blank_2$, so it will suffice to bound $\blank_2^*$.

Each $(a,b)$-edge contributes $a$ to $\blank_2^*$ for each of its $b$ copies from $S_{j+1}$ that lie in $R_{j+1}$.  The total number of copies in $R_{j+1}$ is $D^-(R_{j+1})$ so 
\begin{eqnarray}
\ex(\blank_2^*)&=&\sum_{a,b} abM_j^{a,b}\frac{D^-(R_{j+1})}{\sum_{u\in S_{j+1}}d^-(u)}\nonumber\\
&\leq&\frac{D^-(R_{j+1})}{|S_{j+1}|}\sum_{a,b} abM_j^{a,b}\qquad\mbox{since $d^-(u)\geq 1$ for all $u\in S_{j+1}$}\nonumber\\
&\leq&D^-(R_{j+1})\left(1+2Z_1\frac{|S_{j}|}{n}+\frac{\log^2n}{|S_{j+1}|}\right)
\qquad\mbox{by Lemma~\ref{lsi}(e,a,b)}\nonumber\\
&\leq& D^-(R_{j+1}) + 2Z_1\frac{|S_{j}|}{n}D^-(R_{j+1}) +Z_1\log^2 n, \label{eey2}
\end{eqnarray}
since, by Lemma~\ref{lsi}(g),  $D^-({R_{j+1}})\leq D^-(S_{j+1})\le \sum_{v\in S_{j+1}} (d^-(v))^2\leq Z_1|S_{j+1}|$.

To bound $\blank^*_2$, we will prove that it is concentrated.  Note that $\blank^*_2\geq D^-(R_{j+1})$ since every vertex-copy of $R_{j+1}$ lies in an $(a,b)$-edge for some $a\geq 1$, and hence contributes at least one of the pairs counted by $\blank^*_2$. This allows us to focus instead on proving the concentration of
\[X=\blank^*_2-D^-(R_{j+1}).\] 
$X$ is typically much smaller that $\blank^*_2$.  This will be an advantage when we use $\ex(X)$ rather than $\ex(\blank_2^*)$ in our calculations in (\ref{emd2}) below.  Note that $D^-(R_{j+1})$ is fixed.

To prove concentration of $X$, we apply McDiarmid's variation on Talagrand's Inequality\cite{mt}.
We use the version stated in ~\cite{mrbook}:

\noindent{\bf McDiarmid's Inequality}\cite{cm}
{\em Let $X$ be a non-negative random variable determined by
independent trials $T_1,...,T_{m}$ and independent permutations $\Pi_1,...,\Pi_{m'}$.
We call the outcome of one trial $T_{\imath}$, or the mapping of a single element in a permutation $\Pi_{\imath}$, a {\em choice}.
Suppose that for every
set of possible outcomes of the trials and permutations, we have:
\begin{enumerate}
\item[(i)] changing the outcome of any one trial can affect $X$
by at most $\varrho$;
\item[(ii)] interchanging two elements in any one permutation can affect $X$
by at most $\varrho$; and
\item[(iii)] for each $s>0$, if $X\geq s$ then there is a set
of at most $qs$ choices whose outcomes certify that $X\geq s$.
\end{enumerate}
Then for any $t\geq0$, we have
\[\pr(|X-\ex(X)|>t+25\varrho\sqrt{q\ex(X)}+128\varrho^2q)\leq 4\exp\left(-\frac{t^2}{32\varrho^2q(\ex(X)+t)}\right).\]
}

Our random choice in Step 2 is an assignment of the vertex-copies of $S_{j+1}$ to the $(a,b)$-edges.   This can be done by taking a uniform permutation of those vertex-copies:  There is an implicit listing of all $(a,b)$-edges over all $a,b$: For each edge, we know the values of $a,b$ corresponding to that edge, but we don't yet know exactly which $a$ copies from $S_j$ and $b$ copies from $S_{j+1}$ are assigned to each edge; the permutation of Step 2 determines the assignment of the copies of $S_{j+1}$.  Thus the choice corresponding to a particular vertex-copy specifies the hyperedge to which it is assigned. We will apply McDiarmid's Inequality using this permutation.

If we exchange the position of two vertex-copies in the permutation, we are swapping the hyperedges to which they were assigned. This can change $\blank_2^*$, and hence $X$, by at most 
$r-1$ (the extreme case is when the swap involves an $(r-1,1)$-edge).  So we take $\varrho=r-1$.

To certify $X\geq s$, we can always present $\ell^*\leq s$ vertex-copies $u_1,...,u_{\ell^*}$ such 
that each $u_{\ell}$ is assigned to an $(a_{\ell},b_{\ell})$-edge with \fix{$a_{\ell}\ge 2$ and $\sum_{\ell=1}^{\ell^*} (a_{\ell}-1)b_{\ell}\geq s$.}  So this can be certified by the outcomes of $\ell^*\leq s$ choices, and we can take $q=1$.  

\fix{Setting $t=\max\{\ex(X),\log^2 n\}$, it is easy to see that $25\varrho\sqrt{\ex(X)}+128\varrho^2< t$ and $\ex(X)+t\le 2t$.
So McDiarmid's Inequality yields:
\be\label{emd2}
\pr(X>\ex( X ) + 2t)\leq 4\exp\left(-\frac{t^2}{32\varrho^2(\ex(X)+t)}\right)<4\exp\left(-\frac{t}{64\varrho^2}\right)=o(n^{-5}).
\ee
}

Therefore, applying (\ref{eey2}), with probability at least $1-n^{-5}$, we have
\fix{
\begin{equation}\lab{ey2}
\blank_2\leq \blank_2^*\leq D^-(R_{j+1}) + 6Z_1\frac{|S_{j}|}{n}D^-(R_{j+1}) +3Z_1\log^2 n.
\end{equation}
}

{\bf Step 3: For each $(a,b)$-hyperedge, we choose the $a$ vertex-copies from $S_{j}$.}

In most of the analysis for this step, the probabilities are in terms only of the random choices in Step 3; 
 it is assumed that the choices from Steps 1 and 2 are fixed.  We will use the notation $\pr_3$ and $\ex_3$ to indicate this.

Recall from Definition~\ref{dsi} that $\cals_j$ is the hypergraph formed 
by treating each $(a,b)$-edge as a hyperedge on $a$ vertices of $S_j$; we call this an $a$-edge.
At this point, each $a$-edge consists of $a$ blanks, and we have a pool containing $d_j(u)$ vertex-copies of each $u\in S_j$. The random choice in Step 3 is a uniformly random bijection from these vertex-copies to these blanks.

We define 
\be\lab{defPhi}
\Phi:\ \mbox{the  hyperedges of $\cals_j$ that belong to $(a,b)$-edges  containing vertex-copies from $\cup_{i=I_v}^{j+1} R_i$};
\ee
 thus $R_j'$ is the set of vertices of $\Phi$.  Note that $\Phi$ was determined by Steps 1,2, and that $\blank=\blank_1+\blank_2$ is the total number of blanks contained in the hyperedges of $\Phi$. 

For each hyperedge $f\in\cals_j$,  $C_f$ denotes the component of $\cals_j$ that contains $f$.  Thus $R_j$ is the union  over all $f\in \Phi$ of the vertices in $C_f$, and $D^-(R_j)\leq \sum_{f\in\Phi}|D^-(C_f)|$ (there is an inequality here as some pairs $f,f'\in\Phi$ may lie in the same component).  
In order to bound $\ex( D^-(R_j))$, we will bound $\ex_3(D^-(C_f))$ for each  $f$.

Consider any particular hyperedge $f\in\cals_j$. We will expose $C_f$ using a branching process as in \cite{mr1}:

\begin{tabbing} 
Initially, every blank of $f$ is labelled {\em open}.\\
Whi\=le there are open blanks:\\
\> Tak\=e any open blank, and choose a uniform vertex-copy for that blank; \\
\>\>the blank is labelled {\em closed}.\\
\> Assign every other copy of the same vertex to a  blank chosen uniformly\\
\>\> from those not yet assigned vertex-copies.\\
\> For each blank {$g$} that is chosen:\\
\>\>if it is open (i.e. already known to be in $C_f$), we label it {\em closed};\\
\>\>else, every other blank in the same hyperedge as {$g$} is labelled {\em open}.
\end{tabbing}
	
{\em Intuition:} By Lemma~\ref{lsi}(e,f), the vast majority of blanks lie in 1-edges.  So when we assign copies to blanks, we usually do not create any new open blanks.  Thus, the process tends to die out very quickly and $C_f$ typically has few (in fact, no) vertices outside of $f$.

To analyze this branching process, we
consider the following experiment.  Select a sequence of vertices $u_1,...,u_{\log^2 n}$, each chosen from $S_j$ without replacement, where the probability that $w\in S_j\bk \{u_1,...,u_{\ell-1}\}$ is chosen to be $u_{\ell}$ is proportional to $d_j(w)$. 

To run the branching process: each time that we need to select a new vertex-copy {$x$} for an open blank, we simply take the next vertex on the list $u_1,...$ and choose one of its vertex-copies {and let $x$ be that vertex-copy}.  Note that these vertices are chosen with the correct distribution. 

We will prove {that} there is a constant $W_1>0$ such that for any $y\geq 0$ and every $1\leq t<\log^2 n$:

\begin{equation}\label{edCu1}
\ex_3\left(\sum_{\ell=1}^{t}d^-(u_\ell)\right)<t\left(1+2Z_1\frac{|S_{j}|}{n} +2\frac{\log^2n}{|S_i|}\right);
\end{equation}


\begin{equation}\label{edCu2} 
\mbox{for each $a$-edge $f$:}\qquad\pr_3\left(|C_f|\geq t ~\left|~  \sum_{i=1}^{t}d^-(u_i)=y\right.\right)<\left(\frac{W_1|S_j|}{n}\right)^{\rup{\frac{t-a}{(r-1)(k-1)}}}.
\end{equation}

{\em Proof of (\ref{edCu1}):}   From Lemma~\ref{lsi}(g) and the fact that $\sum_{u\in S_j} d_j(u)\geq|S_j|$, initially the expected value of $d^-(u_{\ell})$ is:
\[\frac{\sum_{u\in S_j}d_j(u)d^-(u)}{\sum_{u\in S_j}d_j(u)}\leq 1+Z_1\frac{|S_j|}{n}+\frac{\log^2 n}{|S_j|}.\]
  After removing at most $\log^2 n$ of the vertices of $S_j$,  the expected value of $d^-(u_{\ell})$ rises to at most
\[\left(1+Z_1\frac{|S_{j}|}{n}+\frac{\log^2 n}{|S_j|}\right)\frac{|S_j|}{|S_j|-\log^2n}< 1+2Z_1\frac{|S_{j}|}{n}+2\frac{\log^2n}{|S_j|},\]
since $|S_j|\geq n^{\d}\log^2 n$.

{\em Proof of  (\ref{edCu2}):}   Conditioning on the event $\sum_{i=1}^{t}d^-(u_i)=y$,
exposes information about $d^-(u_1),...,d^-(u_{t})$.  This affects our conditional probability  only through its effect on the conditional distribution of $d_j(u_1),...,d_j(u_t)$, as EXPOSURE determined  $(d_j(u),d^-(u))$ for each $u\in S_j$.  So we can deal with this conditioning by proving that for {\em any } choice of $\d_1,...,\d_{t}$: 
\begin{equation}\label{edCu11}
\pr_3(|C_u|\geq t\mid d_j(u_1)=\d_1,...,d_j(u_{t})=\d_{t-1})<\left(\frac{W_1|S_j|}{n}\right)^{\rup{\frac{t-a}{(r-1)(k-1)}}}.
\end{equation}
Suppose that we selected vertex $u_{\ell}$.   We place one of its vertex-copies  into the open blank that we are filling.   In order to create at least one new open blank, we must assign at least one of the other $d_j(u_{\ell})-1$ copies  to a blank in an $a$-edge for some $a\geq2$; we call such an edge a {\em plural edge}. 

The total number of blanks in $S_j$ is at least $\inv{r-1}$ times the number of vertex-copies in $S_{j+1}$ (the extreme case is when every hyperedge is a $(1,r-1)$-edge) and thus is at least $\frac{|S_{j+1}|}{r-1}\geq \frac{|S_j|}{{2}(r-1)}$ by Lemma~\ref{lsi}(a,b).
If we have assigned the copies of fewer than $\log^2n$ vertices, then the number of  unfilled blanks is at least $\frac{|S_j|}{{2}(r-1)}-(k-1)\log^2 n \geq \frac{|S_j|}{{2}r}$.  Lemma~\ref{lsi}(f) says that the total number of blanks in all plural edges is at most $Z_1|S_j|^2/n+\log^2 n$.   So the probability that at least one of those $d_j(u_{\ell})-1\leq k-2$ copies of $u_{\ell}$ is assigned to  a plural edge at most 
\[(k-2)\left(\frac{Z_1|S_j|^2}{n}+\log^2 n\right)\left/\frac{|S_j|}{{2}r}\right.<{2k}rZ_1\frac{|S_j|}{n}.\]
If at least one copy of $u_{\ell}$ is assigned to a plural edge, then we create at most $(k-2)(r-1)$ new open blanks.   For an $a$-edge $f$, if $|C_f|\geq t$, then we must {have} created at least $t-a$ open blanks (in addition to the $a$ initial open blanks in $f$) during the exposure of the first $t$ vertices $u_1,\ldots,u_{t}$.  So at least $\rup{\frac{t-a}{(r-1)(k-1)}}$ of the vertices $u_1,...,u_{t}$ must have a copy assigned to a plural edge. The probability of this occurring is at most:
\[{t\choose  \rup{\frac{t-a}{(k-1)(r-1)}}}\left( {2k}rZ_1\frac{|S_j|}{n}\right)^{\rup{ \frac{t-a}{(k-1)(r-1)}}}
<\left(\frac{\fix{et\cdot}2krZ_1|S_j|}{\rup{\frac{t-a}{(k-1)(r-1)}}n}\right)^{\rup{ \frac{t-a}{(k-1)(r-1)}}}
<\left(W_1\frac{|S_j|}{n}\right)^{\rup{ \frac{t-a}{(k-1)(r-1)}}},\]
for an appropriate constant $W_1$, thus establishing~(\ref{edCu2}).

Having proven (\ref{edCu1})  and (\ref{edCu2}), these imply that there is a constant $W$ such that

\begin{equation}\label{edCu}
\mbox{for each $a$-edge $f$:}\qquad
\ex_3(D^-(C_f))\leq a\left(1+W\frac{|S_j|}{n}+5\frac{\log^2 n}{|S_{{j}}|}\right).
\end{equation}

\noindent{\em Proof}   First note that  (\ref{edCu2})  implies:  
\be
\pr(|C_f|>\log^2 n)\leq \left(\frac{W_1|S_j|}{n}\right)^{\log^2n/(r-1)(k-1)}=o\left({n^{-10}}\right).\lab{bigCf}
\ee

Next, note that $D^-(C_f)\leq rcn$, the number of vertex-copies in the entire configuration.  So

\begin{eqnarray}
\ex_3(D^-(C_f))
&=&\sum_{y=1}^{rcn}y\pr_3(D^-(C_f)=y)\non\\
&\leq&\sum_{y=1}^{rcn}y\left(\pr_3(|C_f|\geq \log^2n)+\sum_{t= a}^{\log^2n}\pr_3\left((|C_f|\geq t) \wedge \Big(\sum_{i=1}^{t}d^-(u_i)=y\Big)\right)\right)\non\\
&\leq&O(n^2)\pr_3(|C_f|\geq \log^2n)\non\\
&&+\sum_{y=1}^{rcn}y\sum_{t= a}^{\log^2n}\pr_3\left(\sum_{\ell=1}^{t}d^-(u_{\ell})=y\right)\left(\frac{W_1|S_j|}{n}\right)^{\rup{ \frac{t-a}{(k-1)(r-1)}}}\mbox{by (\ref{edCu2})}\non\\
&\leq&o(n^{-1})+\sum_{t= a}^{\log^2n}\left(\frac{W_1|S_j|}{n}\right)^{\rup{ \frac{t-a}{(k-1)(r-1)}}}\ex_3\left(\sum_{\ell=1}^{t}d^-(u_\ell)\right)\ \mbox{{by~\eqn{bigCf}}}\non\\
&<&o(n^{-1})+\sum_{t= a}^{\log^2n}\left(\frac{W_1|S_j|}{n}\right)^{\rup{ \frac{t-a}{(k-1)(r-1)}}}t\left(1+2Z_1\frac{|S_{j}|}{n} +2\frac{\log^2n}{|S_i|}\right)
\qquad\mbox{by (\ref{edCu1})}\non\\
&<&o(n^{-1})+\left(a+2k^2r^2\frac{W_1|S_j|}{n}\right)\left(1+2Z_1\frac{|S_{j}|}{n} +2\frac{\log^2n}{|S_i|}\right)\non\\
&<&a+Wa\frac{|S_j|}{n}+5{a}\frac{\log^2 n}{|S_j|},
\end{eqnarray}
for an appropriate constant $W$; this is (\ref{edCu}).

Our goal is to bound $D^-(R_j)\leq\sum_{f\in\Phi} D^-(C_{f})$ in order to obtain (\ref{recMain}).  As in Step 2,  we will instead focus on a related quantity: {We use $|f|^*$ to denote the number of {\em vertices} in $f$; i.e.\ if $f$ contains multiple copies of a vertex $u$ then $u$ is counted only once in $|f|^*$.}   We define
\[X=\sum_{f\in\Phi} \min\{D^-(C_{f}),\log^3 n\}-|f|^*.\]

Note that  $D^-(C_f)\geq |f|^*$ as every vertex $u\in f$ satisfies $d^-(u)\geq 1$. Also, $\log^3n>r\geq|f|^*$ and so $X$ is non-negative.

We will show that, with very high probability, $D^-(C_{f})< \log^3 n$ for every $f$, which allows us to work with $X$.
The advantages of doing so are twofold: (i) subtracting $|f|^*$ from each term in the summand has the same advantage as switching to $X$ in the analysis of Step 2; (ii) by introducing an upper bound of $\log^3 n$ on the contribution of each term, we bound the effect of each random choice on $X$.  Since $\sum_{f\in\Phi} |f|\leq \blank$,
(\ref{edCu}) implies
\begin{equation}\label{ee3x}
\ex_3(X)\leq \sum_{f\in\Phi} |f|\left(W\frac{|S_j|}{n}+\frac{5\log^2 n}{|S_j|}\right)\leq \blank\left(W\frac{|S_j|}{n}+\frac{\log^2 n}{|S_j|}\right).
\end{equation}

To prove concentration of $X$, we again use McDiarmid's Inequality.

Our random choice in Step 3 is an assignment of the vertex-copies of $S_{j}$ to the blanks in the $a$-edges.   As in the analysis of Step 2, we do this by taking a uniform permutation of those vertex-copies.

We first bound the amount by which exchanging two vertex-copies can affect $X$.
Suppose that we replace a copy of $u$ in an edge $f$ with a copy of $w$ in an edge $f'$.  We will show that this cannot increase the contribution of the edges in $C_f$ to $X$ by more than $2(k-1)\log^6 n$.  

Let $C$ be the component containing $f$ after the copy of $u$ is replaced by a copy $\omega$ of $w$.   Consider removing $\omega$ from $f$; so the size of $f$ has been reduced by one. {Let $C'$ by the hypergraph obtained from $C$ by the removal of $\omega$.}  Let $C_1$ be the component {of $C'$} containing $f$ (if $f$ had size 1 and hence is now empty, then $C_1=\emptyset$) {and} let $C_2$ be the union of the other components {of $C'$}.  Note that  {\em before the copy of $u$ was replaced with $\omega$}: $C_1$ was a subgraph of $C_f$ and  $C_2$ was a subgraph of the component containing $w$.  

{\em Case 1: $|C_1|,|C_2|\geq \log^3n$.}  Then for every hyperedge $f'\in C_1$, $\min(|C_{f'}|,\log^3 n)=\log^3 n$ both before and after the switch.  The same is true of every hyperedge in $C_2$.  So the contribution of those edges to $X$ was unchanged by the switch.

{\em Case 2: $|C_1|\geq \log^3n,|C_2|<\log^3 n$.}  The contribution of the edges of $C_1$ was unchanged by the switch, but the contribution of each edge in $C_2$ may have increased by up to $\log^3n$.  There are at most $(k-1)|C_2|$ vertex-copies amongst those edges and so there are at most $(k-1)|C_2|<(k-1)\log^3n$ such edges.  So the total contribution from those edges increases by less than $(k-1)\log^6n$.

In each of the remaining two cases, similar reasoning shows that the total contribution of the edges in $C_{{f}}$ increases by at most $2(k-1)\log^6n$. {The same argument shows that the total contribution of the edges in $C_{f'}$, the component containing $f'$ (the other hyperedge involved in the switch) is at most  $2(k-1)\log^6n$.} Thus no switch can increase $X$ by more than $4(k-1)\log^6n$.  By considering the inversion of a switch, no switch can decrease $X$ by more than $4(k-1)\log^6n$.  So we can take $\varrho=4k\log^6n$.

}

 We next show that the event $X\geq s$ can be certified by revealing the outcomes of at most 
$2rs$ choices. If $X\ge s$ then there must be a subset of the edges of $\Phi$, say $f_1,...,f_{\ell^*}$, with $D^-(C_{f_1})\geq |f_1|^*+ s_1,...,D^-(C_{f_{\ell^*}})\geq |f_{\ell^*}|^* + s_{\ell^*}$, and $s_1+...+s_{\ell^*}=s$ {with each $s_{\ell}\ge 1$}. For each $1\leq\ell\leq\ell^*$, we will certify that $D^-(C_{f_{\ell}})\geq|f_{\ell}|^* + s_{\ell}$ by revealing at most $2rs_{\ell}$ choices as follows. We discuss two cases.

{\em Case 1: $|C_{f_{\ell}}|\leq |f_{\ell}|^* + s_{\ell}$}. In this case, we reveal a spanning tree of $C_{f_{\ell}}$.  That is, we reveal the choice of vertex-copy assigned to each hyperedge of a spanning tree of $C_{f_{\ell}}$.  This spanning tree only needs to consist of $f_{\ell}$, and at most $s_{\ell}$ additional edges, and so the total number of vertex-copies whose assignments are revealed is at most $r(1+s_{\ell})\leq 2rs_{\ell}$, as $s_{\ell}\geq 1$. 

{\em Case 2: $|C_{f_{\ell}}|> |f_{\ell}|^*+s_{\ell}$}. In this case, we will reveal a connected subgraph of $C_{f_{\ell}}$ containing $f_{\ell}$ and at least $s_{\ell}$ additional vertices. As described in Case 1, we can do so using at most $s_{\ell}+1$ edges and hence by revealing the assignments of at most $r(1+s_{\ell})\leq 2rs_{\ell}$ vertex-copies.  Since each of these vertices has $d^-\geq 1$, this will certify that $D^-(C_{f_{\ell}})\geq  |f_{\ell}|^*+s_{\ell}$.

This shows that we may certify $X\ge s$ by exposing at most $2rs$ choices so we can take $q=2r$.
\fix{
Setting $t=\max\{\ex(X),6\times 2048k^2r\log^{13}n\}$, it is easy to see that $25\varrho\sqrt{q\ex(X)}+128\times \varrho^2q< t$ {and $\ex(X)+t\leq 2t$}.
So {applying McDiarmid's Inequality with $\varrho=4k\log^6n$ and $q=2r$} yields:
\bea
\pr(X>\ex( X ) + 2t)&\leq& 4\exp\left(-\frac{t^2}{{1024k^2r\log^{12}} n(\ex(X)+t)}\right)<4\exp\left(-\frac{t}{{2048}k^2r\log^{12} n}\right)\nonumber\\
&<&4\exp(-6\log n)=o(n^{-5}).\label{emd}
\eea

}

To use our bound on $X$ to obtain a bound on $D^-(R_j)$, we first note that  by~\eqn{bigCf}, with probability at least $1-n^{-9}$: every $C_f$ has size at most $\log^2 n$ and hence by Lemma~\ref{lsi}(i),
 $D^-(C_f)<\log^3 n$ for all $f$.
This, (\ref{ee3x}) and  (\ref{emd}) yield that with probability at least $1-n^{-5}$:

\fix{
\begin{eqnarray*}
D^-(R_j)&\leq& X+\sum_{f\in\Phi} |f^*|\leq X+\sum_{f\in\Phi} |f|\\
& \leq &X+\blank\leq\ex_3(X)+2t+\blank\\
&\leq&3\ex_3(X)+12\times 2048k^2r\log^{{13}}n+\blank\\
&\leq&\blank\left( 1+3W\frac{|S_j|}{n}+3\frac{\log^2 n}{|S_j|}\right)+12\times 2048k^2r\log^{{13}}n\\
&\leq&\blank\left( 1+3W\frac{|S_j|}{n}\right)+13\times 2048k^2r\log^{{13}}n,
\end{eqnarray*}
}since $\blank\leq (k-1)|S_j|$, fix{and so $\blank \log^2 n/|S_j|=o(\log^{13}n)$}.  This, with  (\ref{ey1}) and  (\ref{ey2}),  yields that with probability at
least $1-3n^{-5}$, we have:

\begin{eqnarray*}
D^-(R_j)&\leq&
 \left(D^-(R_{j+1}) + Z'_1\frac{|S_{j}|}{n}D^-(R_{j+1}) 
+\frac{r^2k}{\a(c)}\frac{|S_j|}{ n}\sum_{i=I_v}^{j+2} |R_i|+\fix{Z_1'}\log^2 n\right)\\
&&\qquad
\times\left( 1+\fix{3}W\frac{|S_j|}{n}\right) +\fix{13\times 2048k^2r}\log^{13}n\\
&\leq&
D^-(R_{j+1}(v))+Z\frac{|S_j|}{n}\sum_{\ell=I_v}^{j+1}{D}^-(R_{\ell}(v))
+\log^{14} n,
\end{eqnarray*}
 for suitable constants $Z'_1$ and $Z$.   This is~(\ref{recMain}).  

So the probability that~(\ref{recMain}) holds for a given $j$ is at least $1-\frac{3}{\e}n^{-5}>1-n^{-4}$. Taking the union bound for $B\le j\le I_v$ yields Lemma~\ref{l.rj}.
\proofend

\subsection{Solving the recurrence}
\lab{sec:rec2}

Because Lemma~\ref{l.rj} holds only for $|S_j|>n^{\d}\log^2n$ and for $j\ge B$, we define:
\[
i_0 = \min\{i:\ |S_{i+1}|< n^{\d}\log^2n\};
\]
i.e.\ $i_0$ is the largest index so that Lemma~\ref{l.rj} applies to $S_B,...,S_{i_0}$.  Note that Lemma~\ref{lsi}(a,b) implies
\be\label{e.i0}
|S_{i_0}|<2n^{\d}\log^2n.
\ee
Recall that
\[
R^{(B)}(v)= R(v)\setminus \Big(\cup_{1\le i<B}S_i\Big).
\]

We will prove:
\begin{lemma}\lab{lrec3}
There is a constant $X=X(r,k)>0$ such that for any $1\leq i\le \imax$ and any $v\in S_i$:
\[\pr(|R^{(B)}(v)|>n^{X\d})\fix{\le}\inv{n^2}.\]
\end{lemma}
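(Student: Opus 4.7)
I plan to bound $U_B := \sum_{j=B}^{I_v} D^-(R_j(v))$, which dominates $|R^{(B)}(v)|$ since $d^-(u)\ge 1$ for every $u\in S_j$ with $j\ge 2$. First I would handle the tail $j>i_0$, where Lemma~\ref{l.rj} does not apply. By the definition of $i_0$ and Lemma~\ref{lsi}(a), $|S_{i_0}|\le 2n^{\delta}\log^2 n$, and Lemma~\ref{lsi}(c) applied at $i=i_0$ gives $\sum_{j\ge i_0}|S_j|=O(n^{3\delta/2}\log^2 n)$. Combined with $d^-(u)<\log n$ from Lemma~\ref{lsi}(i), this yields $U_{i_0+1}=O(n^{3\delta/2}\log^3 n)$, and contributes only $O(n^{3\delta/2}\log^2 n)$ to $|R^{(B)}(v)|$ directly via $|R_j(v)|\le |S_j|$.

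For $B\le j\le i_0$ I would iterate Lemma~\ref{l.rj}. Writing $T_j:=D^-(R_j(v))$, $a_j:=|S_j|/n$, and $L:=\log^{14}n$, the recursion $T_{j-1}\le T_j+Za_{j-1}U_j+L$ together with the identity $U_{j-1}=T_{j-1}+U_j$ becomes a coupled $2$-dimensional linear system
\[
\begin{pmatrix} T_{j-1}\\ U_{j-1}\end{pmatrix}\le M_{j-1}\begin{pmatrix} T_j\\ U_j\end{pmatrix}+\begin{pmatrix} L\\ L\end{pmatrix},\qquad M_j:=\begin{pmatrix} 1 & Za_j\\ 1 & 1+Za_j\end{pmatrix}.
\]
The characteristic polynomial $\lambda^2-(2+Za)\lambda+1=0$ has dominant root $\lambda_+(a)=1+\sqrt{Za}+O(a)$. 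A naive bound $T_j\le U_j$ would give a factor of $2$ per step and blow up exponentially in the stripping length, so instead I would track the ratio $r_j:=T_j/U_j$: a short computation yields $r_{j-1}(1+r_{j-1})\le r_j+Za_{j-1}+L/U_j$, whose fixed point is $\sqrt{Za_{j-1}}$. Starting from $r_{i_0}\le 1$, the dynamics imply $r_j\le 1/(i_0-j+1)$ in the pre-fixed-point phase and $r_j\le (1+o(1))\sqrt{Za_j}$ thereafter, while $U_{j-1}/U_j\le 1+r_{j-1}+Za_{j-1}$. Hence
\[
U_B\le U_{i_0+1}\exp\!\Big(\textstyle\sum_{j=B}^{i_0}(r_{j-1}+Za_{j-1})\Big)+O\!\big(L\imax\cdot n^{O(\delta)}\big).
\]

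The key estimate is $\sum_{j=B}^{i_0}\sqrt{a_j}=O(\delta\log n)$. Setting $b_j:=\sqrt{a_j}$, Lemma~\ref{lsi}(b) in the big regime $|S_j|\ge n^{1-\delta}$ gives $b_{j+1}^2\le(1-Y_2 b_j)b_j^2$, whence $1/b_{j+1}\ge 1/b_j+Y_2/2$; telescoping and summing yields $\sum_{\text{big}} b_j=O(\log(b_B n^{\delta/2}))=O(\delta\log n)$. In the small regime, Lemma~\ref{lsi}(a) gives geometric decay with ratio $1-\Omega(n^{-\delta/2})$, contributing only $O(1)$. An analogous estimate (same harmonic+geometric split) shows $\sum r_j=O(\delta\log n)$, and $\sum Za_j\le Z\eps_0=O(1)$. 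Combining everything, $U_B\le n^{X\delta}$ for some $X=X(r,k)$. For the probability bound I would use a union bound over the failure event of Lemma~\ref{lsi} (which can be strengthened to probability $\le n^{-3}$ since the proofs of its parts rest on standard concentration inequalities whose tails are polynomially small) together with the $\le n^{-3}$ failure of Lemma~\ref{l.rj}, yielding $\pr(|R^{(B)}(v)|>n^{X\delta})\le 1/n^2$.

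The main obstacle is that the recurrence is a genuinely coupled $2$-dimensional system: the presence of the sum $U_j$ (rather than $T_j$ alone) on the right-hand side means a single-variable iteration would blow up by factors of $2^{\imax}=2^{\Theta(n^{\delta/2}\log n)}$. The eigenvalue/ratio analysis above, made rigorous by tracking $r_j$ into its fixed point $\sqrt{Za_j}$, combined with the sharp geometric decay of $|S_j|$ from Lemma~\ref{lsi}(a,b), is exactly what keeps the final bound polynomial of the form $n^{O(\delta)}$.
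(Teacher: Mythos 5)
Your plan is, at its core, the paper's own argument in different notation: the paper also splits at $i_0$ (handling $j>i_0$ via Lemma~\ref{lsi}(c,i), with the same $O(n^{3\d/2}\log^3 n)$ base case), iterates Lemma~\ref{l.rj}, and shows each step multiplies the cumulative sum by at most $1+D\sqrt{|S_j|/n}$. Its factorization $t_j-a_jt_{j-1}=b_j(t_{j-1}-a_{j-1}t_{j-2})+n^{2\d}$, with the induction $1+Z|S_{i-j}|/n\le a_j\le 1+D\sqrt{|S_{i-j}|/n}$, is exactly your dominant-eigenvalue computation (the scalar recurrence for the cumulative sums has companion matrix with the same characteristic polynomial $\la^2-(2+Za)\la+1$ as your $M_j$), and the key estimate $\sum_j\sqrt{|S_j|/n}=O(\d\log n)$ is extracted from Lemma~\ref{lsi}(a,b) by the same big/small split (the paper telescopes the product against $|S_B|/|S_{\ell_0+1}|$; you telescope $1/\sqrt{a_j}$ — equivalent computations).

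Two points in your write-up need repair. First, the ratio dynamics are asserted rather than proved: from $U_{j-1}=T_{j-1}+U_j$ the exact relation is $r_{j-1}/(1-r_{j-1})\le r_j+Za_{j-1}+L/U_j$ and $U_{j-1}/U_j=1/(1-r_{j-1})$, so your intermediate bound $U_{j-1}/U_j\le 1+r_{j-1}+Za_{j-1}$ fails whenever $r_{j-1}>\sqrt{Za_{j-1}}$, the pure harmonic claim $r_j\le 1/(i_0-j+1)$ fails by a constant factor (one needs $r_j\le C/(i_0-j+1)$), and when $U_j$ is only polylogarithmic the term $L/U_j=\log^{14}n/U_j$ can dominate $\sqrt{Za_j}$, so the ``fixed point'' must be replaced by something like $\max\{C\sqrt{Za_j},CL/U_j\}$ and the resulting extra factors re-absorbed. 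All of this is routine but must be carried out; the paper sidesteps it by passing to the monotone majorant sequence $r_j$ and its partial sums $t_j$, for which the clean induction on $a_j$ suffices. Second, your claim that Lemma~\ref{lsi} can be strengthened to failure probability $n^{-3}$ is not justified: parts (a,b,c) rest on a chain of a.a.s.\ statements (Corollary~\ref{ccoresize} via Kim's theorem, the stopping-time identities, Corollary~\ref{cor:l-br}), not merely Chernoff-type tails, and even part (f) as proved only yields failure probability of order $n^{-1}$ after the union over $i$. The upgrade is also unnecessary: treat the conjunction of the Lemma~\ref{lsi} events as a single global a.a.s.\ conditioning (as the paper implicitly does), charge only the $n^{-3}$ failure of Lemma~\ref{l.rj} per vertex, and the union bound over $v$ in the proof of Theorem~\ref{mt3} still delivers the a.a.s.\ conclusion.
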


\proof Taking $B$ and $Z$ as specified in Lemma~\ref{l.rj}, and taking the union bound over all vertices $v$, we have that with probability at least $1-n^{-\fix{2}}$: \eqn{recMain} holds for all  $v$ and all $B\leq j\leq \min\{I_v,i_0\}$.

We fix a particular $i\geq B$ and  $v\in S_i$.  As in the previous section, we say $R_{\ell}$ instead of $R_{\ell}(v)$. Restating~\eqn{recMain} and replacing $j$ with $j'$ (because of an index change below) yields:

\be\label{e.rj}
D^-(R_{j'})\leq D^-(R_{j'+1})+Z\frac{|S_{j'}|}{n}\sum_{\ell=i}^{j'+1}D^-(R_{\ell})
+\log^{14} n,\qquad\mbox{for all } B\le j'\le \min\{i,i_0\}.
\ee

This recursive equation bounds $D^-(R_{j'})$ in terms of $D^-(R_{\ell})$ for values of $\ell$ that are {\em larger} than $j'$.   In order to analyze this using a recursive equation where values are bounded by values with {\em smaller} indices, we will perform the change of indices:
\[j:= i -j'.\]

We will recursively define $r_j$ such that:

\begin{eqnarray}
r_j&\geq&D^-(R_{i-j})\label{elj}
\end{eqnarray}

First, we set up our base cases. Because~(\ref{e.rj}) only holds for  $j'\le \min\{i,i_0\}$, i.e.\ for $j\geq\max\{0,i-i_0\}$, we need a base case for all smaller values of $j$.  We set 
\[j_0= \max\{0,i-i_0\}\] 
and define:

\[\mbox{For }  0\le j\leq j_0:\qquad r_j=n^{2\d}.\]
We verify that something even stronger than~(\ref{elj})  holds for this base case. 
Applying Lemma~\ref{lsi}(\fix{i}), we have
$D^-(R_{i-j})\leq D^-(S_{i-j})<\fix{\log n\cdot}|S_{i-j}|.$ 
  Lemma~\ref{lsi}(c) and~(\ref{e.i0}) yield
\be\lab{total}
\sum_{j=0}^{i-i_0}D^-(R_{i-j})\le \fix{\log n}\sum_{j=0}^{i-i_0}|S_{i-j}|\le \fix{\log n}\sum_{\ell\ge i_0}|S_{\ell}| \le \fix{\log n\cdot}Z_1|S_{i_0}|n^{\d/2}=O(n^{3\d/2}\log^{\fix{3}} n).
\ee

 This immediately yields~(\ref{elj}) for all $j\le \fix{j_0}$.

We define the following recursion.
\be
\forall  j_0+1\le j\le i-B:
r_{j}=r_{j-1}+Z\frac{|S_{i-j}|}{n}\sum_{\ell=j_0}^{j-1}r_{\ell}+n^{2\d}.\lab{rec}
\ee

We argue inductively that ~(\ref{elj}) holds for all $j_0\leq j\leq i-B$.  We have already
seen that it holds for the base case $j=j_0$.  For higher values of $j$,~(\ref{e.rj}) yields:
\begin{eqnarray*}
D^-(R_{i-j})&\leq &D^-(R_{i-(j-1)})+Z\frac{|S_{i-j}|}{n}\sum_{\ell=i}^{{i-j+1}}D^-(R_{\ell})
+\log^{14} n\\
&\leq& r_{j-1}+Z\frac{|S_{i-j}|}{n}\sum_{\ell=j_0}^{j-1}r_{\ell}
+\sum_{\ell=0}^{j_0-1}D^-(R_{i-\ell})+\log^{14} n\quad\mbox{by Lemma~\ref{lsi}(d) and induction on~(\ref{elj})}\\
&\le & r_{j-1}+Z\frac{|S_{i-j}|}{n}\sum_{\ell=j_0}^{j-1}r_{\ell}+n^{2\d}\quad \mbox{by~\eqn{total} {and noting that $\log^{14}n$ is absorbed by $n^{2\d}$}} \\
&=&r_{j}.
\end{eqnarray*}

Thus $r_j$ bounds $|R_{i-j}|$:

\[|R_{i-j}|\le D^-(R_{i-j})\leq r_j ,\quad \forall \ 0\leq j\leq i-B,\]
It will be convenient to define:
\[t_j=\sum_{\ell=j_0}^j r_{\fix{\ell}}.\]
Therefore, by~\eqn{total},
\be
|R^{(B)}(v)|= \sum_{j=0}^{i-B}|R_{i-j}| \leq \sum_{j=0}^{i-B}D^-(R_{i-j})= \sum_{j=j_0}^{i-B}D^-(R_{i-j})+\sum_{j=0}^{j_0-1}D^-(R_{i-j})\le t_{i-B}+n^{2\d}.\lab{Rbound}
\ee
To prove Theorem~\ref{mt3}, it is sufficient to show that $t_{i-B}=n^{O(\d)}$.

Since $r_j=t_j-t_{j-1}$, rearranging~\eqn{rec}, we have
$$
t_{j}-t_{j-1}= t_{j-1}-t_{j-2}  + Z\frac{|S_{i-j}|}{n}t_{j-1}
+ n^{2\d}, \quad \forall j\ge j_0+1,
$$
where $t_{j_0}=n^{2\d}$ and $t_{j_0-1}=0$.
We solve this recurrence. It will be helpful to find sequences $(a_j)_{j\ge j_0}$ and $(b_j)_{j\ge j_0+1}$ that satisfy
\begin{equation}
t_{j}-a_jt_{j-1}=b_j(t_{j-1}-a_{j-1}t_{j-2})+n^{2\d},\quad \forall j\ge j_0+1.\lab{ezzzz}
\end{equation}
Rearranging gives
\[t_{j}-t_{j-1}=(a_j-1+b_j)t_{j-1}+(-a_{j-1}b_j)t_{j-2}+n^{2\d},\]
so we require that for all $j\ge j_0+1$,
\bean
a_j-1+b_j&=&1+ Z\frac{|S_{i-j}|}{n}\\
 -a_{j-1}b_j&=&-1,
\eean
and we may set initial condition $a_{j_0}=b_{j_0+1}=1$.

So we define $a_j,b_j$ recursively as:
\[
b_j=1/a_{j-1},\quad a_j=2-\frac{1}{a_{j-1}}+ Z\frac{|S_{i-j}|}{n},
\]
and~(\ref{ezzzz}) holds. Since $a_j+\inv{a_{j-1}}\ge 2$ for each $j$ and $a_{j_0}=1$, it follows that $a_j\ge 1$ for every $j\ge j_0$.   Next, we show that
there is a constant $D=D(r,k)>0$  such that for every $j\ge j_0+1$,
\begin{equation}
1+Z\frac{|S_{i-j}|}{n}\le a_j\le 1+D\sqrt{\frac{|S_{i-j}|}{n}}.\lab{induction}
\end{equation}
The lower bound follows immediately from $a_{j-1}\geq 1$ and the recursion
$a_j=2-1/a_{j-1}+ Z|S_{i-j}|/n$.  We prove the upper bound
by induction. By taking  $D\geq Z$ we ensure that~\eqn{induction} holds for $j= j_0+1$. Now
 assume that $j\ge j_0+2$ and that~\eqn{induction} holds for $j-1$, and so:
\[
\frac{1}{a_{j-1}}\geq 1-D\sqrt{\frac{|S_{i-j+1}|}{n}}.\]
Since $a_j=2+ Z|S_{i-j}|/n-1/a_{j-1}$, we have
\[ a_j\le 1+\frac{Z|S_{i-j}|}{n}+D\sqrt{\frac{|S_{i-j+1}|
}{n}}.
\]
By the definition of $j_0$ and $i_0$, we have $|S_{i-j}|\ge n^{\d}\log^2 n$ for all $j\ge j_0$. So
  by Lemma~\ref{lsi}(b), we have that for all $j_0\le j\le i-B$, $|S_{i-j+1}|\le (1-Y_2\sqrt{|S_{i-j}|/n})|S_{i-j}|$ for constant $Y_2=Y_2(r,k)>0$.
Thus,
\bean
1+Z\frac{|S_{i-j}|}{n}+D\sqrt{\frac{|S_{i-j+1}|}{n}}&\le & 1+Z\frac{|S_{i-j}|}{n}+\left(1-\frac{Y_2}{2}\sqrt{|S_{i-j}|/n}\right)D\sqrt{\frac{|S_{i-j}|}{n}}\\
&=&1+D\sqrt{\frac{|S_{i-j}|}{n}}-\left(\frac{DY_2}{2}-Z\right)\frac{|S_{i-j}|}{n}\le 1+D\sqrt{\frac{|S_{i-j}|}{n}}
\eean
where the last inequality holds
by choosing $D>2Z/Y_2$. Thus,~\eqn{induction} holds also for $j$ and thus it holds for every $j_0\le j\le i-B$.

Now we continue to solve the recurrence~\eqn{rec}. Let $c_j=t_{j}-a_jt_{j-1}$. Then, since $b_j=1/a_{j-1}\le 1$ for every $j\ge j_0+1$, we have
\[
c_{j}=b_j c_{j-1}+n^{2\d} \le c_{j-1}+n^{2\d}\le c_{j_0} +(j-j_0) n^{2\d}\le n^{2\d}+(j-j_0) n^{2\d}.
\]
 Since $j<i=O(n^{\d/2}\log n)$ by Theorem~\ref{mt}(b), this yields
\be
t_{j}-a_jt_{j-1} \le n^{2\d}+ O(n^{5\d/2}\log n)\le  U:=n^{3\d},\quad \mbox{for all $j\ge j_0+1$.}\lab{rec2}
\ee

Let $\ell_0$ be the maximum integer for which $|S_{\ell_0}|\ge n^{1-\d}$.  Again applying Theorem~\ref{mt}(b), we have $\ell_0 =O(n^{\d/2}\log n)$. Note that, \fix{due to the monotonicity of $|S_{\ell}|$ by Lemma~\ref{lsi}(a,b),} $\ell_0<i_0$ trivially by the definition of $\ell_0$ and $i_0$ and by the fact that $0<\d<1/2$.

Now is a good time to recall that our goal is to show $t_{i-B}=n^{O(\d)}$.

\eqn{rec2} says $t_{j}\leq  a_jt_{j-1} + U$. Applying this
recursively yields that for every $1\le \ell\le i-B$,
\be
t_{i-B}\le t_{i-B-\ell}\prod_{h=0}^{\ell-1}a_{i-B-h}+U\left(1+\sum_{h_2=0}^{\ell-2}\prod_{h=0}^{h_2} a_{i-B-h}\right).
\lab{rec3}
\ee

Since $a_h\ge 1$ for each $h$, we have $1+\sum_{h_2=0}^{\ell-2}\prod_{h=0}^{h_2} a_{i-B-h}\le \ell\prod_{h=0}^{\ell-2} a_{i-B-h}$. First assume that $i\ge \ell_0+2$. Taking $\ell=\ell_0-B+2$ in~(\ref{rec3}) and noting that
$a_{i-\ell_0-1}<U\ell_0$ by~\eqn{induction}  yields
\be
t_{i-B}\le   t_{i-\ell_0-2}\left(\prod_{j=0}^{\ell_0-B+1} a_{i-B-j}\right)+U \ell_0 \prod_{j=0}^{\ell_0-B} a_{i-B-j}\le (1+t_{i-\ell_0-2})U \ell_0 \prod_{j=0}^{\ell_0-B} a_{i-B-j}.\lab{ezz}
\ee
Again applying~\eqn{induction}, we have:
\be
\prod_{j=0}^{\ell_0-B} a_{i-B-j}\le\exp\left(D\sum_{j=0}^{\ell_0-B} \sqrt{\frac{|S_{j+B}|}{n}}\right)
=\exp\left(D\sum_{j=B}^{\ell_0} \sqrt{\frac{|S_j|}{n}}\right).
\lab{ezz2}
\ee

Next, we bound $\exp\left(D\sum_{j=B}^{\ell_0} \sqrt{\frac{|S_j|}{n}}\right)$. By Lemma~\ref{lsi}(b), we have for all $j\le \ell_0$,
\[|S_{j+1}|\leq \exp\left(-Y_2\sqrt{\frac{|S_{j}|}{n}}\right)|S_j|,\]
and so
\[|S_{\ell_0+1}|\le\exp\left(-Y_2\sum_{j=B}^{\ell_0}\sqrt{\frac{|S_j|}{n}}\right)|S_B|.
\]
By the definition of $\ell_0$ and Lemma~\ref{lsi}(a), we have $|S_{\ell_0+1}|>\hf n^{1-\delta}$, so
\[
\exp\left(Y_2\sum_{j=B}^{\ell_0}\sqrt{\frac{|S_j|}{n}}\right)\le \frac{|S_B|}{|S_{\ell_0+1}|}< \frac{n}{|S_{\ell_0+1}|}=O(n^{\delta}),
\]
and so
\[
\exp\left(D\sum_{j=B}^{\ell_0}\sqrt{\frac{|S_j|}{n}}\right)=n^{O(\d)},
\]
since both $D$ and $Y_2$ are positive constants.
This, (\ref{ezz}),~(\ref{ezz2}), $\ell_0=O(n^{\d/2}\log n)$ and $U=n^{3\d}$ yield
\be
t_{i-B}=n^{O(\d)}t_{i-\ell_0-2}.
\lab{rec4}
\ee
It is easy to see that if $i<\ell_0+2$, then the above argument already proves that $t_{i-B}=n^{O(\d)}$, by taking $\ell=i-B$ in~\eqn{rec}. So we assume that $i\ge \ell_0+2$.
It only remains to show that $t_{i-\ell_0-2}=n^{O(\d)}$.
The same recursion that produced~(\ref{rec3})  yields that for every $1\le \ell\le i-\ell_0-2-j_0$,
\[
t_{i-\ell_0-2}\le t_{i-\ell_0-2-\ell}\prod_{h=0}^{\ell-1}a_{i-\ell_0-2-h}+U\left(1+\sum_{h_2=0}^{\ell-2}\prod_{h=0}^{h_2} a_{i-\ell_0-2-h}\right).
\]
Arguing as for~\eqn{ezz}, this time taking $\ell=i-\ell_0-2-j_0$ yields (noting that $t_{j_0}\le n^{2\d}$ always)
\be
t_{i-\ell_0-2}\le (t_{j_0}+Ui)\prod_{j=0}^{i-\ell_0-3-j_0} a_{i-\ell_0-2-j}
=n^{O(\d)}\prod_{j=0}^{i-\ell_0-3-j_0} a_{i-\ell_0-2-j},
\lab{eq:t}
\ee
and arguing as for~\eqn{ezz2} yields
\be
\prod_{j=0}^{i-\ell_0-3-j_0} a_{i-\ell_0-2-j}\leq\exp\left(D\sum_{j=\ell_0+2}^{i-1-j_0} \sqrt{\frac{|S_j|}{n}}\right).
\lab{eq:tt}
\ee

By  the definition of $\ell_0$ we have $|S_{\ell_0+2}|< n^{1-\d}$.
By our definition of $j_0$, for every $\ell_0+2\le j\le i-1-j_0$, we have $|S_{j}|\ge n^{\d}\log^2 n$, and so we can apply Lemma~\ref{lsi}(a) to show
\[
|S_{j}|\le (1-Y_2n^{-\d/2})^{j-(\ell_0+2)}|S_{\ell_0+2}|
\le (1-Y_2n^{-\d/2})^{j-(\ell_0+2)}n^{1-\d}, \quad \forall\ \ell_0+2\le j\le i-1-j_0,
\]
which implies that
\[
\sum_{j=\ell_0+2}^{i-1-j_0}\sqrt{\frac{|S_j|}{n}}\le n^{-\d/2}\sum_{j\ge 0} (1-Y_2n^{-\d/2})^j =O(1).
\]
This proves that $t_{i-\ell_0-2}=n^{O(\d)}$ by~\eqn{eq:t} and~\eqn{eq:tt}. So by~\eqn{rec4} it completes our proof that $t_{i-B}=n^{O(\d)}$. Since $|R^{(B)}(v)|\le t_{i-B}+n^{2\d}$ by~\eqn{Rbound}, this proves the lemma.
\proofend

\subsection{Bounding the maximum depth}
\lab{sec:final}

Lemma~\ref{lrec3} bounds $|R^{(B)}(v)|$ for any non-$k$-core vertex $v$ in $AP_r(n,m)$.  Corollary~\ref{ccon0} implies that the same bound holds for $\H_r(n,m)$. 

The following lemma allows us to bound $|R(v)|$ using the bound on $|R^{(B)}(v)|$. To present the lemma,
 we define $N^s(A)$ to be the set of vertices with distance at most $s$ from $A$. The following easily proved lemma is from~\cite[Lemma 34]{amxor}.
\begin{lemma}\lab{lem:neighbours}
Assume $s,c>0$ are $O(1)$. A.a.s.\ for every subset of vertices $A$ in $\H_r(n,cn)$ such that $A$ induces a connected subgraph, $|N^s(A)|=O(|A|+\log n)$.
\end{lemma}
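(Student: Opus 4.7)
My plan is to prove the lemma by a first-moment argument over ``witnesses.'' A witness will be a triple $(A,B,F)$ where $A\subseteq B$ are vertex sets with $|A|=a$ (and $A$ inducing a connected sub-hypergraph), $|B|=b>C(a+\log n)$ for a sufficiently large constant $C=C(r,c,s)$ to be chosen, and $F$ is a collection of at least $\lceil (b-a)/(r-1)\rceil$ hyperedges of $\H_r(n,cn)$ all supported on $B$ that together form a BFS hyper-forest of depth at most $s$ rooted at $A$ and exhausting $B\setminus A$. The point is that if some connected $A$ violates the conclusion, then running BFS of depth $s$ from $A$ and keeping the discovered hyperedges produces exactly such a witness with the prescribed parameters, since each new vertex of $N^s(A)$ costs at most one hyperedge that adds at most $r-1$ vertices.

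Next I will bound the expected number of witnesses with given $(a,j)$, where $j:=b-a$. The number of ways to choose $B\setminus A$ is $\binom{n}{j}$; the number of possible BFS hyper-forest structures on $a+j$ labelled vertices with $k:=\lceil j/(r-1)\rceil$ hyperedges and depth at most $s$ is bounded by a Cayley/Pr\"ufer-type estimate of the form $C_1^{j}(a+j)^{O(j)}$, with an extra factor at most $s^k$ accounting for the depth label of each hyperedge; and each prescribed hyperedge lies in $\H_r(n,cn)$ with probability $\Theta(n^{-(r-1)})$. Multiplying and collecting, the expected number of witnesses with fixed $a$ and $j$ comes out to $\binom{n}{a}(K(a+j)/n^{r-2})^{j/(r-1)}$ up to polynomial factors. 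For $r\ge 3$ this is $o(n^{-2})$ whenever $j>C(a+\log n)$ and $C$ is large enough, since each additional hyperedge beyond the tree threshold contributes a factor $O(1/n^{r-2})$; for $r=2$ one needs the additional observation that a connected graph on $b$ vertices with $e$ excess edges above a spanning tree has probability $O((b^2/n)^e)$, so the same kind of summable bound holds. A union bound over $a\ge 1$ and $j\ge C(a+\log n)-a$, followed by Markov's inequality, then gives that a.a.s.\ no witness exists.

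The main obstacle I expect is carrying out the hyper-forest enumeration cleanly enough that the $\binom{n}{j}\sim n^{j}$ choice of new vertices is dominated by the $n^{-j}$ savings from the $j/(r-1)$ edge probabilities. This cancellation is exact in the ``tree regime'' and is precisely why the $\log n$ additive slack is needed: it gives a geometric factor $(Ka/n^{r-2})^{\log n/(r-1)}=n^{-\Omega(\log n)}$ in the small-$a$ regime, absorbing the polynomial prefactors, while for $a\gg\log n$ the linear-in-$a$ gap is achieved by the same geometric series with a bounded-ratio improvement coming from the $(a+j)^{O(j)}$ Cayley factor rather than $j^{O(j)}$. A detailed version of essentially this argument is given in~\cite{amxor}, to which the statement here is cited.
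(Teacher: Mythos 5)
You are attempting to prove a statement the paper itself does not prove (it simply cites \cite[Lemma 34]{amxor}), so the only question is whether your first-moment argument stands on its own; as written it does not, for two concrete reasons. First, the central expectation bound is wrong. In a BFS hyper-forest of depth at most $s$ rooted at $A$, the extremal witnesses are those in which every hyperedge contains exactly one previously reached vertex and $r-1$ new vertices; then the number of hyperedges is exactly $\lceil j/(r-1)\rceil$, the $\binom{n}{j}\approx n^{j}/j!$ choices of new vertices cancel the $\Theta(n^{-(r-1)})$ per hyperedge exactly, and what remains per hyperedge is a factor of order $a+j$ for the choice of the parent vertex --- not a factor $O(n^{-(r-2)})$. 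There are no ``hyperedges beyond the tree threshold'' in such a witness, so the claimed bound $\binom{n}{a}\bigl(K(a+j)/n^{r-2}\bigr)^{j/(r-1)}$ does not follow, and for $r=2$ your patch via excess edges is beside the point because a forest has no excess edges. Worse, if one bounds the number of depth-$\le s$ forests by a Cayley-type $(a+j)^{O(j)}$ times a factor $s^{k}$, the resulting first moment diverges --- as it must, since the same computation without any use of the depth restriction would ``rule out'' trees of size $\Theta(\log n)$ and pieces of the giant component, which genuinely exist. All of the decay has to be extracted from the depth-$\le s$ constraint itself (for instance by summing over level sizes $j_1,\dots,j_s$ and using bounds of the shape $\prod_{t}(e\,j_{t-1}/j_t)^{j_t}$ with $j_0=a$, which is where the requirement $j\ge C(a+\log n)$ with $C=C(c,r,s)$ enters); your enumeration throws this structure away by absorbing the depth into a harmless $s^{k}$ factor.

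Second, charging only $\binom{n}{a}$ for the choice of $A$, without accounting for the connectivity of $A$, makes the union bound fail at the claimed threshold: even if the per-new-vertex factor were a constant $q<1$, you would be left with $n^{a}q^{j}$, which is not $o(1)$ for $j\ge C(a+\log n)$ with constant $C$ once $a\gg\log n$; you would need $j=\Omega(a\log n)$. The fix (and the route of the cited proof) is to use that connected $a$-sets are rare: either include a spanning structure of $A$ in the witness, or quote that the expected number of connected $a$-sets is $O(n\,C_0^{a})$. With that in hand one does not even need the forest enumeration: for a fixed $a$-set the number of hyperedges meeting it is stochastically dominated by $\Bin(cn,O(a/n))$, so it exceeds $C(a+\log n)$ with probability $e^{-\Omega(C(a+\log n))}$ by a Chernoff bound, which beats $nC_0^{a}$ after a union bound; since $|N(A)|\le |A|+(r-1)\cdot(\mbox{number of incident hyperedges})$ and $N^{i}(A)$ is again connected, iterating this one-step bound $s=O(1)$ times gives $|N^{s}(A)|=O(|A|+\log n)$. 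Your witness structure can likely be repaired along these lines (charge $A$'s connectivity, and do the depth-layered forest count honestly), but the proof as proposed has a genuine gap at both of these steps.
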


Applying Lemma~\ref{lem:neighbours} with $A=R^{(B)}(v)$ yields our bound on the maximum depth as follows:

{\em Proof of Theorem~\ref{mt3}:} 
By Lemma~\ref{lrec3}, there is a constant $X>0$ such that in $AP_r(n,m)$ \aas there are no vertices $v$ with $R^{(B)}(v)>n^{X\d}$. Any vertices in $R(v)$ that are not contained in $R^{(B)}(v)$, must have been removed during the first $B$ rounds of the parallel stripping process, and must be within distance $B$ of some member of $R(v)$. This bounds each $R_B(v)$ in $AP_r(n,m)$; Corollary~\ref{ccon0} shows that the same bound holds for $\H_r(n,m)$. Now we can apply Lemma~\ref{lem:neighbours}, to show that \aas for each $v\in \H_r(n,m)$, $|R(v)|\leq Y(|R^{(B)}(v)| +\log n)<n^{2X\d}$, for some constant $Y=Y(B,r,k)$. 
\proofend

\section{Proofs of Lemma~\ref{lsi} {and Theorem~\ref{mt}(a): the tight lower bound}}\lab{slsi}

\fix{
In this section, we will prove the key Lemma~\ref{lsi}.
Throughout this section, we have $c=c_{r,k}+n^{-\d}$ for some $0<\d<\hf$.  We have already seen that Lemma~\ref{lsi} is needed to complete the proof of Theorem~\ref{mt2}.  Lemma~\ref{lsi}(a,b) will also immediately imply the lower bound in Theorem~\ref{mt}(a):
}

{\em Proof of Theorem~\ref{mt}(a):} The upper bound was proved in Section~\ref{s.mtub}, so we only need to establish the lower bound: that a.a.s.\ the stripping number is  $\Omega(n^{\d/2}\log n)$. 

By Lemma~\ref{lsi}(a,b), there must be an iteration $i_0$ in the parallel process such that $\hf n^{1-\d}<|S_{i_0}|<n^{1-\d}$. Taking the constant $Y_1$ from Lemma~\ref{lsi} and noting that $2\d <1$, we let $a>0$ be a constant satisfying $1-2Y_1a>2\delta$. 
Applying Lemma~\ref{lsi}(a) recursively for all $i_0\le i\le \ell=an^{\d/2}\log n$,  we have
\[
|S_{i_0+\ell}|\ge (1-Y_1n^{-\d/2})^{\ell}|S_{i_0}|\ge \exp(-2Y_1\ell n^{-\d/2})\cdot \hf n^{1-\d} = \hf n^{-2aY_1+1-\d}.
\]
This is valid since $|S_{i_0+j}|\ge |S_{i_0+\ell}|\ge n^{\d}\log^2 n$ for all $0\le j\le \ell$ (which is desired in order to apply Lemma~\ref{lsi}) by our choice of $a$. This shows that the number of iterations in the parallel stripping process applied to $AP_r(n,m)$ is at least $an^{\d/2}\log n$;  Corollary~\ref{ccon0} implies that the same bound holds for $\H_r(n,m)$.  This confirms the lower bound in Theorem~\ref{mt}(a).
\proofend

\fix{Most of the work in our proof of Lemma~\ref{lsi} goes towards proving parts (a,b).  This requires a very tight analysis of the evolution of $L_t$,  the total degree of the light vertices (i.e. vertices of degree less than $k$) in $G_t$, the hypergraph remaining after $t$ iterations of SLOW-STRIP.  Much of this work can be viewed as a strengthening of the analysis from Section~\ref{smt1}.  A key result from that section is Lemma~\ref{llt1} which bounds $\ex(L_{t+1}-L_t\mid \calf_t)<-K_1n^{-\d/2}$.  This sufficed to prove the upper bound for Theorem~\ref{mt}(b), but to prove the lower bound, we need to strenghten Lemma~\ref{llt1} in two ways:  (i) when $L_t>n^{1-\d}$, the bound on the expected change is decreased to $-\Theta(\sqrt{L_i/n})$; and (ii) we obtain matching lower bounds on the expected change. This is Corollary~\ref{cor:l-br}.
 
 Of course, the expected change in $L_i$ depends on the rate at which new light vertices are created.  In Section~\ref{s.vdk} we studied this rate by examining two key closely related parameters:  roughly speaking, $\zeta_t$ is the average degree of the heavy vertices in $G_t$ (i.e. vertices of degree at least $k$) and $\bar p_t$ is the probability that a uniform vertex-copy from the heavy vertices is a copy of a degree $k$ vertex.  We obtained a coarse bound on $\zeta_t$ (Lemma~\ref{l:zetaT}, Corollary~\ref{cor:zetaT}) which was sufficient to obtain Lemma~\ref{llt1}.
For our tighter analysis in this section, we will require a much tighter bound on $\zeta_t$ (Lemma~\ref{l:zetaseq}). In addition, we will introduce  two more parameters.}

Recall that $G_t$ is the hypergraph (i.e.\ configuration) remaining after $t$ iterations of SLOW-STRIP on $AP_r(n,m)$, and $\hG_i$ is the hypergraph (i.e.\ configuration) remaining after $i-1$ iterations of the parallel stripping process.  Recall also that $t(i)$ is the iteration of SLOW-STRIP corresponding to the beginning of iteration $i$ of the parallel stripping process.  
Recall that $\tau$ denotes the step when SLOW-STRIP terminates.


Recall the definitions of $L_t$, $D_t$, $N_t$, ${\bar p_t}$ from Sections~\ref{smt1} and~\ref{s.vdk}, and that $\zeta_t=D_t/N_t$. {Recall that $\calf_t=\fix{\{(L_s,N_s,D_s)\}_{s\le t}}$.} \fix{Our first new parameter is a very close approximation of $L_{t+1}-L_t$:}
\be
\br_t=-1+(\dd-1)(k-1)\bar p_t.\lab{def:br}
\ee

This is a key parameter in analyzing the evolution of $(L_t)_{t\ge 0}$. If we view $(L_t)$ as a branching process then $1+\br_t$ approximates the branching parameter of $(L_t)$; \fix{i.e.\ the expected number of new light vertices formed during step $t$ of SLOW-STRIP. Note that for $r=2$, this is exactly the expected number of new light vertices formed when the other endpoint of a deleted edge is heavy. But we need to account for (i) the possibility of both endpoints being light, and (ii)  the fact that when $r>2$ we typically remove multiple heavy vertex-copies.  }
By~\eqn{barpt}  and~\eqn{barpt2}, and noting that $h_{t,i}\leq L_t/(L_t+D_t-(r-2))=O(L_{\fix{t}}/n)$ (by Corollary~\ref{ccoresize}), we have that a.a.s.\ for every $t(B)\le t\le \tau$,
\begin{equation}
\ex(L_{t+1}-L_t\mid \calf_t)=\br_t+O(L_t/n),\quad \ex(L_{t+1}-L_t\mid \calf_t)\le\br_t+O(n^{-1}).\lab{br}
\end{equation}
The second part of (\ref{br}) above is applied when we only require an upper bound on $\ex(L_{t+1}-L_t\mid \calf_t)$. However, in some cases we need a lower bound as well, and we will use the first part.

By~(\ref{taustar}), (\ref{et*}) and~(\ref{br1}), there is a constant $K>0$  such that 
\begin{equation}
\mbox{a.a.s.\ for every}\ t(B)\le t\le \tau,\ \ \br_t\le -Kn^{-\d/2}. \lab{brupper}
\end{equation}

By~\eqn{ztau}, a.a.s.\ $\z_{\tau}=\z+\Theta(n^{-\d/2})$.
 By Lemmas~\ref{l:monotone} and~\ref{l2:monotone} and considering the Taylor expansion of $\psi$ around $\z$, a.a.s.\ 
 \[
 \bar p_{\tau}=\psi(\z_{\tau})+O(n^{-1/2}\log n)=\psi(\z)+\psi'(\z)(\z_{\tau}-\z)+O((\z_{\tau}-\z)^2+n^{-1/2}\log n)= \psi(\z)-\Theta(n^{-\d/2}).
 \]
 By~\eqn{relation2}, $\psi(\z)=1/(r-1)(k-1)$, which implies that a.a.s.\ \be
 \br_{\tau}=-\Theta(n^{-\d/2})\lab{brcore}
 \ee by~\eqn{def:br}.

\fix{
Our second new parameter, $\pi(G_t)$, roughly counts the number of vertices in $G_t$ which are not in the $k$-core and so will be removed before the process ends. Recall from Corollary~\ref{ccoresize} that \aas\ the $k$-core  has size $\a n +(K_1+o(1))n^{1-\d/2}$, for a particular constant $K_1>0$. So the number of non-$k$-core vertices is simply $|V(G_t)|$ minus that number.  

We will prove (Lemma~\ref{lem:pi-br}) that $br_t$ is the same multiplicative order as the negation of the proportion of  vertices in $G_t$ that are not in the $k$-core, until that proportion drops below $n^{-\d/2}$; from that point on, $br_t$ remains at $-\Theta(n^{-\d/2})$ (see~(\ref{brcore})). To reflect this,  we define $\pi(G_t)$ to stay at  $\Theta(n^{1-\delta/2})$ once the remaining number of vertices drop below that. So we define:
\[\pi(G_t):=|V(G_t)|-\alpha n-\frac{K_1}{2} n^{1-\delta/2}.\]
}

\fix{Over the next few subsections, we obtain tight bounds on $\br_t$ in terms of $L_t$ (Lemma~\ref{lem:l-br}); coupled with~(\ref{br}), these will yield our bounds on the expected change in $L_t$.  To do so, we will link $\br_t$ and $\pi_t$, using $\zeta_t$.  Our first step will be to obtain a tighter bound on $\zeta_t$.}

\subsection{Controlling $\zeta_t$}

We list below a few facts that we will use.
Since $c=c_{r,k}+n^{-\d}, \d<\hf$, we can assume by Lemma~\ref{lcoresize2} that there is a $k$-core on a linear number of vertices. At each step of SLOW-STRIP, we remove at most one hyperedge. Thus there is a constant $Q=Q(r,k)$ such that
in every step, the average degree of the heavy vertices is changed by at most $\pm Q/n$; i.e.
\begin{equation}\lab{ezt}
\zeta_{t+1}-\zeta_{t}=O(1/n) \mbox{ uniformly for all } 0\leq t<\tau.
\end{equation}
Therefore, for any $\e>0$, Lemma~\ref{l:B} allows us to choose sufficiently large $B$ such that
$|\zeta_t-\zeta_{\tau}|\le \eps/2$ for all $t\ge t(B)$. Recalling the definition of $\z$ from~(\ref{zeta}), we also know that a.a.s.\ $|\zeta_{\tau}-\zeta|=o(1)$ by Corollary~\ref{ccoresize}. Hence, for all $t\ge t(B)$, $|\zeta_t-\zeta|\le |\zeta_t-\zeta_{\tau}|+|\zeta_{\tau}-\zeta|\le \eps$. This immediately gives (Fa) below.
\begin{description}
\item[(Fa)] For every $\eps>0$, we can choose $B$ sufficiently large such that \aas for all $t\ge t(B)$, $|\zeta_t-\zeta|<\eps$.
\item[(Fb)] {A.a.s.\ }for every $t\ge t(B)$, $\bar p_t=(1+O(n^{-1/2}\log n)) \psi(\zeta_t)$, by Lemma~\ref{l:monotone} {and Theorem~\ref{tkim}}.
\item[(Fc)] We can choose $\eps>0$ sufficiently small so that there are $c_1,c_2>0$ such that $-c_1<\psi'(x)<-c_2$ for all $x$ such that $|x-\zeta|<\eps$ by Lemma~\ref{l2:monotone} and since $\z=\z_{r,k}>k$.
\end{description}
 By (Fa) and (Fc), we may assume $B$ is chosen so that
\begin{description}
\item[(Fc')] $-c_1<\psi'(\zeta_t)<-c_2$ uniformly for all $t\ge t(B)$.
\end{description}
By~(\ref{ezt}) we have
\begin{description}
\item[(Fd)] $\zeta_i=\zeta_t+O((i-t)/n)$ uniformly for every $0\le t\le i\le \tau$.
\end{description}

{We will apply Lemma~\ref{l:azuma} several times. To formalise its application, we need to define normal configurations as in Section~\ref{s.vdk}, and define several stopping times when various a.a.s.\ events such as (Fa) and (Fb) fail. We skip such tedious settings of stopping times, and note that by the same treatment as in Section~\ref{s.vdk}, we will only create an $o(1)$ error in all probability bounds if we apply Lemma~\ref{l:azuma} with events in (Fa)--(Fd) assumed.}

In the next lemma, we prove a more precise form of (Fd).  \fix{This is our strengthening of Lemma~\ref{l:zetaT}.} 

\begin{lemma}\lab{l:zetaseq}
A.a.s.\ for every $t(B)\le t<i\le\tau$,
\begin{enumerate}
\item[(a)] $\zeta_i\le \zeta_t+O(\log n/n)$ uniformly for every $0\le t\le i\le \tau$;
\item[(b)] if $i-t\ge \log n$, then $\zeta_i=\zeta_t-\Theta((i-t)/n)$ uniformly.
\end{enumerate}
\end{lemma}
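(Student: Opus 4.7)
The plan is to apply the Azuma-Hoeffding inequality (Lemma~\ref{l:azuma}) to $(\zeta_t)$ over the range $t(B)\le t\le\tau$, exploiting the uniform drift bounds
$-\rho_1/n\le\ex(\zeta_{t+1}-\zeta_t\mid \calf_t)\le -\rho_2/n$
from Lemma~\ref{l:zetaMartingale}(a,b) together with the step-size bound $|\zeta_{t+1}-\zeta_t|=O(1/n)$ from~\eqn{ezt}. Accordingly, $U_t:=\zeta_t+(\rho_2/n)t$ is a supermartingale and $W_t:=-\zeta_t-(\rho_1/n)t$ is also a supermartingale on this range, each with increments of magnitude $O(1/n)$. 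By the treatment described after~(Fd), we may work directly under the assumption that (Fa)--(Fd) hold, costing only $o(1)$ in the final failure probability.

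For part (a), applying Lemma~\ref{l:azuma} to $U_t$ with $j=C\sqrt{(i-t)\log n}/n$ for a sufficiently large constant $C$, and union-bounding over the $O(n^2)$ pairs $t(B)\le t\le i\le\tau$ (using $\tau=O(n)$ from Proposition~\ref{p:tau}), yields \aas
\[\zeta_i-\zeta_t \;\le\; -\frac{\rho_2(i-t)}{n}+\frac{C\sqrt{(i-t)\log n}}{n} \qquad\text{uniformly in }t,i.\]
Viewing the right-hand side as a function of $i-t$, it is maximized at $i-t=\Theta(\log n)$ and attains maximum value $O(\log n/n)$. For $t<t(B)$, since $B$ is a constant and $|\zeta_{t+1}-\zeta_t|=O(1/n)$ by~\eqn{ezt}, the contribution is absorbed in the $O(\log n/n)$ bound. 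This establishes part (a).

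For part (b), observe that once $i-t\ge C'\log n$ for $C'$ large enough in terms of $C$ and $\rho_2$, we have $C\sqrt{(i-t)\log n}/n\le(\rho_2/2)(i-t)/n$, so the Azuma bound from the previous paragraph gives $\zeta_i-\zeta_t\le -(\rho_2/2)(i-t)/n$. Symmetrically, applying Lemma~\ref{l:azuma} to the supermartingale $W_t$ with the same choice of $j$ yields $\zeta_i-\zeta_t\ge -\rho_1(i-t)/n-C\sqrt{(i-t)\log n}/n\ge -2\rho_1(i-t)/n$ on the same range. Combined, $\zeta_i-\zeta_t=-\Theta((i-t)/n)$ uniformly for all such pairs. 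The gap between the threshold $C'\log n$ and the stated $\log n$ is harmless: the implicit constants in $\Theta(\cdot)$ are allowed to depend on $r,k$, and the range $\log n\le i-t<C'\log n$ is handled by the same Azuma estimate, noting that there both the drift and deviation are $\Theta(\log n/n)=\Theta((i-t)/n)$.

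The main obstacle is essentially bookkeeping: guaranteeing that the drift bound of Lemma~\ref{l:zetaMartingale}(a) applies throughout the entire interval $[t(B),\tau]$ over which we union-bound, despite Lemma~\ref{l:zetaMartingale}(a) being stated only up to the stopping times $t_{\eps}+k\eps n$ and $\tau_1$. This is precisely the content of Lemma~\ref{l:zetaMartingale}(b), which together with the normality facts (Fa)--(Fb) lets us invoke the Azuma estimate uniformly across all pairs without any further stopping-time juggling.
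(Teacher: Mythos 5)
Your overall strategy is the same as the paper's: combine the two-sided drift bounds of Lemma~\ref{l:zetaMartingale}(a,b) with the $O(1/n)$ step bound \eqref{ezt}, and apply the supermartingale inequality (Lemma~\ref{l:azuma}) in both directions, using the usual coupled/stopped-process device to handle the fact that the drift bounds only hold a.a.s. Your derivation of part (a) is fine (and with $j=C\sqrt{(i-t)\log n}/n$ the per-pair failure probability is $n^{-\Omega(C^2)}$, so the union bound over $O(n^2)$ pairs is clean). The genuine gap is in part (b) on the range $\log n\le i-t\le C'\log n$. There your deviation allowance $j=C\sqrt{(i-t)\log n}/n$ is of the same order as --- and, for $i-t$ near $\log n$, larger than --- the drift $\rho_2(i-t)/n$, so your upper-tail estimate only yields $\zeta_i\le\zeta_t+O(\log n/n)$; it does \emph{not} yield $\zeta_i\le\zeta_t-\Omega\bigl((i-t)/n\bigr)$, which is half of the assertion $\zeta_i=\zeta_t-\Theta((i-t)/n)$ (recall the paper's convention that $\Theta$ entails matching upper and lower bounds with the correct sign). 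Saying that ``both the drift and the deviation are $\Theta((i-t)/n)$'' is precisely the problem rather than a resolution: once the deviation term matches the drift you lose control of the sign of $\zeta_i-\zeta_t$. (The lower-bound half, $\zeta_i\ge\zeta_t-O((i-t)/n)$, does survive on that range, since $\sqrt{(i-t)\log n}\le i-t$ there.)

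The paper treats all of $i-t\ge\log n$ in one stroke by taking the deviation proportional to the drift, namely $j=\rho_2(i-t)/2n$ for the upper tail and $j=\rho_1(i-t)/2n$ for the lower tail, so that whenever the concentration event holds one gets $\zeta_t-\zeta_i\in[\rho_2(i-t)/2n,\,3\rho_1(i-t)/2n]$, at the price of a per-pair failure probability $\exp(-\Omega(i-t))$. To close your gap you would need to do the same on the intermediate range (or else weaken the statement to $i-t\ge C'\log n$, which would in fact suffice for every later application, e.g.\ Lemma~\ref{lem:pi-br} where $i-t=\Omega(n^{1-\d/2})$, but is not what the lemma claims). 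A second, smaller point: your remark that the case $t<t(B)$ is ``absorbed since $B$ is a constant'' is not correct --- $t(B)$ is a step index of SLOW-STRIP, not a constant; it is typically of order $n$, over which \eqref{ezt} only gives a change of $\Theta(1)$ in $\zeta$. The drift bounds, and hence the lemma, are only available for $t\ge t(B)$, so you should simply restrict to that range, as the preamble of the statement (and the paper's own proof) does.
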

\begin{proof}
 By Lemma~\ref{l:zetaMartingale}, there exist two constants $\rho_1>\rho_2>0$ such that a.a.s.\ for every $t(B)\le t<\tau$ 
$$
-\frac{\rho_1}{n}\le \ex(\zeta_{t+1}- \zeta_t\mid \calf_t)\le -\frac{\rho_2}{n}.
$$
Moreover, $|\zeta_{t+1}-\zeta_t|=O(1/n)$ by~(\ref{ezt}). We apply Lemma~\ref{l:azuma} with $c_n=O(1/n)$ and $a_n=\rho_2/n$ {and with a similar argument as in the proof of Lemma~\ref{llt2}} to show, for every $i>t$ and $j\ge 0$,
\bea
\pr(\zeta_{i}\ge \zeta_{t}-(i-t)\rho_2/n+j)&\le& \exp(-\Omega(j^2 n^2/(i-t))), \lab{1} \\
\pr(\zeta_{i}\le \zeta_{t}-(i-t)\rho_1/n-j)&\le& \exp(-\Omega(j^2 n^2/(i-t))). \lab{2}
\eea

Then by the union bound (by taking $j=(i-t)\rho_2/2n$ in~\eqn{1} and taking $j=(i-t)\rho_1/2n$ in~\eqn{2} for each $t$ and each $i\ge t+\log n$), we obtain (b). Part (a) follows by (b) and the fact that for each $i\le t+\log n$, we always have $\zeta_i=\zeta_t+O(\log n/n)$ by (Fd).
\end{proof}



\subsection{Relations between $L_t$, $\pi(G_t)$ and $\br_t$}

The next Lemma essentially says that if we can bound the expected change in $L_i$ then we can show $L_i$ is concentrated.  Recall that $\t$ is the stopping time of SLOW-STRIP; i.e.\ the first iteration $t$ for which $L_t=0$.

{Lemma~\ref{llt1} says that there are constants $K,B>0$ such that 
a.a.s.\ for every $t(B)\le t<\tau$,
\be
\ex(L_{t+1}\mid \calf_t )\leq L_t-Kn^{-\d/2}.\lab{cond}
\ee
It is convenient to consider a random process $(\LL_t)_{t\ge t(B)}$ such that $\ex(\LL_{t+1}\mid \LL_t )\leq \LL_t-Kn^{-\d/2}$ for all $t\ge t(B)$. The process $(\LL_t)$ can be defined in various ways depending on each application and in many applications we will let $\LL_t=L_t$ (or sometimes with a shift of the subscript) for all steps in which~\eqn{cond} holds (c.f.\ the proof of Lemma~\ref{llt2} and the arguments in Section~\ref{s.mtub}).
We first prove some a.a.s.\ properties of such processes.
}

\begin{lemma}\lab{lem:Lconcentration} {Let $(\LL_i)_{i\ge 0}$ be a random process and define $T$ to be the minimum integer such that $\LL_T\le 0$.} Suppose {there is a constant $C>0$ and} functions $C>a=a(n)>b=b(n)\geq \Theta(n^{-\d/2})$ such that the following bounds always hold for every $i\geq 0$:
\begin{enumerate}
\item[(i)]  $n^{\d}\log^2 n\le \LL_{0}{\le n}$;
\item[(ii)] {$|\LL_{i+1}-\LL_i|\le C$;}
\item[(ii)] $-a\leq \ex(\LL_{i+1}-\LL_i\mid \fix{\{\LL_s\}_{s\le i}})\leq -b$.
\end{enumerate}
 Then, with probability at least $1-o(n^{-1})$,
 \begin{itemize}
 \item[(a)] $\LL_0-2a i<\LL_i<\LL_0-\hf  b i$ for all $n^{\d}\log^{1.5} n\le i\le {n^2}$;
 \item[(b)] $\LL_i<2\LL_0$ for all $0\le i\le {n^2}$;
 \item[(c)] $\inv{2a}\LL_0<T<\frac{2}{b}\LL_0$.
  \end{itemize}
\end{lemma}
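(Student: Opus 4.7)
The plan is to treat $(\LL_i)$ as a near-martingale with negative drift and apply the Hoeffding-Azuma inequality (Lemma~\ref{l:azuma}) in both directions: once to $(\LL_i)$ using the upper drift bound $a_n=-b$ in order to control upward deviations, and once to $(-\LL_i)$ using $a_n=a$ to control downward deviations, both with step-size $c_n=C$ from hypothesis (ii). Parts (a) and (b) will follow from these two applications together with appropriate union bounds, and (c) will be an immediate corollary of (a).

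For (a), Lemma~\ref{l:azuma} applied to $(\LL_i)$ with $j=bi/2$ gives $\pr(\LL_i-\LL_0\ge -bi/2)\le\exp(-b^2 i/(8(C+b)^2))$. Since $b\ge\Theta(n^{-\d/2})$ and $C$ is an absolute constant, for $i\ge n^{\d}\log^{1.5}n$ this exponent is $\Omega(\log^{1.5} n)$, so the probability is $o(n^{-3})$. Similarly applying Lemma~\ref{l:azuma} to $(-\LL_i)$ with $j=ai$ yields $\pr(\LL_i-\LL_0\le -2ai)\le\exp(-a^2 i/(2(C+a)^2))=o(n^{-3})$, since $a\ge b$. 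A union bound over $n^\d\log^{1.5}n\le i\le n^2$ then establishes (a).

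For (b), I would split on $i$. When $i\le n^{\d}\log^{1.5}n$, assumption (ii) forces $\LL_i\le \LL_0+Ci\le\LL_0+Cn^{\d}\log^{1.5}n<2\LL_0$ since $\LL_0\ge n^{\d}\log^2 n$. When $n^{\d}\log^{1.5}n<i\le n^2$, a naive Azuma bound of the form $\exp(-\LL_0^2/(2iC^2))$ becomes too weak when $i$ is close to $n^2$, so the negative drift must be exploited: writing $\LL_0=-bi+(\LL_0+bi)$ in the Azuma deviation gives
\[
\pr(\LL_i-\LL_0\ge\LL_0)\le\exp\!\left(-\frac{(\LL_0+bi)^2}{2i(C+b)^2}\right)\le\exp\!\left(-\frac{2b\LL_0}{(C+b)^2}\right),
\]
where the last step uses AM-GM to bound $(\LL_0+bi)^2/i\ge 4b\LL_0$ uniformly in $i$. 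Since $b\LL_0\ge\Theta(n^{\d/2}\log^2 n)$, this is $o(n^{-3})$, and the union bound over $i\le n^2$ gives (b). Part (c) then drops out of (a) by plugging in $i=\lceil\LL_0/(2a)\rceil$ (forcing $\LL_i>0$ and hence $T>\LL_0/(2a)$) and $i=\lfloor 2\LL_0/b\rfloor$ (forcing $\LL_i<0$ and hence $T<2\LL_0/b$); both choices lie in the range $[n^{\d}\log^{1.5}n,n^2]$ because $\LL_0\ge n^{\d}\log^2 n$, $a\le C$, and $b\ge\Theta(n^{-\d/2})$ with $\LL_0\le n$.

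The main obstacle is the argument for (b): a direct Azuma bound against a union over $i\le n^2$ would fail, and one must instead extract the \emph{uniform} bound $\exp(-\Omega(b\LL_0))$ that is independent of $i$. This is precisely the reason the hypothesis requires $\LL_0\ge n^{\d}\log^2 n$ rather than just $\Omega(n^{\d})$: the extra logarithmic factor in $\LL_0$ is exactly what makes $b\LL_0$ exceed $\Theta(\log^2 n)$ so that the deviation probability can absorb both a $\log n$ slack from the union bound and still vanish.
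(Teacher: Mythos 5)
Your proposal is correct and takes essentially the same route as the paper: both proofs apply the Hoeffding--Azuma bound of Lemma~\ref{l:azuma} in the two directions with $j=\frac12 bi$ and $j=ai$ and a union bound to get (a), use the bounded-increment hypothesis for the small-$i$ regime in (b), and read (c) off from (a), with the exponents working because $b^2 i=\Omega(\log^{1.5}n)$ on the stated range. The only differences are cosmetic: for $i>n^{\delta}\log^{1.5}n$ in (b) the paper simply invokes the upper bound of (a), which already gives $\LL_i<\LL_0-\frac12 bi<2\LL_0$, making your extra Azuma/AM--GM step unnecessary (though valid), and in the lower bound of (c) you should note, exactly as you did in (b), that positivity of $\LL_i$ for the initial segment $i<n^{\delta}\log^{1.5}n$ comes from the bounded increments and $\LL_0\ge n^{\delta}\log^2 n$, since part (a) only controls $i$ in its stated range and $T>\LL_0/(2a)$ requires $\LL_i>0$ for \emph{all} earlier times, not just at $i=\lceil\LL_0/(2a)\rceil$.
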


\begin{proof}   We start with the upper bound in part (a).
We will apply Lemma~\ref{l:azuma} with $X_{n,\ell}=\LL_{\ell}-\LL_0$. So we can take $a_n=-b$. {By (ii) we can choose $c_n=C$.}  

Consider $i\geq n^{\d}\log^{1.5} n$. If $\LL_i\geq \LL_0-\hf  bi$ then $X_{n,i}-X_{n,0}=\LL_i-\LL_0\geq i a_n +\hf i b$.  By Lemma~\ref{l:azuma} with $j=\hf i b$, the probability of this is at most
\[\exp\left(-\frac{(\hf i b)^2}{2 i ({C} + |b|)^2}\right)
\leq\exp(-\Omega(b^2 i))\leq\exp(-\Omega(\log^{1.5}n))=o(n^{-3}).\]
The lower bound in part (a) is nearly identical, but this time we apply Lemma~\ref{l:azuma} with $X_{n,i}=\LL_0-\LL_{i}$,  $a_n=a$ and $j=i a$.  Applying the union bound for the at most ${n^2}$ choices for $i$ shows that (a) holds with probability at least $1-o(n^{-1})$.

 For part (b): If $i<\LL_0/{C}$ then the fact that $\LL_{j+1}{\le \LL_j+C}$ implies that $\LL_i<2\LL_0$.  If $i\geq \LL_0/{C}$, {then} $i>  n^{\d}\log^{1.5} n$, then part (a) implies $\LL_i<\LL_0$.

For part (c):  Note that $\frac{2}{b}\LL_0 \ge n^{\d}\log^{1.5} n$, and so part (a) implies that $\LL_i$ reaches 0 for some $i \le \frac{2}{b}\LL_{{0}}$ {(noting here that $\frac{2}{b}\LL_0=o(n^2)$ by assumption (i))}; this yields the upper bound on $T$.  For the lower bound on $T$, we apply the same argument used for the lower bound in part (a) with $i=\inv{2a}\LL_0$  but with $j=\hf \LL_0$.  This time we get
\[\pr(\LL_{i}<\hf \LL_0)<\exp(-\Omega(\LL_0^2/i))\leq\exp(-\Omega(a\LL_0))=o(n^{-2}),\]
thus providing the {desired lower} bound on $T$.

\end{proof}

\fix{Our goal is to bound $\br_t$ in terms of $L_t$.  First we bound it in terms of $\pi(G_t)$.}

\begin{lemma} \lab{lem:pi-br} There are two constants $C_1,C_2>0$ such that
a.a.s.\ $-C_1\pi(G_t)/n\le \br_t\le -C_2\pi(G_t)/n$ for every $t\ge t(B)$.
\end{lemma}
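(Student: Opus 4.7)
The plan is to prove $\br_t = -\Theta(\pi(G_t)/n)$ via two reductions: first, relate $\br_t$ to $\zeta_t - \zeta$ by expanding $\psi$ around $\zeta$; second, relate $\zeta_t - \zeta$ to $\pi(G_t)/n$ via the dynamics of SLOW-STRIP. The first reduction introduces an additive error of $O(n^{-1/2}\log n)$ from~(Fb), but this will be absorbed since $\pi(G_t)/n \ge \tfrac{K_1}{2}n^{-\d/2}$ and $\d<1/2$.

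For the first reduction, by (Fb), (Fa), (Fc'), a Taylor expansion of $\psi$ at $\zeta$, and $\psi(\zeta)=1/((\dd-1)(k-1))$ from~\eqn{relation2}, we obtain
\[
\br_t = -1 + (\dd-1)(k-1)\bar p_t = (\dd-1)(k-1)\psi'(\tilde\zeta)(\zeta_t-\zeta) + O(n^{-1/2}\log n)
\]
for some $\tilde\zeta$ between $\zeta_t$ and $\zeta$; since $-\psi'(\tilde\zeta)\in[c_2,c_1]$ by (Fc'), this gives $\br_t = -\Theta(\zeta_t-\zeta) + O(n^{-1/2}\log n)$. For the second reduction, I will show that both $\zeta_t-\zeta$ and $\pi(G_t)/n$ equal $\Theta((n_t-n_\tau)/n + n^{-\d/2})$, where $n_t := |V(G_t)|$. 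The bound on $\pi(G_t)$ follows directly from Corollary~\ref{ccoresize}, which gives $n_\tau - \a n = K_1 n^{1-\d/2} + o(n^{1-\d/2})$, hence $\pi(G_t) = (n_t-n_\tau) + \tfrac{K_1}{2}n^{1-\d/2} + o(n^{1-\d/2})$. For $\zeta_t-\zeta$, use $\zeta_\tau - \zeta = \Theta(n^{-\d/2})$ from~\eqn{ztau} together with a bound on $\zeta_t - \zeta_\tau$: Lemma~\ref{l:zetaseq}(b) gives $\zeta_t-\zeta_\tau = \Theta((\tau-t)/n)$ when $\tau - t \ge \log n$, while (Fd) gives $\zeta_t - \zeta_\tau = O(\log n /n) = o(n^{-\d/2})$ otherwise; either way, $\zeta_t - \zeta = \Theta((\tau-t)/n + n^{-\d/2})$.

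It remains to show $\tau - t = \Theta(n_t - n_\tau) + O(L_t)$, which combined with $L_t = O(n^{1-\d}) = o(n^{1-\d/2})$ from Lemma~\ref{llt2} closes the argument. For the upper bound, attribute each hyperedge removed during steps $t+1,\ldots,\tau$ to the vertex $v$ at the front of $\msq$ that triggered its removal; the number attributed to $v$ is at most $v$'s entry degree into $\msq$, since $v$'s degree can only decrease while $v$ waits. Entry degrees sum to at most $L_t$ for vertices in $\msq_t$ and are exactly $k-1$ each for vertices becoming light after time $t$ (of which there are at most $n_t - n_\tau$, since such vertices drop from degree $k$ to $k-1$), giving $\tau - t \le (k-1)(n_t - n_\tau) + O(L_t)$. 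For the lower bound, each removed vertex satisfies $d_t(v)\ge 1$, so $\sum_{v\text{ removed}} d_t(v) \ge n_t - n_\tau$; this sum counts incidences between removed vertices and removed hyperedges and is therefore at most $r(\tau-t)$, giving $\tau - t \ge (n_t - n_\tau)/r$. The main obstacle is this combinatorial counting step, which links the algorithm-dependent $\tau - t$ to the structural quantity $n_t - n_\tau$; everything else is a routine application of Taylor expansion and the a.a.s.\ estimates already established in Sections~\ref{smt1} and~\ref{slsi}.
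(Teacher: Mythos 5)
Your overall route is essentially the paper's: both arguments pass from $\br_t$ to $\zeta_t$ via (Fb), (Fc') and an expansion of $\psi$, then from $\zeta_t$ to the number of remaining SLOW-STRIP steps via Lemma~\ref{l:zetaseq}(b) together with $\zeta_\tau-\zeta=\Theta(n^{-\d/2})$, and finally count steps against deleted vertices to reach $\pi(G_t)$. The only structural differences are cosmetic: the paper anchors the expansion at $\zeta_\tau$ (using $\br_\tau=-\Theta(n^{-\d/2})$ and a case split at the last step $t'$ with $\pi(G_{t'})=K_1n^{1-\d/2}$), while you anchor at $\zeta$ and fold the core-size correction into $\pi(G_t)$ directly, phrasing the step count in terms of $n_t-n_\tau$ rather than $\pi(G_t)-\pi(G_\tau)$ (the same quantity up to an additive constant shift).

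There is, however, one step that fails as written. You close the upper bound $\tau-t\le(k-1)(n_t-n_\tau)+O(L_t)$ by invoking Lemma~\ref{llt2}, but that lemma only controls $L_t$ for $t\ge t_0$, i.e.\ in Phase~2, whereas Lemma~\ref{lem:pi-br} must hold for all $t\ge t(B)$. For $t(B)\le t<t_0$ the queue still contains a positive fraction of the vertices stripped in rounds $B,B+1,\dots$, so $L_t$ is generally far larger than $n^{1-\d}$ (indeed $L_{t(B)}=\Theta(n)$), and the claimed absorption $L_t=o(n^{1-\d/2})$ breaks down precisely where the bound $\br_t\ge -C_1\pi(G_t)/n$ still has to be proved. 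The repair is immediate and already implicit in your own accounting: every vertex of $\msq_t$ is deleted by step $\tau$ and has degree at most $k-1$, so $L_t\le(k-1)|\msq_t|\le(k-1)(n_t-n_\tau)$ for all $t$, whence $\tau-t=O(n_t-n_\tau)$ with no appeal to Lemma~\ref{llt2}; this is exactly the paper's bound $\tau-t\le k\bigl(\pi(G_t)-\pi(G_\tau)\bigr)$. A second, negligible point: the claim that each removed vertex has $d_t(v)\ge 1$ can fail for queued vertices whose degree was driven to $0$ by a hyperedge containing several of their copies; the cleaner lower bound is simply that SLOW-STRIP deletes at most one vertex per step, giving $\tau-t\ge n_t-n_\tau$ outright.
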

\begin{proof} Recall that $\zeta_t$ denotes the average degree of heavy vertices in $G_t$. Let $t'$ be the maximum integer such that $\pi(G_{t'})= K_1n^{1-\d/2}$, where $K_1$ comes from Corollary~\ref{ccoresize}. Note that $\pi(G_t)$ is a non-increasing function of $t$, which decreases by at most 1 in each step. Hence, for all $t\le t'$, $\pi(G_t)\ge K_1n^{1-\d/2}$ and for all $t> t'$, $\pi(G_t)< K_1n^{1-\d/2}$.

By Corollary~\ref{ccoresize}, a.a.s.\ $\pi(G_{\tau})\sim (K_1/2)n^{1-\d/2}$. Since for every $t$, $|\pi(G_t)-\pi(G_{t+1})|\le 1$ as at most one vertex is removed in each step, we have that a.a.s.\ for all $t\le t'$, $\tau-t\ge \pi(G_{t})-\pi(G_{\tau}) { =\Omega(\pi(G_{t}))}$. {This is because $\pi(G_{\tau})\sim (K_1/2)n^{1-\d/2}$ and so $\pi(G_{\tau})\sim \hf \pi(G_{t'})<\frac23 \pi(G_t)$ for every $t\le t'$. The constant bounds involved in the asymptotic notation above and below will be uniform for all $t$.} In particular, $\tau-t'\ge \pi(G_{t'})-\pi(G_{\tau})\ge (K_1/3) n^{1-\d/2}$. 
On the other hand, for every $t$, $\tau-t\le k(\pi(G_t)-\pi(G_{\tau}))\le k\pi(G_t)$, since every light vertex in the queue $\msq_t$ takes less than $k$ steps to be removed. So a.a.s.\ uniformly for every $t\le t'$, we have
\bea
(K_1/3)n^{1-\d/2}&\le& \tau-t'\le \tau-t \leq k \pi(G_t);\lab{t}\\
\tau-t &=& \Theta(\pi(G_t)). \lab{t2}
 \eea

If $t\leq t'$ then $\t-t { =\Omega}(n^{1-\d/2})>\log n$ by~\eqn{t}.
Thus by Lemma~\ref{l:zetaseq}(b)
we have $\zeta_t=\zeta_{\tau}+\Theta((\tau-t)/n)$. By (Fb) and (Fc') and by~\eqn{def:br},
{a.a.s.\ for all $t(B)\le t\le\tau$}, 
\be
\br_t-\br_{\tau}=\Theta(1)(\bar p_t-\bar p_{\tau})=\Theta(1)(\psi(\zeta_t)-\psi(\zeta_{\tau}))=-\Theta(1)(\zeta_t-\zeta_{\tau}),\lab{br-zeta}
\ee
 and {then by the fact that $\zeta_t=\zeta_{\tau}+\Theta((\tau-t)/n)$ for all $t\le t'$ and}~\eqn{t2},
a.a.s.\ for each $t\leq t'$,
\be
\br_t-\br_{\tau}=-\Theta((\tau-t)/n)=-\Theta(\pi(G_t)/n).\lab{br-zeta2}
\ee
The constants in the asymptotic notations above are uniform for all $t\le \tau$. {By~\eqn{brcore}, a.a.s.\ $\br_{\tau}=-\Theta(n^{-\d/2})$. Since $\pi(G_{\tau})\sim (K_1/2)n^{1-\d/2}$,
it follows then that a.a.s.\ for all $t(B)\le t\le t'$, $\br_t=-\Theta(\pi(G_t)/n)$; note that $-\Theta(n^{-\d/2})$ is absorbed as $\pi(G_t)/n\ge \pi(G_{\tau})/n=\Theta(n^{-\d/2})$ for every $t$. This proves that our lemma holds for all $t\le t'$.
}

Now we consider $t>t'$.  \fix{ Applying~(\ref{t}) with $t:=t'$, we obtain that a.a.s.\ $\tau-t'\leq k\pi(G_{t'})=kK_1n^{1-\d/2}$. This implies that} for all $t'\le t\le\tau$, $\tau-t\le \tau-t'=O(n^{1-\d/2})$. Then by~(\ref{ezt}) we have
\be
\zeta_{\tau}-\zeta_t=O(n^{-\d/2}) \lab{11}
\ee
So by~\eqn{br-zeta}, $\br_t-\br_{\tau}=-\Theta(\z_t-\z_{\tau})=O(n^{-\d/2})$ for all ${t'}\le t\le \tau$.
Then, we must have $\br_t=-\Theta(n^{-\d/2})$ for all ${t'}<t\le \tau$ by~\eqn{brupper} and~\eqn{brcore}.
The definition of $t'$ implies that  $\pi(G_t)/n=\Theta(n^{-\d/2})$ for all $t>t'$.  This {implies that our lemma holds also for $t>t'$}.  
\end{proof}


\fix{And now we are ready  to bound $\br_t$ in terms of $L_t$:}

\begin{lemma}\lab{lem:l-br} There are constants $D_1,D_2,B>0$ such that
a.a.s.\ for all $t\geq t(B)$,
\begin{enumerate}
\item[(a)] for all $t$ such that $L_t\ge n^{1-\d}$, $-D_1\sqrt{L_t/n}\le \br_t\le -D_2\sqrt{L_t/n}$.
\item[(b)] for all $t$ such that $L_t<n^{1-\d}$, $-D_1n^{-\d/2}\le\br_t\le -D_2n^{-\d/2}$.
\end{enumerate}
\end{lemma}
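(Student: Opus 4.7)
The plan is to combine Lemma~\ref{lem:pi-br}, which gives $\br_t=-\Theta(\pi(G_t)/n)$, with a two-sided identity $L_t=\Theta(\pi(G_t)^2/n)$ that will be established whenever $\pi(G_t)\ge n^{1-\d/2}$. The lemma then follows at once: in the regime $L_t\ge n^{1-\d}$ this identity gives $\sqrt{L_t/n}=\Theta(\pi(G_t)/n)=\Theta(|\br_t|)$, yielding part~(a); in the regime $L_t<n^{1-\d}$, the upper bound on $\br_t$ is already supplied by~\eqn{brupper}, while the lower bound is obtained via the contrapositive (any $\pi(G_t)\gg n^{1-\d/2}$ would force $L_t\gg n^{1-\d}$), giving $\pi(G_t)=O(n^{1-\d/2})$ and therefore $\br_t\ge -D_1n^{-\d/2}$ through Lemma~\ref{lem:pi-br}.

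To prove $L_t=\Theta(\pi(G_t)^2/n)$ I will track the evolution of $L_s$ for $s$ running from $t$ to $\tau$. Since $L_\tau=0$ and the one-step drift is $\ex(L_{s+1}-L_s\mid\calf_s)=\br_s+O(L_s/n)$ by~\eqn{br}, summing over $s\in[t,\tau-1]$ and applying the Hoeffding--Azuma bound (Lemma~\ref{l:azuma}, using $|L_{s+1}-L_s|=O(1)$) will yield, with probability $1-o(n^{-1})$,
\[
L_t=-\sum_{s=t}^{\tau-1}\br_s+O\!\left(\sqrt{\tau-t}\,\log n\right).
\]
Next, substituting $\br_s=-\Theta(\pi(G_s)/n)$ from Lemma~\ref{lem:pi-br} and using that $\pi(G_s)$ is monotone non-increasing with $\pi(G_s)-\pi(G_{s+1})\in\{0,1\}$ (each step of SLOW-STRIP deletes at most one vertex), together with $\tau-t=\Theta(\pi(G_t))$ from~\eqn{t}--\eqn{t2}, gives $\sum_{s=t}^{\tau-1}\pi(G_s)=\Theta(\pi(G_t)^2)$ and hence $-\sum\br_s=\Theta(\pi(G_t)^2/n)$.

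The error term $O(\sqrt{\tau-t}\,\log n)=O(\sqrt{\pi(G_t)}\log n)$ must be absorbed into the main term $\Theta(\pi(G_t)^2/n)$; this amounts to $\pi(G_t)^{3/2}\gg n\log^2 n$, which is valid whenever $\pi(G_t)\ge n^{1-\d/2}$ because $\d<\tfrac12$ forces $1-\d/2>2/3$. This establishes both directions of $L_t=\Theta(\pi(G_t)^2/n)$, and hence the lemma.

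The main obstacle is handling the non-constant drift $\br_s$ along with the $O(L_s/n)$ correction in~\eqn{br}: the martingale concentration must be applied to an appropriate stopping time (essentially $\tau$ itself, together with the stopping times already introduced in Section~\ref{s.vdk} on which the normality of $\calf_s$ and the Lemma~\ref{lem:pi-br} estimate hold), and the error from the $O(L_s/n)$ term must be controlled by a bootstrap that uses the crude bound $L_s=O(\pi(G_s)^2/n)=O(\pi(G_t)^2/n)$ uniformly on $[t,\tau]$. Once these technical points are in place, the rest of the argument is just summing, comparing orders, and invoking Lemma~\ref{lem:pi-br} twice.
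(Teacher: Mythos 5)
Your overall strategy — use Lemma~\ref{lem:pi-br} twice and bridge it to $L_t$ through a two-sided relation $L_t=\Theta(\pi(G_t)^2/n)$ obtained by integrating the drift in~\eqn{br} from $t$ to $\tau$ — is viable, and it rearranges the same ingredients the paper uses (the paper instead bounds the stopping time $\tau-t$ two-sidedly in terms of $L_t/|\br_t|$ via Lemma~\ref{lem:Lconcentration}(c), converts $\tau-t$ into bounds on $\pi(G_t)$, and then invokes Lemma~\ref{lem:pi-br}). However, as written there are concrete gaps. First, the key claim ``$L_t=\Theta(\pi(G_t)^2/n)$ whenever $\pi(G_t)\ge n^{1-\d/2}$'' is false at that threshold: by Corollary~\ref{ccoresize}, $\pi(G_\tau)\sim (K_1/2)n^{1-\d/2}$ with $K_1$ an unspecified constant, so for $t$ at or just before $\tau$ one can have $\pi(G_t)\ge n^{1-\d/2}$ while $L_t$ is $0$ or tiny, whereas your lower bound would force $L_t=\Omega(n^{1-\d})$. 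The lower-bound half of your identity needs $\tau-t=\Omega(\pi(G_t))$, which only holds once $\pi(G_t)\ge (1+c)\pi(G_\tau)$; so the threshold must be a sufficiently large constant multiple of $n^{1-\d/2}$, and the boundary regime $\pi(G_t)=\Theta(n^{1-\d/2})$ must then be handled separately (there one argues directly that $|\br_t|=\Theta(n^{-\d/2})$ and, for part (a), that $L_t=\Theta(n^{1-\d})$, the upper bound coming from the threshold-free upper-bound half of the identity). Second, and related, part (a) tacitly assumes that $L_t\ge n^{1-\d}$ places $t$ inside the identity's range; this implication is never justified in your write-up. It does follow from the upper-bound half $L_t\le O(\pi(G_t)^2/n)+o(n^{1-\d})$ (which, unlike the lower-bound half, holds for all $t$ since it only needs $\tau-t\le k\pi(G_t)$), but you need to isolate that statement and carry out the resulting case analysis on the size of $\pi(G_t)$.

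Third, your bootstrap for the $\sum_s O(L_s/n)$ correction is circular as phrased: the ``crude bound'' $L_s=O(\pi(G_s)^2/n)$ is exactly the upper-bound half of the identity you are in the middle of proving. The standard repair is the one the paper uses at the analogous point: the uniformly negative drift~\eqn{brupper} together with Lemma~\ref{lem:Lconcentration}(b) gives a.a.s.\ $L_s\le 2L_t$ for all $t\le s\le\tau$, so $\sum_{s=t}^{\tau-1}L_s/n\le 2L_t(\tau-t)/n=o(L_t)$ is absorbed; note also that the lower-bound half of your identity needs no control of $L_s$ at all if you use the second inequality in~\eqn{br} ($\ex(L_{s+1}-L_s\mid\calf_s)\le\br_s+O(n^{-1})$). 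Finally, your Azuma error ``$O(\sqrt{\tau-t}\log n)$'' is stated over a random horizon; this is fixable (condition on $\calf_t$ and run the stopped martingale over the $\calf_t$-measurable horizon $k\pi(G_t)$, then union bound over $t$), but it should be said. With the threshold corrected, the missing implication for part (a) supplied, and the bootstrap replaced by the $L_s\le 2L_t$ argument, your route does yield the lemma.
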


\fix{
Combining Lemma~\ref{lem:l-br} with~(\ref{br}), and applying a slight adjustment to $D_1,D_2$  yields our strengthening of Lemma~\ref{llt1}:

\begin{corollary}\lab{cor:l-br} There are constants $D_1,D_2,B>0$ such that
a.a.s.\ for all $t\geq t(B)$,
\begin{enumerate}
\item[(a)] for all $t$ such that $L_t\ge n^{1-\d}$, $-D_1\sqrt{L_t/n}\le \ex(L_{t+1}-L_t\mid \calf_t)\le -D_2\sqrt{L_t/n}$.
\item[(b)] for all $t$ such that $L_t<n^{1-\d}$, $-D_1n^{-\d/2}\le\ex(L_{t+1}-L_t\mid \calf_t)\le -D_2n^{-\d/2}$.
\end{enumerate}
\end{corollary}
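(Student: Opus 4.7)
The plan is to derive the corollary directly from Lemma~\ref{lem:l-br} by substituting its bounds on $\br_t$ into the two estimates of~\eqref{br}: the two-sided equality $\ex(L_{t+1}-L_t\mid\calf_t) = \br_t + O(L_t/n)$ and the sharper one-sided upper bound $\ex(L_{t+1}-L_t\mid\calf_t) \le \br_t + O(n^{-1})$. The only remaining task is to verify that in each regime the additive errors are swallowed by the main term after a small inflation of the constants $D_1, D_2$; the constant $B$ may be taken as the maximum of those appearing in~\eqref{br} and Lemma~\ref{lem:l-br}.

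In the regime of part~(a), where $L_t \ge n^{1-\d}$, the main term is $-\Theta(\sqrt{L_t/n})$, of magnitude at least $n^{-\d/2}$. For the upper bound, the sharper one-sided estimate contributes an error $O(n^{-1}) = o(n^{-\d/2})$, which is absorbed by a slight reduction of the constant $D_2'$ supplied by Lemma~\ref{lem:l-br}(a). For the lower bound, the error $O(L_t/n)$ must be compared with $\sqrt{L_t/n}$; using $L_t = O(n)$ throughout (since the total degree of $G_t$ is $O(n)$), one has $\sqrt{L_t/n} = O(1)$, hence $L_t/n \le C_0 \sqrt{L_t/n}$ for a universal constant $C_0$. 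The error is therefore absorbed into a slightly larger $D_1$.

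In the regime of part~(b), where $L_t < n^{1-\d}$, the main term is $-\Theta(n^{-\d/2})$. Since $0 < \d < 1/2$, both errors $O(L_t/n) = O(n^{-\d})$ and $O(n^{-1})$ are $o(n^{-\d/2})$, so both inequalities follow immediately from~\eqref{br} after arbitrarily small adjustments of the constants. There is no genuine obstacle: the substantive analysis has already been carried out in Lemma~\ref{lem:l-br}, which ties $\br_t$ to $L_t$ in each regime. The corollary is essentially a cosmetic repackaging, with the only nontrivial check being the book-keeping of error terms in the two regimes.
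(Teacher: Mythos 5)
Your proposal is correct and is essentially the paper's own argument: the corollary is obtained by combining Lemma~\ref{lem:l-br} with the two estimates in~\eqref{br} and slightly adjusting $D_1,D_2$, exactly as you do. Your bookkeeping of the error terms (using $n^{-1}=o(n^{-\d/2})$, $L_t/n=O(n^{-\d})$ in regime (b), and $L_t/n=O(\sqrt{L_t/n})$ since $L_t=O(n)$ in regime (a)) is sound and fills in the details the paper leaves implicit.
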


This strenghtening is what we need to prove Lemma~\ref{lsi}(a,b).}

{\em Proof of Lemma~\ref{lem:l-br} } We take $B$ large enough so that the relevant preceding results hold.

 We have $\br_t\leq -Kn^{-\d/2}$ for some constant $K>0$, for all $t(B)\le t\le \t$ by~\eqn{brupper}. {For any $t(B)\le t\le \t$, we will consider $i$ such that $t\le i\le \t$.}  By Lemma~\ref{l:zetaseq}(a), a.a.s.\ {for any such pair of $t,i$}, we have $\zeta_i\le \zeta_t+O(\log n/n)$. Thus by (Fb) and (Fc') and the definition of $\br_i$,  a.a.s.\ for all {pairs of $t,i$ such that} $t(B)\leq t<i\leq\t$,
\bean
\br_i&=&\br_t -\Theta(1)(\bar p_t-\bar p_i)=\br_t-\Theta(1)(\psi(\z_t)-\psi(\z_i))=\br_t-\Theta(1)(\z_i-\z_t)\\
&\ge& \br_t-\Theta\left(\frac{\log n}{n}\right)\ge 2\br_t,
\eean
since $\log n/n=o(|\br_t|)$. By~\eqn{br}, a.a.s.\ for every $t(B)\leq t<i\leq\t$,
\[
 2\br_t-O(L_i/n)\leq \ex(L_{i+1}-L_i\mid \calf_i)\leq \br_i+O(n^{-1}).
\]
We know that a.a.s.\ for every $t(B)\leq t<i\leq\t$, we have $\br_t,\br_i\leq -K n^{-\d/2}$  by~\eqn{brupper}. So for some constant $A>0$, we have a.a.s. for all  $t(B)\leq t<i\leq \t$:
\be
2\br_t-AL_i/n\le \ex(L_{i+1}-L_i\mid \calf_i)\le -\frac{K}{2}n^{-\d/2}.\lab{ebr*}
\ee

Let $t_1$ be the smallest $t$ such that  $L_{t+1} <n^{\d}\log^2 n$. We first prove the lemma for all $t\le t_1$. 

Note that since $\br_t\geq -1$ (by~\ref{def:br}), we can take $B$ large enough so that for $i\geq B$, $L_i/n$ is small enough that the LHS of~\eqn{ebr*} is at least -3. So we let $\tau^*$ denote the first step that the condition $-3<\ex(L_{i+1}-L_i\mid \calf_i)\le -\frac{K}{2}n^{-\d/2}$ fails; define $\tau^*=\tau$ if it never fails. Assume $t\le t_1$.  
 Define $(\LL_j)_{j\ge 0}$ by letting $\LL_0=L_t$ and $\LL_{i-t}=L_i$ for each $t<i\le \tau^*$ and $\LL_{i+1-t}=\LL_{i-t}-Kn^{-\d/2}$  for all $i\ge \tau^*$. Moreover,~\eqn{ebr*} implies that a.a.s.\ $\LL_{i-t}=L_i$ for all $t\le i\le \tau$. 

Now the process $(\LL_i)$ satisfies the hypotheses in Lemma~\ref{lem:Lconcentration}(i) (by the definition of $t_1$), (ii) with $C=kr$ and (iii) with $a=3$ and $b=Kn^{-\d/2}$. Now by Lemma~\ref{lem:Lconcentration}(b) (and by taking the union bound over $t$), \aas  $L_i<2L_{t}$ (corresponding to $\LL_{i-t}<2\LL_0$ for each given $t$) for all pairs of $i,t$ such that $t(B)\le t\le i<\tau$. Then~\eqn{ebr*} becomes
 \be
2\br_t-2AL_t/n\le \ex(L_{i+1}-L_i\mid \calf_i)\le -\frac{K}{2}n^{-\d/2}.\lab{ebr**}
\ee
So we can apply Lemma~\ref{lem:Lconcentration} again to $(\LL_i)_{i\ge 0}$ defined as above (except that we modify $\tau^*$ to reflect~\eqn{ebr**} rather than~\eqn{ebr*}) with $a=-2\br_t+2AL_t/n$.  Letting $T$ denote the first step that $\LL_i$ becomes at most zero,  Lemma~\ref{lem:Lconcentration}(c) shows that with probability $1-o(n^{-1})$, $T\geq  4A L_t/(|\br_t|+L_t/n)$; i.e.\ $\t\geq t+4A L_t/(|\br_t|+L_t/n)$.  Taking the union bound over $t(B)\le t\le t_1$ and using that by~\eqn{ebr**} a.a.s.\ $\LL_{i-t}=L_t$ for all $t(B)\le t<i\le\tau$ shows that a.a.s.\ for all $t(B)\leq t\leq t_1$:
\[\tau\geq  t+4A L_t/(|\br_t|+L_t/n).\]

We delete at least one vertex for every $k-1$ steps of SLOW-STRIP, and so  $|V(G_t)\setminus V(G_{\tau})|\geq (\t-t)/(k-1)$. Therefore,  applying Corollary~\ref{ccoresize}, we have that \aas for all $t(B)\leq t\leq t_1$:
\bea
\pi(G_t)&=&|V(G_t)|-\alpha n-\frac{K_1}{2} n^{1-\delta/2}
\geq|V(G_t)| -|V(G_{\tau})|+\frac{K_1}{3}n^{1-\d/2}\nonumber\\
&\geq&\frac{4A L_t}{(k-1)(|\br_t|+L_t/n)}+\frac{K_1}{3}n^{1-\d/2}. \lab{epgt}
\eea
By Lemma~\ref{lem:pi-br}, there exists a constant $C_2>0$ such that a.a.s.\ for all $t(B)\leq t\leq t_1$,
\[
\br_t\le -C_2\frac{\pi(G_t)}{n}\le-C_2 \left(\frac{4A L_t/n}{(k-1)(|\br_t|+L_t/n)}+\frac{K_1}{3}n^{-\d/2}\right),
\]
and so
\be
|\br_t|\ge C_2 \max\left(\frac{4A L_t/n}{(k-1)(|\br_t|+L_t/n)},\frac{K_1}{3}n^{-\d/2}\right).\lab{sigma1}
\ee
Taking $B$ large enough that $L_t/n$ is sufficiently small for $t\geq t(B)$ (by Lemma~\ref{l:B}), and rearranging~\eqn{sigma1}, there is a constant $D_2>0$, such that a.a.s. for all $t(B)\leq t\leq t_1$:
\be\lab{egoal1}
|\br_t|\geq D_2\max\left( \sqrt{\frac{L_t}{n}},n^{-\d/2}\right).
\ee
This yields the upper bounds in our lemma for $t\leq t_1$.
Next we prove the lower bounds; i.e., we wish to prove that for some constant $D_1>0$ for all $t(B)\leq t<i\leq t_1$:
\be\lab{egoal}
|\br_t|\leq D_1\max\left( \sqrt{\frac{L_t}{n}},n^{-\d/2}\right).
\ee
The proof is similar to that for the upper bound so we briefly describe the arguments.
Let $A_2,A_3{>0}$ be the implicit constants in (Fd),~\eqn{br-zeta}, and set $A_1=1/(2A_2A_3)$.
Applying~\eqn{br-zeta},~(Fd)  we get that a.a.s.\ for any $t(B)\leq t\leq i\le \min\{t+A_1|\br_t|n,\t\}$:
\[\br_i-\br_t\leq A_3(\z_{t}-\z_i)\leq A_2A_3((i-t)/n)\leq A_1A_2A_3|\br_t|=-\hf\br_t,\]
{where the first inequality above comes from {(Fb), (Fc') and ~\eqn{def:br} (in the same way that~\eqn{br-zeta} is deduced)}, the second inequality follows by (Fd) and the third inequality follows by  the upper bound of $i$; the equality holds by the definition of $A_1$.}
Thus, $\br_i\le \hf\br_t$. So, by~\eqn{br}:
\be\lab{eggg}
\ex(L_{i+1}\mid \calf_i)- L_i\le \frac{1}{2}\br_t+O(1/n),\quad \forall t(B)\leq t< i\le \min\{t+A_1|\br_t|n,\t\}.
\ee
We can assume $|\br_t|> \sqrt{(8/A_1)L_t/n}$, as otherwise~(\ref{egoal}) holds with $D_1=\sqrt{8/A_1}$. 
Therefore we have (i) $8L_t/|\br_t|<A_1|\br_t|n$ and (ii) the RHS of~(\ref{eggg}) is at most $\frac{1}{4}\br_t$ (noting that $|br_t|>>1/n$ by~\eqn{brupper}).

Define $(\LL_i)_{i\ge 0}$ analogously to how it was defined above,
and apply Lemma~\ref{lem:Lconcentration}(c) with $b=\frac{1}{4}\br_t$ (and with $a=-2\br_t+2AL_t/n$, as argued above). By the a.a.s.\ correspondence between $\LL_{i-t}$ and $L_t$, and noting that (i) above allows us to apply~\eqn{eggg} for all $i\leq \min\{t+8L_t/|\br_t|,\t\}$, we can show that with probability $1-o(n^{-1})$,  the stopping time
\be\lab{esay*}
\tau<t+\frac{2}{b}L_t=t+8L_t/|\br_t|.
\ee
Taking the union bound over $t(B)\le t\le t_1$ shows that \aas~\eqn{esay*} holds for all $t(B)\leq t\leq t_1$ for which $|\br_t|> \sqrt{(8/A_1)L_t/n}$.

Since we remove at most one vertex during each iteration of SLOW-STRIP, it follows that $G_t$ contains at most $8L_t/|\br_t|$ non-$k$-core vertices by~\eqn{esay*}.
Recalling that $\pi(G_t)$ is approximately the number of non-$k$-core vertices in $G_t$ plus $\hf K_1 n^{-\d/2}$, this implies that
$\pi(G_t)\le A_4(L_t/|\br_t|+ n^{1-\d/2})$ for some constant $A_4>\max\{8,\hf K_1\}$.
Now, by Lemma~\ref{lem:pi-br}, we have that there is a constant $C_1>0$ such that a.a.s.\ for all $t(B)\leq t\leq t_1$ with $|\br_t|> \sqrt{(8/A_1)L_t/n}$ we have:
\[\br_t\ge -C_1\pi(G_t)/n \ge -C_1A_4\left(\frac{L_t/n}{|\br_t|}+ n^{-\d/2}\right).\]
This implies~(\ref{egoal}) for $t(B)\leq t\leq t_1$ with $D_1=\max\{\sqrt{8/A_1},C_1A_4\}$.

Now we consider $t>t_1$. Since $L_{t_1}\ge n^{\d}\log^2 n$, by~\eqn{esay*}, a.a.s.\ $\tau-t_1<8L_{t_1}/|\br_{t_1}|$. By the definition of $t_1$ we also have that $L_{t_1}\leq L_{t_1+1}+r<2n^{\d}\log^2 n$. (\ref{egoal1}) says that a.a.s.\ $|\br_{t_1}|\ge D_2 n^{-\d/2}$. So we obtain: a.a.s.\ for all $t>t_1$,
\[
\tau-t<\tau-t_1<(16/D_2)n^{3\d/2}\log^2n.
\]
This implies that a.a.s.\ for all $t_1<t\le \tau$, $t-t_1<(16/D_{\fix{2}})n^{3\d/2}\log^2n$. Then, recalling that $A_2,A_3$ are the implict constants from (Fd) and~\eqn{br-zeta}, and since $|\br_{t_1}|\geq D_2 n^{-\d/2}$  (from~\eqn{egoal1}), a.a.s.\ for all $t_1<t\le \tau$,
\[|\br_t-\br_{t_1}|\leq A_2A_3(t-t_1)/n\le A_2A_3(16/D_1)n^{3\d/2-1}\log^2n=o(n^{-\d/2})=o(\br_{t_1}),
 \]
 as $\d<1/2$. Since~\eqn{egoal1},~\eqn{egoal} hold for $t=t_1$ then by increasing $D_1$ and decreasing $D_2$, these bounds hold for $t>t_1$, and hence for all $t(B)\leq t\leq\t$.
\proofend

\subsection{Proof of the key lemma}

And now we can prove Lemma~\ref{lsi}, which we restate.  

Recall that we are carrying out SLOW-STRIP on $AP_r(n,m)$, and so implicitly, this yields the steps of the parallel stripping process applied to $AP_r(n,m)$.  $\imax$ is the number of iterations carried out by the parallel stripping process, and $S_i$ is the set of vertices removed in iteration $i$. The number of hyperedges removed in iteration $i$ that contain $u$ is $\fix{d^+}(u)$ for each $u\in S_i$, and is $d^-(u)$ for each $u\in S_{i+1}$ (if $i<\imax$).

\newtheorem*{lsi}{Lemma~\ref{lsi}}
\begin{lsi} There exist constants $B,Y_1,Y_2,Z_1$ dependent only on $r,k$, such that \aas for every $ B\leq i< \imax$ with $|S_i|\ge n^{\d}\log^2 n$:
\begin{enumerate}
\item[(a)] if $|S_i|< n^{1-\d}$ then $(1-Y_1n^{-\d/2})|S_i|\leq |S_{i+1}|\leq  (1-Y_2n^{-\d/2})|S_i|$;
\item[(b)] if $|S_i|\geq n^{1-\d}$ then $(1-Y_1\sqrt{\frac{|S_i|}{n}})|S_i|\leq |S_{i+1}|\leq  (1-Y_2\sqrt{\frac{|S_i|}{n}})|S_i|$;
\item[(c)] $\sum_{j\ge i}|S_j|\le {Z_1}|S_i|n^{\d/2}$.
\item[(d)] $|S_i|\leq\sum_{u\in S_{i}} d^-(u) < |S_{i}|+Z_1\frac{|S_{i}|^2}{n}+\log^2n$;
\item[(e)] $|S_{i+1}|\leq \sum_{a,b}abM_i^{a,b}\leq |S_{i+1}|+Z_1\frac{|S_{i}|^2}{n}+\log^2n$;
\item[(f)]$\sum_{a\geq 2,b\leq r-a}abM_i^{a,b}\leq Z_1\frac{|S_{i}|^2}{n}+\log^2n$;   
\item[(g)] $\sum_{u\in S_i}\fix{d^+}(u)d^-(u)\leq\sum_{u\in S_i}\fix{d^+}(u)+Z_1\frac{|S_{i}|^2}{n}+\log^2n$;
\item[(h)] $\sum_{u\in S_{i}} (d^-(u))^2\le Z_1|S_i|$;
\item[(i)] $d^-(u)<\log n$ for all $u\in\cup_{i=2}^{\imax}S_i$.
\end{enumerate}
\end{lsi}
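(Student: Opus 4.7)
The plan is to establish the parts in the order (i), then (a)--(b), then (d)--(h), and finally (c), handling a mild circularity between (a)--(b) and (d) by induction on $i$. Part (i) is the easiest: a vertex of $AP_r(n,cn)$ has degree approximately $\Po(rc)$, so a Chernoff bound gives $\pr(\deg(u)\ge \log n)=o(n^{-2})$, and a union bound over all $n$ vertices, combined with $d^-(u)\le \deg_H(u)$, yields (i).

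For (a) and (b), we will track $L_t$ over the SLOW-STRIP interval $[t(i),t(i+1))$. By~\eqn{eq:ti} the interval length $\ell_i$ lies in $[L_{t(i)}/r,L_{t(i)}]$, and since $d^+(u)\in[1,k-1]$ for $u\in S_i$ we have $L_{t(i)}\in[|S_i|,(k-1)|S_i|]$, whence $\ell_i=\Theta(|S_i|)$. Corollary~\ref{cor:l-br} supplies the two-sided drift
\[-D_1\max\!\bigl(\sqrt{L_t/n},\,n^{-\d/2}\bigr)\;\le\;\ex(L_{t+1}-L_t\mid\calf_t)\;\le\;-D_2\max\!\bigl(\sqrt{L_t/n},\,n^{-\d/2}\bigr).\]
A Hoeffding--Azuma argument in the spirit of Lemma~\ref{lem:Lconcentration} shows $L_t$ stays within a constant factor of $L_{t(i)}$ throughout the interval, so the total drift is two-sidedly $\Theta(|S_i|\cdot\max(\sqrt{|S_i|/n},\,n^{-\d/2}))$ with failure probability $o(n^{-2})$. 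To convert a bound on $L_{t(i+1)}$ to a bound on $|S_{i+1}|$, we use the identity, valid for each $u\in S_{i+1}$,
\[(k-1)-d^+(u)\;=\;d^-(u)-(\deg_{\hG_i}(u)-k+1)\;\in\;[0,\,d^-(u)-1],\]
which gives $0\le(k-1)|S_{i+1}|-L_{t(i+1)}\le D^-(S_{i+1})-|S_{i+1}|$, a quantity controlled by part (d) at index $i+1$. The same conversion at index $i$ closes the loop and yields (a), (b).

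Parts (d)--(h) are variance-style estimates for the number of ``plural'' deleted edges and repeated bin-hits. The core estimate is this: in iteration $i-1$, the $\Theta(|S_{i-1}|)$ deleted edges have each of their $r-1$ ``non-queue'' vertex-copies allocated uniformly into the $\Theta(n)$ heavy bins, conditional on $\calf_{t(i-1)}$ (Observation~\ref{oft}). A direct second-moment computation gives that the expected number of pairs of copies landing in the same bin of $S_i$, or in the same $(a,b)$-edge with $a\ge 2$, is $O(|S_{i-1}|^2/n)$; McDiarmid's Inequality, used exactly as in Section~\ref{sec:SS1}, supplies an additive $\log^2 n$ concentration. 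The inductive hypothesis $|S_{i-1}|=O(|S_i|)$ (from (a)/(b) at index $i-1$) then converts $|S_{i-1}|^2/n$ into $|S_i|^2/n$, yielding (f); parts (d), (e), (g) follow by elementary manipulations of $M_i^{a,b}$ (e.g.\ $\sum_{a,b}abM_i^{a,b}-\sum_{a,b}bM_i^{a,b}=\sum_{a\ge 2,b}(a-1)bM_i^{a,b}$), and (h) follows from (d) together with (i). Finally, (c) is immediate by iterating (a)--(b): while $|S_j|<n^{1-\d}$ the recursion $|S_{j+1}|\le(1-Y_2n^{-\d/2})|S_j|$ gives a geometric tail summing to $O(|S_i|n^{\d/2})$, and the even faster decay in the regime $|S_j|\ge n^{1-\d}$ causes no difficulty; the final tail where $|S_j|$ falls below $n^\d\log^2 n$ is handled separately using the $O(n^{\d/2}\log n)$ bound on its length from Theorem~\ref{mt}(a).

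The main obstacle is the mild circularity between (a)--(b) and (d), which we resolve by an induction on $i$: assuming (a)--(b) and (d)--(h) hold for all indices in $[B,i-1]$, we establish them at $i$, with the base case $i=B$ following from Corollary~\ref{ccoresize} for $B$ sufficiently large. A secondary technical challenge is to obtain per-iteration failure probabilities of $o(n^{-2})$ in the Azuma concentration so that the union bound over the $O(n^{\d/2}\log n)$ iterations (Theorem~\ref{mt}(a)) leaves an $o(1)$ overall failure probability; this is precisely where the slack $|S_i|\ge n^\d\log^2 n$ in the hypothesis is used to absorb logarithmic factors.
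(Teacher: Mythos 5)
Your overall strategy coincides with the paper's for most parts: (i) by a binomial/union bound, (a)--(b) by the two-sided drift of Corollary~\ref{cor:l-br} over the SLOW-STRIP window $[t(i),t(i+1))$ plus an Azuma argument in the style of Lemma~\ref{lem:Lconcentration}, conversion between $L_{t(i)}$ and $|S_i|$ through the excess $\sum_{u}(d^-(u)-1)$, and (d)--(g) by second-moment--type bounds on plural edges and repeated bin hits (the paper uses binomial domination/negative correlation and Chernoff where you use pair counts and McDiarmid; that substitution is legitimate, though with Lipschitz constant $O(\log n)$ from (i) it will typically yield an additive $\log^3 n$ rather than the stated $\log^2 n$, and your appeal to ``(d) at index $i+1$'' inside an induction on $i$ needs to be restated as a bound on the iteration-$i$ excess, which only requires index-$i$ data). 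However, two steps as written do not deliver the stated bounds. First, part (h) does \emph{not} follow from (d) together with (i): those give only $\sum_{u\in S_i}(d^-(u))^2\le \sum_u d^-(u)+\max_u d^-(u)\cdot\sum_u (d^-(u)-1)\le |S_i|+O\bigl(\log n\cdot |S_i|^2/n\bigr)+O(\log^3 n)$, and for $i\ge B$ bounded the lemma must cover $|S_i|=\Theta(n)$, where the middle term is $\Theta(|S_i|\log n)$, not $O(|S_i|)$. The paper needs the geometric decay in $j$ of $X_j=|\{u\in S_i: d^-(u)\ge j\}|$ (its bound \eqref{exj}, applied at iteration $i-1$) to get $\sum(d^-(u))^2\le |S_i|+O(|S_{i-1}|)$; equivalently, in your framework you must use the same-bin pair count directly via $\sum(d^-(u))^2=\sum d^-(u)+2\sum\binom{d^-(u)}{2}$ rather than the crude $\max\times$sum bound.

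Second, your derivation of (c) falls short in the boundary regime. Iterating (a),(b) controls $\sum_j|S_j|$ only while $|S_j|\ge n^{\d}\log^2 n$; for the remaining tail, ``length $O(n^{\d/2}\log n)$ times size $<n^{\d}\log^2 n$'' gives $O(n^{3\d/2}\log^3 n)$, whereas the required bound $Z_1|S_i|n^{\d/2}$ may be as small as $\Theta(n^{3\d/2}\log^2 n)$ when $|S_i|$ is near its minimum admissible value $n^{\d}\log^2 n$ --- a $\log n$ shortfall. Moreover, once $|S_j|$ drops below $n^{\d}\log^2 n$ parts (a),(b) no longer apply, and nothing in your argument rules out $|S_j|$ climbing back above that threshold (per iteration it can grow by a factor up to $(r-1)(k-1)$), so even the ``size $<n^{\d}\log^2 n$'' bound on tail terms is unjustified. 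The paper's proof of (c) sidesteps both problems by staying at the level of SLOW-STRIP: since the queue degree drops by at most $r$ per step, $\sum_{j\ge i}\ld_j\le r(\tau-t(i))$, and the stopping-time bound of Lemma~\ref{lem:Lconcentration}(c), applied from time $t(i)$ with drift $b=\Theta(n^{-\d/2})$, gives $\tau-t(i)=O(n^{\d/2})\ld_i$ uniformly, including the small-$|S_j|$ tail. You should either adopt that route or supply a separate argument for the sub-threshold tail; as proposed, (c) and (h) are not proved.
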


\proof We take $B$ large enough so that the relevant preceding results hold.

{\bf Part (i):} We allocate $rcn$ vertex-copies to \fix{$n$} bins. So the probability that at least one bin has size at least $\log n$ is at most $n\pr[\Bin(rcn, \inv{n})\geq\log n]$ which is easily computed to be $o(1)$.  Part (i) follows since $d^-(u)$ is less than the size of bin $u$.

{\bf Part (d):}  We first prove (d) assuming (e).
The first inequality follows since $d^-(u)\geq 1$ for all $u\in S_i$ (with $i\geq 2$).
For the second inequality, note that $\sum_{u\in S_{i}} d^-(u)= \sum_{a\geq 1,b\leq r-a}bM_{i-1}^{a,b} \leq\sum_{a,b}abM_{i-1}^{a,b}$, and so the bound follows from  Lemma~\ref{lsi}(e) \fix{and the fact that a.a.s.\ $|S_{i+1}|=\Theta(|S_i|)$ by Lemma~\ref{lsi}(a,b)}. \smallskip

{\bf Part (f):} $\sum_{a\ge 2,b\le r-a}abM_i^{a,b}\le r^2\sum_{a\ge 2,1\le b\le r-a}M_i^{a,b}=r^2 Y$, where $Y$ is the number of hyperedges {(i.e.\ $r$-tuples)} that contains at least two {vertex-copies} in $S_i$ and at least one vertex-copy in $S_{i+1}$.

Consider SLOW-STRIP from step $t(i)$ to $t(i+1)-1$, i.e.\  the steps when the vertices in $S_{i}$ are removed. When  SLOW-STRIP removes a hyperedge incident with $u\in S_i$, it removes a vertex-copy from $u$ and another $\dd-1$ vertex-copies chosen uniformly at random.  This edge counts towards $Y$ \fix{only if} at least one of these $\dd-1$ vertex-copies are from $S_i$. Regardless of what happened during the removal of previous vertices from $S_i$, there are at most $(k-1)|S_i|$ remaining vertex-copies from $S_i$, and there are a total of $\Theta(n)$ available vertex-copies (by Corollary~\ref{ccoresize}). So the probability that at least one vertex-copy is selected from $S_i$ is at most $O(|S_i|/n)$. We remove at most $(k-1)|S_i|$ hyperedges during this phase.  So $Y$ is dominated in distribution by a binomial variable $\Bin((k-1)|S_i|, O(|S_i|/n))$ and so the Chernoff bound yields that with probability at least $1-n^{-2}$, $Y=O(|S_i|\cdot |S_i|/n+\log n)$. Multiplying by the at most $n$ choices for $i$ completes the proof for part (f).\smallskip

{\bf Part (e):} 
The lower bound is trivial as every vertex in $S_{i+1}$ is incident with at least one hyperedge counted by $\sum_{a,b}M_i^{a,b}$, and each $(a,b)$-hyperedge contains $b$ vertices of $S_{i+1}$.
We now prove the upper bound.

Let {$\Lambda$} denote the sum of $d^-(u)-1$ over all $u\in S_{i+1}$ with $d^-(u)\geq 2$.  We note that
\bea
\sum_{a,b}abM_i^{a,b}&\leq&\sum_{b}bM_i^{1,b}+\sum_{a\ge 2,b\le r-a}abM_i^{a,b}\\
&=&\sum_{u\in S_{i+1}}d^-(u)+O(|S_i|^2/n)\qquad\mbox{ by part (f) }\\
&=& |S_{i+1}|+  {\Lambda}  + O(|S_i|^2/n)\qquad\mbox{since $d^-(u)\geq 1$ for all $u\in S_{i+1}$.}
\eea

{To bound $\Lambda$,  we bound $X_j$ which is defined to be the number of vertices $u\in S_{i+1}$ with $d^-(u)\geq j$.  
To do so, we expose the degree sequence of the configuration remaining after $i$ iterations of the parallel stripping process, and then consider choosing the configuration using Bollob\'as' configuration model.  We emphasize that this exposure of the degree sequence, and choice of the configuration are only carried out for the purposes of the proof of~(\ref{exj}) below; the exposure does not carry on outside of this lemma - in particular to the analysis in Section~\ref{sec:SS1}.

If a vertex $u$ is counted by $X_j$, then we must have $j\leq \deg(u)\leq k-1+j$, where $\deg(u)$ is its degree in the remaining configuration, and $j$ copies of $u$ must be selected for removal.  At most $(r-1)(k-1)|S_i|$ vertex-copies are randomly selected for removal, and there are at least $\a n$ vertex-copies to choose from (by Corollary~\ref{ccoresize}). So the probability that $u$ is counted by $X_j$ is at most
\[{\deg(u)\choose j}\left(\frac{(r-1)(k-1)|S_i|}{\a n}\right)^j\leq {k-1+j\choose j}\left(\frac{(r-1)(k-1)|S_i|}{\a n}\right)^j
<\left(Z\frac{|S_i|}{4n}\right)^j,\]
for a suitable constant $Z=Z(r,k)$.

The probability that $u,u'$ both count towards $X_j$ are negatively correlated - if $u$ counts towards $X_j$ then at least $j$ copies are chosen from $u$ and hence are not chosen from $u'$. (Note: the reason that $X_j$ is defined to be the number of $u\in S_{i+1}$ with $d^-(u)\geq j$ rather than $d^-(u)= j$ is that it makes this negative correlation easier to see.) Noting that there are at most $n$ vertices with $j\leq \deg(u)\leq k-1+j$, this implies that $X_j$ is dominated by the binomial $BIN(n,2^{-j}\left(Z\frac{|S_i|}{n}\right)^j)$.  Applying the Chernoff Bound and summing over all choices of $j$ implies that with probability at least $1-n^{-2}$
\be\label{exj}
\mbox{for all $j\geq 2$:} \qquad X_j\leq n\left(Z\frac{|S_i|}{2n}\right)^j+\log n<\frac{Z}{2^j}\frac{|S_i|^2}{n}.
\ee

(\ref{exj}) implies that $\Lambda\leq \sum_{j\geq 2} (j-1)X_j=O(|S_i|^2/n +\log n)$. Multiplying by the at most $n$ choices for $i$ completes the proof for part (e)  and hence of part (d). \smallskip
}

{\bf Part (g):}  Here, we focus again on ${\Lambda}$ from part (e), this time using the value from iteration $i-1$.  So ${\Lambda}$ is the sum of $d^-(u)-1$ over all $u\in S_{i}$ with $d^-(u)\geq 2$.
As we showed in part (e), a.a.s.\ ${\Lambda}<Z  |S_i|^2/n +\log^2 n$ (for every $i$).  Part (g) now follows since:
\[\sum_{u\in S_i} \fix{d^+}(u)d^-(u)=\sum_{u\in S_i} \fix{d^+}(u) + \sum_{u\in S_i} \fix{d^+}(u)(d^-(u)-1)\leq \sum_{u\in S_i} \fix{d^+}(u) + (k-1){\Lambda}.\]
\smallskip

For the remaining parts, we focus on:
\[\ld_i=\sum_{v\in S_i} \fix{d^+}(v); \qquad \mbox{ i.e. } \ld_i=L_{t(i)}.\]
We prove the following relation between $\ld_i$ and $|S_i|$: {a.a.s.\ for all $i$ under the assumptions of this lemma,}
\be\lab{ld-S}
\ld_i= (k-1)|S_i|(1+O(|S_i|/n+\log^2 n/|S_i|)).
\ee
Since $\fix{d^+}(v)\le k-1$ for every $v\in S_i$, immediately we have $\ld_i\le (k-1)|S_i|$. If a vertex $v$ in $S_i$ has $\fix{d^+}(v)<k-1$ then $d^-(v)$ must be at least two since otherwise $v$ should have been removed before the $i$-th iteration of the process. Let $I_2$ denote the 
set of vertices $v\in S_i$ with $d^-(v)\ge 2$ then
\[
\ld_i\ge (k-1)|S_i\setminus I_2|.
\]
By (d), 
\[
|S_i\setminus I_2|+2|I_2|\le \sum_{u\in S_i}d^-(u) \le |S_i|+O(|S_i|^2/n+\log^2 n).
\]
It yields 
$
|I_2|=O(|S_i|^2/n+\log^2 n)$ and so $|S_i\setminus I_2|=|S_i|+O(|S_i|^2/n+\log^2 n)=|S_i|(1+O(|S_i|/n+\log^2 n/|S_i|))$.
It follows then that $\ld_i\ge (k-1)|S_i|(1+O(|S_i|/n+\log^2 n/|S_i|))$. Combining with the upper bound that $\ld_i\le (k-1)|S_i|$,~\eqn{ld-S} follows.  \smallskip

{\bf Parts (a,b):} For (a), we have  $\ld_i=\Omega(n^{\d}\log^2 n)$ and $\ld_i=O(n^{1-\d})$ by the assumptions of the lemma and by~\eqn{ld-S}. We will bound $\ex(L_{j+1}-L_j\mid \calf_j)$ in order to apply Lemma~\ref{lem:Lconcentration} as in the proof for Lemma~\ref{lem:l-br}.

Since SLOW-STRIP  removes at least one and at most $r$ vertex-copies from $S_i$ in every iteration $t(i)\le j<t(i+1)$, we have $t(i+1)-t(i)=\Theta(\ld_i)$, uniformly over $i$ satisfying the conditions of the lemma. So
$t(i+1)>t(i)+n^{\d}\log^{1.5}n$.  Furthermore, by (Fd),
for all $t(i)\le j\le t(i+1)-1$, $\zeta_j-\zeta_{t(i)}=O((j-t(i))/n)=O(\ld_i/n)$, and so by~\eqn{br-zeta}, we have $\br_j=\br_{t(i)}+O(\ld_i/n)=\br_{t(i)}+O(n^{-\d})$.  \fix{Corollary~\ref{lem:l-br}(b) says {that a.a.s.}:}
\be\lab{eabab}
-D_1n^{-\d/2}\le\ex(L_{j+1}-L_j\mid \calf_j)\le - D_2n^{-\d/2},
\ee
for all $t(i)\le j\le t(i+1)-1$. This allows us to apply  Lemma~\ref{lem:Lconcentration} in a similar manner as in the proof of Lemma~\ref{lem:l-br}. I.e.\ we define a process $\LL_j$ that is equal to $L_{j+t(i)}$ so long as~\eqn{eabab} holds, and such that we can apply Lemma~\ref{lem:Lconcentration} to $\LL_j$; then we translate what this says about $\LL_j$ to $L_j$ since~\eqn{eabab} \aas\ holds for all $j$.  This yields that for each $i$ satisfying the conditions of the lemma, with probability $1-o(n^{-1})$,
\[L_{t(i+1)}=L_{t(i)}-\Theta(n^{-\d/2})(t(i+1)-t(i))=L_{t(i)}(1-\Theta(n^{-\d/2})).\]
Taking the union bound over all $i$ yields part (a) with $|S_i|$ replaced by $\ld_i$. Then part (a) follows by~\eqn{ld-S} by noting that {$n^{-\d/2}$}  dominates the other errors $|S_i|/n$ and $\log^2 n/|S_i|$ for $|S_i|$ in this range. \smallskip

The proof of part (b) is nearly identical, applying Lemma~\ref{lem:l-br}(a)  rather than Lemma~\ref{lem:l-br}(b).\smallskip

{\bf Part (c):} Recall that $\t$ is the  stopping time of  SLOW-STRIP. As  in the proofs of parts (a,b), we will apply Lemma~\ref{lem:Lconcentration}.  There are two possible ranges for $L_j$ and in both, we can take $b=\Theta(n^{-\d/2})$: if $L_j<n^{1-\d}$ we use $b=\Theta(n^{-\d/2})$;  if $L_j\geq n^{1-\d}$ we use $b=\Theta(\sqrt{L_j/n})\geq \Theta(n^{-\d/2})$.  Lemma~\ref{lem:Lconcentration}(c)  yields that with probability $1-o(n^{-1})$, $\LL_{t}\leq \LL_0+t \Theta(n^{-\d/2})$ for each $t\geq n^{\d}\log^{1.5}n$, which implies (since \whp\  $L_{t(i)+t}=\LL_t$ until step $\t$ when $\LL_t$ drops to zero)
$\t-t(i)\leq \Theta(n^{\d/2})\ld_i$.

 Since the total degree in $\msq_t$ decreases by at most $\dd$ in every iteration of SLOW-STRIP, we have
\[\sum_{j\ge i}\ld_j\le \dd (\tau-t(i))  \leq \Theta(n^{\d/2})\ld_i.\]
Taking the union bound over all $i$ and then applying~\eqn{ld-S} yields part (c).\smallskip

{\bf Part (h):}  {By~(\ref{exj}) applied to iteration $i-1$ and using parts (a,b), we have that with probability at least $1-n^{-2}$ there is a constant $Z_1$ such that:
\be\label{e.lsih}
\sum_{u\in S_i}(d^-(u))^2 \leq |S_i|+\sum_{j\geq 2}(j^2-1)X_j<|S_i| + O(|S_{i-1}|)<Z_1|S_i|.
\ee
Part (h) then follows} by multiplying {the failing probability of at most $n^{-2}$} by the at most $n$ choices for $i$. \smallskip

\proofend

\fix{
\no {\bf Acknowledgement}   We thank two anonymous referees for their careful reading and several corrections.
}

\newpage
\no{\Large \bf Appendix}

\begin{proof}[Proof of Lemma~\ref{l:diff}.] First we bound $y:=\mu(c)-\mu_{r,k}$. Recall that $\mu(c)$ is the larger root of $h(\mu)=c$ and $\mu_{\dd,k}$ is
the unique root of $h(\mu)=c_{\dd,k}$. As $h(x)$ is convex over $x\in (0,+\infty)$ and has derivative $0$ at $x=\mu_{\dd,k}$.
The Taylor expansion at $\mu=\mu_{\dd,k}$ gives
$$
c=h(\mu_{\dd,k})+\frac{h''(\mu_{\dd,k})}{2}y^2+O(y^3)=c_{\dd,k}+\frac{h''(\mu_{\dd,k})}{2}y^2(1+O(y)).
$$
Thus, $y=\sqrt{2/h''(\mu_{\dd,k})}n^{-\d/2}+O(n^{-\d})=K_1n^{-\d/2}+O(n^{-\d})$, by letting $K_1=\sqrt{2/h''(\mu_{\dd,k})}$.
Next, we bound
$$
f_k(\mu(c))-\alpha_{\dd,k}=f_k(\mu(c))-f_k(\mu_{\dd,k})=f_k'(\mu_{\dd,k})y+O(y^2).
$$
Recall that
$$
f_k(x)=e^{-x}\sum_{i\ge k} x^i/i!.
$$
Thus, $f'_k(x)=e^{-x}x^{k-1}/(k-1)!$. Hence, $f'_k(\mu_{\dd,k})>0$. It follows then that there is a constant $K_2>0$, such that
$$
f_k(\mu(c))-\alpha=K_2n^{-\d/2}+O(n^{-\d}).
$$
Similarly, there is a $K_3>0$ such that
\[
\frac{1}{\dd}\mu(c)f_{k-1}(\mu(c))-\beta=K_3n^{-\d/2}+O(n^{-\d}).\qedhere
\]
\end{proof}

\begin{proof}[Proof of Lemma~\ref{l:degreeK}.]
 By the definitions of $\mu_{\dd,k}$ and $h(\mu)$ from~\eqn{murk}, $h'(\mu_{\dd,k})=0$. Since
$$
h'(x)=\frac{f_{k-1}(x)^{\dd-1}-x(\dd-1)f_{k-1}(x)^{\dd-2}f'_{k-1}(x)}{f_{k-1}(x)^{2(\dd-1)}},
$$
and $f'_k(x)=f_{k-1}(x)-f_k(x)$ for all $k\ge 1$, we have
$$
f_{k-1}(\mu_{\dd,k})=\mu_{\dd,k}(\dd-1)(f_{k-2}(\mu_{\dd,k})-f_{k-1}(\mu_{\dd,k})),
$$
i.e.,
$$
\frac{f_{k-2}(\mu_{\dd,k})}{f_{k-1}(\mu_{\dd,k})}=\frac{1+\mu_{\dd,k}(\dd-1)}{\mu_{\dd,k}(\dd-1)}=1+\frac{1}{\mu_{\dd,k}(\dd-1)}.
$$
On the other hand,
$$
f_{k-2}(\mu_{\dd,k})=f_{k-1}(\mu_{\dd,k})+e^{-\mu_{\dd,k}}\frac{\mu_{\dd,k}^{k-2}}{(k-2)!},
$$
it follows immediately that
\be
e^{-\mu_{\dd,k}}\frac{\mu_{\dd,k}^{k-2}}{(k-2)!f_{k-1}(\mu_{\dd,k})}=\frac{1}{\mu_{\dd,k}(\dd-1)}.\lab{critical}
\ee
By the definition of $\bar\rho_{\dd,k}(k)$, $\alpha$ and $\beta$, the assertion follows thereby.
\end{proof}

\begin{proof}[Proof of Lemma~\ref{lem:AP}]

The Poisson Cloning Model for a random $r$-uniform configuration on $n$ vertices, $H_{PC}(n,p;r)$  is defined in~\cite{jhk} as follows: 
For each vertex $v$, select $d(v)$ to be a Poisson variable with mean $p{n-1\choose r-1}$ where these $n$ Poisson variables are independent.  Create $d(v)$ copies (i.e. clones) of each vertex $v$. If $D=\sum d(v)$ is a multiple of $r$ then take a uniformly random partition of the vertex-copies into parts of size $r$, as in the AP-model.  If $D$ is not a multiple of $r$ then choose one of the parts to have size $D \mod r$, and so the resulting hypergraph will have one edge of size less than $r$.  Note that the probability that $r$ divides $D$ is bounded from below by a positive constant.  This implies that if a property $Q$ holds \whp\ for $H_{PC}(n,p;r)$ then it holds \whp\ when conditioning on the event that $r$ divides $D$.      

\cite{jhk} notes that this model is equivalent to: Choose $D$ to be a Poisson variable with mean $np{n-1\choose r-1}$, create $D$ vertex-copies, and assign each vertex-copy to a uniformly chosen vertex.  Then take a uniformly random partition as described above.   Thus, the AP-model $AP_r(n,m)$ is exactly $H_{PC}(n,p;r)$ conditioned on $D=rm$.  

We say that a property $Q$ of configurations is monotone decreasing (resp. increasing) if:  Consider any configuration 
for which $Q$ holds.  Create a new configuration by removing any part and its vertex-copies (resp. adding a part containing $r$ new vertex-copies and assigning those vertex-copies to any bin).  Then $Q$ holds for the new configuration.   We will argue that if a monotone property $Q$ holds \whp\ in  $H_{PC}(n,p;r)$ then $Q$ holds \whp\ in 
$AP_r(n,m)$ if $m=\frac{np}{r}{n-1\choose r-1}$.

Note that $AP_r(n,m-1)$ is equivalent to: choose a configuration from $AP_r(n,m)$, select a uniformly random part, and remove it along with its vertex-copies.  It follows that if $Q$ is monotone increasing (resp.\ decreasing) then the probability that $Q$ holds in $AP_r(n,m)$ is at least (resp.\ at most) the probability that $Q$ holds in $AP_r(n,m-1)$.
Suppose $Q$ is monotone increasing and that $Q$ holds \whp\ in  $H_{PC}(n,p;r)$.  Note that the probability that $D=Po(p{n-1\choose r-1})$ is a multiple of $r$ that is at least $p{n-1\choose r-1}$ is bounded  from below by a positive constant.  This implies that $Q$ holds \whp\ for $H_{PC}(n,p;r)$ when conditioning that $D$ is a multiple of $r$ that is at least $p{n-1\choose r-1}$.  
Since $AP_r(n,m)$ is $H_{PC}(n,p;r)$ conditioned on $D=rm=p{n-1\choose r-1}$ and since the probability that $Q$ holds in $AP_r(n,m)$ is monotone increasing with $m$ it follows that $Q$ holds \whp\ for $H_{PC}(n,p;r)$ when conditioning that $D=p{n-1\choose r-1}$, i.e.\ for  $AP_r(n,m)$.  The argument for $Q$ monotone decreasing is almost identical.

\cite{jhk} proves that Lemma~\ref{lcoresize2} holds for $H_{PC}(n,p;r)$ where $\frac{np}{r}{n-1\choose r-1}=m$.  Note that each part of Lemma~\ref{lcoresize2} is the intersection of two monotone properties, one increasing and the other decreasing.  For example, the first part says that there is a positive function $g(n)=O(n^{3/4})$ such that (i) the $k$-core has at least $\a(c)n - g(n)$ vertices and (ii) the $k$-core has at most $\a(c)n + g(n)$ vertices.  Since all of these properties hold \whp\ for  $H_{PC}(n,p;r)$ they hold \whp\ for $AP_r(n,m)$.   Corollary~\ref{ccoresize} follows from Lemma~\ref{lcoresize2} and Lemma~\ref{l:diff}, so it also holds \whp\ for   $AP_r(n,m)$.

\end{proof}

\begin{proof}[Proof of Lemma~\ref{l:gk}.]
Set $f(x)=e^xf_k(x), h(x)=xe^xf_{k-1}(x)$.  So we wish to show that $f'(x)h(x)<f(x)h'(x)$.

\[f(x)=\sum_{i\geq k}\frac{x^i}{i!},\qquad f'(x)=\sum_{i\geq k}\frac{x^{i-1}}{(i-1)!}.\]

\[h(x)=\sum_{i\geq k}\frac{x^i}{(i-1)!},\qquad h'(x)=\sum_{i\geq k}\frac{ix^{i-1}}{(i-1)!}.\]

\[f(x)h'(x)=\sum_{i,j\geq k}\frac{ix^{i+j-1}}{(i-1)!j!}, \qquad
f'(x)h(x)=\sum_{i,j\geq k}\frac{x^{i+j-1}}{(i-1)!(j-1)!}.\]

When $i=j$, the contribution to each sum is the same:  $\frac{x^{2i-1}}{(i-1)!(i-1)!}$. When $i\neq j$, consider the contribution of $(i,j)$ plus the contribution of $(j,i)$.   The sum of these contributions to $f(x)h'(x)$ and $f'(x)h(x)$ is
\[x^{i+j-1}\left(\frac{i}{(i-1)!j!}+\frac{j}{i!(j-1)!}\right), \qquad x^{i+j-1}\frac{2}{(i-1)!(j-1)!}.\]
The contribution to $f(x)h'(x)$ is larger since $\frac{i}{j}+\frac{j}{i}>2$.
\end{proof}

\begin{proof}[Proof of Lemma~\ref{l2:monotone}] Let $h(\la)=e^{-\la}\la^{k-1}/f_{k-1}(\la)(k-1)!$.
Recall $g_k(x)$ above Lemma~\ref{l:gk} and recall the definition of $\psi(x)$ in~\eqn{h}. Then,
\[
\psi(x)=h(g_k^{-1}(x)),
\]
where $g_k^{-1}$ is the inverse of $g_k$. It is easy to see and we have mentioned before that $\lim_{x\to 0} g_k(x)=k$. So for every $x>k$, $g_k^{-1}(x)$ exists, and by Lemma~\ref{l:gk} and the chain rule, the derivative of $g_k^{-1}(x)$ is positive. Thus, in order to show that $\psi'(x)<0$, it is sufficient to show that for every $\la>0$, $h'(\la)<0$.

Showing that $h'(\la)<0$ is equivalent to showing that for every $\la>0$,
$$
(k-1)\sum_{j\ge k-1}\frac{\la^j}{j!}-\la\sum_{j\ge k-2}\frac{\la^j}{j!}=\frac{\la^{k-2}}{(k-2)!}\left(-\la+(k-1-\la)\sum_{j\ge k-1}\frac{\la^{j-k+2}}{[j]_{j-k+2}}\right)
$$
is negative. It is trivially true if $\la\ge k-1$. Now assume that $\la<k-1$. Since for every $j\ge k-1$, we have

$$
\frac{\la^{j-k+2}}{[j]_{j-k+2}}\le\left(\frac{\la}{k-1}\right)^{j-k+2},
$$
where the inequality is strict except for $j=k-1$. Thus,
\bean
-\la+(k-1-\la)\sum_{j\ge k-1}\frac{\la^{j-k+2}}{[j]_{j-k+2}}&<&-\la+(k-1-\la)\sum_{j\ge 0}\left(\frac{\la}{k-1}\right)^{j+1}\\
&=&-\la+(k-1-\la)\cdot\frac{\la}{k-1}\cdot\frac{1}{1-\frac{\la}{k-1}}=0.
\eean
This completes the proof that $h'(\la)<0$.
\end{proof}

We need the following technical lemma before proving Lemma~\ref{l:rho}.
\begin{lemma}\lab{l40}  For all $k,\dd\ge 2$ with $(r,k)\neq (2,2)$, $h(x)<1/(r-1)$ for all $x\geq x^*$ where
\[
h(x)=\frac{e^{-x} x^{k-1}}{f_{k-1}(x)(k-2)!}, \ \ \mbox{and}\ \ x^*=r(k-1)-\frac{r}{r-1}.
\]
\end{lemma}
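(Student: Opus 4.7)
The plan is to combine strict monotonicity of $h$ with a direct lower bound on an explicit power series at $x = x^*$.

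First, $h$ is strictly decreasing on $(0,\infty)$. Writing $h(x) = (k-1)\tilde h(x)$ where $\tilde h(x) = e^{-x}x^{k-1}/(f_{k-1}(x)(k-1)!)$, this is exactly the monotonicity established in the proof of Lemma~\ref{l2:monotone}. Second, I identify $h(\mu_{\dd,k})$: differentiating $h_{\dd,k}(\mu) = \mu/f_{k-1}(\mu)^{\dd-1}$ from \eqn{e.hrk} and setting the derivative to zero at the unique minimizer $\mu_{\dd,k}$ yields
\[
f_{k-1}(\mu_{\dd,k}) = (\dd-1)\,e^{-\mu_{\dd,k}}\mu_{\dd,k}^{k-1}/(k-2)!,
\]
which is precisely $h(\mu_{\dd,k}) = 1/(\dd-1)$. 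Combined with the strict monotonicity of $h$, the lemma reduces to proving the strict inequality $\mu_{\dd,k} < x^*$, or equivalently $h(x^*) < 1/(\dd-1)$.

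Clearing denominators and expanding $f_{k-1}(x^*) = e^{-x^*}\sum_{i\geq k-1}(x^*)^i/i!$ as a power series, the condition $h(x^*) < 1/(\dd-1)$ rewrites as
\[
\Sigma(x^*) := \sum_{j=0}^{\infty} \frac{(x^*)^j}{k(k+1)\cdots(k+j-1)} > (\dd-1)(k-1),
\]
where the $j=0$ term equals $1$. Let $a_j$ denote the $j$-th term; the recursion $a_{j+1}/a_j = x^*/(k+j)$ shows $(a_j)$ is unimodal with peak near $j=\lfloor x^*-k\rfloor \approx (\dd-1)(k-1)-1$. By AM-GM, $k(k+1)\cdots(k+j-1) \leq ((2k+j-1)/2)^j$, so $a_j \geq (2x^*/(2k+j-1))^j$, which is at least $1$ whenever $j \leq 2x^* - 2k + 1 = 2(\dd-1)(k-1) - 2\dd/(\dd-1) - 1$. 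Summing the guaranteed unit-sized terms gives $\Sigma(x^*) \geq 2(\dd-1)(k-1) - 2\dd/(\dd-1)$, which strictly exceeds $(\dd-1)(k-1)$ whenever $(\dd-1)^2(k-1) > 2\dd$.

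The main obstacle is handling the small-parameter cases that fall outside the AM-GM regime, namely $(\dd,k)\in\{(2,3),(2,4),(2,5),(3,2)\}$. For these I would use the closed form $\Sigma(x) = (k-1)!(e^x - \sum_{i=0}^{k-2} x^i/i!)/x^{k-1}$ and verify the inequality by direct numerical computation. For example, the case $(\dd,k)=(3,2)$ has $x^*=3/2$ and reduces to $\Sigma(3/2) = (e^{3/2}-1)/(3/2) > 2$, i.e.\ $e^{3/2}>4$, which is immediate; more generally, the $k=2$ cases reduce to $e^{(\dd-1)-1/(\dd-1)} > (\dd-1)^2$, i.e.\ $y - 1/y > 2\ln y$ for $y=\dd-1\geq 2$, which follows since the function $g(y)=y-1/y-2\ln y$ has $g'(y) = (y-1)^2/y^2\geq 0$ and $g(2) = 3/2 - 2\ln 2 > 0$. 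The remaining three cases $(2,3),(2,4),(2,5)$ each reduce to a single explicit inequality between $e^{2k-4}$ and a short polynomial in $2k-4$, readily checked.
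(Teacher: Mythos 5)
Your proposal is correct in substance but takes a genuinely different route from the paper's. The paper also begins with the monotonicity of $h$, but then evaluates at the point $x_1=r(k-2)\le x^*$, uses the Poisson-median-type bound $f_{k-1}(x_1)>1/2$ (Claim~\ref{fkinq}) to reduce the problem to $e^{-x_1}x_1^{k-1}/(k-2)!<1/2(r-1)$, and controls the resulting function $\phi(x,r)=e^{-rx}(rx)^{x+1}/x!$ via monotonicity in $r$ and in $x$, finishing with a handful of numerical checks (and a separate direct computation for $k=2$). You instead rewrite the target $h(x^*)<1/(r-1)$ exactly as the series inequality $\Sigma(x^*)=\sum_{j\ge 0}(x^*)^j/\bigl(k(k+1)\cdots(k+j-1)\bigr)>(r-1)(k-1)$ and lower-bound it by counting, via AM--GM, the terms that are at least $1$. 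Your observation that $h(\mu_{r,k})=1/(r-1)$ is indeed the paper's identity \eqn{critical2}, and your framing is not circular since you prove $h(x^*)<1/(r-1)$ directly (the detour through $\mu_{r,k}$ is in fact unnecessary: monotonicity of $h$ alone reduces the lemma to $h(x^*)<1/(r-1)$). What your route buys: it avoids the $f_{k-1}>1/2$ crutch, and your closed-form treatment of \emph{all} $k=2$ cases — the reduction to $e^{x^*}>(r-1)^2$, i.e.\ $y-1/y>2\ln y$ for $y=r-1\ge 2$, which I checked is exact since $(r-1)x^*+1=(r-1)^2$ — is cleaner than the paper's case-by-case verification there. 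The price is three residual numerical checks $(r,k)\in\{(2,3),(2,4),(2,5)\}$, which do come down to $e^2>7$, $e^4>45$, $e^6>277$ and hence hold; this is comparable in spirit to the finite verifications the paper itself defers.

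One small correction: the number of integers $j$ with $0\le j\le M:=2x^*-2k+1$ is $\lfloor M\rfloor+1$, which yields $\Sigma(x^*)>M$ but not $\Sigma(x^*)\ge M+1$ when $M\notin\mathbb{Z}$ (this happens for all $r\ge 4$). So your displayed bound $\Sigma(x^*)\ge 2(r-1)(k-1)-2r/(r-1)$ is a slight overclaim, and the corrected criterion for the AM--GM regime is $(r-1)(k-1)\ge 2r/(r-1)+1$, which additionally excludes $(r,k)=(4,2)$. This creates no actual gap, because your general $k=2$ argument already covers every $r\ge 3$; but you should either restrict the AM--GM criterion to $k\ge 3$ (where its only failures are $(2,3),(2,4),(2,5)$), or note that the terms with $1\le j<M$ are strictly larger than $1$ and the remaining terms are positive, which both restores strictness of $\Sigma(x^*)>(r-1)(k-1)$ in the boundary cases and handles $(4,2)$ directly.
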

\begin{proof}
It is easy to see that $h(x)$ is a decreasing function on $x>0$ (dividing $e^{-x} x^{k-1}$ from both the numerator and the denominator and then the numerator is constant whereas the denominator is an increasing function of $x$). Hence if we can prove $h(x)<1/(r-1)$ for some $x\le x^*$ then we are done.

We first prove a well-known inequality with respect to $f_k(\mu)$. 
\begin{claim}\lab{fkinq}
For any integer $k<\lfloor \mu \rfloor$, $f_k(\mu)>1/2$.
\end{claim}

\begin{proof}
Let $M$ be the maximum integer such that $M\le \mu-1$. Then we must have $k-1\le M-1$. By the definition of $f_k(\mu)$, we only need to prove that
\[
\sum_{i=0}^{k-1} e^{-\mu} \frac{\mu^i}{i!}\le \sum_{i=0}^{M-1} e^{-\mu} \frac{\mu^i}{i!}<1/2. 
\]
Let $p(i)=e^{-\mu} \mu^i/i!$; it suffices then to show that for every $1\le i\le M$, $p(M-i)\le p(M+i)$. This follows easily from
\[
\frac{p(M-i)}{p(M+i)}=\prod_{j=-i+1}^{i}\frac{M+j}{\mu}\le \prod_{j=-i+1}^{i}\left(1+\frac{-1+j}{\mu}\right)\le \exp\left(\sum_{j=-i}^{i-1}\frac{j}{\mu}\right)=\exp\left(-\frac{i}{\mu}\right)<1.\qedhere
\]
\end{proof}

 Now we continue to prove Lemma~\ref{l40}.
   We split our discussion into two cases. In both, we will reduce the lemma to checking a finite number of pairs $(k,r)$, which is straightforward.

{\em Case 1: $k=2$, $r\ge 3$.} Now $h(x)=e^{-x}x/(1-e^{-x})$ and $x^*=r-r/(r-1)$. By computing the derivative of $(r-1)\cdot h(x^*(r))$ with respect to $r$, it is easy to see that $h(x)$ is a decreasing function on $r\ge 3$. Hence it suffices to verify that $h(x^*(3))<1/2$ which can be easily done. 

{\em Case 2: $k\ge 3$.} We may easily verify the lemma for $(k,r)=(3,2)$. So we assume that $(k,r)\neq(3,2)$. \fix{Let $x_1=r(k-2)$. Clearly $x_1\le x^*$ and $\lfloor x_1\rfloor>k-1$. By Claim~\ref{fkinq}, we have $f_{k-1}(x_1)>1/2$. Since $h(x)$ is decreasing on $x>0$, it suffices to prove that $h(x_1)<1/(r-1)$ and thus it suffices to prove that}
$e^{-x_1} x_1^{k-1}/(k-2)!<\fix{\frac{1}{2(r-1)}}$.
Define
\[
\phi(x,r)=\frac{e^{-rx} (rx)^{x+1}}{x!}.
\]
Then, $e^{-x_1} x_1^{k-1}/(k-2)!=\phi(k-2,r)$ and so it suffices to prove 
\be
(r-1)\phi(k-2,r)<\hf.\lab{phi}
\ee
By computing the derivative of $\fix{(r-1)\cdot}\phi(x,r)$ with respect to $r$, we see that 
\begin{equation}\label{ephixr}
\mbox{for } \fix{(r-1)^2\ge r^2/(x+1)}: \qquad (r-1)\cdot \phi(x,r) \mbox{ is \fix{non-increasing} on } r.
\end{equation}

It is easy to see that
\[
\frac{\phi(x+1,r)}{\phi(x,r)}=\frac{e^{-r}(rx+r)^{x+2}}{(x+1)(rx)^{x+1}}= r e^{-r}\left(1+\frac{1}{x}\right)^{x+1}\le re^{-r+1+1/x}\le re^{-r+2},
\]
which is strictly less than one for any $x\ge 4$ and $r\ge 2$. So
\begin{equation}\label{ephixr2}
\mbox{for }x\ge 4, r\ge 2: \quad (r-1)\cdot \phi(x,r) \mbox{ is decreasing on } x.
\end{equation}

{\bf $k\geq 6$:}
We can easily verify that $\phi(4,2)<1/2$, and so by~(\ref{ephixr}),  this establishes (\ref{phi}) for $k=6,r\geq 2$. Then~(\ref{ephixr2}) establishes (\ref{phi}) for all $k\geq {6},r\geq 2$.

{\bf $k= 5$:}
We can easily verify that $2\phi(3,3)<1/2$, and so by~(\ref{ephixr}),  this establishes (\ref{phi}) for $k=5,\fix{r\geq 2}$. 

{\bf $k= 4$:}
We can easily verify that $3\phi({2},4)<1/2$, and so by~(\ref{ephixr}),  this establishes (\ref{phi}) for $k=4,\fix{r\geq 3}$. For $r=2$, we verify the lemma directly from the definitions of $h(x)$ and $x^*$.

{\bf $k= 3$:}
We can easily verify that $5\phi({1},6)<1/2$, and so by~(\ref{ephixr}),  this establishes (\ref{phi}) for $k=3,\fix{r\geq 4}$. For $r\in\{2,3\}$, we verify the lemma directly from the definitions of $h(x)$ and $x^*$.

\remove{
{\bf $k\geq 6$:}
We can easily verify that $\phi(4,2)<1/2$, and so by~(\ref{ephixr}),  this establishes (\ref{phi}) for $k=6,r\geq 2$. Then~(\ref{ephixr2}) establishes (\ref{phi}) for all $k\geq {6},r\geq 2$.

{\bf $k= 5$:}
We can easily verify that $2\phi(3,3)<1/2$, and so by~(\ref{ephixr}),  this establishes (\ref{phi}) for $k=5,r\geq 3$. For $(k,r)=(5,2)$, we verify the lemma directly from the definitions of $h(x)$ and $x^*$.

{\bf $k= 4$:}
We can easily verify that $3\phi({2},4)<1/2$, and so by~(\ref{ephixr}),  this establishes (\ref{phi}) for $k=4,r\geq 4$. For $r\in\{2,3\}$, we verify the lemma directly from the definitions of $h(x)$ and $x^*$.

{\bf $k= 3$:}
We can easily verify that $5\phi({1},6)<1/2$, and so by~(\ref{ephixr}),  this establishes (\ref{phi}) for $k=3,r\geq 6$. For $r\in\{2,3,4,5\}$, we verify the lemma directly from the definitions of $h(x)$ and $x^*$.
}
\end{proof}

\begin{proof}[Proof of Lemma~\ref{l:rho}.]
We first verify that $\zeta>k$. Recall that $g_k(x)=xf_{k-1}(x)/f_k(x)$. By definition of $\zeta$, $\alpha$ and $\beta$ in~(\ref{zeta}) and~\eqn{alpha-beta}, 
\[\zeta=g_k(\mu_{r,k}).\]  	It is easy to see that $\lim_{x\to 0} g_k(x)=k$. Lemma~\ref{l:gk} says that $g_k(x)$ is strictly increasing on $x>0$. {It is easy to see that for any $(r,k)\neq (2,2)$, $h_{r,k}(\mu)$ in~\eqn{e.hrk} tends to infinity both when $\mu\to 0$ and when $\mu\to\infty$. This implies that $\mu_{r,k}$ is a positive real number.} Thus, $g_k(\mu_{r,k})>k$.

 Next we prove that $\z<r(k-1)$.      
By~\eqn{critical},
\be
e^{-\mu_{\dd,k}}\frac{\mu_{\dd,k}^{k-1}}{(k-1)!f_{k-1}(\mu_{\dd,k})}=\frac{1}{(k-1)(\dd-1)}.\lab{critical2}
\ee
As we said above, $\z=g_k(\mu_{r,k})$ and so by~(\ref{egmrk}), $\z=\mu_{r,k}f_{k-1}(\mu_{r,k})/f_k(\mu_{r,k})$. Hence,
$\z<r(k-1)$ is equivalent to 
\[
\frac{f_k(\mu_{r,k})}{\mu_{r,k}f_{k-1}(\mu_{r,k})}>\frac{1}{r(k-1)}.
\]
 Note that $f_k(\mu_{r,k})=f_{k-1}(\mu_{r,k})-e^{-\mu_{r,k}}\mu_{r,k}^{k-1}/(k-1)!$; combined with~\eqn{critical2} the left hand side above is equal to
 \[
 \frac{1}{\mu_{r,k}}\left(1-e^{-\mu_{r,k}}\frac{\mu_{r,k}^{k-1}}{(k-1)!f_{k-1}(\mu_{r,k})}\right)=\frac{1}{\mu_{r,k}}\left(1-\frac{1}{(k-1)(r-1)}\right).
 \]
  Hence $\zeta<r(k-1)$ is equivalent to 
  \be\label{eqqq}
  \frac{1}{\mu_{r,k}}\left(1-\frac{1}{(k-1)(r-1)}\right)>\frac{1}{r(k-1)} \quad i.e.\ \mu_{r,k}<\mu^*:=r(k-1)-\frac{r}{r-1}.
  \ee

  We consider the function $h(x)$ from Lemma~\ref{l40}; i.e.\ 
  \[
  h(x)=\frac{e^{-x}x^{k-1}/(k-2)!}{f_{k-1}(x)}.
  \]
  By~\eqn{critical2} (and multiplying $k-1$ on both sides), $h(\mu_{r,k})=1/(r-1)$. So by  Lemma~\ref{l40},  $\mu_{r,k}<\mu^*$, thus establishing~(\ref{eqqq}) and hence the lemma.
\end{proof}

\end{document}